\theoremstyle{plain}
\newtheorem{thm}{Theorem}
\numberwithin{thm}{section}
\newtheorem{lem}[thm]{Lemma}
\newtheorem{prop}[thm]{Proposition}
\newtheorem{cor}[thm]{Corollary}
\theoremstyle{definition}
\newtheorem{defn}[thm]{Definition}
\theoremstyle{remark}
\newtheorem{rem}[thm]{Remark}
\numberwithin{equation}{section}
\newcommand{\Int}{\Omega}
\newcommand{\Ext}{{{\mathcal S}_0}}
\def\curl{\operatorname{curl}}
\def\dive{\operatorname{div}}
\providecommand{\R}{\mathbb{R}}
\providecommand{\C}{\mathbb{C}}
\providecommand{\N}{\mathbb{N}}
\providecommand{\eps}{\varepsilon}
\renewcommand{\leq}{\leqslant}
\renewcommand{\geq}{\geqslant}
\def\longrightharpoonup{\relbar\joinrel\rightharpoonup} 
\spnewtheorem*{xproof}{}{\itshape}{\rmfamily}
\renewenvironment{proof}[1][\proofname]
 {\xproof}
 {\endxproof}
 \renewcommand{\Re}{{\rm Re}\,}
\begin{document}

\date{\today}
\title{Point vortex dynamics as zero-radius limit of the motion of a rigid body in an irrotational fluid}
\author{Olivier Glass \and Alexandre Munnier \and Franck Sueur} 
\institute{O. Glass \at CEREMADE, UMR CNRS 7534, Universit\'e Paris-Dauphine, 
Place du Mar\'echal de Lattre de Tassigny, 75775 Paris Cedex 16, France
\and 
A. Munnier \at
Universit\'e de Lorraine, Institut Elie Cartan de Lorraine, UMR 7502, Nancy-Universit\'e, Vandoeuvre-l\`es-Nancy, F-54506, France
\& CNRS, Institut Elie Cartan de Lorraine, UMR 7502, Nancy-Universit\'e, Vandoeuvre-l\`es-Nancy, F-54506, France
\and
F. Sueur \at
Institut de Math\'ematiques de Bordeaux, UMR CNRS 5251, Universit\'e de Bordeaux, 351 cours de la Lib\'eration, F-33405 Talence Cedex, France.
}
\titlerunning{Point vortex dynamics as zero-radius limit of a rigid body's motion}
\authorrunning{Glass, Munnier \& Sueur}
\maketitle

%
\begin{abstract}

The point vortex system is usually considered as an idealized model where the
vorticity of an ideal incompressible two-dimensional  fluid is concentrated in a
finite number of moving points. 
In the case of  a single vortex in an otherwise irrotational ideal fluid occupying 
a bounded and simply-connected  two-dimensional  domain the motion is given by the so-called Kirchhoff-Routh velocity which depends only on the domain.
The main result of  this paper establishes that this dynamics can also be  obtained as the limit of the motion of a rigid body immersed in such a fluid 
when the body shrinks to a massless point particle  with fixed circulation. 
The rigid body is assumed to be only accelerated by the force exerted by the  fluid pressure on its boundary, the fluid velocity and pressure being given by the incompressible Euler equations, with zero vorticity. 
The circulation of the fluid velocity around the particle is conserved as time proceeds according to Kelvin's theorem and gives the strength of the limit point vortex. 
We  also prove that  in the different regime where the body shrinks with a fixed mass the limit dynamics is governed by a second-order differential equation involving a Kutta-Joukowski-type lift force.

To prove these results, in a first step we reformulate the dynamics of the body in order to make more explicit different kind of interactions with the fluid. 
Precisely we establish that the Newton-Euler equations of translational and rotational dynamics of the body can be seen as a $3$-dimensional ODE with coefficients solving an auxiliary problem for the fluid. When the circulation around the body is zero, this equation is a geodesic equation  for a metric associated with the well-known ``added inertia'' phenomenon; with a nonzero circulation, an additional term similar to the Lorentz force of electromagnetism appears. 
Then, in the zero-radius limit, surprising relations between leading and subprincipal orders of various terms and  modulation variables show up and allow us to establish a normal form with a gyroscopic structure.  This leads to   uniform estimates on the body's dynamics thanks to a modulated energy, and therefore allows us to describe the transition of the dynamics in the limit.

\end{abstract}
%

%
%
\setcounter{tocdepth}{3}

\let\oldtocsection=\tocsection

\let\oldtocsubsection=\tocsubsection

\let\oldtocsubsubsection=\tocsubsubsection

\newcommand{\tocsection}[2]{\hspace{0em}\oldtocsection{#1}{#2}}
\newcommand{\tocsubsection}[2]{\hspace{1em}\oldtocsubsection{#1}{#2}}
\newcommand{\tocsubsubsection}[2]{\hspace{2em}\oldtocsubsubsection{#1}{#2}}
\tableofcontents

%
%
\printindex
%
%
%

%
%
%
%
%
\section{Introduction}

The point vortex system is a classical topic which originates from fluid mechanics and goes back to Helmholtz \cite{Helmholtz}, Kirchhoff \cite{Kirchhoff}, Poincar\'e \cite{Poincare}, Routh \cite{Routh}, Kelvin \cite{Kelvin}, and Lin \cite{Lin1,Lin2}.
It appeared as an idealized model where the vorticity of an ideal incompressible two-dimensional fluid is concentrated in a finite number of points.
Although it does not constitute a solution to the Euler equations in the sense of distributions, it is now understood that point vortices can be viewed as limits of concentrated smooth vortices which evolve according to the Euler equations.
In the case of  a single vortex moving in a bounded and simply-connected domain this was proved by Turkington in \cite{Turk}.
An extension to the case of several vortices was given by Marchioro and Pulvirenti; see \cite{MP}. 
Recently  Gallay has proven in \cite{Gallay} that the point vortex system can also be obtained as vanishing viscosity limits of concentrated smooth vortices evolving according to the incompressible Navier-Stokes equations.

The main goal of this paper is to prove that the point vortex system can also be viewed as the limit of the dynamics of a solid, shrinking into a massless point particle with fixed circulation, in free motion in an irrotational bounded flow.
By free motion we mean that the rigid body is only accelerated by the force exerted by the fluid pressure on its boundary, the fluid velocity and pressure being given by the incompressible Euler equations with zero vorticity.
 In a different regime, we also derive a different ``massive'' point vortex system  evoked  (in the case of two point vortices in the whole plane)  by Friedrichs in  \cite[Chapter 3]{Friedrichs} 
 under the terminology of {\it bound vortices} (as opposed to {\it free vortices}).
 In this case the dynamics is given by a second-order differential equation involving a gyroscopic force similar to the 
celebrated Kutta-Joukowski-type lift force revealed in the case of a single body in a irrotational unbounded flow at the beginning of the 20th century during the first mathematical investigations in aeronautics; see for example \cite{Lamb}. 
This result extends the one  obtained in  \cite{GLS} to the case where the solid-fluid system is bounded. 

To distinguish  these two limits  of the dynamics of the solid when its size goes to $0$
 we therefore introduce two cases: 
 Case (i) when the mass of the solid is fixed (and then the solid tends to a point-mass particle), and 
Case (ii): when the mass tends to 0 along with the size (and then the solid tends to a massless point particle). In particular Case (ii) encompasses the case of fixed density.
The main results in this paper establish the massive point vortex system  in Case (i), see Theorem~\ref{theo:3}, and the 
{\it classical point vortex system} in Case (ii), see  Theorem~\ref{theo:1}, 
as  limits  of the dynamics of a shrinking solid in a fluid  in a cavity. 
In both cases the strength of the point vortex obtained in the limit is given by the circulation around the body.  
This circulation is supposed held fixed independently of the size of the body and  is conserved as time proceeds according to Kelvin's theorem. 

From the fluid viewpoint the circulation around the body somehow encodes the amount of vorticity hidden in the body.
The limit where the body has a diameter tending to zero therefore corresponds to a singular perturbation problem (in space). 
Indeed it is well understood since the work \cite{ILN}, see also \cite{Lopes}, that when a solid obstacle with a nonzero given circulation is held fixed in a perfect incompressible fluid, with possibly nonzero vorticity, then in the limit the obstacle shrinks into a fixed point particle and the Euler equation driving the fluid evolution has to be modified: in the Biot-Savart law providing the fluid velocity generated by the fluid vorticity, a point vortex placed at the fixed position of the point obstacle has to be added to the fluid vorticity, with a strength equal to the circulation previously mentioned. 
 
Still the dynamics of an immersed rigid body requires a more precise analysis, in particular because it is driven by the fluid pressure on the boundary of the solid, a quantity which depends in a nonlinear and non local way on the fluid velocity and hides a remote interaction between the moving body and the exterior boundary. 
Moreover, in the zero-radius limit, the pressure field on the boundary of the solid is expected to be singular and  the Newton-Euler equations driving the particle's dynamics involve a singular perturbation problem in time (in addition to the singularity in space), in a particularly intricate way for asymmetric particles (actually for any other form than a disk) and even more so for light particles whose mass and moment of inertia go to zero (Case (ii)). 
   Our analysis relies on a detailed treatment of the structure of these singularities, first for any positive body radius and then for vanishingly small radius, 
 to describe the transition of the dynamics in the limit.

In a first step, see Theorem~\ref{THEO-intro} below, we reformulate the dynamics of the body in order to make more explicit different kind of interactions with the fluid. 
 Indeed the fluid velocity can be recovered from the solid position and velocity by an elliptic-type
problem, so that the fluid state may be seen as solving an auxiliary steady problem,
where time only appears as a parameter, instead of the unsteady incompressible Euler equation. 
We establish a reformulation of the Newton-Euler equations as a second-order differential equation on
the solid position which is determined by three degrees of freedom  (two for the translation and one for the rotation) with coefficients obtained by solving
the auxiliary  fluid problem.
Indeed we establish that the dynamics  of the body  may be recast as a geodesic equation  with an applied force similar to the Lorentz force of electromagnetism. 
 The metric associated with the geodesic part of the equation is given by the total inertia, that is the inertia of the solid to which one adds the so-called ``added inertia'': a symmetric nonnegative  matrix depending only on the body's shape and position, encoding the amount of incompressible 
fluid that the rigid body has also to accelerate around itself. 
 The magnetic part of the Lorentz force is a gyroscopic force, proportional to the circulation around the body, which can be seen as an extension of the Kutta-Joukowski lift force. In particular the contribution of this force to the energy variation vanishes. 
On the contrary, the electric part of the Lorentz force leads to an energy exchange between the fluid and the solid.

To be able to describe the transition of the dynamics in the zero-radius limit we would like to deduce uniform estimates from this geodesic-Lorentz reformulation in order to pass to the limit. 
 Still the equation contains also the electric-type force which prevents from obtaining such bounds from an energy estimate. 
  Indeed even if the whole system is hamiltonian, see \cite{GS-Arnold}, nothing excludes a priori some sharp energy exchange between the fluid and the immersed rigid particles. 
  To overcome this difficulty the second part of our analysis exploits the structure of the various terms of the geodesic-Lorentz reformulation in  the limit where the size of the solid goes to $0$. 
  These terms involve integrals  of  functions describing the part of the fluid velocity due to the body's  velocity and to the body's circulation. 
 These functions are given as solutions to some elliptic-type problems in a domain which is the complement to the vanishingly small body  in the cavity, which entails some small-scale variations of these functions. 
Multi-scale expansions allow us to precisely describe their asymptotic behaviour and to deduce asymptotic expansions of every term of the geodesic-Lorentz reformulation. 
In Case (i) a rough expansion is sufficient as the only leading term is gyroscopic and therefore allows to obtain some uniform estimates. 
In Case (ii) a few striking combinations allow us to transfer the bad electric-type term as a modulation of the particle velocity appearing in the other terms of the equation (i.e. the geodesic and magnetic terms).
This leads to a geodesic-gyroscopic asymptotic normal form 
 of the Newton-Euler equations where a modulation of the unknown is used. 
 This normal form  is tailored to obtain uniform estimates on the dynamics thanks to some energy-type quantities modulated by the limit dynamics. 
 These uniform estimates then allow us to pass to the limit in both cases.

%
%
%
%
%
%
%
%
%
%
%
%
%
%
\section{Main results}
\label{MR}
\subsection{Dynamics of a solid with fixed size and mass}
\label{Subsec:FixedSize}
To begin with, let us recall the dynamics of a solid with fixed size and mass in a perfect incompressible fluid. We denote by $\Omega$ 
\index{BGrecZO@$\Omega$: fixed domain occupied by the whole  system}
the bounded open smooth and simply connected\footnote{the simple connectedness is a simplifying assumption but is actually not essential in the analysis.}  
domain of $\mathbb R^2$ occupied by the fluid-solid system.
At the initial time, the domain of the solid is a non-empty closed smooth and simply connected domain $\mathcal S_0 \subset \Omega$ 
\index{AS1@$\mathcal S_0$: domain initially occupied by the solid}
and
$\mathcal F_0:=\Omega\setminus {\mathcal S}_0 $             
\index{AF0@$\mathcal F_0$:  domain initially occupied by the fluid}
is the domain of the fluid. 
There is no loss of generality in assuming that the center of mass of the solid coincides at the initial time with the origin. 

The rigid motion of the solid is represented at every moment by a rotation matrix
\begin{equation*} 
R(\vartheta(t)) := 
\begin{bmatrix}
\cos  \vartheta (t) & - \sin \vartheta (t) \\
\sin  \vartheta (t) & \cos  \vartheta (t)
\end{bmatrix},
\index{BGrecFT1@$\vartheta$: rotation angle of the solid}
\index{AR1@$R(\vartheta)$: $2\times2$  rotation matrix of angle $\vartheta$}
\end{equation*}
describing the rotation of the solid with respect to its original position
and a vector $h(t) $ in $\mathbb R^2$ describing the position of the center of mass. 
\index{AH1@$h$: position of the center of mass}
The domain of the solid at every time $t>0$ is therefore 
$\mathcal S(t) :=R(\vartheta(t))\mathcal S_0+h(t), $ 
while the domain of the fluid is 
$\mathcal F(t) := \Omega\setminus {\mathcal S}(t)$ (see Fig.~\ref{Fig1}).
%
\begin{figure}[htb]
\def\svgwidth{0.5\textwidth}
\centerline{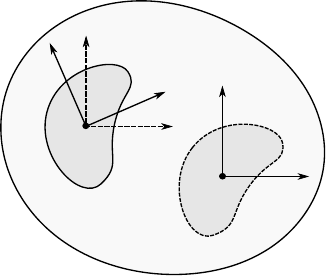}
\caption{\label{Fig1}The domains $\Omega$, $\mathcal S(t)$ and $\mathcal F(t):=\Omega\setminus\overline{\mathcal S(t)}$ of the problem.}
\end{figure}
%
The fluid-solid system is governed by the following set of coupled equations:
\begin{subequations} \label{SYS_full_system}
\begin{alignat}{3}
\nonumber \text{Fluid equations:}& \\
\frac{\partial u}{\partial t}+(u\cdot\nabla)u +\nabla \Pi&=0&\quad&\text{in }\mathcal F(t) , \label{EEE1} \\
\dive u&=0&&\text{in }\mathcal F(t). \label{E2} \\
\nonumber \text{Solid equations:}& \\
mh''&=\int_{\partial\mathcal S(t)}\Pi n\, {\rm d}s,    \label{EqTrans} \\
\mathcal J\vartheta''&=\int_{\partial\mathcal S(t)}(x-h(t))^\perp\cdot \Pi n\, {\rm d}s.  \label{EqRot} \\
\nonumber \text{Boundary conditions:}& \\
u\cdot n&=\big(\vartheta'  (\cdot-h)^\perp+ h' \big) \cdot n&&\text{on }\partial\mathcal S(t) ,\label{souslab}\\
u\cdot n&=0&&\text{on }\partial\Omega. \label{souslabis} 
\end{alignat}
\end{subequations}
Above $u$ and $\Pi$ denote the velocity and pressure fields in the fluid,
\index{AU1@$u$: fluid velocity}
\index{BGrecP1@$\Pi$: fluid pressure}
$m>0$ and $\mathcal{J}>0$
\index{AM0@$m$: solid's mass}
\index{AJ1@$\mathcal{J}$: solid's moment of inertia}
denote respectively the mass and the moment of inertia of the body  while the fluid  is supposed to be  homogeneous of density $1$, to simplify the notations.
When $x=(x_1,x_2)$
the notation $x^\perp $ stands for $x^\perp =( -x_2 , x_1 )$, 
$n$ \index{AN1@$n$: normal vector}
denotes the unit normal vector pointing outside of the fluid.
Let us also emphasize that ${\rm d}s$ will stand for the arc length without any distinction between $\partial \Omega$,
$\partial \mathcal S(t)$ and $\partial \mathcal S_{0}$. \index{AD1@${\rm d}s$: arc length} \par
In this paper, we consider irrotational solutions, that is, solutions satisfying
\begin{equation} \label{irr}
\curl u (t,\cdot) = 0 \text{ in } \mathcal F(t).
\end{equation}
Actually introducing the initial data:
\begin{subequations}  \label{ID_full_system}
\begin{gather}
u_{|t=0}=u_0 \text{ in }\mathcal F_0,  \label{fluid_initial_cond} \\
\vartheta(0)=0, \ h(0)=0, \ h'(0)=\ell_0, \  \vartheta' (0)=\omega_0,
\end{gather}
\end{subequations}
the condition~\eqref{irr} merely depends on the initial vorticity $\curl u_0$: if the flow is irrotational at the initial time, that is $\curl u_0 = 0$ in $\mathcal F_0$,
it will remain irrotational for every time as in \eqref{irr}, according to Helmholtz's third theorem.
On the other hand the circulation around the body is constant in time:
\begin{equation} \label{circ}
\int_{\partial\mathcal S(t)} u(t)\cdot\tau \, {\rm d}s = \gamma ,
\end{equation}
with
\begin{equation} \nonumber
\gamma := \int_{\partial\mathcal S_0} u_0\cdot\tau \, {\rm d}s,
\index{BGrecAG1@$\gamma$: circulation} 
\end{equation}
according to Kelvin's theorem.
Here $\tau$
\index{BGrecT@$\tau$: tangential vector}
denotes the unit counterclockwise tangential vector so that $n = \tau^\perp$.
Let us mention here that we will also use the notation $\tau$ on  $\partial \Omega$ such that $n := \tau^\perp$ so that it is clockwise in this case (see Fig.~\ref{Fig1}).
\par
\ \par
In the irrotational case, System \eqref{SYS_full_system}-\eqref{irr} can be recast as an ODE whose unknowns are the degrees of freedom of the solid, namely $\vartheta$ and $h$.
In particular, given \eqref{circ}, the motion of the fluid is completely determined by the solid position and velocity.
To state this rigorously, let us introduce the variables
$$
h := (h_{1}, h_{2}) \quad\text{and}\quad
q:=(\vartheta,h_1,h_2)\in\mathbb R^3 .  \index{AQ1@$q$: body position}$$ 
Since the domains $\mathcal S(t)$ and $\mathcal F(t)$ depend on $q$ only, we will rather denote them $\mathcal S(q)$ and $\mathcal F(q)$ in the rest of the paper.
\index{AS2@$\mathcal S(q)$: solid domain associated with the solid position $q$}
\index{AF1@$\mathcal F(q)$: fluid domain associated with the solid position $q$}
Since throughout this paper we will not consider any collision, we introduce:
\begin{equation} \label{DefQ}
\mathcal Q :=\{q\in\mathbb R^3\ :\ d(\mathcal S(q),\partial\Omega)>0\}, 
\index{AQ3@$\mathcal Q$: set of body positions without collision}
\end{equation}
where $d(A,B)$ denotes the minimal distance between the two sets $A$ and $B$ in the plane 
\begin{equation} \label{MinimalDistance}
d(A,B) := \inf \, \{  | x-y  |_{\R^{2}} , \ x \in A, \ y \in B \}.
\end{equation}
Above and throughout the paper we use the notation $ | \cdot |_{\R^{d}}$ for the Euclidean norm in $\R^{d}$.
Since $\mathcal S_0$ is a closed subset in the open set $ \Omega$, the initial position $q(0)=0$ of the solid belongs to $\mathcal Q$. \par
\ \par
Now we need to introduce various objects depending on the geometry and on the constants $m$, ${\mathcal J}$, $\gamma$, in order to make the aforementioned ODE explicit. \par
\ \par
\noindent
{\it Kirchhoff potentials $\varphi_{j}$.}
Consider the functions $\xi_{j}$, for $j=1,2,3$, defined for $(q,x) $ in $\cup_{q \in \mathcal Q} \;  \Big( \{ q  \} \times  \mathcal F(q)  \Big)$, by the formula
\begin{equation} \label{def-xi-j}
\xi_{1}  (q,x) := (x-h)^\perp 
\text{ and }
\xi_{j}  (q,x) := e_{j-1}, \text{ for } j=2,3,
\index{BGrecNX@$\xi_{j}$: elementary rigid velocities}
\end{equation}
where $e_1$ and $e_2$ are the unit vectors of the canonical basis.
\index{AE0@$e_1, e_2$: unit vectors of the canonical basis}
For any $j=1,2,3$,  for  any $q$ in $\mathcal Q$, 
we denote by $K_j(q,\cdot)$ the normal trace of $\xi_{j} $ on $\partial\Omega  \cup \partial \mathcal S(q)$, that is:
\begin{equation} \label{Def-Kj}
K_j (q,\cdot) := n \cdot  \xi_{j} (q,\cdot) \text{ on } \partial\Omega  \cup \partial \mathcal S(q) .
\index{AK1@$K_{j}(q,\cdot)$: normal trace of elementary rigid velocities}
\end{equation}
Now $q$ being fixed in ${\mathcal Q}$, we introduce the Kirchhoff potentials $\varphi_j(q,\cdot)$, for $j=1,2,3$, as the unique (up to an additive constant) solutions in $\mathcal F(q)$ of the following Neumann problem:
\begin{subequations} \label{Kir}
\begin{alignat}{3}
\Delta \varphi_j (q, \cdot) &= 0 & \quad & \text{ in } \mathcal F(q),\\ \label{Kir-b}
\frac{\partial \varphi_j}{\partial n} (q,\cdot)&=K_{j} (q,\cdot) & \quad & \text{ on }\partial\mathcal S(q), \\\label{Kir-c}
\frac{\partial\varphi_j}{\partial n}(q,\cdot) & =0 & &\text{ on } \partial \Omega.
\end{alignat}
\index{BGrecTF1@$\varphi_j(q,\cdot)$ ($j=1,2,3$) :  Kirchhoff's potentials}
\end{subequations}
We concatenate the $K_j$ and $\varphi_j$ into the vectors:  
\begin{subequations} \label{Kir-gras}
\begin{align} 
\boldsymbol{K}(q,\cdot) &:=(K_1(q,\cdot),K_2(q,\cdot),K_3(q,\cdot))^{t}  \text{ and } \\
\boldsymbol\varphi(q,\cdot) &:=(\varphi_1(q,\cdot),\varphi_2(q,\cdot),\varphi_3(q,\cdot))^{t} ,
\index{BGrecTF2@$\boldsymbol\varphi(q,\cdot)$: vector containing the three Kirchhoff potentials}
\end{align}
\end{subequations}
where the exponent $t$ denotes the transpose of the vector. 

\ \par
\noindent
{\it Stream function $\psi$ for the circulation term.} 
For every $q $ in $\mathcal Q$, there exists a unique $C(q)\in\mathbb R$ such that the unique solution 
$\psi(q,\cdot)$ of the Dirichlet problem:
\begin{subequations} 
\label{def_stream}
\begin{alignat}{3}
\Delta \psi(q,\cdot) & =0 & \quad & \text{ in } \mathcal F(q) \\
\psi(q,\cdot) & =C(q) & \quad & \text{ on } \partial \mathcal S(q)\\
\psi(q,\cdot) & =0 & & \text{ on } \partial\Omega,
\end{alignat}
satisfies
\begin{equation} \label{circ-norma}
\int_{\partial\mathcal S(q)} \frac{\partial\psi}{\partial n} (q,\cdot) \, {\rm d}s=-1.
\index{BGrecYP1@$\psi$: circulatory part of the stream function}
\end{equation}
\end{subequations}
This can be seen easily by defining the corresponding harmonic function $\tilde{\psi}(q,\cdot)$ with $\tilde{\psi}(q,\cdot)=-1$ on $\partial \mathcal S(q)$ and  $\tilde{\psi}(q,\cdot)=0$ on $\partial \Omega$ and renormalizing it. 
Indeed the strong maximum principle ensures that $\frac{\partial\tilde{\psi}}{\partial n} (q,\cdot)<0$ on $\partial \mathcal S(q)$, so that $\int_{\partial {\mathcal S}(q)} \,\frac{\partial\tilde{\psi}}{\partial n} (q,\cdot) \,{\rm d}s <0$. \par
The function $C(q)$ is actually minus the inverse of the condenser capacity  of $\mathcal S(q)$ in $\Omega$, that is, of $\int_{ \mathcal F(q)} |  \nabla \tilde{\psi}   (q,\cdot) |^2 \, {\rm d}x$.
Observe that
\begin{gather}
\label{signeC}
\forall q \in \mathcal Q, \quad C(q) = - \int_{ \mathcal F(q)} |  \nabla \psi   (q,\cdot) |^2 \, {\rm d}x < 0 , \\
\label{reg-C}
C \in C^\infty   ( \mathcal Q ; (-\infty ,0)) \text{ and depends on }  \mathcal S_0  \text{ and } \Omega.
\end{gather}
Concerning \eqref{reg-C} and other similar properties below about the regularity with respect to the shape, we refer to \cite{CM,HP,M,SZ}. \par
\ \par
\noindent
{\it Decomposition of the fluid velocity.}
The fluid velocity $u$ satisfies a div-curl type system in the doubly-connected domain $\mathcal F(q)$, constituted of 
\eqref{E2},
\eqref{souslab},
\eqref{souslabis},
\eqref{irr}
and of 
\eqref{circ}. 
When the solid position $q $ in $\mathcal Q$, 
and the  right hand sides of these equations, including the solid velocity $q'= (q'_{1} , q'_{2} , q'_{3}) \in \R^3$ are given,
the fluid velocity $u $  is determined in a unique way and we will therefore denote it by $u (q,\cdot)$.
Moreover, thanks to \eqref{Kir}, \eqref{Kir-gras} and \eqref{def_stream}, the solution  $u (q,\cdot)$  takes the form:
\begin{equation} \label{EQ_irrotational_flow}
u (q,\cdot) = u_{q'} (q,\cdot) + u_\gamma (q,\cdot) , 
\end{equation}
with 
\begin{subequations} \label{DecompU}
\begin{align} 
\label{DecompUa}
u_{q'}  (q,\cdot) &:=\nabla (\boldsymbol\varphi(q,\cdot)\cdot q') = \nabla \left( \sum_{j=1}^3 \varphi_j (q,\cdot)q'_j \right) \\
\label{DecompUb} \text{ and }\quad u_\gamma  (q,\cdot) &:=\gamma \nabla^\perp\psi (q,\cdot) .
\end{align}
\end{subequations}
So besides the dependence with respect to  $\mathcal S_0$, to $\Omega$ and to the space variable, $u_{q'}$ 
depends on $q$ and linearly on $q'$ while $u_\gamma$ depends on $q$ and linearly on $\gamma$. 
Notice that the initial data \eqref{fluid_initial_cond} for system of equations \eqref{SYS_full_system} is no longer required since it can be deduced from the given circulation $\gamma$ and the initial data of the solid through the functions $\boldsymbol\varphi(0,\cdot)$ and $\psi(0,\cdot)$. \par
\ \par
\noindent
{\it Inertia matrices.}
We can now define the mass matrices
\begin{subequations} \label{def_M_Gamma}
\begin{align}
M_g  &:={\rm diag}(\mathcal J,m,m),\\
\index{AM2@$M_{g}$: genuine solid inertia} 
\label{def-MAq}
M_a (q)  &:= \int_{\partial \mathcal S(q)}\boldsymbol\varphi(q,\cdot)\otimes\frac{\partial\boldsymbol\varphi}{\partial n}(q,\cdot) \,{\rm d}s 
= \Big( \int_{ {\mathcal F}(q) } \nabla \varphi_{i}  \cdot  \nabla\varphi_{j} {\rm d}x 
  \Big)_{1  \leqslant i,j  \leqslant  3}, \\
\index{AM3@$M_a$: added inertia} 
M(q) &:= M_g  + M_a (q) .
\index{AM4@$M$: total inertia of the solid}
\end{align}
\end{subequations}
The matrix $M(q)$ corresponds to the sum of the genuine inertia $M_{g}$ of the body and the so-called added inertia $M_{a} (q)$, which, loosely speaking,  measures how much the surrounding fluid resists the acceleration of the  body motion (since the fluid undergoes an acceleration as well). Both $M_{g}$ and $M_{a} (q)$ are symmetric and positive-semidefinite, and $M_{g}$ is positive definite.  \par
\ \par
\noindent
{\it Christoffel symbols.} A bilinear symmetric mapping $ \Gamma (q)$ associated with $M(q)$ can be defined as follows, %
for every $p=(p_1,p_2,p_3)\in\mathbb R^3$:
\begin{subequations} 
\label{def_vraiment-Gamma}
\begin{equation} \label{Christo2}
\langle\Gamma (q),p,p\rangle :=\left(\sum_{1\leq i,j\leq 3} \Gamma^k_{i,j}(q) p_i p_j  \right)_{1\leq k\leq 3}\in\mathbb R^3 ,
\end{equation}
where, for every $i,j,k \in\{1,2,3\}$, we denote by 
\begin{equation} \label{Christo1}
\Gamma^k_{i,j} (q)  :=  \frac12
\Big(  (M_a)_{k,j}^{i} + (M_a)_{k,i}^{j} - (M_a)_{i,j}^{k}  \Big) (q) 
\index{BGrecAG2@$\Gamma$: Christoffel symbols}
\end{equation}
the Christoffel symbols of the first kind associated with the mass matrix.
In this identity, the notation $(M_a)_{i,j}^{k} $ stands for the partial derivative of the entry of indices $(i,j)$ of the matrix $M_a$ with respect to $ q_{k}$, that is
\begin{equation} \label{cri}
(M_a)_{i,j}^{k} := \frac{\partial (M_a)_{i,j}}{\partial q_{k}} .
\end{equation}
\end{subequations}
We underline that since the genuine inertia $M_{g}$ of the body is independent of the position $q$ of the solid, only the added inertia is involved in the Christoffel symbols. \par
\ \par
\noindent
{\it Force term.}
Eventually, we also define the column vectors:
\begin{subequations} 
\label{def_EetB}
\begin{align}
\label{B-def}
B(q) & :=  \int_{\partial\mathcal S(q)} \left( \frac{\partial\psi}{\partial n}  \left( \frac{\partial\boldsymbol\varphi}{\partial n}  \times 
\frac{\partial\boldsymbol\varphi}{\partial \tau} \right)  \right)(q,\cdot) \, {\rm d}s,\\
\label{E-def}
E(q) &:= - \frac{1}{2} \int_{\partial\mathcal S(q)}  
\left( \left| \frac{\partial\psi}{\partial n} \right|^2 \frac{\partial\boldsymbol\varphi}{\partial n} \right) (q,\cdot)\,  {\rm d}s ,
\index{AB1@$B(q)$: magnetic-type field acting on the solid}
\index{AE1@$E(q)$: electric-type field acting on the solid }
\index{AF2@${F}(q,p)$: total force acting on the solid}
\end{align}
and for $p$ in $\R^3$ the force term
\begin{equation} \label{def-upsilon}
F (q,p) :=  \gamma^2 E(q) +  \gamma \, p \times B(q) .
\end{equation}
\end{subequations}
We recall that $\gamma$ denotes the circulation around the body. \par
\begin{rem}
The notations $E$ and $B$ are chosen on purpose to highlight the analogy with the Lorentz force acting on a charged particle moving under the influence of an electromagnetic field $(E,B)$. This force vanishes if $\gamma = 0$.
\end{rem}
It can be checked that
\begin{subequations} 
\begin{align}
\label{reg-M}
 &M \in  C^{\infty}(\mathcal Q ; S^{++}_3 (\R)) \text{ and depends on }  \mathcal S_0, m, \mathcal{J}  \text{ and } \Omega,  
\\ &{F}  \in C^{\infty}(\mathcal Q \times \R^3 ; \R^3) \text{ and depends on }  \mathcal S_0, \gamma \text{ and } \Omega,\nonumber
\\ &\hspace{4.5cm}\text{  and vanishes when  } \gamma =0,
\\  &\Gamma  \in C^{\infty}(\mathcal Q; \mathcal{BL} (\R^3 \times \R^3 ; \R^3 ) ) \text{ and depends on }  \mathcal S_0  \text{ and } \Omega.
\end{align}
\end{subequations}
Above $S^{++}_3 (\R)$ denotes the set of real symmetric positive-definite $3\times 3$ matrices, $\mathcal{BL} (\R^3 \times \R^3 ; \R^3 )$ denotes the space of bilinear mappings from  $ \R^3 \times \R^3 $ to $ \R^3$.

We stress that $M$ does not depend on the circulation $\gamma$ whereas $F$ does not depend on $m$ and $\mathcal{J}$  and $ \Gamma $ does not depend on  $m$, $\gamma$ and $\mathcal{J}$.
In the following, when specifying these dependences is relevant, we will denote 
\begin{equation} \label{crochet}
M [\mathcal S_0 , m ,\mathcal{J},\Omega],  \ \Gamma [\mathcal S_0 ,\Omega]
\text{ and } 
F [\mathcal S_0 ,\gamma,\Omega]  \text{ instead of } M, \ \Gamma  \text{ and }  F.
\end{equation}
Now our first result is a rephrasing of System \eqref{SYS_full_system}-\eqref{irr} as an ordinary differential equation. \par
\begin{thm} \label{THEO-intro}
For smooth solutions, System~\eqref{SYS_full_system}-\eqref{irr} can be recast, up to the first collision, as the second order ODE 
\begin{equation} \label{ODE_intro}
 M(q)  q''  + \langle \Gamma (q),q',q'\rangle = {F} (q,q') .
\end{equation}
\end{thm}
Let us emphasize that 
 \eqref{ODE_intro} only determines the body motion. 
The fluid velocity $u (q,\cdot)$ 
is then deduced by 
\eqref{EQ_irrotational_flow} and \eqref{DecompU}.
The proof of Theorem~\ref{THEO-intro} is postponed to Section \ref{tard}. \par
\begin{rem}
\label{rem-geod}
Theorem~\ref{THEO-intro} extends  \cite [Theorem 1.1]{Munnier} where 
 the potential case, i.e. the case where $\gamma=0$,   is obtained. 
In that case  the ODE \eqref{ODE_intro} means that the particle is moving along the geodesics associated with the Riemann metric induced on $\mathcal Q$ by the matrix $M(q)$.
  
On the other hand in  \cite{GS-Arnold} another reinterpretation  in term of  geodesics is given: 
classical solutions to the PDEs  driving the fluid-rigid body system 
are the geodesics of a Riemannian manifold of infinite dimension, in the sense that
they are the critical points of an action, which is the integral over time of the total kinetic energy of
the system. 
This result was stated in the 3D case in \cite{GS-Arnold} but holds in the present setting as well, even in the case where $\gamma \neq 0$ and where the fluid is rotational. 
Theorem  \ref{THEO-intro-without} somehow establishes that for irrotational flows this geodesic structure can be projected on the degrees of freedom corresponding to the immersed rigid particle only, at the prize of an extra Lorentz-type force for $\gamma \neq 0$.
\end{rem}
\begin{rem}
The analysis leading to Theorem~\ref{THEO-intro} is extended in \cite{GKS} in the case where a control acts on a part of the
external boundary. Then the remote influence of the external boundary control translates into  additional
force terms.
\end{rem}
According to classical ODE theory, given $\gamma$, there exists a maximal time $T>0$ and a unique maximal solution $q$ in $C^\infty([0,T) ; \mathcal Q)$  to \eqref{ODE_intro} 
with Cauchy data $q(0)=(0,0)$, $q'(0)=(\omega_0,\ell_0)$.

Moreover,  $T$ is the time of the first collision of the solid with the outer boundary of the fluid domain. 
If there is no collision, then $T=+\infty$. 
This follows from Corollary~\ref{bd-loin} below, which itself relies on an energy argument. 
Indeed an important feature of the system \eqref{ODE_intro} is that it is conservative. 
More precisely,  let us define
 for  $(q,p) $ in $\mathcal Q  \times  \mathbb R^3$, 
\begin{equation}
\label{pader} 
\mathcal{E} (q,p) := \frac{1}{2} M(q) p \cdot p + U(q) ,
\index{AE1@$\mathcal{E} (q,p)$: total energy}
\index{AU4@$U (q)$: potential energy}
\end{equation}
where the potential energy $U(q)$ is given by
\begin{equation} \nonumber
U(q) := - \frac{1}{2} \gamma^{2}  C (q),
\end{equation}
with $C(q)$ given by \eqref{def_stream}. We will prove the following in Subsection \ref{section122}.
\begin{lem}\label{LEM-pot-energy}
The derivative $DC(q)$ of $C(q)$ with respect to $q$ satisfies:
\begin{equation} \label{pot-energy}
\forall q \in \mathcal Q ,  \quad \frac{1}{2} DC (q) =E(q)   ,
\end{equation}
where $E(q)$ is defined by \eqref{E-def}.
\end{lem}
As a corollary of Lemma~\ref{LEM-pot-energy} we have the following result regarding the energy conservation.
\begin{prop} \label{energy}
For any $q $ in $C^\infty([0,T] ;\mathcal Q )$ satisfying  \eqref{ODE_intro}, 
\begin{equation} \label{conserv}
\frac{d}{dt} \mathcal{E} (q,q') = 0 ,
\end{equation}
\end{prop}
We will prove Proposition  \ref{energy} in  Subsection \ref{sectionProp1}. \par
Let us emphasize that the energy function $\mathcal{E}$ belongs to $C^\infty(\mathcal Q  \times  \mathbb R^3 ;\R)$ and is the sum of two nonnegative terms.
 In addition to its dependence on $q$ and $p$, the energy $ \mathcal{E}$ depends on $\mathcal S_0 , m ,\mathcal{J}, \gamma$ and $\Omega$. \par
If we assume that the body remains at a distance greater than $\delta > 0$ from the boundary we may infer a bound for the body velocity.
Indeed  we have the following immediate corollary of Proposition \ref{energy}.
\begin{cor} \label{bd-loin}
Let $\mathcal S_0$ a subset of $\Omega$, $p_0$ in  $\mathbb R^3$ and $(\gamma, m, \mathcal J)$ in $\mathbb R \times (0,+\infty) \times (0,+\infty)$.
Let $\delta > 0$ and $q$ in $C^\infty([0,T] ;\mathcal Q \times  \mathbb R^3)$ satisfying  \eqref{ODE_intro} with the Cauchy data
$(q , q')(0)= (0, p_0)$ and such that $d(\mathcal S(q(t)),\partial\Omega)> \delta$ for $t$ in $[0,T]$.
Then there exists $K>0$ depending only on $\mathcal S_0$, $\Omega$, $p_0$, $\gamma$, $m$, $\mathcal J$ and $\delta$ such that 
$ | q' |_{\R^{3}}  \leq K$ on $[0,T]$.
\end{cor}
Let us refer here to \cite{HM} and \cite{Munnier:2015aa} for a study of the collision of a solid moving in a potential flow (that is in the case where $\gamma=0$) with the fixed boundary of the fluid domain. \par
\ \par
Let us now turn our attention back to the Christoffel symbols defined in \eqref{def_vraiment-Gamma}. They actually can be split into two parts: one taking into account the effect of the solid rotation and the other part encoding the effect of the exterior boundary.
First, we introduce the impulses $\rho_g$, $\rho_a$, $\rho$ in $\R$ and $P_g$, $P_{a}$  and
$P$ in $\R^2$ by the following relations
\begin{equation} \label{onoublie}
\begin{pmatrix}
\rho_g  \\
P_g 
\end{pmatrix} :=M_{g} p
, \quad 
\begin{pmatrix}
\rho_a \\
P_a
\end{pmatrix} := M_a(q)p
, \quad 
\begin{pmatrix}
\rho \\
P
\end{pmatrix} :=  
\begin{pmatrix}
\rho_g  +\rho_a \\
P_g  + P_a
\end{pmatrix} .
\index{BGrecR1@$\rho_a$: added angular impulse}
\index{AP2@$P_a$: added translation impulse}
\end{equation}
Then for every $q $ in $\mathcal Q$ and for every $p = (\omega , \ell )$ in $\mathbb R^3$, we let:
\begin{equation} \label{zozo}
\langle \Gamma^{{\rm rot}} (q), p, p\rangle := 
- \begin{pmatrix}0\\
P_a \end{pmatrix}
\times p
-
\omega M_a (q)\begin{pmatrix}
0\\
\ell^\perp
\end{pmatrix}       \in\mathbb R^3 .
\index{BGrecAG3@$\Gamma^{{\rm rot}} $: Christoffel tensor related to the solid rotation}
\end{equation}
We can notice that one also has 
$$
\langle \Gamma^{{\rm rot}} (q), p, p \rangle = 
- \begin{pmatrix} 0 \\ P \end{pmatrix} \times p
- \omega M (q) \begin{pmatrix} 0 \\ \ell^\perp \end{pmatrix}  \in \R^3,
$$
since the extra terms cancel out. 
\par
Next, for every $j,k,l\in\{1,2,3\}$, we set
\begin{multline} \label{Def_Gamma}
(\Gamma^{\partial \Omega})^j_{k,l}(q) :=\\
\frac{1}{2} \int_{\partial\Omega} \left(\frac{\partial\varphi_j}{\partial\tau}  \frac{\partial\varphi_k}{\partial\tau} K_l 
+ \frac{\partial\varphi_j}{\partial\tau} \frac{\partial\varphi_l}{\partial\tau} K_k
- \frac{\partial\varphi_k}{\partial\tau} \frac{\partial\varphi_l}{\partial\tau} K_j \right) (q,\cdot)\, {\rm d}s ,
\end{multline}
and we associate correspondingly $\Gamma^{\partial \Omega} (q)$ a symmetric bilinear mapping in ${\mathcal B}{\mathcal L} (\R^{3} \times \R^{3};\R^{3})$ so that for $p=(p_1,p_2,p_3)\in\mathbb R^3$:
\begin{equation} \nonumber
\langle \Gamma^{\partial \Omega} (q), p, p \rangle
:= \left(\sum_{1\leq k,l\leq 3} (\Gamma^{\partial \Omega})^j_{k,l}(q) \, p_k p_l \right)_{1\leq j\leq 3} \in \mathbb R^3.
\index{BGrecAG4@$\Gamma^{\partial \Omega} $:  Christoffel tensor omitting the solid rotation}
\end{equation}
The Christoffel symbols satisfy the following relation.
\begin{prop} \label{splitting-christo}
For every $q $ in $\mathcal Q$
and for every $p$ in $\mathbb R^3$, 
\begin{equation} \label{NiouGamma}
\langle \Gamma (q),p,p\rangle =   \langle \Gamma^{{\rm rot}} (q),p,p\rangle  + \langle \Gamma^{\partial \Omega} (q),p,p\rangle .
\end{equation}
\end{prop}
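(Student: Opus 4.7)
The plan is to compute the shape derivative $(M_a)^k_{i,j} := \partial_{q_k}(M_a)_{i,j}(q)$ by the Reynolds transport theorem, then form the Christoffel combination $\Gamma^k_{i,j} = \frac{1}{2}\bigl[(M_a)^i_{k,j} + (M_a)^j_{k,i} - (M_a)^k_{i,j}\bigr]$ and split the resulting boundary integrals into $\partial\mathcal S(q)$- and $\partial\Omega$-parts.

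First, I would start from $(M_a)_{i,j}(q) = \int_{\mathcal F(q)}\nabla\varphi_i\cdot\nabla\varphi_j\,dx$ and note that as $q_k$ varies the boundary $\partial\mathcal S(q)$ moves with the rigid velocity $\xi_k$ while $\partial\Omega$ is fixed. The Reynolds transport theorem together with one integration by parts in the bulk (using $\Delta\varphi_i = 0$ and the Neumann conditions $\partial_n\varphi_i = K_i$ on $\partial\mathcal S(q)$, $\partial_n\varphi_i = 0$ on $\partial\Omega$) yields
$$(M_a)^k_{i,j} = \int_{\partial\mathcal S(q)}\bigl[(\partial_{q_k}\varphi_i)K_j + (\partial_{q_k}\varphi_j)K_i + (\nabla\varphi_i\cdot\nabla\varphi_j)K_k\bigr]\,ds,$$
where $\partial_{q_k}\varphi_i$ denotes the Eulerian shape derivative.

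Next I introduce the material derivative $\dot\varphi_i^k := \partial_{q_k}\varphi_i + \xi_k\cdot\nabla\varphi_i$, which is harmonic in $\mathcal F(q)$ since $\xi_k$ generates a rigid motion (so $\nabla\xi_k$ is antisymmetric). On $\partial\mathcal S(q)$, using the orthogonal decompositions $\nabla\varphi_i = K_i n + (\partial_\tau\varphi_i)\tau$ and $\xi_k = K_k n + (\xi_k\cdot\tau)\tau$, the integrand above rewrites in terms of $\dot\varphi_i^k$, the normal traces $K_\ell$, and the tangential derivatives $\partial_\tau\varphi_i$. When forming the Christoffel combination, the fully $(i,j,k)$-symmetric $K_iK_jK_k$ terms cancel, and the remaining $\partial\mathcal S(q)$-integrals involving only $K_\ell$'s and $\partial_\tau\varphi_i$'s combine, after one integration by parts along the closed curve $\partial\mathcal S(q)$, into exactly $\langle\Gamma_{\mathcal S}(q),p,p\rangle$ as given by \eqref{zozo}. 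A consistency check: in the whole-plane case $\Omega=\R^2$ treated in Subsection~\ref{Subsec-withoutOmega}, the Kirchhoff problem is rigidly transported with the solid, so the $\dot\varphi_i^k$-terms (which encode purely the correction due to $\partial\Omega$) give no contribution in that limit; the full Christoffel symbols are then $\Gamma_{\mathcal S}$, in agreement with \eqref{Christo-exter}.

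It remains to handle the $\int_{\partial\mathcal S(q)}\dot\varphi_i^k K_j\,ds$ contributions. I apply Green's second identity using the harmonicity of both $\dot\varphi_i^k$ and $\varphi_j$ with $\partial_n\varphi_j|_{\partial\Omega}=0$ and $\partial_n\varphi_j|_{\partial\mathcal S(q)}=K_j$: this transfers these integrals to $\int_{\partial\Omega}\varphi_j\,\partial_n\dot\varphi_i^k\,ds$ plus a $\partial\mathcal S(q)$-piece that combines with the $\Gamma_{\mathcal S}$ contributions. On $\partial\Omega$, differentiation of the $q$-independent identity $\partial_n\varphi_i = 0$ gives $\partial_n(\partial_{q_k}\varphi_i) = 0$, so $\partial_n\dot\varphi_i^k = \partial_n(\xi_k\cdot\nabla\varphi_i)$; combined with $\nabla\varphi_i = (\partial_\tau\varphi_i)\tau$ on $\partial\Omega$, this reduces to a tangential expression in $\partial_\tau\varphi_i$ and $K_k$. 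After symmetrization into $\Gamma^k_{i,j}$ and a final integration by parts along $\partial\Omega$, the boundary integrals on $\partial\Omega$ assemble precisely into the formula \eqref{Def_Gamma} for $(\Gamma_{\partial\Omega})^j_{k,l}$, which yields the splitting \eqref{NiouGamma}.

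The main obstacle is the last paragraph's identification: one must track carefully the tangential/normal decomposition of $\partial_n(\xi_k\cdot\nabla\varphi_i)$ on $\partial\Omega$ and perform the integration by parts along $\partial\Omega$ so that the symmetrized boundary integrals match the exact combinatorial form of \eqref{Def_Gamma} (two terms with $+$ sign and one with $-$). The $\partial\mathcal S$-side identification with $\Gamma_{\mathcal S}$ is essentially forced by consistency with the unbounded case \eqref{Christo-exter}, but still requires explicit bookkeeping of all tangential terms and the use of the specific rigid structure of the velocities $\xi_k$.
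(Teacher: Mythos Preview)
Your approach is viable in principle but takes a genuinely different route from the paper's. You work in the laboratory frame, where $\partial\Omega$ is fixed and $\partial\mathcal S(q)$ moves, then try to disentangle the $\Gamma_{\mathcal S}$ and $\Gamma_{\partial\Omega}$ contributions by hand via Green's identity and tangential integrations by parts. The paper instead passes to the \emph{body frame}: it sets $\widetilde{\mathcal F}(q)=R(\vartheta)^t(\mathcal F(q)-h)$, defines $\widetilde M_a(q)=\mathcal R(\vartheta)^tM_a(q)\mathcal R(\vartheta)$, and observes that in these variables the solid boundary $\partial\mathcal S_0$ is fixed while only $\partial\widetilde\Omega(q)$ moves. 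The decomposition then falls out structurally: differentiating $M_a=\mathcal R(\vartheta)\widetilde M_a\mathcal R(\vartheta)^t$ produces (i) terms where the derivative hits the rotation matrices, which are exactly the $\Upsilon$-terms giving $\Gamma_{\mathcal S}$, and (ii) the shape derivative of $\widetilde M_a$, which by Reynolds involves only the moving outer boundary and yields $\Gamma_{\partial\Omega}$ after a single shape-derivative lemma (Lemma~\ref{LEM_forme}).

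What the paper's route buys is that the split is \emph{automatic}: there is no need to transfer terms across boundaries by Green's identity, and no $\partial\mathcal S(q)$-piece left over to ``combine with the $\Gamma_{\mathcal S}$ contributions''. Your approach, by contrast, keeps both boundaries in play simultaneously; the steps you flag as obstacles (matching the $\partial\mathcal S(q)$-integrals to \eqref{zozo}, and reducing $\partial_n\dot\varphi_i^k$ on $\partial\Omega$ to the precise combinatorics of \eqref{Def_Gamma}) are exactly where the bookkeeping becomes heavy. These can be carried out, but you have not done so here: the assertions that the various pieces ``combine into exactly $\langle\Gamma_{\mathcal S}(q),p,p\rangle$'' and ``assemble precisely into \eqref{Def_Gamma}'' are the whole content of the proposition, and your sketch does not verify them. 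If you want to complete your approach, the cleanest way is probably to recognize that your material derivative $\dot\varphi_i^k$ in the lab frame corresponds, after conjugation by $\mathcal R(\vartheta)$, to the paper's $\widetilde\varphi_i'$ in the body frame---at which point you are essentially reproducing the paper's computation in disguise.
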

The proof of Proposition \ref{splitting-christo} is given in Subsection~\ref{Sect-splitting-christo}.
We emphasize that in \eqref{NiouGamma} and the expressions above,  unlike \eqref{cri}, there is no derivative with respect to $q$, that is, no more shape derivative.

We will see that in the decomposition \eqref{NiouGamma}, the term $\Gamma^{\partial \Omega}$ obeys a scaling law with respect to $\varepsilon$ different from the one of $\Gamma^{{\rm rot}}$ (compare \eqref{exp-GammaS} and \eqref{expGammaOmega} below), which makes it of lower order. \par 
\ \par
\noindent
The case where $\mathcal S_0$ is a disk is peculiar, and we focus on it for the rest of Subsection~\ref{Subsec:FixedSize}. In particular several degeneracies appear in this case:
\begin{itemize}
\item the added mass matrix $M_{a}(q)$ degenerates (it becomes of rank $2$),
\item the potentials $\varphi_{2}$, $\varphi_{3}$ and $\psi$ depend on $q$ only through the position $h_{c}$ of the center of the disk ${\mathcal S}(q)$, and in particular so do $E_{2}$, $E_{3}$, $B_{1}$ and $(M_{a,i,j})_{i,j=2,3}$,
\item the dynamics of the solid also degenerates in the sense that it satisfies ${\mathcal J} \vartheta'' = m h'' \cdot (h_{c}-h)^{\perp}$.
\end{itemize}
Above and in the sequel we denote with an index (in normal font type) the coordinates of $E$ or $B$ (we will sometimes use italic type indices for other purposes, in a way that should not be ambiguous). 
Note that in particular if the solid is homogeneous, $h=h_{c}$ and $\vartheta'$ is constant over time. \par
As a consequence, in this case where ${\mathcal S}_{0}$ is a disk, we establish a particular reduction of the dynamics. We will use a block decomposition of the matrix  $M_{a}=M_{a}(q)$:
\begin{equation} \label{def-Ma-Omega}
M_{a} =:
\begin{pmatrix}
m_\#   & \mu^t \\
\mu & M_{\flat}   
\end{pmatrix} ,
\end{equation}
where $M_\flat$ is a symmetric $2 \times 2$ matrix.
This matrix is useful in the case where $\mathcal S_0$ is a disk of center $\zeta$ (we will use this notation later in a broader context).
Of course the position $h_c$ of the center of $\mathcal S(t)$ is related to $q= (h,\vartheta)$ by 
\begin{equation} \label{Defhc}
h_c = h + R(\vartheta) \zeta.
\end{equation}
It is easy to see that $M_\flat (q)$ depends on $q$ only through $h_c$ so that we define 
\begin{equation} \label{DefTildeMb}
\tilde{M}_{\flat} (h_c) := M_{\flat} (q).	
\end{equation}
We associate with the matrix field $ \tilde{M}_{\flat} (h_c)$  a bilinear symmetric mapping $\Gamma_{\flat} (h_c)$  defined as follows: for $p_{\flat} \in \R^{2}$,
\begin{subequations} \label{def_vraiment-Gamma-disk}
\begin{equation} \label{Christo2-disk}
\langle\Gamma_{\flat} (h_c), p_{\flat}, p_{\flat} \rangle :=\left(\sum_{1\leq i,j\leq 2} (\Gamma_{\flat})^k_{i,j}(h_c) p_{\flat,i} p_{\flat,j}  \right)_{1\leq k\leq 2}\in\mathbb R^2 ,
\end{equation}
where, for every $i,j,k \in\{1,2\}$, we set
\begin{equation} \label{Christo1-disk}
(\Gamma_{\flat})^k_{i,j} (h_c)  :=  \frac12
\Big(  (\tilde{M}_{\flat})_{k,j}^{i} + (\tilde{M}_{\flat})_{k,i}^{j} - (\tilde{M}_{\flat})_{i,j}^{k}  \Big) (h_c).
\end{equation}
In this identity, the notation $(\tilde{M}_{\flat})_{i,j}^{k} $ stands for the partial derivative of the entry of indices $(i,j)$ of the matrix $\tilde{M}_{\flat}$ with respect to $ (h_c)_{k}$, that is
\begin{equation} \label{cri-disk}
(\tilde{M}_{\flat})_{i,j}^{k} := \frac{\partial (\tilde{M}_{\flat})_{i,j}}{\partial (h_c)_{k}} .
\end{equation}
\end{subequations}
The field $E(q)$ also depends on $q$ only through $h_c$ and we define $E_{\flat} (h_c)$ in $\R^2$ by 
\begin{equation} \label{Eflat}
E_{\flat} (h_c) := (E_2 ,E_3)^t (q) .
\end{equation}
In the same way, $B(q)$ depends on $q$ only through $h_c$ and we define $\tilde{B}_{1} (h_c)$ in $\R$ by the relation
\begin{equation} \label{RelB1}
\tilde{B}_1(h_{c}) = B_{1}(q).
\end{equation}
With these settings, the dynamics can be described as follows.
\begin{thm} \label{THEO-intro-disk}
In the case where $\mathcal S_0$ is a disk of center $\zeta$, for smooth solutions, System~\eqref{SYS_full_system}-\eqref{irr} can be recast up to the first collision, as the following differential system
\begin{subequations} \label{ODE_intro-disk}
\begin{alignat}{3}
\label{OID1}
m h'' + \tilde{M}_{\flat} (h_c) h_c''  + \langle \Gamma_{\flat} (h_c), h_c', h_c' \rangle 
  & = \gamma^2 E_{\flat}  (h_c) - \gamma \tilde{B}_1 (h_c) (h'_c)^\perp, &  \\
\label{OID2}
h_c - h  &= R(\vartheta) \zeta ,& \\
\label{OID3}
\mathcal J\vartheta'' &=  (h_c - h)^\perp \cdot  mh'' ,&
\end{alignat}
\end{subequations}
with Cauchy data
$(h,\vartheta)(0)=0$,  $ (\vartheta,h)'(0)=(\omega_0,\ell_0)$ in  $\mathbb R \times \mathbb R^2$.
\end{thm}
As for Theorem~\ref{THEO-intro}, the fluid velocity $u (q,\cdot)$ is then deduced by \eqref{EQ_irrotational_flow} and \eqref{DecompU}. \par
The proof of Theorem~\ref{THEO-intro-disk} is given in Subsection~\ref{tard-disk}. \par
\ \par
\ \par
\noindent
We have now at our disposal all the material to deal with the limit of the dynamics when the size of the solid goes to $0$.
As mentioned in the introduction, we distinguish two cases:
\begin{itemize}
\item Case (i):  the mass of the solid is fixed (and then the solid tends to a massless point particle), and 
\item Case (ii):  the mass tends to $0$ along with the size (and then the solid tends to a point-mass particle).
\end{itemize}

%
%
%
%
\subsection{Case (i): Dynamics of a solid shrinking  to a point-mass particle}
From now on, we suppose that $0 \in \Omega$ and we scale ${\mathcal S}_{0}$ around $0$.
Precisely, for every $\varepsilon $ in $(0,1]$,
\index{BGrecE1@$\varepsilon$: typical size of the solid}
we define 
\begin{equation} \label{piti}
\mathcal S_{0,\varepsilon} :=\varepsilon\mathcal S_0 ,
\index{AS2@$\mathcal S_{0,\varepsilon}$: initial position of the shrinking solid}
\end{equation}
and for every $q=(\vartheta,h)\in\mathbb R^3$, 
\begin{equation} \label{def-solide-scaled}
\mathcal S_\varepsilon(q) :=R(\vartheta)\mathcal S_{0,\varepsilon}+h \quad\text{ and }\quad\mathcal F_\varepsilon(q)=\Omega\setminus {\mathcal S}_\varepsilon(q).
\index{AS3@$\mathcal S_{\varepsilon}(q)$: position of the shrinking solid} 
\end{equation}
We recall that $h(0)=0$ so that \eqref{piti} represents a homothety centered at $h(0)$. 
Without loss of generality, we suppose that for any $\varepsilon \in (0,1]$, $\mathcal S_{0,\varepsilon} \subset \Omega$; it suffices to consider some ${\mathcal S}_{0,\varepsilon}$ as the initial solid position ${\mathcal S}_{0}$, if necessary. \par
\ \par
In Case (i) the solid occupying the domain $\mathcal S_\varepsilon(q)$ is assumed to have a mass and a moment of inertia of the form
\begin{equation} \label{mass-inertie}
m_\varepsilon=  m  \quad\text{ and }\quad \mathcal J_\varepsilon=\varepsilon^2 \mathcal J_{1},
\index{AM1@$m_\varepsilon$: mass of the shrinking solid}
\index{AJ5@$\mathcal{J}_{\varepsilon}$: moment of inertia of the shrinking solid}
\end{equation}
where $m>0$  and  $\mathcal J_{1}>0$ are fixed. \par
With these settings, we denote by $q_\varepsilon$ 
 \index{AQ2@$q_\varepsilon$: position  of the shrinking solid}
the solution to the ODE  \eqref{ODE_intro}  associated with
\begin{equation}
M_\varepsilon  :=  M [\mathcal S_{0,\varepsilon}, m_\varepsilon ,\mathcal{J}_\varepsilon , \Omega ], \ 
\Gamma_\varepsilon  := \Gamma[\mathcal S_{0,\varepsilon} ,\Omega ]
\text{ and } {F}_\varepsilon  := {F} [{\mathcal S}_{0,\varepsilon} , \gamma ,\Omega ],
\end{equation}
in place of $M$, $\Gamma$ and $F$, respectively, 
defined on the maximal time interval $[0,T^\varepsilon)$. 
Accordingly, we define $E_{\varepsilon}$ and $B_{\varepsilon}$ so that the equivalent of \eqref{def-upsilon} is true for all $p \in \R^3$.
As before we decompose $q_\varepsilon$ into
$$q_\varepsilon =(\vartheta_\varepsilon, h_\varepsilon) \in \mathbb R \times \mathbb R^2 .$$
We emphasize that the circulation $\gamma$ and the Cauchy data  $(q_0, p_0)$ do not depend on $\varepsilon$.
The latter are decomposed into 
$$q_0 = (0, 0)  \text{ and }p_0 = (\omega_0 , \ell_0 ).$$
Our first result is the convergence, in this setting, of $h_\varepsilon$ to the solution to a massive point vortex equation.
Let us introduce this limit equation.
Let  $(\overline{h},\overline{T})$ be the maximal solution to the ODE:
\begin{subequations}\label{ODE-mass}
\begin{alignat}{3} 
m \overline{h}''  &=\gamma \left( \overline{h}^\prime- \gamma u^\Omega (\overline{h}) \right)^\perp \, &\quad&\text{ for } t \in [0,\overline{T}), \\
 \text{ with }\quad
\overline{h} (0)&= 0 \text{ and } \overline{h}'(0)=\ell_0,
\end{alignat}
\end{subequations}
where $u^\Omega$ is the Kirchhoff-Routh velocity defined as follows.
Consider first $\psi^{\Int}_{{\it 0}} (h,\cdot)$, the solution to the following Dirichlet problem:
\begin{equation} \label{depsi0}
\begin{array}{l}
\Delta \psi^{\Int}_{{\it 0}} (h,\cdot) =  0  \text{ in } \Omega, \ \ 
\psi^{\Int}_{{\it 0}} (h,\cdot) =  G(\cdot-h )   \text{  on } \partial\Omega,	
\end{array}
\end{equation}
where
\begin{equation} \label{NewtonianPotential}
G(r) := - \frac{1}{2\pi} \ln |r| .
\end{equation}
\index{AG1@$G$: Newtonian potential}
The Kirchhoff-Routh stream function $\psi^\Omega$ is defined as
\begin{equation} \label{def-KRS}
\psi^\Omega (x) :=  \frac{1}{2} \psi^{\Int}_{{\it 0}} (x,x) , \quad x\in\Omega,
\index{BGrecYP2@$\psi^\Omega $: Routh' stream function}
\end{equation}
and the Kirchhoff-Routh velocity $u^\Omega$ is defined by
\begin{equation} \label{DefUOmega}
u^\Omega := \nabla^\perp \psi^\Omega ,
\index{AU2@$u^\Omega $: Routh' velocity} 
\end{equation}
where $ \nabla^\perp := (-\partial_2 , \partial_1 )$. 

The existence of  $(\overline{h},\overline{T})$ follows from classical ODE theory. 
Moreover 
for any $h $ in $C^\infty([0,T]; \Omega)$ satisfying \eqref{ODE-mass}  the following energy conservation holds: 
\begin{equation} \label{conserv(i)}
\frac{d}{dt} \mathcal{E}_{(i)} (h,h') = 0, \quad \text{ with }
\mathcal{E}_{(i)} (h,h') := \frac{1}{2} m h' \cdot h' - \gamma^2 \psi^\Omega (h).
\end{equation}
\begin{rem}
Indeed  \eqref{ODE-mass} is a Hamiltonian system associated with the energy  \eqref{conserv(i)}. 
For further Hamiltonian aspects related to System \eqref{ODE-mass}  we refer for instance to \cite{VKM}.
\end{rem}
From \eqref{conserv(i)} and 
the regularity of the Kirchhoff-Routh stream function $\psi^\Omega$ in $\Omega$  we deduce 
that $\overline{T}$ is the time of the first collision of $\overline{h}$  with the outer boundary $\partial \Omega$ of the fluid domain. 
If there is no collision, then $\overline{T}=+\infty$. \par

The precise statement of our first convergence result is as follows.
\begin{thm} \label{theo:3}
Let $\mathcal S_0$ a subset of $\Omega$ as above, $p_0$ in $ \mathbb R^3$ and  $(\gamma, m, \mathcal J) $ in $\mathbb R \times (0,+\infty) \times (0,+\infty)$.
Let $(\overline{h},\overline{T})$ be the maximal solution to \eqref{ODE-mass}.
For every $\varepsilon$ in $(0,1]$, let $(q_\varepsilon ,T_\varepsilon)$ be  the maximal solution to \eqref{ODE_intro} with respectively
$M_\varepsilon  =  M [\mathcal S_{0,\varepsilon}, m_\varepsilon ,\mathcal{J}_\varepsilon , \Omega ]$,
$\Gamma_\varepsilon  = \Gamma[\mathcal S_{0,\varepsilon} ,\Omega ]$
and  ${F}_\varepsilon  = {F} [{\mathcal S}_{0,\varepsilon} , \gamma ,\Omega ] $
in place of $M$, $\Gamma$ and  $F$, respectively,  where $m_\varepsilon ,\mathcal{J}_\varepsilon$ are given by \eqref{mass-inertie},
and with the initial data 
$q_\varepsilon (0) = 0$ and  $q_\varepsilon' (0)= p_0 .$
Then, as $\varepsilon \to 0^{+}$,  $\liminf T_\varepsilon \geq \overline{T}$, and for all $T$ in $(0,\overline{T})$, 
 $h_\varepsilon \rightharpoonup \overline{h}$ in $W^{2,\infty}([0,T];\mathbb R^2)$ weak-$\star$
and $\varepsilon\vartheta_\varepsilon \rightharpoonup 0$ in $W^{2,\infty}([0,T];\mathbb R)$ weak-$\star$.
\end{thm}
Theorem~\ref{theo:3} is proved in Section~\ref{Sec:PreuveTheo3}.
%
%
%
%
%
%
%
\subsection{Case (ii): Dynamics of a solid shrinking to a massless point particle}
In this section the solid is still assumed to occupy initially the domain ${\mathcal S}_{0,\varepsilon}$ given by \eqref{piti} (satisfying the same assumptions as above) but we assume now that it has a mass and a moment of inertia given by
\begin{equation} \label{mass-inertie2}
m_\varepsilon= \alpha_\varepsilon m_{1}  \text{ and } \mathcal J_\varepsilon=  \alpha_\varepsilon  \varepsilon^2 {\mathcal J}_{1},
\end{equation}
where $\alpha_{1}=1$ and $\alpha_\varepsilon \rightarrow 0^{+}$ as ${\varepsilon} \rightarrow 0^{+}$, and where $m_{1}>0$ and ${\mathcal J}_{1}>0$ are fixed. 
To simplify the notations we will assume that $ \alpha_\varepsilon$ is of the form 
\begin{equation} \label{mass-inertie-deception}
\alpha_\varepsilon =\varepsilon^{\alpha} ,
\end{equation}
with $ \alpha > 0$.
The particular case where $ \alpha =2$ corresponds to the case of a fixed solid density.
Case (i) corresponded to the case where  $ \alpha = 0$. 

In this setting, we denote by $q_\varepsilon =(\vartheta_\varepsilon, h_\varepsilon) $ in $\mathbb R \times \mathbb R^2 $ the solution to the ODE \eqref{ODE_intro} associated with $M_\varepsilon  :=  M [\mathcal S_{0,\varepsilon}, m_\varepsilon ,\mathcal{J}_\varepsilon , \Omega ]$,
$\Gamma_\varepsilon  := \Gamma[\mathcal S_{0,\varepsilon} ,\Omega ]$
and ${F}_\varepsilon  := {F} [{\mathcal S}_{0,\varepsilon} , \gamma ,\Omega ]$ in place of $M$, $\Gamma$ and $F$, respectively, 
defined on the maximal time interval $[0,T^\varepsilon)$.
We stress that the circulation $\gamma$ and the Cauchy data are still assumed independent of $\varepsilon$.
Moreover we will assume here that
$$ \gamma \neq 0.$$

Our second result is the convergence of $h_\varepsilon$ as ${\varepsilon} \rightarrow 0^{+}$ to the solution to the point vortex equation:
\begin{alignat}{3} \label{argh}
\overline{h}^\prime = \gamma u^\Omega (\overline{h})  \text{ for }t>0,   \text{ with } \overline{h} (0)= 0.
\end{alignat}

It is well-known that \eqref{argh}  is a Hamiltonian system associated with the energy 
\begin{equation*}
\mathcal{E}_{(ii)} (h) :=  \gamma^2    \psi^\Omega (h) ;
\end{equation*}
see for instance to \cite{MP,Newton}.
In particular for any $h $ in $C^\infty([0,T]; \Omega)$  satisfying \eqref{argh},  
\begin{equation} \label{conserv(ii)}
\frac{d}{dt} \mathcal{E}_{(ii)} (h) = 0 .
\end{equation}
It follows from \eqref{conserv(ii)} and the fact that 
$\psi^\Omega (h) \rightarrow +\infty$ when $h$ gets close to $\partial \Omega$, see for instance \cite[Eq. (1.27)]{Turk}, that 
the solution $\overline{h} $ is global in time, and in particular that there is no collision of the point vortex with the external boundary $\partial \Omega$. \par
Our result in this situation is the following. 
\begin{thm} \label{theo:1}
Let $\mathcal S_0$ a subset of $\Omega$ as above, different from a disk.
Let $\gamma \neq 0$, $p_0$ in $ \mathbb R^3 $ and  $(m_1 ,\mathcal J_1) $ in $ (0,+\infty) \times (0,+\infty)$. 
Let us consider the global solution $\overline{h}$ to \eqref{argh} and for every $\varepsilon $ in $(0,1]$, let
$(q_\varepsilon ,T_\varepsilon)$ the maximal solution to  \eqref{ODE_intro} with respectively
$M_\varepsilon  =  M [\mathcal S_{0,\varepsilon}, m_\varepsilon ,\mathcal{J}_\varepsilon , \Omega ]$, 
$\Gamma_\varepsilon  = \Gamma[\mathcal S_{0,\varepsilon} ,\Omega ]$
and 
${F}_\varepsilon  = {F} [\mathcal S_{0,\varepsilon} , \gamma ,\Omega ] $
in place of $M, \ \Gamma \text{ and } F$, 
where $m_\varepsilon$, $\mathcal{J}_\varepsilon$ are given by \eqref{mass-inertie2},
and with the initial data
$q_\varepsilon (0) =0$ and  $q_\varepsilon' (0) =  p_0 .$
Then, as $\varepsilon \rightarrow 0^{+}$, 
$T_\varepsilon \rightarrow +\infty$ 
and $h_\varepsilon \rightharpoonup \overline{h}$ in $W^{1,\infty}([0,T];\mathbb R^2)$ weak-$\star$ for all $T>0$.
\end{thm}
Theorem~\ref{theo:1} is proved in Section~\ref{Sec:PreuveTheo1}. \par
In the case where $\mathcal S_0$ is a disk, the statement needs a slight modification.
\begin{thm} \label{theo:1disk}
Let $\mathcal S_0$ a disk in $\Omega$, with center $\zeta$ and center of mass $0$.
Let $\gamma\neq 0$, $(m_{1}, \mathcal J_{1}) $ in $(0,+\infty)^2$ and $p_0$ in $ \mathbb R^3$.
Let us consider the global solution $\overline{h}$ to \eqref{argh} and for every $\varepsilon$ in $(0,1]$, let $(q_\varepsilon ,T_\varepsilon)$ be as in Theorem~\ref{theo:1}.
Let $h_{c,\varepsilon}=h_{\varepsilon} + \varepsilon R(\vartheta_{\varepsilon}) \zeta$ the center of the disk.
Then, as $\varepsilon \rightarrow 0^{+}$, $T_\varepsilon \rightarrow +\infty$ 
and $h_{c,\varepsilon} \rightharpoonup \overline{h}$ in $W^{1,\infty}([0,T];\mathbb R^2)$ weak-$\star$ for all $T>0$.
\end{thm}
Theorem~\ref{theo:1disk} is proved in Subsection~\ref{Subsec:TCONHD}. 
It is straightforward to check that this involves the convergence of $h_{\varepsilon}$ to $\overline{h}$ in $L^{\infty}$ strong for all $T>0$. Actually, one even gets the convergence in $W^{\beta,\infty}(0,T)$ weak-$\star$ for all $T>0$ with $\beta:=\min\left(1,\frac{2}{\alpha}\right)$; see again Subsection~\ref{Subsec:TCONHD}. As we will see during the proofs, if either ${\mathcal S}_{0}$ is homogeneous (so that the center of the disk and the center of gravity coincide) or if $\alpha \leq 2$, Theorem~\ref{theo:1} is actually valid without change. 
%
%
%
%
%
%
%
%
%
%
%
%
\section{Comments and organization of the paper}
\label{fewComments}
\subsection{Comments}

The limit systems obtained in Cases (i) and (ii) do not depend on the body shape nor on the value of $\alpha >0$.
Still the proof is simpler in the case where the body is a homogeneous disk. 
Indeed if  $\mathcal S_0$ is a disk, in both Cases (i)  and (ii), it follows directly from \eqref{EqRot}
that the rotation $\vartheta_\varepsilon $ satisfies, for any $\varepsilon $ in $(0,1)$, 
$\vartheta_\varepsilon (t) = t \omega_0$ as long as the solution exists.  \par
We notice that the convergences in Theorem~\ref{theo:3} and Theorem~\ref{theo:1} cannot be improved unless some compatibility condition holds at initial time.
One may however wonder if these convergences could be improved in the open interval $(0,T)$.
It seems that some strong oscillations in time show up when $\eps \rightarrow 0^{+}$ which prevent strong convergence from happening. 
We plan to study this phenomenon by a multi-scale approach of the solution to the ODE \eqref{ODE_intro} in a forthcoming work. 
Once again the case where the body is a homogeneous disk is likely to simplify the discussion. \par
Let us also underline that the convergence of the dynamics of the solid involves some convergence in the fluid. It is not difficult to check that for any $k \in \N$ we have the weak-$\star$ convergence in $W^{1,\infty}(0,T;C^{k}(K))$ for any $T < \overline{T}$ in Case~(i) and in $L^{\infty}(0,T;C^{k}(K))$ for any $T>0$ in Case (ii), of the fluid velocity $u_{\varepsilon}$ toward $\gamma u^{\Omega}(\cdot) + \frac{\gamma}{2\pi} \frac{(\cdot - \overline{h}(t))^{\perp}}{|\cdot - \overline{h}(t)|^{2}}$, for any compact set $K \subset \Omega$ not intersecting $\{ \overline{h}(t), t \in [0,T] \}$. \par
In Case~(i), one may also raise the question whether it is possible that $\liminf T_\varepsilon > \overline{T}$. This problem should be connected to the behaviour of the potentials and stream functions as the body approaches the boundary; see for instance \cite{BDTV}, \cite{CNS}, \cite{MNP} and \cite{Munnier:2015aa} and references therein for this question. \par
The analysis performed in this paper can be easily adapted in order to cover the case where the circulation $\gamma$ depends on $\eps$ under the form 
$\gamma^{\eps} = \eps^{\beta} \; \gamma^{1}$ with $\beta >0$ in Case (i) and  $\beta $ in $(0,1)$ in Case (ii). One obtains respectively at the limit the trivial equations 
$\overline{h}'' = 0$ and $\overline{h}^\prime = 0$. \par
Our analysis should hold as well in the case of several bodies moving in the full plane or in a multiply-connected domain, as long as there is no collision.
This will be tackled in a forthcoming work. \par
Another natural question is whether or not one may extend the results of Theorem \ref{theo:3} and Theorem \ref{theo:1} to rotational flows. 
These extensions are respectively tackled in \cite{GLS}  and  \cite{AvecCricri} in the case without external boundary. 
%
%


\subsection{Organization of the paper} 
The paper is organized as follows. \par
%
%
In Section~\ref{Subsec-withoutOmega} we deal with the simpler case when there is no external boundary.  This case is well-known in the literature and has been in particular studied with complex analysis, using Blasius' lemma (see e.g. \cite{MP}). We shall use an alternative approach based on a lemma due to Lamb, see Lemma~\ref{blasius}, allowing one to exchange normal and tangential components in some trilinear integrals over the body boundary. The case without outer boundary is actually very important to tackle the general one. \par
Then Theorem~\ref{THEO-intro}, Proposition~\ref{energy} and Proposition~\ref{splitting-christo}, which are independent of $\varepsilon$, are proved in Section~\ref{tard}. \par
In Sections~\ref{Sec:PreuveTheo3} and \ref{Sec:PreuveTheo1}, respectively, we prove Theorem \ref{theo:3} concerning the limit of a massive particle (Case (i)) and Theorem \ref{theo:1} concerning the limit of a massless particle (Case (ii)). These proofs rely on some asymptotic normal forms \eqref{fnorm1} and \eqref{fnorm2}, respectively. These normal forms are the key point of the demonstration and allow us to establish some renormalized and modulated energy estimates and prove the passage to the limit. They are established in the last two sections. \par
In Section~\ref{sec-dev-stream} we establish some asymptotic expansions of stream and potential functions with respect to $\eps$.  
These expansions involve two scales corresponding respectively to variations over length $O(1)$ and $O(\eps)$ respectively on  $\partial \Omega$ and $\partial  \mathcal S_{\eps} (q)$. 
The profiles appearing in these expansions are obtained by successive corrections, considering alternatively at their respective scales
the body boundary from which the external boundary seems remote and the external boundary from which the body seems tiny, 
so that good approximations are given respectively by the case without external boundary and  without the body. \par
Then in Section~\ref{Sec:FormesNormales}, we prove the normal forms \eqref{fnorm1} and \eqref{fnorm2}.
To do so we plug the expansions obtained in Section~\ref{sec-dev-stream} into the expressions of the inertia matrix $M_{\varepsilon}(q)$, of the Christoffel symbol $\langle \Gamma_{\eps} (q),{p},{p}\rangle$ and of the force fields $E_{\varepsilon}$ and $p \times B_{\eps}$ and compute the leading terms of the resulting expansions. These expansions can themselves be plugged into the ODE \eqref{ODE_intro} of Theorem~\ref{THEO-intro}. In particular, thanks to Lamb's lemma  we will make appear in several terms of the expansions of $E_{\varepsilon}$ and $B_{\varepsilon}$ some coefficients of the added inertia of the solid corresponding to the case without external boundary. Strikingly this allows us to combine the subprincipal terms of the expansions of $E_{\varepsilon}$ and $B_{\varepsilon}$ with the leading term of the expansion of $\Gamma_{\varepsilon}$ thanks to a modulation of the variable; see Lemma~\ref{lemma-grav}. This fact is essential in the proof.

%
%
%
%
%
%
%
%
%
%
%
\section{Case without external boundary}
\label{Subsec-withoutOmega}
In this section, we consider a simplified version of the problem that we are interested in. The simplification consists in assuming that the domain occupied by the 
fluid-solid system is the whole plane, i.e. $\Omega =\R^2$, the fluid being at rest at infinity. We aim to give (in this simplified unbounded configuration) the counterparts of Theorems~\ref{THEO-intro}, \ref{theo:3} and \ref{theo:1}.
The rephrasing of the equations driving the dynamics as an ODE (that is 
a result similar to Theorem~\ref{THEO-intro}), has been known since the investigations of Blasius, Kutta, Joukowski, Chaplygin and Sedov.
Their analysis, relying on a complex-analytic approach, was then revisited following another approach which seems to date back to  Lamb.
Since we will elaborate on the latter in order to deal with the bounded case, we will first establish the counterpart of Theorem~\ref{THEO-intro} relying on Lamb's analysis.
 We will deduce from this ODE an energy conservation providing an analogous to Proposition \ref{energy}. 
Then we will investigate the passage to the limit of the dynamics when the size of the solid goes to $0$ in both Cases (i) and (ii). Hence we will establish the counterparts of Theorem~\ref{theo:3} and Theorem~\ref{theo:1}.  

\begin{rem}
The purpose of this section is not only to provide a ``warm-up'' for the much more involved ``bounded'' configuration. It turns out that several objects that will come up in the analysis 
will be also of central importance in the sequel.  
\end{rem}
\subsection{Recasting of the system as an ODE}
\label{Subsec-recasting}
In the case where $\Omega=\mathbb R^2$, 
the fluid-solid system is governed by the following set of coupled equations, quite similar to System \eqref{SYS_full_system}:
\begin{subequations}
\label{SYS_full_system-without}
\begin{gather} 
\frac{\partial u}{\partial t}+(u\cdot\nabla)u +\nabla \Pi=0 
\quad\text{ and } \quad
\dive u=0\text{ in }\R^2 \setminus {\mathcal S}(t),\\
(\mathcal J\vartheta'' , mh'' ) = 
\Big(\int_{\partial\mathcal S(t)}(x-h(t))^\perp\cdot \Pi n\, {\rm d}s , 
\int_{\partial\mathcal S(t)}\Pi n\, {\rm d}s \Big),  \\
u\cdot n=\big( \vartheta' (\cdot-h)^\perp+ h' \big) \cdot n\text{ on }\partial\mathcal S(t)   \text{ and } 
u(x) \rightarrow 0 \text{ when } \vert x\vert  \rightarrow +\infty . 
\end{gather}
\end{subequations}
%

%
%
We still consider irrotational solutions:
\begin{equation} \label{irr-without}
\curl u=0\text{ in }\R^2 \setminus {\mathcal S}(t).
\end{equation}
Again, this depends solely on the fluid part of the initial data
\begin{equation*}
 u \vert_{t=0}=u_0\text{ in }   \R^2 \setminus\     \mathcal S_0  \text{ and }  (\vartheta,h)(0)=(0,0),\  (\vartheta',h')(0)=(\omega_0,\ell_0).
\end{equation*}
To state the aforementioned reformulation, we introduce the 
Kirchhoff potentials, the inertia matrices, the Christoffel symbols and the force term corresponding to this simplified case. \par
\ \par
\noindent
{\it Kirchhoff potential.}
Let us first denote by $\varphi^{\Ext}_{j}$, for $j=1,2,3$, the Kirchhoff potentials in $\R^{2} \setminus \mathcal S_0$ which are the functions  that satisfy the following Neumann problem:
\begin{subequations} \label{Kir-exte}
\begin{alignat}{2}
& \Delta \varphi^{\Ext}_{j}  
=0  \quad & \text{ in } \R^{2} \setminus \mathcal S_0, \\
 \label{Kir-ext-boundary}
& \frac{\partial \varphi^{\Ext}_{j}}{\partial n} 
= 
\left\{ \begin{array}{l}
x^{\perp} \cdot n  \ \text{ for } j= 1 \\
e_{j-1} \cdot n , \text{ for } j=2,3 \\  
\end{array} \right.
\quad & \text{ on }\partial\mathcal S_0,  \\
& \nabla \varphi^{\Ext}_{j}(x)  \rightarrow 0 & \text{ at infinity. }
\end{alignat}
\index{BGrecTF3@$\varphi^{\Ext}_{j}$ ($j=1,2,3$) :  Kirchhoff's potentials when $ \Omega =  \R^{2}$}
\end{subequations}
We also define  
\begin{equation} \label{Kir-gras-exte}
\boldsymbol\varphi^\Ext
:=(\varphi^{\Ext}_{1} , \, \varphi^{\Ext}_{2},\, \varphi^{\Ext}_{3})^{t} .
\end{equation}
\index{BGrecTF4@$\boldsymbol\varphi^\Ext$: vector  containing the three Kirchhoff potentials when $ \Omega =  \R^{2}$}
\ \par
\noindent
{\it Stream function for the circulation term. } In the same spirit as \eqref{def_stream}, we first introduce the function $\psi^{\Ext}_{{\it -1}}$ as the solution to 
\begin{subequations} \label{model_stream-1}
\begin{alignat}{3}
\label{model_stream-1-a}
-\Delta \psi^{\Ext}_{{\it -1}}  &=0& &\text{in }\R^{2} \setminus \mathcal S_0 ,\\
\label{model_stream-1-b}
\psi^{\Ext}_{{\it -1}} &=C^{\Ext} &\quad&\text{on }\partial\mathcal S_0 ,
\\ \label{model_stream-1-c}
\psi^{\Ext}_{{\it -1}} &= O(\ln |x|) &\quad&\text{at infinity,}
\end{alignat}
where the constant $C^{\Ext}$
\index{AC2@$C^{\Ext}$: value of $\psi^{\Ext}_{{\it -1}}$ on $\partial\mathcal S_0$}
is such that:
\begin{equation} \label{53d-1}
\int_{\partial\mathcal S_0}\frac{\partial\psi^{\Ext}_{{\it -1}}}{\partial n}\, {\rm d}s = -1 .
\end{equation}
\end{subequations}
The existence and uniqueness of  $\psi^{\Ext}_{{\it -1}}$ will be recalled below in Proposition~\ref{coro-cap}; one can still identify $\frac{\partial\psi^{\Ext}_{{\it -1}}}{\partial n}$ as being the equilibrium measure of $\partial\mathcal S_0$. \par
\ \par
\noindent
{\it Change of frame and decomposition of the fluid velocity.} The vector field $\underline{u}$ defined from the fluid velocity $u$ by
\begin{equation} \label{chgt}
\underline{u}(t,x) :=R(\vartheta(t))^t \, u(t,R(\vartheta(t))x+h(t)),
\end{equation}
satisfies the following  div-curl type system in the doubly-connected domain $\mathcal F_0$:
\begin{subequations}
\begin{gather} 
\label{divcurlv-a}
\dive \underline{u}  =0     \text{ and }    \curl \underline{u} = 0      \text{ in }\R^{2} \setminus \mathcal S_0 , \\
\label{divcurlv-c}
\underline{u}\cdot n = \left(\underline{\ell} + \vartheta' x^\perp\right)\cdot n \quad\text{ on }  \partial \mathcal{S}_0,  \\
\label{divcurlv-d}
\underline{u}  \longrightarrow 0\quad\text{at infinity,} \\
\label{divcurlv-e}
\int_{\partial\mathcal S_0}  \underline{u} \cdot\tau\, {\rm d}s =  \gamma ,
\end{gather}
\end{subequations}
where $\gamma := \int_{\partial\mathcal S_0} u_{0} \cdot\tau\, {\rm d}s$
and  $\underline{\ell}(t) :=R(\vartheta(t))^t \, h' (t)$.

When the right hand sides of these equations are given
the auxiliary velocity field $\underline{u}$ is determined in a unique way and takes the form:
\begin{subequations}
\label{decomp_underline}
\begin{equation} 
\underline{u}  = \underline{u}_{(\vartheta',\underline{\ell})}  + \underline{u}_{\gamma} 
\end{equation}
where, for $\underline{\ell}=(\underline{\ell}_{1},\underline{\ell}_{2})$, 
\begin{equation} 
\underline{u}_{(\vartheta',\underline{\ell})}   :=  \nabla \big(   \varphi^{\Ext}_1  \vartheta'     +   \varphi^{\Ext}_2  \underline{\ell}_1  +   \varphi^{\Ext}_3  \underline{\ell}_2  \big) 
\quad\text{ and }\quad \underline{u}_{\gamma}   :=\gamma \nabla^\perp\psi^{\Ext}_{\mathit{-1}}  .
\end{equation}
\end{subequations}

Below we define $ M^\Ext_{a, \vartheta} $, $\Gamma^{\Ext}_{\vartheta}$ and ${F}^{\Ext}_{\vartheta}$ such that 
\begin{gather*}
 M_{g}   +  M^\Ext_{a, \vartheta} = M [\mathcal S_0 , m^1 ,\mathcal{J}^1 ,{ \R^2}] (q)  ,
\ \Gamma^{\Ext}_{\vartheta} =  \Gamma [\mathcal S_0 ,{ \R^2}] (q)
 \\  \text{  and  } 
{F}^{\Ext}_{\vartheta} (p) = F [\mathcal S_0 ,\gamma,{ \R^2}]  (q,p)  .
\end{gather*}
\begin{rem} \label{RemNotTheta}
In particular, since the dependence on  $q$ of $M$, $ \Gamma$ and $F$ reduces to a dependence on the rotation $R(\vartheta)$ only in the case without external boundary, 
this dependence on $\vartheta$  is mentioned through an index.
\end{rem}
\ \par
\noindent
{\it Inertia matrices.}
We  define the  added mass  matrix 
\begin{equation} \label{def-Maa}
M^{\Ext}_{a}
:= \int_{\partial \mathcal S_0} \boldsymbol\varphi^{\Ext} \otimes\frac{\partial\boldsymbol\varphi^\Ext}{\partial n} \, {\rm d}s
= \Big( \int_{ \R^{2} \setminus \mathcal S_0 } \nabla \varphi^{\Ext}_{i} \cdot  \nabla \varphi^{\Ext}_{j} {\rm d}x 
  \Big)_{1  \leqslant i,j  \leqslant  3} .
\index{AM5@$M^{\Ext}_{a}$: added inertia of the solid when $ \Omega =  \R^{2}$}
\end{equation}
This matrix  is symmetric positive-semidefinite and depends only on $\mathcal S_0$.
Actually it is positive definite if and only if  $\mathcal S_0$ is not a disk. 
Moreover, when $\mathcal S_0$ is a disk centered at the center of mass, this matrix reads
$M^{\Ext}_{a}  =: \text{diag }(0,m_{a}^{\Ext} ,m_{a}^{\Ext} )$ with  $m_{a}^{\Ext} >0$. 

Then we can introduce the mass matrix $M^\Ext_{a, \vartheta} $ taking the rotation into account by
\begin{equation} \label{g+a_ext}
M^\Ext_{a, \vartheta}  := \mathcal{R}(\vartheta) M^{\Ext}_{a} \mathcal{R}(\vartheta)^t .
\index{AM6@$M^{\Ext}_{{a},\vartheta}$: conjugate matrix of $M_{a}^{\Ext}$ by the rotation matrix of angle $\vartheta$}
\end{equation}
Above we used the notation
\begin{equation}
\label{anum}
\mathcal{R}(\vartheta) :=
\begin{pmatrix}
1 & 0 
\\ 0 & {R}(\vartheta) 
\end{pmatrix}   \in{\rm SO}(3).
\index{AR3@$\mathcal{R}(\vartheta)$: $3\times3$  rotation matrix of angle $\vartheta$}
\end{equation}
{\it Christoffel symbols.} Let $\Gamma^{\Ext}_{\vartheta}$ the bilinear symmetric mapping  defined 
 for all ${p} = (\omega , {\ell} ) \in \R \times \R^{2}$ by
\begin{equation} \label{Christo-exter}
\langle \Gamma^{\Ext}_{\vartheta} ,{p},{p}\rangle  :=
- \begin{pmatrix} 0 \\
P^\Ext_{a, \vartheta}
\end{pmatrix}
 \times {p}
-  \omega M^\Ext_{a, \vartheta}
\begin{pmatrix}
 0 \\
 {\ell}^\perp  \end{pmatrix},
\index{BGrecAG5@$\Gamma^{\Ext}_{\vartheta}$: Christoffel symbols when $\Omega= \R^{2}$}
\end{equation}
where $P^\Ext_{a, \vartheta}$  \index{AP3@$P^\Ext_{a, \vartheta} $:  translation impulses when $\Omega= \R^{2}$}
denotes the last two coordinates of the impulse  $M^\Ext_{a, \vartheta} \, {p} $. 
We associate with $\Gamma^{\Ext}_{\vartheta}$ the coefficients $(\Gamma^{\Ext}_{\vartheta})^k_{i,j}$ such that 
for any  $\vartheta $ in $\R$, for any $p$ in $\R^3$,  
\begin{equation*}
\langle \Gamma^{\Ext}_{\vartheta} ,p,p\rangle :=\left(\sum_{1\leq i,j\leq 3} (\Gamma^{\Ext}_{\vartheta})^k_{i,j} p_i p_j  \right)_{1\leq k\leq 3} .
\end{equation*}
Then a tedious computation reveals that for every $i,j,k \in\{1,2,3\}$, 
\begin{equation*} 
(\Gamma^{\Ext}_{\vartheta})^k_{i,j}  =  \frac12
\Big(  (M^\Ext_{a, \vartheta})_{k,j}^{i} + (M^\Ext_{a, \vartheta})_{k,i}^{j} - (M^\Ext_{a, \vartheta})_{i,j}^{k}  \Big)  ,
\end{equation*}
where $(M^\Ext_{a, \vartheta})_{i,j}^{k} $ denotes the partial derivative with respect to $ q_{k}$ of the entry of indexes $(i,j)$ of the matrix $M^\Ext_{a, \vartheta}$.
Thus the coefficients $(\Gamma^{\Ext}_{\vartheta})^k_{i,j}$, for $i,j,k \in\{1,2,3\}$, are the Christoffel symbols of the first kind associated with   the  matrix $M^\Ext_{a, \vartheta}$.
There is naturally no counterpart of the term $\Gamma^{\partial \Omega}$ here.
\par
\ \par
\noindent
{\it Force term.} Let us define  the geometric constant 
\begin{equation} \label{def-zeta}
{\zeta} := - \int_{\partial\mathcal S_0}x \frac{\partial\psi^{\Ext}_{{\it -1}}}{\partial n}\, {\rm d}s  
\index{BGrecEZ1@$\zeta$: conformal center of $\mathcal S_0$}
\end{equation}
which is a vector of  $\R^2$, depending only on $\mathcal S_0$,  usually referred  to as the conformal center  of $\mathcal S_0$. 
Remark that if ${\mathcal S}_{0}$ is a disk, $\psi^{\Ext}_{{\it -1}}(x) = \frac{1}{2\pi} \ln |x-h_{c}(0)|$ where $h_{c}(0)$ is the center of the disk (recalling that the initial position of the center of mass is $h(0)=0$); it follows that $\zeta=h_{c}(0)$ and the definition of $\zeta$ in \eqref{def-zeta} is coherent with the notation in Theorem~\ref{THEO-intro-disk}. \par
Next we define the force term, for all ${p} = (\omega , {\ell} ) \in \R^{3}$, as being:
\begin{equation} \label{force-ext}
F^{\Ext}_{\vartheta} ({p}) :=
 \gamma 
\begin{pmatrix}
\zeta_\vartheta \cdot  {\ell} \\
 {\ell}^\perp - \omega \zeta_\vartheta
\end{pmatrix},
\index{AF3@$F^{\Ext}_{\vartheta} $: force term  when $\Omega= \R^{2}$}
\end{equation}
where 
\begin{equation} \label{def-zeta_vartheta}
\zeta_\vartheta := R(\vartheta)  {\zeta} .
\index{BGrecEZ2@$\zeta_\vartheta$: conformal center of $\mathcal S_0$ rotated of $\vartheta$}
\end{equation}
An important feature of the force vector field $(\vartheta, {p}) \mapsto {F}^{\Ext}_{\vartheta} ({p}) $ is that it is gyroscopic, in the sense of the following definition; see for instance \cite[p. 428]{arnold}. Note that we use here again the convention of Remark \ref{RemNotTheta} to put $\vartheta$ as an index.
\begin{defn} \label{def-gyro}
We say that a vector field $F $ in $C^\infty (\R \times  \R^3 ;  \R^3)$ is gyroscopic if for any $(\vartheta,p) $ in  $\R \times  \R^3$, 
$p \cdot {F}  (\vartheta,p) = 0 $. 
\end{defn}
Indeed, for any $(\vartheta ,p)$ in  $\R \times  \R^3$, the force ${F}^{\Ext}_{\vartheta} (p) $ can be written as 
\begin{equation} \label{def-Bext}
{F}^{\Ext}_{\vartheta} (p) = \gamma p \times B^{\Ext}_{\vartheta}  
\text{ with }  
B^{\Ext}_{\vartheta} :=
\begin{pmatrix}
- 1 \\  \zeta_{\vartheta}^{\perp}
\end{pmatrix} .
\end{equation}
%
%
%
%
%
The next result is a reformulation of System~\eqref{SYS_full_system-without} as an ordinary differential equation and the counterpart of Theorem~\ref{THEO-intro} in the case without external boundary. 
\begin{thm} \label{THEO-intro-without}
For smooth solutions, System~\eqref{SYS_full_system-without}-\eqref{irr-without} can be recast, up to the first collision, as the second order ODE 
\begin{equation}  \label{ODE_ext}
(M_{g}   +  M^\Ext_{a, \vartheta} ) \, q''   +  \langle  \Gamma^{\Ext}_{\vartheta} ,q',q'\rangle
= {F}^{\Ext}_{\vartheta} (q').
\end{equation}
\end{thm}
Let us underline that it is understood that we associate with the body equation given by  \eqref{ODE_ext} the fluid velocity $u$ such that 
the vector field $\underline{u}$ given by \eqref{chgt} satisfies \eqref{decomp_underline}.

The proof of Theorem  \ref{THEO-intro-without} is postponed to Subsection \ref{Sub:samsoir}.

In addition we have  the following energy conservation property.
\begin{prop} \label{Prop-conserv-no}
If $q$ is a smooth solution  to \eqref{ODE_ext} then during its lifetime the quantity $(M_{g}   +  M^\Ext_{a, \vartheta} ) \, q' \cdot q' $ is conserved. 
\end{prop}
The quantity above corresponds to twice the sum of the kinetic energy of the solid associated with the genuine inertia and of the one associated with the added inertia.
\begin{proof}
For any $\vartheta $ in $\R$, for any $p$ in $\R^3$,  we define the matrix 
\begin{equation*} 
S^{\Ext}_{\vartheta} (p) := \left(\sum_{1\leq i\leq 3} (\Gamma^{\Ext}_{\vartheta} )^k_{i,j} p_i  \right)_{1\leq k,j\leq 3}  \text{ so that  }
\langle \Gamma^{\Ext}_{\vartheta} , p, p \rangle = S^{\Ext}_{\vartheta} (p)  p .
\end{equation*}
Then, an explicit computation (another method, more theoretical, is given in the proof of Proposition~\ref{energy}; see Lemma~\ref{antis}) proves that for any  $\vartheta $ in $\R$, for any $p$ in $\R^3$,  
\begin{equation}
\label{EZ1}
\frac{1}{2}    \frac{\partial  M^\Ext_{a, \vartheta}}{\partial q}    (\vartheta) \cdot  {{p}}
- S^{\Ext}_{\vartheta} ( {{p}})   \text{ is skew-symmetric.}
\end{equation}
Then Proposition  \ref{Prop-conserv-no} follows from  \eqref{EZ1} and from the fact that 
the force ${F}^{\Ext}_{\vartheta} (p)$ is gyroscopic in the sense of  Definition~\ref{def-gyro}.
\end{proof}

As a consequence, given $\gamma$ and $\mathcal S_0$ and some Cauchy data, there exists a unique global smooth solution to \eqref{ODE_ext}.

\subsection{Lamb's lemma}
To prove Theorem~\ref{THEO-intro-without}, we start with recalling a technical result borrowed from \cite[Article 134a. (3) and (7)]{Lamb} and which is a cornerstone of our analysis. 
We recall that $\xi_{j}$ and $K_{j}$, for  $j=1,2,3$,  were defined in \eqref{def-xi-j} and \eqref{Def-Kj} respectively.
\begin{lem} \label{blasius}
For any pair of vector fields $u$, $v$ in $C^\infty (\overline{\R^2 \setminus \mathcal S_0} ; \R^2 )$ satisfying 
\begin{itemize}
\item $\dive u = \dive v = \curl u  = \curl v = 0$, 
\item $u(x) = O(1/|x|)$ and $v(x) = O(1/|x|)$ as $|x| \rightarrow + \infty$,
\end{itemize}
 for any $j=1,2,3$, 
\begin{equation} \label{blasius-formu}
\int_{\partial \mathcal S_0} (u\cdot v) K_{j} (0,\cdot ) \, {\rm d}s 
= \int_{\partial  \mathcal S_0} \xi_{j} (0,\cdot )  \cdot \Big(  (u \cdot n) v +  (v \cdot n)  u  \Big) \, {\rm d}s .
\end{equation}
\end{lem}
\begin{proof}
Let us start with the case where $j=2$ or $3$. Then 
\begin{equation} \label{Bla1}
\int_{\partial  \mathcal S_0} (u\cdot v) K_{j} (0,\cdot ) \, {\rm d}s  
= \int_{\R^2 \setminus\mathcal S_0} \dive \big( (u\cdot v) \xi_{j} (0,\cdot )\big)    \, {\rm d}x ,
\end{equation}
since  $u(x) = O(1/|x|)$ and $v(x) = O(1/|x|)$ when $|x| \rightarrow + \infty$.
Therefore
\begin{align} \nonumber
\int_{\partial  \mathcal S_0} (u\cdot v) K_{j} (0,\cdot ) \, {\rm d}s  
&= \int_{\R^2 \setminus\mathcal S_0} \xi_{j} (0,\cdot )\cdot \nabla (u\cdot v)     \, {\rm d}x 
\\&= \int_{\R^2 \setminus\mathcal S_0} \xi_{j} (0,\cdot ) \cdot (  u \cdot \nabla  v + v\cdot \nabla u )  \, {\rm d}x ,\label{Bla2}
\end{align}
since  $ \curl u  = \curl v = 0$.
Now, integrating by parts, using that $\dive u = \dive v = 0$ and once again that $u(x) = O(1/|x|)$ and $v(x) = O(1/|x|)$ as $|x| \rightarrow + \infty$, we obtain \eqref{blasius-formu} when $j=2$ or $3$. \par
We now tackle the case where $j=1$.
We follow the same lines as above, with two precisions.
First we observe that there is no contribution at infinity in \eqref{Bla1} and \eqref{Bla2} when $j=1$ as well.
Indeed $ \xi_{1}$ and the normal to a centered circle are orthogonal. 
Moreover there is no additional distributed term coming from the integration by parts in \eqref{Bla2} when $j=1$  since
\begin{equation*}
\int_{\R^2 \setminus\mathcal S_0}
v \cdot (  u \cdot \nabla_x  \xi_{j} (0,\cdot ) )+  u \cdot (v\cdot \nabla_x  \xi_{j} (0,\cdot ) )     \, {\rm d}x
= \int_{\R^2 \setminus\mathcal S_0} ( v \cdot   u^\perp  +  u \cdot v^\perp )   \, {\rm d}x = 0.
\end{equation*}
The proof is then complete.
\qed 
\end{proof}
\subsection{Proof of Theorem~\ref{THEO-intro-without}}
\label{Sub:samsoir}
To transfer the equations in the body frame we recast the equations in terms of 
the vector field $\underline{u}$ (defined from the fluid velocity $u$ by
\eqref{chgt}), of $\underline{\ell}(t)=R(\vartheta(t))^t \, h' (t)$ and of the auxiliary pressure $\underline{\Pi}$,
defined from the fluid pressure $\Pi$ by $\underline{\Pi} (t,x)=\Pi(t,R(\vartheta(t))x+h(t))$.
Equations~\eqref{SYS_full_system-without} become
\begin{subequations}
\begin{gather}
\label{Euler11}
\displaystyle \frac{\partial \underline{u}}{\partial t}
+ \left[(\underline{u}-\underline{\ell}-\vartheta' x^\perp)\cdot\nabla\right]\underline{u} 
+ \vartheta' \underline{u}^\perp+\nabla \underline{\Pi} =0   \text{ for }  x\in \R^2 \setminus\mathcal S_0 \\
\dive \underline{u} = 0  \text{ for }  x\in \R^2 \setminus\mathcal S_0 , \\
\label{Euler13}
\underline{u}\cdot n = \left(\underline{\ell} +\vartheta' x^\perp\right)\cdot n  \text{ for }  x\in \partial \mathcal{S}_0
\ \text{ and } \ \underline{u}(x) \rightarrow 0 \text{ as } |x| \rightarrow +\infty, \\
\label{Solide11}
(\mathcal{J} \vartheta''  , m \underline{\ell}' ) =
\left(\int_{\partial \mathcal{S}_0} x^\perp \cdot \underline{\Pi} n \ ds ,  
\int_{\partial \mathcal{S}_0} \underline{\Pi} n \ ds-m \vartheta' \underline{\ell}^\perp \right) . 
\end{gather}
\end{subequations}
Let us consider the right hand side of \eqref{Solide11}.
Using an integration by parts (which we can justify by the decay at infinity of the solution, see \cite[p. 520]{GLS}), we obtain 
\begin{equation*}
\left( \int_{ \partial \mathcal{S}_0} \underline{\Pi} x^{\perp} \cdot n \,ds , \ \int_{ \partial \mathcal{S}_0} \underline{\Pi} n \, {\rm d}s
 \right) = 
\left( \int_{ \R^2 \setminus\mathcal S_0} \nabla \underline{\Pi} \cdot \nabla \varphi^{\Ext}_{i} \, dx \right)_{i=1,2,3} .
\end{equation*}
Moreover  since  the vector field $\underline{u} $ is irrotational we infer from the equation \eqref{Euler11} that 
\begin{gather*}
 \nabla  \underline{\Pi} = - \frac{\partial \underline{u}}{\partial t} - \nabla {Q}   ,   \text{ with }
{Q} :=  \frac{1}{2} |\underline{u}|^2 - (\underline{\ell} + \vartheta' x^\perp)\cdot \underline{u}.
\end{gather*}
Thus the solid equations \eqref{Solide11} now read 
\begin{multline*}
M_{g} \,  \underline{p} ' 
 + m \vartheta' \begin{pmatrix} 0 \\ \ell^{\perp} \end{pmatrix}
\\ = - \left(   \int_{ \R^2 \setminus\mathcal S_0}   \frac{\partial \underline{u}}{\partial t}      \cdot \nabla \varphi^{\Ext}_{i} \, dx   \right)_{i=1,2,3} 
- \left(   \int_{ \R^2 \setminus\mathcal S_0}   \nabla {Q}       \cdot \nabla \varphi^{\Ext}_{i} \, dx   \right)_{i=1,2,3} .
\end{multline*}
Integrating by parts, taking into account the  boundary condition \eqref{Euler13},  and 
recalling that the  added mass  matrix $M^{\Ext}_{a}$ is defined  in \eqref{def-Maa} 
we observe that 
the contribution of $\frac{\partial \underline{u}}{\partial t}$ is
\begin{equation*}
\left(   \int_{ \R^2 \setminus\mathcal S_0}   \frac{\partial \underline{u}}{\partial t}      \cdot \nabla \varphi^{\Ext}_{i} \, dx   \right)_{i=1,2,3} 
 = M_{a}^{\Ext} \, \underline{p}', 
\text{ where } 
\underline{p} := \begin{pmatrix} \vartheta'   \\ \underline{\ell} \end{pmatrix} .
\end{equation*}
By another integration by parts we also have, for $i=1,2,3$, that 
\begin{equation*}
  \int_{ \R^2 \setminus\mathcal S_0}   \nabla {Q}       \cdot \nabla \varphi^{\Ext}_{i} \, dx  
=   \frac{1}{2} \int_{\partial  \mathcal{S}_{0}  } |{\underline{u}}|^2  K_i \, {\rm d}s -  \int_{\partial  \mathcal{S}_{0}  } (\underline{\ell} + \vartheta' x^\perp)\cdot {\underline{u}}  K_i \, {\rm d}s , 
\end{equation*}
where we simplified the notation by writing $K_i$ instead of $K_{i}(0,\cdot)$ (defined in \eqref{Def-Kj}), which corresponds to the initial position of the solid, that is $q=0$. In the same way, we will write $\xi_i$ for the vector fields $\xi_{i}(0,\cdot)$ (defined in \eqref{def-xi-j}) in the computations below.
Thus the solid equations \eqref{Solide11} can be rewritten in the form
\begin{multline*}
(M_{g} + M_{a}^{\Ext} ) \underline{p} ' 
 + m \vartheta' \begin{pmatrix} 0 \\ \ell^{\perp} \end{pmatrix}
\\= - \left( \frac{1}{2} \int_{\partial  \mathcal{S}_{0}  } |{\underline{u}}|^2  K_i \, {\rm d}s
- \int_{\partial  \mathcal{S}_{0}  } (\underline{\ell} + \vartheta' x^\perp)\cdot {\underline{u}}  K_i \, {\rm d}s\right)_{i=1,2,3} .
\end{multline*}
We compute the first term in the right hand side  by using Lamb's lemma and the boundary conditions. 
For $i =1,2,3$, 
\begin{multline*}
 \frac{1}{2} \int_{\partial  \mathcal{S}_{0}  } |{\underline{u}}|^2  K_i \, {\rm d}s  = 
\int_{\partial  \mathcal{S}_{0}  } \big( (\underline{\ell} + \vartheta' x^\perp)\cdot  n \big)  \big( \underline{u} \cdot n \big)  K_{i}  \, {\rm d}s \\
+ \int_{\partial  \mathcal{S}_{0}  } \big( (\underline{\ell} + \vartheta' x^\perp)\cdot  n \big)  \big( \underline{u} \cdot \tau  \big) \big(  \xi_{i} \cdot  \tau \big)  \, {\rm d}s .
\end{multline*}
It follows that the solid equations \eqref{Solide11}  can be recast as follows: 
\begin{multline} \label{roof1}
(M_{g} + M_{a}^{\Ext} ) \underline{p}'
+ m \vartheta' \begin{pmatrix} 0 \\ \underline{\ell}^{\perp} \end{pmatrix} \\
= - \Big( \sum_{k=1}^3  \, \underline{p}_{k}  \,  \int_{\partial  \mathcal{S}_{0}  } \,   \big( \underline{u}\cdot \tau  \big) 
 [ \big(  \xi_{i} \cdot  \tau \big) K_{k}  -  \big(  \xi_{k} \cdot  \tau \big)   K_{i}   ] \, {\rm d}s \Big)_{i=1,2,3} 
 \end{multline}

We observe that the brackets above are either vanishing (for $i=k$) or given by the following identities:
\begin{gather}
\nonumber
   \big(  \xi_{2} \cdot  \tau \big) K_{3}  -  \big(  \xi_{3} \cdot  \tau \big)   K_{2}   
= 1 , \quad 
 \big(  \xi_{2} \cdot  \tau \big)  K_{1} 
   -  \big(  \xi_{1} \cdot  \tau \big)  K_2
 =  x^\perp \,  \cdot  e_{2} ,
 \\  \label{roof2} \text{ and }
\big(  \xi_{3} \cdot  \tau \big) K_{1}  -  \big(  \xi_{1} \cdot  \tau \big)   K_{3} 
= -  x^\perp \,  \cdot  e_{1} .
\end{gather}
Thanks to these equalities we compute the previous integrals in terms of the entries of the matrix $M_{a}^{\Ext}$.
Precisely we decompose $M_{a}^{\Ext}$ into
\begin{equation} \label{def-Ma}
M_{a}^{\Ext}  =:
\begin{pmatrix}
m^{\Ext}_\#   & (\mu^{\Ext})^t \\
\mu^{\Ext} & M^{\Ext}_{\flat}   
\end{pmatrix} ,
\end{equation}
where $M^\Ext_\flat$ is a symmetric $2 \times 2$ matrix. Then we have the following result.
\begin{lem}
There holds: 
\begin{equation} \label{azaz}
\int_{\partial  \mathcal{S}_{0}  } \,   \big( \underline{u}\cdot \tau  \big) x^\perp \, {\rm d}s 
= \gamma \zeta^{\perp} - \vartheta'  \mu - M^\Ext_{\flat} \, \underline{\ell} .
\end{equation}
\end{lem}
\begin{proof}
We first use \eqref{decomp_underline}  and \eqref{def-zeta} to get 
\begin{equation*} 
\int_{\partial  \mathcal{S}_{0}  } \,   \big( \underline{u}\cdot \tau  \big) x^\perp \, {\rm d}s 
=  \gamma \zeta^{\perp}
+ \sum_{j=1}^3 \,  \underline{p}_{j}  \int_{\partial  \mathcal{S}_{0}  } \, \big(   \nabla \varphi^{\Ext}_{j}  \cdot  \tau \big)  x^\perp\, {\rm d}s.
\end{equation*}
Therefore, it only remains to observe that an integration by parts yields: 
$$
\int_{\partial  \mathcal{S}_{0}  }  \big(   \nabla \varphi^{\Ext}_{j}  \cdot  \tau \big)  x^\perp\, {\rm d}s 
= - \int_{\partial  \mathcal{S}_{0}  } \varphi^{\Ext}_{j}  n \, {\rm d}s  
= - \big( (M_{a}^{\Ext})_{i,j} \big)_{i=2,3} .
$$
\qed
\end{proof} 

Combining now  \eqref{divcurlv-e}, \eqref{roof1}, \eqref{roof2} and \eqref{azaz}
we end up with the identity:
\begin{equation*}
\big[  M_{g} +  M^{\Ext}_{a} \big] \underline{p}' 
+ m \vartheta' \begin{pmatrix} 0 \\ \underline{\ell}^{\perp} \end{pmatrix}
+ \begin{pmatrix} \underline{\ell}^{\perp} \cdot M^\Ext_{\flat} \underline{\ell}  \\ \vartheta' (M^\Ext_{\flat} \underline{\ell})^{\perp}\end{pmatrix} 
+ \vartheta' \underline{p} \times \begin{pmatrix} 0  \\ \mu  \end{pmatrix}
 =  \gamma \underline{p} \times  \begin{pmatrix} -1  \\ \zeta^{\perp}  \end{pmatrix} .
\end{equation*}
Going back to the original frame  we arrive at \eqref{ODE_ext}. This concludes the proof of Theorem~\ref{THEO-intro-without}. \par
Now in the remaining subsections we will make use of the reformulation of the system established in 
Theorem~\ref{THEO-intro-without} to  investigate the passage to the limit of the dynamics when the size of the solid goes to $0^{+}$ in both Cases (i) and (ii).
We will establish the counterparts of Theorem~\ref{theo:3} and Theorem~\ref{theo:1}.
\subsection{Scaling with respect to $\varepsilon$}

In this subsection we investigate the scaling of several objects with respect to $\varepsilon$ in the absence of outer boundary. 
We will treat at once both Cases (i) and (ii). Recall that the scaling of the inertia is given by the relations \eqref{mass-inertie2} and \eqref{mass-inertie-deception}, that is to say, 
Case (i) corresponds to $\alpha=0$ in \eqref{mass-inertie-deception} while Case (ii) occurs when $\alpha>0$.
We consider for $\varepsilon $ in $(0,1]$, the scaled solid occupying initially the domain $\mathcal S_{0,\varepsilon}$ given by \eqref{piti}.
We recall that $\gamma$ and the Cauchy data $p_0 = (\omega_0 , \ell_0 )$ are supposed to be independent of $\varepsilon$ whereas $q_0 =0$. 
We denote by $q_\varepsilon :=(\vartheta_\varepsilon, h_\varepsilon) $ in $\mathbb R \times \mathbb R^2 $ the solution to the ODE obtained from \eqref{ODE_ext} by rescaling the coefficients following the relations \eqref{piti}, \eqref{mass-inertie2} and \eqref{mass-inertie-deception}. \par
\ \par
\noindent
{\it Genuine inertia matrix and kinetic energy.}  Under the relation \eqref{mass-inertie2}  and \eqref{mass-inertie-deception}, the matrix of genuine inertia reads
\begin{equation} \label{Eq:Mgeps}
M_{g,\varepsilon} = \varepsilon^{\alpha} I_\eps  M_g   I_\eps, 
\end{equation}
where we define
\begin{equation} \label{Eq:Ieps}
I_\eps :={\rm diag}(\varepsilon,1,1) \text{ and } M_g :={\rm diag}(\mathcal J_1,m_1,m_1).
\index{AI1@$I_\eps$: diagonal matrix ${\rm diag}(\varepsilon,1,1)$}
\end{equation}
Therefore, it is natural to introduce the vector
\begin{equation} \label{def-hat}
{p}_\varepsilon  := I_{\varepsilon} q_{\varepsilon}'
 = \begin{pmatrix} \varepsilon \vartheta_\varepsilon' \\ h_\varepsilon'  \end{pmatrix}.
\index{AP1@${p}_\varepsilon$: solid velocity with rescaled angular velocity}
\end{equation}
In particular the solid kinetic energy of the solid can be recast as
\begin{equation} \label{RecastedEnergy}
\frac{1}{2} M_{g,\varepsilon} \, q_{\varepsilon}' \cdot q_{\varepsilon}' 
= \frac{1}{2}  \varepsilon^{\alpha} M_g \, p_{\varepsilon}  \cdot p_{\varepsilon} .
\end{equation}
Hence the natural counterpart to $h_\varepsilon'  $ for what concerns the angular velocity is rather $\eps \vartheta_\varepsilon'  $ than $\vartheta_\varepsilon' $. 
This can also be seen on the boundary condition \eqref{souslab}: when $x$ belongs to $\partial\mathcal S_\varepsilon (t)$, 
the term $ \vartheta_\varepsilon'  (x-h_\varepsilon)^\perp$ is of order $\eps \vartheta_\varepsilon'  $ and is added to $ h_\varepsilon' $. \par
\ \par
\noindent
{\it Added inertia matrix.} 
We recall that $M^{\Ext}_{a}$ is the added inertia matrix for ${\mathcal S}_{0}$ defined in \eqref{def-Maa}, while $M_{a,\vartheta}^{\Ext}$ is the one corresponding to ${\mathcal S}(q)=R(\vartheta)\mathcal S_0+h$ (with $q=(\vartheta,h)$), given in \eqref{g+a_ext}. 
The scaled version of these matrices are $M_{a,\varepsilon}^{\Ext}$ corresponding to the solid ${\mathcal S}_{0,\varepsilon}$ and $M_{a,\vartheta,\varepsilon}^{\Ext}$ corresponding to ${\mathcal S}_{\varepsilon}(q)$.  
\index{AM7@$M^{\Ext}_{{a},\varepsilon}$: added inertia of ${\mathcal S}_{0,\varepsilon}$ when $\Omega=\R^{2}$}
Then one easily sees after suitable scaling/rotation arguments that
\begin{equation} \label{ScalingAddedInertia}
M_{a,\varepsilon}^{\Ext} = \varepsilon^{2} I_\eps  M^{\Ext}_{a}  I_\eps
\text{ and } 
M_{a,\vartheta,\varepsilon}^{\Ext} = \varepsilon^{2} I_\eps  M^{\Ext}_{a,\vartheta}  I_\eps.
\end{equation}
So the dependence of the added inertia matrices with respect to $\varepsilon$ is quite simple. It will not be the case any longer in the case $\Omega$ bounded, where this dependence will be much more intricate; see Proposition  \ref{dev-added} below.\par
\ \par
\noindent
{\it Other terms and scaled equation.} The other terms in \eqref{ODE_ext} have also a simple scaling with respect to $\varepsilon$ in the unbounded case.
Concerning the Christoffel symbols \eqref{Christo-exter}, it is not hard to check that 
$\langle \Gamma^\Ext_{\vartheta,\varepsilon}, q_{\varepsilon}', q_{\varepsilon}' \rangle =
\varepsilon I_{\varepsilon} \langle \Gamma^{\Ext}_{\vartheta}, p_{\varepsilon}, p_{\varepsilon} \rangle$,
and concerning the force term \eqref{force-ext} that
${F}^\Ext_{\vartheta,\varepsilon} ( q_{\varepsilon}' ) 
= I_{\varepsilon} {F}^{\Ext}_{\vartheta} ({p}_{\varepsilon}).$ 
The counterpart of Equation~\eqref{ODE_ext} for a shrinking solid is therefore
\begin{equation} \label{fnormEXTE}
\Big( \eps^{\alpha}\,  M_g + \eps^2 \, M^{\Ext}_{a, \vartheta_\varepsilon} \Big)
 p'_\varepsilon 
+ \varepsilon \langle \Gamma^{\Ext}_{\vartheta_\varepsilon} , p_\varepsilon, p_\varepsilon\rangle 
 = F^{\Ext}_{\vartheta_\varepsilon} ( p_{\varepsilon} )  .
\end{equation}
As we can see here, a difficulty is that the term ${F}^{\Ext}_{\vartheta_\varepsilon}$ depends on  the unknown $\varepsilon \vartheta_\varepsilon$ through $ \vartheta_\varepsilon$, that is singularly.
This difficulty, which is still present in the general case, will be overcome by using some averaging effect; see  \eqref{2-g} and Lemma~\ref{pS} below. \par
\ \par
\noindent
{\it Rescaled energy.} Let, for any $\vartheta $ in $\R$ and for any $\eps $ in $(0,1)$,
\begin{equation} \label{TrueMatrix}
{M}_\vartheta  (\eps):=
\left\{
\begin{array}{ccc}
 M_g + \eps^{2-\alpha}\, M^{\Ext}_{a, \vartheta} &   \text{ if } & \alpha \leq 2 ,  \\
 {M}^{\Ext}_{a,\vartheta} + \eps^{\alpha -2}\, M_g &  \text{ if }   &   \alpha > 2 .
  \end{array}
\right.
\index{AM8@${M}_\vartheta  (\eps)$: Rescaled total inertia}
\end{equation}
Observe that
\begin{equation} \label{agreable}
\eps^{\alpha}\,  M_g + \eps^2 \, M^{\Ext}_{a, \vartheta}
= \eps^{\min(2,\alpha)} \,  M_{ \vartheta }(\varepsilon) ,
\end{equation}
and that this matrix is of order ${O}(1)$ with respect to $\varepsilon$.
Using \eqref{def-hat}, \eqref{RecastedEnergy} and \eqref{ScalingAddedInertia} 
 we obtain that the total energy is 
$\eps^{\min(2,\alpha)} \,  {\mathcal{E}}_\vartheta (\eps , p_\eps) $, where  for any $p $ in $\R^3$, 
\begin{equation} \label{TrueNRJ}
{\mathcal{E}}_\vartheta (\eps , p)  :=  \frac{1}{2}  {M}_\vartheta (\eps)  p \cdot p .
\end{equation}
The use of ${\mathcal{E}}_\vartheta (\eps , p)$ is motivated by the following elementary result.
\begin{lem} \label{coerc}
Suppose that ${\mathcal S}_{0}$ is not a disk or $\alpha \leq 2$.
There exists $K>0$ depending only on ${\mathcal S}_{0}$, $m_{1}$ and $\mathcal J_{1}$ such that, for any $(\eps ,\vartheta, p) $ in $(0,1) \times \R \times \R^3$, 
\begin{equation*}
K |p|^{2}_{\R^{3}}  \leq  {\mathcal{E}}_\vartheta (\eps, p)  \leq  K^{-1}  |p|^{2}_{\R^{3}} .
\end{equation*}
\end{lem}
Using the energy conservation provided by Proposition~\ref{Prop-conserv-no} 
we deduce that, unless ${\mathcal S}_{0}$ is a disk and $\alpha>2$, $(| p_{\varepsilon}| )_{\varepsilon \in (0,1)}$ is bounded uniformly on $ [0,+\infty)$ in both Cases (i) and (ii). 
(Obtaining such an a priori estimate when $\Omega$ is a bounded domain will be much more involved in particular in Case (ii).) \par
In the degenerate case when ${\mathcal S}_{0}$ is a disk and $\alpha>2$ (hence in Case (ii)), the problem is that $M^{\Ext}_{a, \vartheta}$ is both the principal part of $M_{\vartheta}(\varepsilon)$ and degenerate. However using \eqref{Kir1Disk} below, we can check that for $p=(\omega,\ell)$, 
$$M^{\Ext}_{a, \vartheta} p \cdot p = M^{\Ext}_{\flat}(\ell + \omega \zeta_{\vartheta}^{\perp}) \cdot (\ell + \omega \zeta_{\vartheta}^{\perp}),$$
where $M^{\Ext}_{\flat}$ is given in \eqref{def-Ma}. We obtain that
\begin{equation*}
{\mathcal{E}}_{\vartheta_{\varepsilon}} (\eps , p_{\varepsilon}) 
= \frac{1}{2} \left( \varepsilon^{\alpha-2} m_{1} |h'_{\varepsilon}|^{2} +\varepsilon^{\alpha} {\mathcal J}_{1} |\vartheta'_{\varepsilon}|^{2} 
+ M^{\Ext}_{\flat} h'_{c,\varepsilon} \cdot h'_{c,\varepsilon} \right),
\end{equation*}
where we recall that $h_{c,\varepsilon}=h_{\varepsilon} + \varepsilon R(\vartheta_{\varepsilon}) \zeta$. \par
We deduce in this case that the families $(h'_{c,\varepsilon})$, $(\varepsilon^{\frac{\alpha}{2}-1} h'_{\varepsilon})$ and $(\varepsilon^{\frac{\alpha}{2}} \vartheta'_{\varepsilon})$ for $\varepsilon $ in $(0,1)$,  are bounded uniformly on $ [0,+\infty)$. \par
\ \par
Now our goal is to pass to the limit in each term of \eqref{fnormEXTE}.
We  distinguish Case (i) (with $m_\varepsilon$, $\mathcal{J}_\varepsilon$ given by \eqref{mass-inertie}) and  Case (ii) (with $m_\varepsilon$, $\mathcal{J}_\varepsilon$ given by \eqref{mass-inertie2}) . 
%
%
%
%
%
%
%
\subsection{Case (i) without external boundary}
In Case (i) we show that $h_\varepsilon$ converges to the solution to a massive point vortex equation similar to \eqref{ODE-mass} with a vanishing Kirchhoff-Routh velocity. 
For $\gamma \neq 0$ we let for $t \geq 0$,
\begin{equation*}
\overline{h}  (t) :=  \frac{ m }{  \gamma } \left[R\left(\frac{ \gamma t }{ m} - \frac{ \pi }{ 2} \right) \ell_0 + \ell_{0}^{\perp} \right],
\end{equation*}
and $\overline{h}  (t) := \ell_{0} t$ for $\gamma=0$.
Of course $\overline{h}$ satisfies 
\begin{equation} \label{ODE-massEXTE}
m \overline{h}''  =\gamma \left( \overline{h}^\prime  \right)^\perp  \  \text{ for } t \geq 0, \ 
 \text{ with }\ 
\overline{h} (0) = 0 \ \text{ and } \ \overline{h}'(0)=\ell_0.
\end{equation}
The precise statement of our first convergence result is as follows.
\begin{thm} \label{theo:3EXTE}
Let $\mathcal S_0$ a subset of $\R^2$ as above, $p_0 $ in $ \mathbb R^3$ and  $(\gamma ,m ,\mathcal J) $ in $\mathbb R \times (0,+\infty) \times (0,+\infty)$.
Let $\overline{h}$ be the global solution to \eqref{ODE-massEXTE}.
For every $\varepsilon $ in $(0,1]$,
let $q_\varepsilon $ be  the global solution to \eqref{def-hat} and \eqref{fnormEXTE} where $m_\varepsilon$ and $\mathcal{J}_\varepsilon$ are given by \eqref{mass-inertie},
and with the initial data 
$q_\varepsilon (0) = 0$ and  $q_\varepsilon' (0)= p_0 .$
Then, for all $T> 0$, as $\varepsilon \to 0^{+}$, 
$h_\varepsilon \rightharpoonup \overline{h}$ in $W^{2,\infty}([0,T];\mathbb R^2)$ weak-$\star$
and $\varepsilon\vartheta_\varepsilon \rightharpoonup 0$ in $W^{2,\infty}([0,T];\mathbb R)$ weak-$\star$.
\end{thm}
\begin{proof}
Let $T> 0$.
Using that $(p_{\varepsilon} )_{\varepsilon \in (0, 1)}$ is bounded uniformly  on $ [0,T]$ and Equation~\eqref{fnormEXTE},
we deduce uniform $W^{2,\infty}$ bounds on $h_{\varepsilon}$ and $\varepsilon \vartheta_{\varepsilon}$. 
This entails the existence of a converging subsequence $(\varepsilon_{n} \vartheta_{\varepsilon_{n}}, h_{\varepsilon_{n}})$
of $(\varepsilon \vartheta_{\varepsilon}, h_{\varepsilon})$:
\begin{equation} \label{SousSuiteEXTE}
(\varepsilon_{n} \vartheta_{\varepsilon_{n}}, h_{\varepsilon_{n}}) \longrightharpoonup (\Theta_{*}, h_{*}) \ \text{ in } W^{2,\infty} \text{ weak--}  \star .
\end{equation}
We now aim at characterizing the limit.
First it is clear that the left hand side of  \eqref{fnormEXTE} converges to 
$ M_{g}  ( \Theta_{*}'' , h_{*}'' )^t$            
 in $ L^{\infty}   \text{ weak} -\star.$
Now consider the force term 
${F}^{\Ext}_{\vartheta} ({p}_{\varepsilon_{n}}) 
= \gamma 
({\zeta}_{\vartheta_{\varepsilon_{n}}} \cdot h'_{\varepsilon_{n}} ,
(h'_{\varepsilon_{n}})^\perp - \varepsilon_{n} \vartheta_{\varepsilon_{n}}' {\zeta}_{\vartheta_{\varepsilon_{n}}})^t$
as defined in  \eqref{force-ext}.
On the one hand using \eqref{def-zeta_vartheta} we see that
\begin{equation} \label{2-g}
\varepsilon_{n} \vartheta_{\varepsilon_{n}}' {\zeta}_{\vartheta_{\varepsilon_{n}}} \longrightharpoonup 0 \text{ in } W^{-1,\infty}  \text{ weak--}\star.
\end{equation}
On the other hand since the weak-$\star$ convergence in $W^{2,\infty}$ entails the strong $W^{1,\infty}$ one,  we deduce that
$h'_{\varepsilon_{n}} \rightarrow h_{*}'$ in $L^{\infty}$
and $ \big( h_{*}(0) , h_{*}'(0)  \big) =  \big( 0, \ell_{0}  \big) $.
Hence from the last two coordinates of System \eqref{fnormEXTE}
we deduce
$m_{1} h_{*}''= \gamma (h_{*}') ^{\perp}.$
Due to the uniqueness of the solution to Equation~\eqref{ODE-massEXTE}, this establishes that $h_{*} = \overline{h}$ and  the convergence as $\varepsilon \rightarrow 0^{+}$ of the whole sequence (not merely of a subsequence). 
This concludes the proof of  Theorem \ref{theo:3EXTE} for the part concerning the position of the center of mass.

We now turn to the part concerning the angle, that is the convergence of $\varepsilon \vartheta_{\varepsilon}$.
We will use the following lemma (see \cite{GLS}).
\begin{lem} \label{pS}
Let $(\omega_{n})_{n \in \N} $ in $W^{1,\infty}(0,T;\R)^{\N}$, $(\varepsilon_{n})_{n \in \N}$ in $(0,+\infty)^{\N}$ with $\varepsilon_{n} \rightarrow 0^{+}$ as $n \rightarrow +\infty$, such that
$\varepsilon_{n} \omega_{n} \longrightharpoonup \overline{\rho} $ in  $ W^{1,\infty}(0,T;\R) $ 
 weak-$\star $ as  $n \rightarrow +\infty$.
 Let $(w_{n})_{n \in \N} $ in $L^{\infty}(0,T;\C)^{\N}$ such that
$w_{n} \longrightarrow w$ in  $ L^{\infty}(0,T;\C) $  as $n \rightarrow +\infty$.
Let $\vartheta_{n}:=\int_{0}^{t} \omega_{n}$.
Suppose that, on $(0,T)$, 
$$\varepsilon_{n} \omega_{n}'(t) = \Re [w_{n}(t) \exp(- i \vartheta_{n}(t))].$$
Then $\overline{\rho}$ is constant on $[0,T]$.
\end{lem}
We consider the first coordinate of  the system \eqref{fnormEXTE} (recall that $\alpha=0$ here). By the uniform estimate of $p_{\varepsilon}$ in $W^{1,\infty}(0,T)$, the system can be written as:
\begin{equation} \nonumber
M_g p'_\varepsilon  = F^{\Ext}_{\vartheta_\varepsilon} ( p_{\varepsilon} )  + R_{\varepsilon},
\end{equation}
where the remainder $R_{\varepsilon}$ is of order ${\mathcal O}(\varepsilon)$ in $L^{\infty}(0,T)$.
We apply Lemma~\ref{pS} to $\omega_{n}=\vartheta_{\varepsilon_{n}}'$ and
$w_{n} := \frac{1}{{\mathcal J}_{1}} \left[ \gamma (\zeta \cdot h'_{\varepsilon_{n}}) - i \gamma (\zeta \cdot (h'_{\varepsilon_{n}})^{\perp}) + R_{\varepsilon_{n},1} e^{i \vartheta_{\varepsilon_{n}}}\right]$, where $R_{\varepsilon,1}$ denotes the first coordinate of $R_{\varepsilon}$.
The assumptions are satisfied thanks to \eqref{SousSuiteEXTE}. Hence $\Theta_{*}$ is constant; using the initial data, we infer that $\Theta_{*} =0$. This concludes the proof of Theorem \ref{theo:3EXTE}. 
\qed
\end{proof}
%
%
%
%
%
\subsection{Case (ii) without external boundary}
In this situation we show that the limit system is the point vortex system with a vanishing Kirchhoff-Routh velocity field, that is the trivial system $\overline{h}'=0$.
Our result is the following. 
%
%
%
\begin{thm} \label{theo:1EXTE}
Let $\mathcal S_0 $ a subset of $\R^2$ as before, different from a disk (or $\alpha \leq 2$), $\gamma \neq 0$, $p_0$ in $ \mathbb R^3 $ and  $(\gamma ,m ,\mathcal J) $ in $ \mathbb R \times (0,+\infty) \times (0,+\infty)$. 
For every $\varepsilon $ in $(0,1]$, let $q_\varepsilon $ be  the global solution to \eqref{def-hat} and \eqref{fnormEXTE} where $m_\varepsilon$ and $\mathcal{J}_\varepsilon$ are given by \eqref{mass-inertie2},
and with the initial data $q_\varepsilon (0) =0$ and  $q_\varepsilon' (0) =  p_0$.
Then, as $\varepsilon \rightarrow 0^{+}$, $h_\varepsilon \rightharpoonup 0$ in $W^{1,\infty}([0,T];\mathbb R^2)$ weak-$\star$ for all $T>0$.
\end{thm}
\begin{proof}
First, thanks to the energy estimate, $\eps M^{\Ext}_{a, \vartheta_\varepsilon}$ is bounded in $W^{1,\infty}$ uniformly with respect to $\varepsilon$.
Since moreover $M_{g}$ is constant and $( p_\varepsilon )'$ is bounded in $W^{-1,\infty}$, we can conclude that the first term in the  left hand side of \eqref{fnormEXTE}
converges to $0$  in  $W^{-1,\infty}$ (due to the extra powers of $\varepsilon$).
Next,  the second term of the left hand side converges to $0$ in $L^{\infty}$ since the terms inside the brackets are bounded. 
Now  the last two  coordinates of  the right hand side of the equation \eqref{fnormEXTE} correspond to the vector $\gamma (h_\varepsilon')^\perp - \gamma  \varepsilon \vartheta_\varepsilon' {\zeta}_{\vartheta_\varepsilon }$.
The last term converges weakly to $0$ in $W^{-1,\infty}$ as seen in Case (i); see \eqref{2-g}.
Hence we infer that ${h}'_{\varepsilon}$ converges weakly-$\star$ to $0$ in $W^{-1,\infty}$. 
Because ${h}'_{\varepsilon}$ is bounded, this convergences also occurs in $L^{\infty}$ weak-$\star$.
This is sufficient to deduce  the strong convergence of $h_{\varepsilon}$ toward some $h_{*}$   in $L^{\infty}$, and that
$h_{*}'=0$  and $ h_{*} (0) = 0 $.
\qed \end{proof} 
In the case of disk and for $\alpha>2$, we have the equivalent convergence for the center of the disk $h_{c,\varepsilon}$. It suffices to pass to the weak limit in \eqref{OID1} where, since there is no external boundary, the right hand side is simplified by $E_{\flat}=0$ and $\tilde{B}_1=-1$. Note that in that case $\tilde{M}_{\flat}$ is constant, so $\Gamma_{\flat}=0$ (while the ``original'' $\Gamma$ is not, since $M_{a}$ has one more dimension and a different set of variables). 
When the disk is homogeneous, $h_{\varepsilon}=h_{c,\varepsilon}$, so that $h_{\varepsilon}$ converges itself in $W^{1,\infty}$ weak-$\star$.
 We omit the details.

%
%
%
%
%
%
%
%
%
%
%
%
%
\section{Recasting the system: Proofs of Theorem~\ref{THEO-intro}, Lemma~\ref{LEM-pot-energy}, Proposition \ref{energy}, Proposition \ref{splitting-christo} and Theorem~\ref{THEO-intro-disk}}
\label{tard}
In this Section, we prove the results of Subsection~\ref{Subsec:FixedSize} concerning the dynamics of a solid with fixed size and mass, that is, Theorem~\ref{THEO-intro}, Lemma~\ref{LEM-pot-energy}, Proposition~\ref{energy}, Proposition~\ref{splitting-christo} and Theorem~\ref{THEO-intro-disk}.
\subsection{Splitting the proof of Theorem~\ref{THEO-intro}}
The pressure $\Pi$ can be recovered by means of Bernoulli's formula which is obtained by combining \eqref{EEE1} and  \eqref{irr}, and which reads:
\begin{equation} \label{EQ_bernoulli}
\nabla \Pi =-\left(\frac{\partial u}{\partial t}+\frac{1}{2}\nabla|u^2 |\right)\quad\text{in }\mathcal F(q).
\end{equation}
Given $q$, $p$ and $\gamma$, the pair $(u,\Pi)$ where $u$ is given by \eqref{EQ_irrotational_flow} and $\Pi$ by \eqref{EQ_bernoulli} yields a solution to \eqref{EEE1}, \eqref{E2}, \eqref{souslab} and \eqref{souslabis}. 
 \par
Now, equations \eqref{EqTrans} and  \eqref{EqRot}  can be summarized in the variational form:
\begin{equation} \label{bern_1}
mh'' \cdot \ell^\ast + \mathcal J \vartheta'' \omega^\ast
= \int_{\partial\mathcal S(q)} \Pi (\omega^\ast(x-h)^\perp + \ell^\ast) \cdot n \, {\rm d}s,
\end{equation}
for all $p^\ast :=(\omega^\ast,\ell^\ast) $ in $ \mathbb R \times \R^2$.
Let us associate with $(q,p^\ast)$ in $ \mathcal Q \times \R^3$ the potential vector field 
\begin{equation} \label{pot*}
u^\ast :=\nabla (\boldsymbol\varphi(q,\cdot)\cdot p^\ast) ,
\end{equation}
which is defined on $\mathcal F(q)$.
By definition of the 
Kirchhoff potentials, see \eqref{Kir-b} and \eqref{Kir-c}, 
we have the following equalities: 
$$
u^\ast\cdot n=\big(\omega^\ast  (\cdot-h)^\perp+ \ell^\ast  \big) \cdot n \quad \text{on }\partial\mathcal S(q)  \text{ and }
u^\ast\cdot n=0 \quad \text{on }\partial\Omega. 
$$
According to Bernoulli's formula  \eqref{EQ_bernoulli} and upon an integration by parts, identity \eqref{bern_1} can be turned into:
\begin{equation}
\label{bern_2}
mh''\cdot\ell^\ast+\mathcal J\vartheta''\omega^\ast=-\int_{\mathcal F(q)}\left(\frac{\partial u}{\partial t}+
\frac{1}{2}\nabla|u|^2\right)\cdot u^\ast{\rm d}x,
\end{equation}
for all $p^\ast :=(\omega^\ast,\ell^\ast) $ in $ \mathbb R \times \R^2.$
Therefore substituting the decomposition  \eqref{EQ_irrotational_flow} into \eqref{bern_2} we arrive at
\begin{multline}
\label{bern_2bis} 
mh''\cdot\ell^\ast+\mathcal J\vartheta''\omega^\ast 
+ \int_{\mathcal F(q)} \left( \frac{\partial u_{q'}}{\partial t}+\frac{1}{2}\nabla|u_{q'}|^2 \right) \cdot u^\ast{\rm d}x
= \\
- \int_{\mathcal F(q)} \left( \frac{1}{2}\nabla|u_\gamma|^2 \right) \cdot u^\ast{\rm d}x 
-
\int_{\mathcal F(q)}\left(\frac{\partial u_\gamma}{\partial t} + \nabla (u_{q'} \cdot u_{\gamma} ) \right) \cdot u^\ast{\rm d}x ,
\end{multline}
for all $p^\ast :=(\omega^\ast,\ell^\ast) $ in $ \mathbb R \times \R^2.$

Then the reformulation of Equations (\ref{SYS_full_system})-\eqref{irr} stated in Theorem~\ref{THEO-intro} will follow from the three following lemmas which deal respectively with the left hand side of \eqref{bern_2bis} and the two terms in the right hand side.
%
%
%
\begin{lem} \label{LEM_3}
For every smooth curve $t\mapsto q(t)$ in $\mathcal Q$ and every $p^\ast =(\omega^\ast,\ell^\ast)$ in $\mathbb R^3$, one has
\begin{multline} \label{lagrange_1}
mh''\cdot\ell^\ast+\mathcal J\vartheta''\omega^\ast
+ \int_{\mathcal F(q)} \left(\frac{\partial u_{q'}}{\partial t}+\frac{1}{2}\nabla|u_{q'}|^2\right)\cdot u^\ast{\rm d}x \\
= M(q)q'' \cdot p^\ast  + \langle \Gamma(q),q',q'\rangle \cdot p^\ast ,
\end{multline}
where $q=(\vartheta,h)$,
$u^\ast$ is given by \eqref{pot*}, $u_{q'}$ is given by  \eqref{DecompUa},
$M(q)$ and $\Gamma(q)$ are defined in \eqref{def_M_Gamma} and \eqref{def_vraiment-Gamma}.
\end{lem}
%
%
%
\begin{lem} \label{LEM_easy}
For every $q\in\mathcal Q$ and every $p^\ast =(\omega^\ast,\ell^\ast)\in\mathbb R^3$, 
\begin{equation} \label{rab}
- \int_{\mathcal F(q)}\left(\frac{1}{2}\nabla|u_\gamma|^2\right)\cdot u^\ast{\rm d}x=\gamma^2 E(q)\cdot p^\ast,
\end{equation}
where  $u^\ast$ is given by \eqref{pot*},  $u_{\gamma}$ is given by  \eqref{DecompUb}, $E(q)$ is defined in \eqref{E-def}. 
\end{lem}
%
%
%
\begin{lem} \label{LEM_4}
For every smooth curve $t\mapsto q(t)$ in $\mathcal Q$  and every $p^\ast =(\omega^\ast,\ell^\ast)$ in $\mathbb R^3$, 
\begin{equation} \label{last_one}
- \int_{\mathcal F(q)}\left(\frac{\partial u_\gamma}{\partial t}+
\nabla (u_{q'} \cdot u_{\gamma} )\right)\cdot u^\ast{\rm d}x=\gamma\big(q' \times B(q)\big)\cdot p^\ast,
\end{equation}
where $q=(\vartheta,h)$, $u^\ast$ is given by \eqref{pot*}, $u_{\gamma}$ and  $u_{q'}$ are given by \eqref{DecompU}, $B(q)$ is defined in \eqref{B-def}. 
\end{lem}
%
%
%
Lemma~\ref{LEM_easy} simply follows from an integration by parts.
If we consider Lemmas~\ref{LEM_3} and \ref{LEM_4} as granted, then gathering the results of Lemmas~\ref{LEM_3}, \ref{LEM_easy} and \ref{LEM_4} with \eqref{bern_2bis}, the conclusion of Theorem~\ref{THEO-intro} follows. \qed \par
\ \par
Lemmas~\ref{LEM_3} and \ref{LEM_4} are proved in Subsections~\ref{SecPL3} and \ref{SecPL4}, respectively.

%
%
%
%
%
%
\subsection{Reformulation of the potential part: Proof of Lemma \ref{LEM_3}}
\label{SecPL3}
We start with observing that 
\begin{equation} \label{trotriv}
mh'' \cdot \ell^\ast + \mathcal J \vartheta'' \omega^\ast =  M_g q''\cdot p^\ast .
\end{equation} 
Now in order to deal with the last term of the left hand side of \eqref{lagrange_1} we use a Lagrangian strategy.
For any  $q$ in $\mathcal Q$ and every $p  $ in $\mathbb R ^3$, let us define
\begin{equation} \label{cococo}
\mathcal{E}_1(q,p) :=\frac{1}{2}\int_{\mathcal F(q)}|  \nabla (\boldsymbol\varphi(q,\cdot)\cdot p) |^2{\rm d}x .
\end{equation} 
Thus $\mathcal{E}_1(q,q')$ denotes  the kinetic energy  of the potential part $u_{q'}$ of the flow.
It follows from classical shape derivative theory (see  \cite{CM,HP,M,SZ}) that $\mathcal{E}_1 $ is in $C^{\infty} \big(\mathcal Q \times  \mathbb R ^3 ; \lbrack 0,+\infty ) \big)$.

Now the crucial quantity here is the Euler-Lagrange function:
\begin{equation} \label{def-EL}
\mathcal E  \mathcal L  
 = \left(\frac{d}{dt}\frac{\partial \mathcal{E}_1 }{\partial p} (q,q') -\frac{\partial \mathcal{E}_1 }{\partial q} (q,q')\right)\cdot p^\ast ,
\end{equation}
associated with  any smooth curve $t\mapsto q(t)$ in $\mathcal Q$ and any $p^\ast\in\mathbb R ^3$.
\begin{lem} \label{LEM_1}
For any smooth curve $t\mapsto q(t)$ in $\mathcal Q$, for  every $p^\ast\in\mathbb R ^3$,
\begin{equation} \label{lagrange}
\int_{\mathcal F(q)}
\left(\frac{\partial u_{q'}}{\partial t}+\frac{1}{2}\nabla|u_{q'}|^2\right)\cdot u^\ast{\rm d}x =
\mathcal E  \mathcal L 
\end{equation}
where   $u_{q'}$ is given by  \eqref{DecompU}, $u^\ast$ is given by \eqref{pot*} and $\mathcal E  \mathcal L $ is given by   \eqref{def-EL}.
\end{lem}
%
%
%

\begin{proof}

Let us  make use of the following slight abuse of notations which simplifies the presentation of the proof of Lemma \ref{LEM_1}.
For a smooth function $I(q,p)$, where $(q,p)$ is running into $\mathcal Q \times \mathbb R ^3$, and  a smooth curve $t\mapsto q(t)$ in $\mathcal Q$ let us define
\begin{equation*}
\left( \frac{\partial}{\partial q} \frac{d}{dt} I(q,q') \right) (t) := \left(\frac{\partial}{\partial q} J\right) \big(q(t),q'(t),q''(t)\big), 
\end{equation*}
where, for $(q,p,r)$ in  $\mathcal Q \times \mathbb R ^3 \times \mathbb R ^3$,
\begin{equation} \label{chain}
J(q,p,r) = p  \frac{\partial I}{\partial q} (q,p) + r \frac{\partial I}{\partial p} (q,p) .
\end{equation}
Observe in particular that
\begin{align}
\nonumber
\frac{d}{dt} \big(   I (q(t),q'(t)) \big) &=  J \big(q(t),q'(t),q''(t)\big), 
\intertext{and}
 \label{ty}
\frac{d}{dt} \left( \frac{\partial I}{\partial q}   (q(t),q'(t)) \right) 
&=  \left( \frac{\partial}{\partial q} \frac{d}{dt} I(q,q') \right) (t) .
\end{align}
Below, in such circumstances, it will be comfortable to write 
\begin{equation*}
\frac{\partial}{\partial q}  \left[ J \big(q(t),q'(t),q''(t)\big)  \right] \text{ instead of }
\left(\frac{\partial J}{\partial q} \right) \big(q(t),q'(t),q''(t)\big) , 
\end{equation*}
and it will be understood that $J$ is  extended  from $\big(q(t),q'(t),q''(t)\big) $ to general $(q,p,r)$ by \eqref{chain}. \par
Let us also recall  the Reynolds transport formula (see \cite[pages 12--13]{Reynolds:1903}), that we shall use to express the derivative of functionals of the kind:
$$\mathcal I(q)=\int_{\mathcal F(q)} f(q,\cdot)\,{\rm d}x,$$
where $f(q,\cdot)$ is a smooth function defined in $\mathcal F(q)$. The formula reads, for every $q\in\mathcal Q$ and every $p^\ast$ in $\mathbb R^3$:
\begin{equation}
\label{eq:reynolds}
\frac{\partial\mathcal I}{\partial q}(q)\cdot p^\ast=\int_{\mathcal F(q)} \frac{\partial f}{\partial q}(q,\cdot)\cdot p^\ast\,{\rm d}x
+\int_{\partial\mathcal S(q)} f(q,\cdot)(u^\ast\cdot n)\,{\rm d}s,
\end{equation}
where  
$u^\ast$ is given by \eqref{pot*}.\par
We start with manipulating the first term of $\mathcal E  \mathcal L$. 
Upon an integration by parts we arrive at
$$
\frac{\partial \mathcal{E}_1 }{\partial p}\cdot p^\ast =\int_{\mathcal F(q)} u_{q'}\cdot u^\ast{\rm d}x
= \int_{\partial\mathcal S(q)}(\boldsymbol\varphi\cdot q' )(u^\ast\cdot n)\,{\rm d}s .
$$
Then, thanks to formula \eqref{eq:reynolds}, 
\begin{equation}
\label{theV}
\frac{\partial \mathcal{E}_1 }{\partial p} \cdot p^\ast 
=\frac{\partial}{\partial q}\left(\int_{\mathcal F(q)}\left(\boldsymbol\varphi\cdot q' \right)\,{\rm d}x\right)\cdot p^\ast-
\int_{\mathcal F(q)}\left(\frac{\partial \boldsymbol\varphi }{\partial q}\cdot q'\right)\cdot p^\ast{\rm d}x.
\end{equation}
We differentiate  \eqref{theV} with respect to $t$ to arrive at
\begin{multline} \label{PPP}
\frac{d}{dt}\frac{\partial \mathcal{E}_1 }{\partial p} \cdot p^\ast =\frac{d}{dt}\frac{\partial}{\partial q}\left(\int_{\mathcal F(q)}(\boldsymbol\varphi\cdot q' )\,{\rm d}x\right)\cdot p^\ast\\
-\frac{d}{dt}\left(\int_{\mathcal F(q)}\left(\frac{\partial \boldsymbol\varphi }{\partial q}\cdot q'\right)\cdot p^\ast{\rm d}x\right) .
\end{multline}
With the abuse of notations  mentioned above we exchange the derivatives involved in the first term of the right hand side, so that 
 the identity \eqref{PPP} can be rewritten as follows:
\begin{multline} \label{DD0}
\frac{d}{dt}\frac{\partial \mathcal{E}_1 }{\partial p} \cdot p^\ast 
= \frac{\partial}{\partial q} \frac{d}{dt}\left(\int_{\mathcal F(q)}(\boldsymbol\varphi\cdot q' )\,{\rm d}x\right)\cdot p^\ast \\
- \frac{d}{dt}\left(\int_{\mathcal F(q)}\left(\frac{\partial \boldsymbol\varphi }{\partial q}\cdot q'\right)\cdot p^\ast{\rm d}x\right).
\end{multline}
Moreover, using again the Reynolds transport formula \eqref{eq:reynolds}, we deduce that 
\begin{align} \nonumber
\frac{d}{dt}\left(\int_{\mathcal F(q)}(\boldsymbol\varphi\cdot q' )\,{\rm d}x\right) &= \int_{\mathcal F(q)}\frac{\partial}{\partial t}(\boldsymbol\varphi\cdot q' )\,{\rm d}x + \int_{\partial\mathcal S(q)}(\boldsymbol\varphi\cdot q' )(u_{q'}\cdot n)\,{\rm d}s \\
\label{DD1}
&=\int_{\mathcal F(q)}\frac{\partial}{\partial t}(\boldsymbol\varphi\cdot q' )\,{\rm d}x + 2 \mathcal{E}_1 (q,q'),
\end{align}
by integration by parts.

We infer from \eqref{DD0} and \eqref{DD1}, again with the abuse of notations  mentioned above, that:
\begin{multline}
\label{6}
 \mathcal E  \mathcal L 
=
\frac{\partial \mathcal{E}_1}{\partial q} 
+ \frac{\partial}{\partial q} 
\left[ \int_{\mathcal F(q)}
\frac{\partial}{\partial t}(\boldsymbol\varphi\cdot q' )\,{\rm d}x\right]\cdot p^\ast
\\-\frac{d}{dt}\left(\int_{\mathcal F(q)}\left(\frac{\partial \boldsymbol\varphi }{\partial q}\cdot q'\right)\cdot p^\ast{\rm d}x\right).
\end{multline}
Thanks to formula \eqref{eq:reynolds}, we translate the second term of the right hand side into
\begin{multline} \label{8}
\frac{\partial}{\partial q} \left[ \int_{\mathcal F(q)}
\frac{\partial}{\partial t}(\boldsymbol\varphi\cdot q' )\,{\rm d}x\right] \cdot p^\ast
= \int_{\mathcal F(q)}\frac{\partial}{\partial q}\left(\frac{\partial}{\partial t}(\boldsymbol\varphi\cdot q')\right)\cdot p^\ast\,{\rm d}x
\\+ \int_{\partial\mathcal S(q)}\frac{\partial}{\partial t}(\boldsymbol\varphi\cdot q' )(u^\ast\cdot n)\,{\rm d}s,
\end{multline}
and  the last one into 
\begin{align} \nonumber
\frac{d}{dt} \left( \int_{\mathcal F(q)} \left(\frac{\partial \boldsymbol\varphi }{\partial q}\cdot q'\right) \cdot p^\ast{\rm d}x\right)
&= \int_{\mathcal F(q)}\frac{\partial}{\partial t}\left( \left(\frac{\partial \boldsymbol\varphi }{\partial q}\cdot q'\right)\cdot p^\ast\right)\,{\rm d}x
\\ &\quad+\int_{\partial\mathcal S(q)}\left(\left(\frac{\partial \boldsymbol\varphi }{\partial q}\cdot q'\right)\cdot p^\ast\right)(u_{q'} \cdot n)\,{\rm d}s .
\end{align}
Using again \eqref{ty} for the first term and integrating by parts the second one, we arrive at 
\begin{multline} \label{9} 
\frac{d}{dt}\left(\int_{\mathcal F(q)} \left(\frac{\partial \boldsymbol\varphi }{\partial q}\cdot q'\right)\cdot p^\ast{\rm d}x\right)
\\=  \int_{\mathcal F(q)} \frac{\partial }{\partial q} \left( \frac{\partial}{\partial t} (\boldsymbol\varphi\cdot q' ) \right)\cdot p^\ast \,{\rm d}x
+\int_{\mathcal F(q)} \left(\frac{\partial u_{q'}}{\partial q}\cdot p^\ast\right) \cdot u_{q'} \,{\rm d}x .
\end{multline}
On the other hand, thanks to formula \eqref{eq:reynolds}, 
 \begin{equation}
\label{7}
\frac{\partial \mathcal{E}_1}{\partial q}  \cdot p^\ast=\int_{\mathcal F(q)}\left(\frac{\partial u_{q'}}{\partial q}\cdot p^\ast\right)\cdot u_{q'}\,{\rm d}x+\frac{1}{2}\int_{\partial\mathcal S(q)}|u_{q'}|^2(u^\ast\cdot n)\,{\rm d}s .
\end{equation}
Substituting the expressions  \eqref{8}, \eqref{9} and \eqref{7} into \eqref{6} and simplifying, we end up with:
\begin{equation*}
\mathcal E  \mathcal L 
= \int_{\partial\mathcal S(q)}\left[\frac{\partial}{\partial t}(\boldsymbol\varphi\cdot q' )+\frac{1}{2}|u_{q'}|^2\right](u^\ast\cdot n)\,{\rm d}s.
\end{equation*}
Upon an integration by parts, we recover \eqref{lagrange} and the proof of Lemma~\ref{LEM_1} is completed.
\qed 
\end{proof} 
%
%
%

Now, we observe that  the kinetic energy $\mathcal{E}_1(q,q')$ associated with the potential part of the flow, as defined by \eqref{cococo}, can be rewritten as:
\begin{equation} \label{E1}
\mathcal{E}_1 (q,q') =\frac{1}{2} M_a(q) q' \cdot q',
\end{equation}
where $ M_a (q)$ is  defined by \eqref{def_M_Gamma}.
This allows us to prove the following result.
%
%
\begin{lem} \label{niou}
For any smooth curve $t\mapsto q(t)$ in $\mathcal Q$, for  every $p^\ast\in\mathbb R ^3$, one has
\begin{equation} \label{full_RHS}
\mathcal E  \mathcal L  = M_a(q)q'' \cdot p^\ast + \langle \Gamma (q),q',q'\rangle\cdot p^\ast ,
\end{equation}
with $\mathcal E \mathcal L$ is given by \eqref{def-EL}, $ M_a (q)$ defined by \eqref{def_M_Gamma} and  $\Gamma (q)$ associated with $M(q)$ by the Christoffel formula \eqref{Christo2}-\eqref{Christo1}.
\end{lem}
%
%
%

%
\begin{proof}[Proof of Lemma~\ref{niou}.]
Using \eqref{E1} in the definition   \eqref{def-EL} of $ \mathcal E  \mathcal L $, we arrive at 
\begin{align*}
 \mathcal E  \mathcal L 
&= M_a (q)q''  \cdot p^\ast
+  \Big( \big( DM_a (q)\cdot q' \big) q' \Big) \cdot p^\ast
- \frac12 \Big( \big( DM_a  (q)\cdot p^\ast \big) q'\Big)  \cdot q' .
\end{align*}
Let us recall  the notation $(M_a)_{i,j}^{k} (q)$   in \eqref{cri}
and let the notation  $\sum$ stand for $\sum_{1 \leq i,j,k \leq 3 } $ for the rest of this proof.
Then
\begin{align*}
 \mathcal E  \mathcal L 
= M_a \, q''  \cdot p^\ast
+  \sum  (M_a)_{i,j}^{k}  \,  q'_{k} q'_{j}  p^\ast_{i} 
- \frac12  \sum  (M_a)_{i,j}^{k}  \,  q'_{i} q'_{j}  p^\ast_{k} .
\end{align*}
A symmetrization of the second term of the right hand side  above leads to 
\begin{align*}
 \mathcal E  \mathcal L 
&= M_a  \,  q''  \cdot p^\ast
\\ &\quad + \frac12 \Big( \sum (M_a)_{i,j}^{k}  \,    q'_{k} q'_{j}  p^\ast_{i} 
+ \sum  (M_a)_{i,k}^{j}  \,  q'_{k} q'_{j}  p^\ast_{i} 
-  \sum (M_a)_{i,j}^{k}  \,  q'_{i} q'_{j}  p^\ast_{k} \Big) ,
\end{align*}
Then the result follows by exchanging $i$ and $k$ in the last sum.
\qed 
\end{proof} 
Finally Lemma~\ref{LEM_3} straightforwardly results from the combination of \eqref{trotriv}, Lemmas \ref{LEM_1} and \ref{niou}. This concludes the proof of Lemma \ref{LEM_3}.
\subsection{Reformulation of the cross part: Proof of Lemma~\ref{LEM_4}}
\label{SecPL4}
We now turn to the proof of Lemma~\ref{LEM_4}.
Write $q:=(\vartheta,h)$ and recall that $\psi(q,\cdot)$ and $C(q)$ are defined in \eqref{def_stream} and that 
$$u_\gamma :=\gamma\nabla^\perp\psi(q,\cdot),  \quad 
u_{q'} :=\nabla ( \boldsymbol\varphi(q,\cdot) \cdot q' )   \text{ and }u^\ast :=\nabla ( \boldsymbol\varphi(q,\cdot) \cdot p^\ast) .$$ 
Upon an integration by parts, since $\psi (q, \cdot)$ vanishes on $\partial \Omega$, 
\begin{equation} \label{cla} 
\int_{\mathcal F(q)}\left(\frac{\partial u_\gamma}{\partial t}\right)\cdot u^\ast{\rm d}x
= - \gamma\int_{\partial\mathcal S(q)}\left(   \frac{\partial }{\partial t} \big( \psi (q, \cdot)\big)  \right)
\left(\frac{\partial\boldsymbol\varphi}{\partial \tau}(q,\cdot)\cdot p^\ast \right) \, {\rm d}s . 
\end{equation}
\begin{lem} \label{wehave}
On $\partial\mathcal S(q)$, 
\begin{equation} \label{tderiv}
\frac{\partial }{\partial t} \big( \psi(q, \cdot)\big) 
= - \frac{\partial\psi}{\partial n}(q,\cdot) \left(\frac{\partial\boldsymbol\varphi}{\partial n}(q, \cdot) \cdot q'\right) 
+ DC(q)\cdot q' .
\end{equation}
\end{lem}
\begin{proof}
We start with the observation that 
\begin{equation}
\label{tderiv1}
 \frac{\partial}{\partial t} \big( \psi(q, \cdot)\big)   =  \frac{\partial\psi}{\partial q}(q,\cdot) \cdot q' 
 \end{equation}
 is the derivative of the function $\psi(q, \cdot)$ when the boundary $\partial\mathcal S(q)$ undergoes a rigid displacement of velocity 
$w=\omega(x-h)^\perp+\ell$ where $q'=(\omega,\ell)$. 
Then we differentiate the identity:
$$\psi(q,R(\vartheta)X+h)=C(q),\qquad   \text{ for  } X\in\partial\mathcal S_0,$$
with respect to $q$ in the direction $q'$. We obtain:
\begin{equation} \label{tderiv2}
\frac{\partial\psi}{\partial q}(q,x)\cdot q'+ \nabla\psi(q,x)\cdot w=DC(q)\cdot q',\qquad   \text{ for  } x\in\partial\mathcal S(q).
\end{equation}
Since $\psi(q,\cdot)$ is constant on $\partial\mathcal S(q)$, its tangential derivative is zero. 
Besides, on $\partial\mathcal S(q)$ we have $w\cdot n=u_{q'}\cdot n= \frac{\partial\boldsymbol\varphi}{\partial n}(q,\cdot) \cdot q'$ so 
\begin{equation} \label{tderiv3}
\nabla \psi(q,x) \cdot w = \frac{\partial\psi}{\partial n}(q,x) 
\left( \frac{\partial\boldsymbol\varphi}{\partial n}(q,x) \cdot q' \right)  \text{ for  } x\in\partial\mathcal S(q).
\end{equation}
Gathering \eqref{tderiv1}, \eqref{tderiv2} and \eqref{tderiv3} we
obtain \eqref{tderiv}.
\qed 
\end{proof} 
Substituting now \eqref{tderiv} into \eqref{cla}, we arrive at 
\begin{equation} \label{circu_1} 
\int_{\mathcal F(q)}\left(\frac{\partial u_\gamma}{\partial t}\right)\cdot u^\ast{\rm d}x 
=\gamma\int_{\partial\mathcal S(q)}
\frac{\partial \psi}{\partial n}\left(\frac{\partial\boldsymbol\varphi}{\partial n}\cdot q'\right)\left(\frac{\partial\boldsymbol\varphi}{\partial \tau}\cdot p^\ast\right) \, {\rm d}s.
\end{equation}
On the other hand, by an integration by parts, 
\begin{equation} \label{circu_2}
\int_{\mathcal F(q)}\nabla(u_{q'}\cdot u_\gamma)\cdot u^\ast{\rm d}x = \gamma\int_{\partial\mathcal S(q)}(u_{q'}\cdot u_\gamma)(u^\ast\cdot n) \, {\rm d}s .
\end{equation}
Adding \eqref{circu_1} and \eqref{circu_2}  and using that $u_\gamma=-\gamma \frac{\partial\psi}{ \partial n} \tau$ on $\partial\mathcal S(q)$, we arrive at 
\begin{multline*}
\int_{\mathcal F(q)}\left(\frac{\partial u_\gamma}{\partial t}+\nabla(u_{q'}\cdot u_\gamma)\right)\cdot u^\ast{\rm d}x = \\
\gamma\int_{\partial\mathcal S(q)} \frac{\partial \psi}{\partial n}\left[\left(\frac{\partial\boldsymbol\varphi}{\partial n}\cdot q'\right) 
\left(\frac{\partial\boldsymbol\varphi}{\partial \tau}\cdot p^\ast\right)
- \left(\frac{\partial\boldsymbol\varphi}{\partial n}\cdot p^\ast\right)\left(\frac{\partial\boldsymbol\varphi}{\partial \tau}\cdot q' \right)\right] \, {\rm d}s,
\end{multline*}
which is \eqref{last_one}. This ends the proof of Lemma~\ref{LEM_4}. 
%
%
%
%
%
%
%
%
%
\subsection{Decomposition of the Christoffel symbols: Proof of Proposition~\ref{splitting-christo}}
\label{Sect-splitting-christo}

This subsection is devoted to the proof of Proposition \ref{splitting-christo}.
We will use the matrix ${\underline{M}}_a(q)$  given by
\begin{equation} \label{massea}
{\underline{M}}_a(q) := \mathcal R(\vartheta)^t \, \, M_a(q)\mathcal R(\vartheta) ,
\end{equation}
where we recall that $\mathcal R(\vartheta)$ is the rotation matrix defined by \eqref{anum}.
We also introduce the following real-valued functions depending on the variables $q=(\vartheta, h) \in \mathcal Q$, $p$ in ${\mathbb R}^3$ and $p^\ast$ in ${\mathbb R}^3$:
\begin{align*}
\Xi_{1} (q,p,p^\ast ) &:=
\left[\left(\frac{\partial {\underline{M}}_a}{\partial q}(q)\cdot p^\ast \right)\mathcal R(\vartheta)^t \, p\right]\cdot\mathcal R(\vartheta)^t \, p , \\
\Xi_{3} (q,p,p^\ast ) &:=
\left[\left(\frac{\partial {\underline{M}}_a}{\partial q}(q)\cdot p\right)\mathcal R(\vartheta)^t \, p\right]\cdot\mathcal R(\vartheta)^t \, p^\ast .
\end{align*}
The indices in $\Xi_{1}$ and $\Xi_{3}$ above are chosen in order to recall the position where $p^\ast$ appears (the ``first $p$'' in the expression of $\Xi_{1}$ is $p^{\ast}$ and so on.)
Similarly we define, for $p= (\omega,\ell)$ and $p^\ast = (\omega^\ast ,\ell^\ast) $ the functions: 
\begin{multline*}
\Upsilon_{1} (q,p,p^\ast )  :=\omega^\ast
M_a(q) p \cdot \begin{pmatrix}0\\  
\ell^\perp \end{pmatrix} , \quad
 \Upsilon_{2} (q,p,p^\ast ) :=\omega
M_a(q)p^\ast \cdot \begin{pmatrix}0\\  
\ell^\perp \end{pmatrix} , \\
\text{ and } \  \Upsilon_{3} (q,p,p^\ast ) :=\omega
M_a(q)p \cdot \begin{pmatrix} 0 \\  
\ell^{\ast\perp} \end{pmatrix} .
\end{multline*}
The proof of Proposition~\ref{splitting-christo} is then split into the proof of the following three ancillary lemmas.
%
%
%
%
\begin{lem} \label{niou-split}
For any $(q,p,p^\ast)$ in $\mathcal Q \times \mathbb R ^3 \times \mathbb R ^3$, 
\begin{multline} \label{full_full}
\langle \Gamma (q),p,p\rangle\cdot p^\ast
= \Upsilon_{1} (q,p,p^\ast ) 
- \Upsilon_{2} (q,p,p^\ast ) 
- \Upsilon_{3} (q,p,p^\ast )  \\+ \Xi_3 (q,p,p^\ast )
- \frac{1}{2}\Xi_1 (q,p,p^\ast ) .
\end{multline}
\end{lem}

\begin{lem} \label{nioubis}
For any $(q,p,p^\ast)$ in $\mathcal Q \times \mathbb R ^3 \times \mathbb R ^3$, 
\begin{equation*}
\Upsilon_{1} (q,p,p^\ast ) - \Upsilon_{2} (q,p,p^\ast ) - \Upsilon_{3} (q,p,p^\ast ) 
= \langle \Gamma^{{\rm rot}} (q),p,p\rangle\cdot p^\ast ,
\end{equation*}
where $\Gamma^{{\rm rot}} (q)$ is defined in \eqref{zozo}.
\end{lem}
\begin{lem} \label{LEM_2}
For any $(q,p,p^\ast)$ in $\mathcal Q \times \mathbb R ^3 \times \mathbb R ^3$, 
\begin{equation} \label{expre_quadratic}
\Xi_3 (q, p, p^\ast) - \frac{1}{2}\Xi_1 (q,p,p^\ast )
= \langle \Gamma^{\partial \Omega} (q),p,p\rangle\cdot p^\ast,
\end{equation}
where $\Gamma^{\partial \Omega}( q)$ is defined in \eqref{Def_Gamma}.
\end{lem}

Before proving these three lemmas, we introduce a few notations.
For every $q=(\vartheta,h)\in\mathcal Q$, we define the change of variables $y = R(\vartheta)^t(x-h)$, the domains
$$
{\underline{\Omega}}(q) :=R(\vartheta)^t(\Omega-h),  \quad {\underline{\mathcal F}}(q) :=R(\vartheta)^t(\mathcal F(q)-h)={\underline{\Omega}}(q)\setminus {\mathcal S}_0 ,
$$
 and the functions ${\underline{\varphi}}_i(q,y)$ such that 
 $${\underline{\boldsymbol\varphi}}(q,\cdot) :=({\underline{\varphi}}_1(q,\cdot),{\underline{\varphi}}_2(q,\cdot),{\underline{\varphi}}_3(q,\cdot)) ,$$
satisfies 
$${\underline{\boldsymbol\varphi}}(q,y) =
\mathcal R(\vartheta)^t{\boldsymbol\varphi}(q,x),\qquad y\in{\underline{\mathcal F}}(q) .
$$
For every $j=1,2,3$, the functions ${\underline{\varphi}}_j(q,\cdot)$ are harmonic in ${\underline{\mathcal F}}(q)$ and satisfy:
\begin{subequations}
\begin{alignat}{3}
\frac{\partial{\underline{\varphi}}_j}{\partial n}(q,y)&=\begin{cases}
y^\perp\cdot n&j=1;\\
n_{j-1}&j=2,3\\
\end{cases}&\quad&
\text{ on }\partial\mathcal S_0,\\
\label{trala}
\frac{\partial{\underline{\varphi}}_j}{\partial n}(q,y)&=0\quad(j=1,2,3)&&\text{ on }\partial{\underline{\Omega}}(q).
\end{alignat}
\end{subequations}
Therefore the matrix ${\underline{M}}_a(q)$ defined in  \eqref{massea} can be recast as 
\begin{equation}
{\underline{M}}_a(q) =\int_{\partial\mathcal S_0}{\underline{\boldsymbol\varphi}}(q)\otimes\frac{\partial{\underline{\boldsymbol\varphi}}}{\partial n}(q) \, {\rm d}s .
\end{equation}

\begin{proof}[Proof of Lemma \ref{niou-split}.]
Let  $t\mapsto q(t)$  be a smooth curve  in $\mathcal Q$, defined in a neighborhood of $0$ such that $q(0) = q$ and $q'(0)= p$. 
On the one hand, it follows from  \eqref{E1} and \eqref{massea} that for every $p^\ast=(\omega^\ast,\ell^\ast)\in\mathbb R^3$,  
\begin{align}
\frac{d}{dt}\frac{\partial \mathcal{E}_1}{\partial p}\cdot p^\ast
&=M_a(q)p' \cdot p^\ast - \Upsilon_{2} (q,p,p^\ast ) 
+ \Xi_3 (q,p,p^\ast ) - \Upsilon_{3} (q,p,p^\ast )  .
\label{explict_1}
\end{align}
On the other hand, 
\begin{equation} \label{explict_2}
\frac{\partial \mathcal{E}_1}{\partial q}\cdot p^\ast
= -\Upsilon_{1} (q,p,p^\ast )  + \frac{1}{2}\Xi_1 (q,p,p^\ast ).
\end{equation}
Gathering \eqref{def-EL}, \eqref{full_RHS}, \eqref{explict_1} and \eqref{explict_2},  we deduce \eqref{full_full}.
\qed
\end{proof} 
%
%
%

\begin{proof}[Proof of Lemma~\ref{nioubis}.]
On the one hand, since $M(q)$ is  symmetric, 
\begin{equation} \label{rel_2}
\Upsilon_{2} (q,p,p^\ast )  = \omega M_a(q)\begin{pmatrix} 0 \\ \ell^{\perp} \end{pmatrix} \cdot p^\ast.
\end{equation}
On the other hand, with the notation of \eqref{onoublie}, 
\begin{equation} \label{rel_3}
- \Upsilon_{3} (q,p,p^\ast ) 
+ \Upsilon_{1} (q,p,p^\ast ) 
=\begin{pmatrix}
-P_a^\perp\cdot \ell\\
\omega  P_a^\perp
\end{pmatrix}\cdot p^\ast =-\left[\begin{pmatrix}0\\
P_a \end{pmatrix}\times
p\right]\cdot p^\ast .
\end{equation}
Now gathering \eqref{rel_2} and \eqref{rel_3} we conclude the proof of Lemma~\ref{nioubis}.
\qed
\end{proof}

\begin{proof}[Proof of Lemma \ref{LEM_2}.]

We will use the following lemma.
%
\begin{lem} \label{LEM_forme}
For $i,j=1,2,3$, for every  $\hat q = (\hat\vartheta,\hat h) $ in $\mathcal Q$ and every $p^\ast =(\omega^\ast,\ell^\ast)$ in $\mathbb R ^3$, 
\begin{equation} \label{ovv-eq}
\frac{\partial}{\partial q}\left.\left(\int_{{\underline{\mathcal F}}(q)} \nabla {\underline{\varphi}}_i(q) \cdot \nabla{\underline{\varphi}}_j(q)
{\rm d}y \right) \right|_{q=\hat q} \cdot p^\ast
= - \int_{\partial{\underline{\Omega}}(\hat q)} \frac{\partial{\underline{\varphi}}_i}{\partial \tau}
\frac{\partial{\underline{\varphi}}_j}{\partial \tau} (\hat w^\ast\cdot n) \, {\rm d}s ,
\end{equation}
with 
\begin{equation} \label{HatwAst}
\hat w^\ast (\hat q ,  p^\ast ,\cdot) :=-\omega^\ast \cdot^\perp-R(\hat\vartheta)^t \, \ell^\ast.
\end{equation}
\end{lem}
%
%
Let us first take Lemma~\ref{LEM_forme} for granted and conclude the proof of Lemma~\ref{LEM_2}.
Applying the change of variables $x=R(\hat\vartheta)y+\hat h$, we deduce that:
\begin{align*}
\frac{\partial{\underline{M}}_a}{\partial q}(\hat q)\cdot p^\ast 
&= \left(\frac{\partial}{\partial q}\left.\left(\int_{{\underline{\mathcal F}}(q)}\nabla{\underline{\varphi}}_i(q)\cdot\nabla{\underline{\varphi}}_j(q){\rm d}y\right)\right|_{q=\hat q}\cdot p^\ast \right)_{1\leq i,j\leq 3} \\
&=-\left(\int_{\partial{\underline{\Omega}}(\hat q)}\frac{\partial{\underline{\varphi}}_i}{\partial \tau}\frac{\partial{\underline{\varphi}}_j}{\partial \tau}(\hat w^\ast\cdot n) \, {\rm d}s\right)_{1\leq i,j\leq 3} \\
&=\mathcal R(\hat\vartheta)^t \, \left(\int_{\partial\Omega}\frac{\partial\varphi_i}{\partial \tau}(\hat q)\frac{\partial\varphi_j}{\partial \tau}(\hat q)( w^\ast\cdot n) \, {\rm d}s\right)_{1\leq i,j\leq 3}\mathcal R(\hat\vartheta) ,
\end{align*}
with $ w^\ast :=\omega^\ast(x-h)^\perp +\ell^\ast$. 
Therefore, applying this with $(\hat q , p^\ast ) = (q,  p^\ast ) $ and with $(\hat q , p^\ast ) = (q,  p ) $,  we arrive at
\begin{equation*}
\Xi_3 (q,p,p^\ast ) -\frac{1}{2}\Xi_1 (q,p,p^\ast )
= \sum \left[  \Lambda^l_{ij}(q)p_lp_j-\frac{1}{2}\Lambda^i_{jl}(q)p_lp_j \right] p^\ast_i ,
\end{equation*}
where as before, the notation  $\sum$ stands for $\sum_{1 \leq i,j,l \leq 3 }$ in this proof and where
for $k=1,2,3,$ the matrices $\Lambda^k(\hat q)$ are given by:
\begin{equation*}
\Lambda^k(\hat q) = \left(\int_{\partial\Omega}\frac{\partial\varphi_i}{\partial \tau}(\hat q)\frac{\partial\varphi_j}{\partial \tau}(\hat q) K_k (\hat q,\cdot) \, {\rm d}s\right)_{1\leq i,j\leq 3}.
\end{equation*}
We recall that $K_1(\hat q,\cdot)=(x-\hat h)^\perp\cdot n$ and $K_j(\hat  q,\cdot)=n_{j-1}$ ($j=2,3$) on $\partial\Omega$. 
Now the quadratic form in $p$ can be symmetrized  as follows:
\begin{equation*}
\sum \left[\Lambda^l_{ij}(q)p_lp_j-\frac{1}{2}\Lambda^i_{jl}(q)p_lp_j\right]p^\ast_i
=\frac{1}{2} \sum \left[\Lambda^l_{ij} +\Lambda_{il}^j-\Lambda_{jl}^i\right](q) \, p_l \,p_j \, p^\ast_i,
\end{equation*}
which leads to \eqref{expre_quadratic}. This concludes the proof of Lemma~\ref{LEM_2}.
\qed 
\end{proof} 
%
%
%
%
Now we give the proof of Lemma~\ref{LEM_forme}.
\begin{proof}[Proof of Lemma~\ref{LEM_forme}.]
The quantity 
\begin{equation}
\label{shape_derive}
\frac{\partial}{\partial q}\left.\left(\int_{{\underline{\mathcal F}}(q)}\nabla{\underline{\varphi}}_i(q)\cdot\nabla{\underline{\varphi}}_j(q){\rm d}y\right)\right|_{q=\hat q}\cdot p^\ast 
\end{equation}
can be interpreted as the time derivative of the quantity between parentheses, when the outer boundary $\partial{\underline{\Omega}}(\hat q)$ undergoes a rigid displacement of velocity $w^\ast$.

More precisely,  denote by $\chi$ a cut-off function, compactly supported, valued in $[0,1]$ and such that $\chi=1$ in a  neighborhood of $\partial{\underline{\Omega}}(\hat q)$ and $\chi=0$ in a neighborhood of $\mathcal S_0$. Then, denote by $\xi(t,\cdot)$ the flow associated with the ODE:
\begin{equation} \label{ODExi}
\xi'(t,y)=\chi(\xi(t,y))\hat w^\ast(t,\xi(t,y)),\quad   \text{ for } t>0 ,   \text{  with }
\xi(0,y)=y,
\end{equation}
where $\hat w^\ast$ was introduced in \eqref{HatwAst}. Notice that: 
\begin{equation*}
\xi(t,y)=R(-t\omega^\ast)y-tR(\hat\vartheta)^t \, \ell^\ast,
\end{equation*}
in a neighborhood of $\partial{\underline{\Omega}}(\hat q)$ and $\xi(t,y)=y$ in a neighborhood of $\partial\mathcal S_0$.

For every $t$ small, define
$${\underline{\Omega}}_t :=\xi(t,{\underline{\Omega}}(\hat q))  \text{ and }{\underline{\mathcal F}}_t:=\xi(t,{\underline{\mathcal F}}(\hat q)).$$
For $j=1,2,3$, let ${\underline{\varphi}}_{j,t}$ be harmonic in $\underline{\mathcal F}_t$ and satisfy the Neumann boundary conditions:
\begin{subequations}
\begin{alignat}{3}
\frac{\partial{\underline{\varphi}}_{j,t}}{\partial n}&=\begin{cases}
(y-\hat h)^\perp\cdot n&j=1;\\
n_{j-1}&j=2,3\\
\end{cases}&\quad&
\text{ on }\partial\mathcal S_0,\\
\frac{\partial{\underline{\varphi}}_{j,t}}{\partial n}&=0\quad(j=1,2,3)&&\text{ on }\partial{\underline{\Omega}}_t.
\end{alignat}
\end{subequations}
With these settings, the quantity \eqref{shape_derive} can be rewritten as:
\begin{equation*}
\frac{d}{dt} \left. \left(\int_{\underline{\mathcal F}_t} \nabla{\underline{\varphi}}_{i,t}\cdot\nabla{\underline{\varphi}}_{j,t} \, {\rm d}x\right)\right|_{t=0}.
\end{equation*}
According to the Reynolds transport formula \eqref{eq:reynolds}, it can be expanded as follows:  
\begin{multline} \label{reynolds}
\frac{d}{dt} \left.\left(\int_{\underline{\mathcal F}_t}\nabla{\underline{\varphi}}_{i,t}\cdot\nabla{\underline{\varphi}}_{j,t} \, {\rm d}x\right)\right|_{t=0}
=
\int_{{\underline{\mathcal F}}(\hat q)}\nabla{\underline{\varphi}}'_i\cdot\nabla{\underline{\varphi}}_j \, {\rm d}x
+ \int_{{\underline{\mathcal F}}(\hat q)}\nabla{\underline{\varphi}}_i\cdot\nabla{\underline{\varphi}}'_j \, {\rm d}x \\
+ \int_{\partial{\underline{\Omega}}(\hat q)}(\nabla{\underline{\varphi}}_i\cdot\nabla{\underline{\varphi}}_j)(\hat w^\ast\cdot n) \, {\rm d}s,
\end{multline}
where 
$${\underline{\varphi}}'_j :=  \frac{\partial  {\underline{\varphi}}_{j,t}}{\partial t} \Big|_{t=0} .$$

\begin{lem} \label{SUBLEM_forme}
For $j=1,2,3$, the function ${\underline{\varphi}}'_j$ is  harmonic in ${\underline{\mathcal F}}(\hat q)$, satisfies 
\begin{equation} \label{bd-int}
\frac{\partial{\underline{\varphi}}_j'}{\partial n}=0\text{ on }\partial\mathcal S_0
\end{equation}
and 
\begin{equation} \label{bd-ext}
\frac{\partial{\underline{\varphi}}_j'}{\partial n}=
\frac{\partial}{\partial \tau} 
\Big( 
(\hat w^\ast\cdot n) \frac{\partial {\underline{\varphi}}_j}{\partial \tau} 
\Big)
\text{ on }\partial{\underline{\Omega}}(\hat q).
\end{equation}
\end{lem}
%
%
Once Lemma~\ref{SUBLEM_forme} is proved, \eqref{ovv-eq} follows from \eqref{reynolds} and an integration by parts.
\qed 
\end{proof} 
\begin{proof}[Proof of Lemma~\ref{SUBLEM_forme}.]
The function  ${\underline{\varphi}}'_j$ is defined and harmonic in ${\underline{\mathcal F}}(\hat q)$ and the boundary conditions are obtained by differentiating with respect to $t$, at $t=0$,  the identities on the fixed boundaries $\partial\mathcal S_0$ and $\partial{\underline{\Omega}}(\hat q)$:
\begin{subequations}
\begin{alignat}{3}
\frac{\partial{\underline{\varphi}}_{j,t}}{\partial n} \big(\xi(t,\cdot)\big)&=\begin{cases}
(y-\hat h)^\perp\cdot n&j=1; \\
n_{j-1}&j=2,3; \\
\end{cases}&\quad&
\text{ on }\partial\mathcal S_0, \\
\label{ccla}
\frac{\partial{\underline{\varphi}}_{j,t}}{\partial n}\big(\xi(t,\cdot)\big)&=0\quad(j=1,2,3)&&\text{ on }\partial{\underline{\Omega}}(\hat q).
\end{alignat}
\end{subequations}
Let us focus on the proof of \eqref{bd-ext}, the proof of \eqref{bd-int} being quite similar with some simplifications.
On $\partial{\underline{\Omega}}(\hat q)$, using \eqref{ODExi}, we can write that:
\begin{equation} \label{ddtnablavarphi}
\frac{d}{dt} \left( \frac{\partial{\underline{\varphi}}_{j,t}}{\partial n} \big(\xi(t,\cdot)\big) \right)|_{t=0}=
\frac{\partial{\underline{\varphi}}_j'}{\partial n} 
 + \langle D^2{\underline{\varphi}}_j,\hat w^\ast,n\rangle + \omega^\ast
\frac{\partial{\underline{\varphi}}_j}{\partial \tau},
\end{equation}
where the last term is obtained by noticing that $n(\xi(t,\cdot))=R(-t\omega^\ast)n$.
Therefore by taking the derivative  at $t=0$ of the identity \eqref{ccla} and using \eqref{ddtnablavarphi} we obtain
\begin{align}
\nonumber
\frac{\partial{\underline{\varphi}}_j'}{\partial n}  &=
- \langle D^2{\underline{\varphi}}_j,\hat w^\ast,n\rangle 
- \omega^\ast
\frac{\partial{\underline{\varphi}}_j}{\partial \tau} 
\\ \label{tro}
 &= - \frac{\partial^2{\underline{\varphi}}_j}{\partial n^2}(\hat w^\ast\cdot n)
-\langle D^2{\underline{\varphi}}_j,\tau,n\rangle (\hat w^\ast\cdot\tau)
- \omega^\ast
\frac{\partial{\underline{\varphi}}_j}{\partial \tau} ,
\end{align}
by decomposing $w^\ast$ into normal and tangential parts.
Taking the tangential derivative of the identity \eqref{trala} and using the relation $\frac{\partial n}{\partial \tau}=\kappa\tau$ with $\kappa$  the local curvature of $\partial{\underline{\Omega}}(\hat q)$, we arrive at 
\begin{equation} \label{tangent_derivative}
\langle D^2{\underline{\varphi}}_j,\tau,n\rangle+\kappa\frac{\partial {\underline{\varphi}}^j}{\partial \tau}=0\quad\text{on }\partial\underline{\Omega}(\hat q) .
\end{equation}
We substitute \eqref{tangent_derivative} into \eqref{tro} to deduce:
\begin{align}
\nonumber
\frac{\partial{\underline{\varphi}}_j'}{\partial n}  &=
- \frac{\partial^2{\underline{\varphi}}_j}{\partial n^2}(\hat w^\ast\cdot n)
+ \Big( \kappa ( \hat w^\ast\cdot\tau ) - \omega^\ast \Big)
\frac{\partial{\underline{\varphi}}_j}{\partial \tau} 
\\  &=
- \frac{\partial^2{\underline{\varphi}}_j}{\partial n^2}(\hat w^\ast\cdot n)
+  \left( \frac{\partial}{\partial \tau}(\hat w^\ast\cdot n) \right) \frac{\partial{\underline{\varphi}}_j}{\partial \tau}  .
\end{align}
On $\partial{\underline{\Omega}}(\hat q)$, we have with local coordinates:
$$
\Delta{\underline{\varphi}}_j= \frac{\partial}{\partial\tau}\left(\frac{\partial {\underline{\varphi}}_j}{\partial \tau }\right)
+ \kappa \frac{\partial{\underline{\varphi}}_j}{\partial n} + \frac{\partial^2{\underline{\varphi}}_j}{\partial n^2} .
$$
Since ${\underline{\varphi}}_j$ is harmonic and $\frac{\partial{\underline{\varphi}}_j}{\partial n}=0$ on $\partial{\underline{\Omega}}(\hat q)$, we 
deduce that 
$\frac{\partial^2{\underline{\varphi}}_j}{\partial n^2}  = -
\frac{\partial^2{\underline{\varphi}}_j}{\partial \tau^2}$ on $\partial{\underline{\Omega}}(\hat q)$,  and therefore
\begin{equation*}
\frac{\partial{\underline{\varphi}}_j'}{\partial n}  
= (\hat w^\ast\cdot n) \frac{\partial}{\partial\tau}\left(\frac{\partial {\underline{\varphi}}_j}{\partial \tau }\right)
+  \left( \frac{\partial}{\partial \tau}(\hat w^\ast\cdot n) \right) \frac{\partial{\underline{\varphi}}_j}{\partial \tau}  ,
\end{equation*}
which is \eqref{bd-ext}.
\qed 
\end{proof}
\subsection{Identification of the electric potential: Proof of Lemma~\ref{LEM-pot-energy}}
\label{section122}
We now establish Lemma~\ref{LEM-pot-energy}. By definition, 
\begin{equation*}
C(q) = - \int_{\mathcal   F(q)} \nabla \psi (q, \cdot) \cdot \nabla \psi (q, \cdot)   \, {\rm d}x .
\end{equation*}
Thus, by Reynolds transport formula \eqref{eq:reynolds} we infer that for every $p$ in $\mathbb R^3$,
\begin{equation} \label{12novA}
D C(q) \cdot p =  - 2 \int_{\mathcal   F(q)} \nabla \left( \frac{\partial \psi}{\partial q }\cdot p \right)  \cdot \nabla \psi    \, {\rm d}x 
- \int_{\partial \mathcal   S(q)}  |\nabla  \psi|^2 u_{q'} \cdot n \, {\rm d}s  ,
\end{equation}
where $u_{q'}$ was defined in \eqref{DecompU}.
Upon an integration by parts, 
\begin{equation} \label{col1}
\int_{\mathcal   F(q)} \nabla \left( \frac{\partial \psi}{\partial q } \cdot p \right) \cdot \nabla \psi    \, {\rm d}x
= \int_{\partial \mathcal   S(q)} \left( \frac{\partial \psi}{\partial q } \cdot p \right)  \frac{\partial \psi}{\partial n}   \, {\rm d}s .
\end{equation}
Moreover gathering \eqref{tderiv2} and \eqref{tderiv3}  we arrive at 
\begin{equation} \label{col2}
\frac{\partial\psi}{\partial q}(q,x)\cdot p
= DC(q)\cdot p -  \frac{\partial\psi}{\partial n}(q,x) \frac{\partial\boldsymbol\varphi}{\partial n}(q,x)\cdot p .
 \qquad   \text{ for } x\in\partial\mathcal S(q) .
\end{equation}
Now combining \eqref{col1} and \eqref{col2} we obtain 
\begin{align}
\nonumber
\int_{\mathcal   F(q)} \nabla \left( \frac{\partial \psi}{\partial q } \cdot p \right) \cdot \nabla \psi    \, {\rm d}x
&= \int_{\partial \mathcal   S(q)} (DC(q)\cdot p )  \frac{\partial \psi}{\partial n}  \, {\rm d}s 
  - \int_{\partial \mathcal   S(q)} \left| \frac{\partial \psi}{\partial n} \right|^2  \frac{\partial\boldsymbol\varphi}{\partial n}\cdot p   \, {\rm d}s  \\ 
\label{12novB} 
&= - DC(q)\cdot p 
- \int_{\partial \mathcal   S(q)} \left| \frac{\partial \psi}{\partial n} \right|^2  \frac{\partial\boldsymbol\varphi}{\partial n}\cdot p   \, {\rm d}s ,
\end{align}
thanks to \eqref{circ-norma}. 
On the other hand since $\psi (q,\cdot )$ is constant on $\partial \mathcal   S(q)$, 
\begin{equation} \label{12novC}
\int_{\partial \mathcal   S(q)}  |\nabla  \psi|^2 u_{q'} \cdot n\, {\rm d}s 
=
\int_{\partial \mathcal   S(q)}  \left| \frac{\partial \psi}{\partial n} \right|^2  \frac{\partial\boldsymbol\varphi}{\partial n}\cdot p   \, {\rm d}s .
\end{equation}
Gathering \eqref{12novA}, \eqref{12novB} and \eqref{12novC} and recalling the definition \eqref{E-def} of $E$, we obtain \eqref{pot-energy}. This concludes the proof of Lemma~\ref{LEM-pot-energy}. \qed
\subsection{Conservation of energy: Proof of Proposition \ref{energy}}
\label{sectionProp1}

We prove Proposition \ref{energy}.
We start with the observation that 
\begin{equation} \label{conserv-proof1}
\big( \mathcal{E} (q,q') \big)' = M(q) q'' \cdot q' 
+ \frac{1}{2} (DM(q) \cdot q') q' \cdot q'  - \frac{1}{2} \gamma^{2}   DC (q) \cdot q'  .
\end{equation}
Now, thanks to \eqref{ODE_intro} and \eqref{def-upsilon},  
\begin{equation} \label{conserv-proof2}
M(q) q'' \cdot q' = - \langle \Gamma (q),q',q'\rangle  \cdot q' + F(q,q')  \cdot q' ,
\end{equation}
and
\begin{equation}
\label{dima}
F (q,q')   \cdot q'=  \gamma^2 E(q)   \cdot q' .
\end{equation}
We introduce the matrix for any $(q,p) $ in $\mathcal Q \times \R^{3}$, 
\begin{equation} \label{pad1}
S(q,p) := \left(\sum_{1\leq i\leq 3} \Gamma^k_{i,j}(q) p_i  \right)_{1\leq k,j\leq 3} ,
\end{equation}
so that
\begin{equation}
\label{pad2}
\langle \Gamma (q), p, p \rangle = S(q,p) p .
\end{equation}
Combining \eqref{conserv-proof1}, \eqref{conserv-proof2}, \eqref{dima},  \eqref{pad1} and  \eqref{pad2} we obtain
\begin{equation} \label{conserv-proof3}
\big( \mathcal{E} (q,q') \big)'  = \gamma^2  \left(  E(q)  - \frac{1}{2} DC (q) \right)   \cdot q'
+ \left( \frac{1}{2}  DM(q) \cdot q' - S(q,q') \right) q' \cdot q'   .
\end{equation}
The first term of the right hand side vanishes thanks to Lemma~\ref{LEM-pot-energy}.
Proposition~\ref{energy} follows then from the following result.
\begin{lem} \label{antis}
For any $(q,p) $ in $\mathcal Q \times \R^{3}$, the matrix
$\frac{1}{2}  DM(q) \cdot p - S(q,p) $ is skew-symmetric.
\end{lem}
%

%
\begin{proof}
We first observe that $DM(q) \cdot p$ is the $3 \times 3$ matrix containing the entries
\begin{equation*}
\sum_{1\leq k \leq 3} (M_a)_{i,j}^{k}  (q)  \, p_{k} , \text{ for } 1\leq i,j\leq 3,
\end{equation*}
where $(M_a)_{i,j}^{k} (q)$ is defined in \eqref{cri}. 
On the other hand, the matrix $S(q,p)$ contains the entries
\begin{equation*}
\frac{1}{2} \sum_{1\leq k \leq 3} \Big(  (M_a)_{i,j}^{k}  +    (M_a)_{i,k}^{j}   -    (M_a)_{k,j}^{i} \Big) (q) \, p_{k} ,
\end{equation*}
for $1\leq i,j\leq 3$.
Therefore, the matrix $DM(q) \cdot p - S(q,p)$  contains  the entries
\begin{equation*}
c_{ij} (q,p)  = -  \frac12 \sum_{1\leq k \leq 3}    \Big(   (M_a)_{i,k}^{j}   -    (M_a)_{k,j}^{i}  \Big) (q) \,  p_{k},
\end{equation*}
for $1\leq i,j\leq 3$.
Since the matrix $M(q)$ is symmetric,  $c_{ij}(q,p)  = - c_{ji} (q,p) $ for $1\leq i,j\leq 3$.
\qed 
\end{proof}
\subsection{The case of a disk: proof of Theorem~\ref{THEO-intro-disk}}
\label{tard-disk}
In this subsection, we suppose that ${\mathcal S}_{0}$ is a disk,  of center $\zeta$ and radius $r_{0}$.
We start by observing that, as noticed in Subsection~\ref{Subsec-recasting}, $\zeta = h_{c}(0) = h_{c}(0) - h(0)$, so with \eqref{def-zeta_vartheta} we deduce
$\zeta_{\vartheta} = h_{c} -h$,
that is, \eqref{OID2}. Relation \eqref{OID3} is an immediate consequence of the decomposition $x-h= (x-h_{c}) + (h_{c} -h)$,
of the fact that $(x-h_{c})\cdot n=0$ on $\partial\mathcal S(t)$, and of \eqref{EqTrans}-\eqref{EqRot}.
Hence only \eqref{OID1} needs proving. \par
Concerning the added mass matrix, due to \eqref{Kir},  in this case,  $\partial_{n} \varphi_{1}(q,\cdot) = (h_{c}-h)^{\perp} \cdot n(\cdot)$, and consequently, 
\begin{equation} \label{Kir1Disk}
\varphi_{1} (q,\cdot) = - \zeta_{\vartheta,2} \, \varphi_{2}(q,\cdot) + \zeta_{\vartheta,1} \, \varphi_{3}(q,\cdot).
\end{equation}
We underline that $\varphi_{2}$ and $\varphi_{3}$ depend merely on $h_{c}$ while $\varphi_{1}$ depends also on $\vartheta$. 
From \eqref{Kir1Disk}, one deduces that for any $q$ in ${\mathcal Q}$ such that $d(B(h,r_{0}), \partial \Omega) >0$ and any $p=(\omega,\ell)$:
\begin{equation} \label{ReducEnergieAjoutee}
M_{a}(q) p \cdot p = \tilde{M}_{\flat}(h_{c}) p_{\flat} \cdot p_{\flat}
\end{equation}
with
\begin{equation} \label{Defpb}
p_{\flat} = p_{\flat}(\vartheta,\omega,\ell) := \ell + \omega \zeta_{\vartheta}^{\perp}.
\end{equation}
Notice that for a solution $(p,q): [0,T] \rightarrow {\mathcal Q} \times \R^{3}$ of the system with $p=q'$, for all times, 
\begin{equation} \label{Proppb}
p_{\flat}(\vartheta(t),\vartheta'(t),h'(t)) = h_{c}'(t).
\end{equation}
Now to establish \eqref{OID1} we rely on the following adaptation of Lemma~\ref{niou}.
\begin{lem} \label{niou-disk}
For any smooth curve $t\mapsto q(t)$ in $\mathcal Q$, for  every $p^\ast = (\omega^{\ast},\ell^{\ast})\in\mathbb R ^3$, 
\begin{equation} \label{full_RHS_disk}
\mathcal E  \mathcal L  = \tilde{M}_{\flat}(h_{c})p_{\flat}' \cdot p_{\flat}^\ast + \langle \Gamma_{\flat}(h_{c}),p_{\flat},p_{\flat}\rangle\cdot p_{\flat}^\ast ,
\end{equation}
where
\begin{equation} \label{Defpbstar}
p_{\flat}^\ast=p_{\flat}^\ast(\vartheta,\omega^{\ast},\ell^{\ast}) := \ell^{\ast} + \omega^{\ast} \zeta_{\vartheta}^{\perp},
\end{equation}
with $\mathcal E \mathcal L$ is given by \eqref{def-EL}, $\tilde{M}_{\flat}(h_{c})$ is defined by \eqref{def-Ma-Omega}-\eqref{DefTildeMb} and $\Gamma_{\flat} (h_{c})$ is defined in \eqref{def_vraiment-Gamma-disk}.
\end{lem}
\begin{proof}
Mimicking the proof of Lemma~\ref{niou}, using $\frac{d}{dt} p_{\flat}^{\ast} = - \vartheta' \omega^{\ast} \zeta_{\vartheta}$, \eqref{DefTildeMb} and \eqref{ReducEnergieAjoutee}
we obtain
\begin{equation*}
\frac{d}{dt} \frac{\partial {\mathcal E}_{1}}{\partial p} \cdot p^{\ast} 
= \tilde{M}_{\flat} p_{\flat}' \cdot p_{\flat}^{\ast} + (D_{h_{c}} \tilde{M}_{\flat} \cdot h_{c}') p_{\flat} \cdot p_{\flat}^{\ast} - \tilde{M}_{\flat} p_{\flat} \cdot (\vartheta' \omega^{\ast} \zeta_{\vartheta}).
\end{equation*}
On the other hand, 
\begin{equation*}
\frac{\partial {\mathcal E}_{1}}{\partial q} \cdot p^{\ast} =  \frac{1}{2} (D_{h_{c}} \tilde{M}_{\flat} \cdot \omega^{*} \zeta_{\vartheta}^{\perp}) p_{\flat} \cdot p_{\flat}
+ \tilde{M}_{\flat} (\vartheta' \omega^{\ast} \zeta_{\vartheta}) \cdot p_{\flat}.
\end{equation*}
We deduce that 
\begin{equation*}
\mathcal E  \mathcal L  = \tilde{M}_{\flat} p_{\flat}' \cdot p_{\flat}^{\ast} + (D_{h_{c}} \tilde{M}_{\flat} \cdot h_{c}') p_{\flat} \cdot p_{\flat}^{\ast} - \frac{1}{2} (D_{h_{c}} \tilde{M}_{\flat} \cdot \omega^{*} \zeta_{\vartheta}^{\perp}) p_{\flat} \cdot p_{\flat},
\end{equation*}
and  conclude as in Lemma~\ref{niou}, by symmetrizing the second term.
\qed
\end{proof}
Theorem~\ref{THEO-intro-disk} finally follows from Lemmas~\ref{LEM_3}, \ref{LEM_easy}
and a rewriting of $E(q)$ and $B(q)$ taking \eqref{Kir1Disk} into account. 
%
%
%
%
%
%
%
%
\section{Convergence to the massive point vortex system in Case (i): Proof of Theorem~\ref{theo:3}}
\label{Sec:PreuveTheo3}
In this section we prove Theorem \ref{theo:3} which corresponds to Case (i). We will rely on intermediate results (Proposition~\ref{Pro-fnormA}, Lemma~\ref{drift} and Lemma~\ref{kin-eps}). The proofs of these intermediate results will be postponed to the last sections. \par
We work on Equation~\eqref{ODE_intro} with a small solid, that is
\begin{align}
 \label{ODE_intro_eps}
M_\varepsilon (q_\varepsilon)   q_\varepsilon'' +  \langle \Gamma_\varepsilon  (q_\varepsilon),q_\varepsilon',q_\varepsilon'\rangle &= F_\varepsilon (q_\varepsilon,q_\varepsilon') ,
\end{align}
where
$$ M_\varepsilon  :=  M [\mathcal S_{0,\varepsilon}, m_\varepsilon ,\mathcal{J}_\varepsilon , \Omega ], \
\Gamma_\varepsilon :=   \Gamma [\mathcal S_{0,\varepsilon}  ,\Omega ]
\text{ and } {F}_\varepsilon  := {F} [\mathcal S_{0,\varepsilon} , \gamma ,\Omega ],$$
with
$m_\varepsilon ,\mathcal{J}_\varepsilon$  given by \eqref{mass-inertie}. 
The functions $M_\varepsilon (q)$, $ \langle \Gamma_\varepsilon (q)  ,p,p\rangle $ and ${F}_\varepsilon (q,p)$ are defined for $q$ in ${\mathcal Q}^{\varepsilon}$ and for $p$ in $\R^3$. \par
We begin by introducing some notations. Given $\delta > 0$ and $\eps_{0} $ in $(0,1)$, we let 
\begin{gather}
\label{Qeps}
\mathfrak Q :=  \{(\varepsilon, q) \in (0,1) \times \mathbb R^3 \ :\  d(\mathcal S_\eps(q),\partial\Omega)>0 \}  , \\
\index{AQ4@$\mathfrak Q$: bundle of shrinking body positions without collision}
\label{MathfrakQdelta}
\mathfrak Q^{\delta} := \{(\varepsilon, q) \in (0,1) \times \mathbb R^3 \ :\  d(\mathcal S_\eps(q),\partial\Omega)> \delta \}  , \\
\index{AQ5@$\mathfrak Q^{\delta}$: bundle of shrinking body positions at distance $\delta$ from the boundary}
\label{deqQed}
\mathfrak Q_{\delta,\eps_{0}} :=  \{(\varepsilon, q) \in (0,\varepsilon_{0}) \times \mathbb R^3 \ :\  d(\mathcal S_\eps(q),\partial\Omega)> \delta \}  .
\index{AQ6@$\mathfrak Q_{\delta,\varepsilon_{0}}$: bundle of shrinking body positions at distance $\delta$ from the boundary with $\varepsilon < \varepsilon_{0}$}
\end{gather}
For $\delta > 0$, we also introduce

\begin{equation} \label{Omega_delta}
\Omega_{\delta} := \{x \in  \Omega \ :\  d (x, \partial \Omega ) > \delta \} .
\end{equation}
\index{BGrecZO2@$\Omega_{\delta}$: set of  points at distance $\delta$ from the boundary}
Observe that despite the fact that the center of mass $h_{\eps}$ does not necessarily belong to ${\mathcal S}_{\eps}(q)$, we have the following elementary result whose proof is left to the reader.
\begin{lem} \label{banane}
Let $\delta > 0$. There exists $\delta_0 $ in $(0, \delta )$ and $\eps_{0} $ in $(0,1]$ such that for any $(\eps, q)$ in $\mathfrak Q_{\delta ,\eps_{0}}$, with $q=(\vartheta , h)$,
necessarily $h$ belongs to $\Omega_{\delta_{0}}$. 
\end{lem}
%
%
%
%
\subsection{Normal form}
%
%
We will rephrase \eqref{ODE_intro_eps} to be able   to pass to the limit as $\varepsilon$ goes to 0.
The following definition will be useful to deal with the remainder.
\begin{defn} \label{def-weakly-nonlinear}Let $\delta>0$ and $\varepsilon_0\in(0,1)$ be given. 
We say that a vector field $F$ in $L^{\infty}_\mathrm{loc} (\mathfrak Q_{\delta,\eps_{0}} \times \R^3; \R^3)$ is weakly nonlinear if 
there exists $K>0$   depending on  $\mathcal S_0$, $m$, ${\mathcal J}$, $\gamma$, $\Omega$ and $\delta$
such that for any $(\eps, q, p) $ in $\mathfrak Q_{\delta,\eps_{0}} \times \R^3 $, 
\begin{equation} \label{ineq-H_r}
| F  (\eps , q, p) |_{\R^{3}} \leq K ( 1 + | p|_{\R^{3}}  + \eps | p|_{\R^{3}}^{2} ) .
\end{equation}
\end{defn}
The normal form is as follows.
\begin{prop} \label{Pro-fnormA}
There exists $F_{r}:\mathfrak Q \times \R^3 \rightarrow \R^3$ depending on  $\mathcal S_0$, $\gamma$  and $\Omega$ such that,
for any $\delta > 0$, there exists $\eps_{0} $ in $(0,1)$ such that
$F_{r}$ belongs to $L^{\infty}_\mathrm{loc}(\mathfrak Q_{\delta,\eps_{0}} \times \R^3; \R^3)$ and is weakly nonlinear in the sense
of Definition~\ref{def-weakly-nonlinear}
and such that Equation \eqref{ODE_intro_eps} can be recast as
\begin{align} \label{fnorm1} 
M_g {p}_\varepsilon ' 
= {F}^{\Ext}_{\vartheta_\varepsilon} (\varepsilon \vartheta_\varepsilon',  h_\varepsilon'  - \gamma  u^\Omega (h_\varepsilon ) )
+ \varepsilon {F}_{r} (\eps,q_\varepsilon, {p}_\varepsilon ) .
\end{align}
\end{prop}
Recall that $u^\Omega$ was defined in \eqref{DefUOmega}, that the force term ${F}^{\Ext}_{\vartheta_\varepsilon}(p)$ was defined in the section dealing with the case without outer boundary, see \eqref{force-ext}, and that ${p}_\varepsilon=(\varepsilon \vartheta_{\varepsilon}',h'_{\varepsilon})^{t}$ was defined in \eqref{def-hat}.

The normal form  \eqref{fnorm1} will be useful in order to pass to the limit.
To get Proposition~\ref{Pro-fnormA}, we will perform expansions of the inertia matrix, of the Christoffel symbols and of the force terms with respect to $\eps$.
Roughly speaking the leading terms coming from the force terms will be gathered into the first term of the right hand side of \eqref{fnorm1}; see \eqref{EB1}. 
\subsection{Renormalized energy estimates}
We will of course need uniform estimates
as $\varepsilon \rightarrow 0^{+}$ in order to pass to the limit in \eqref{fnorm1}. The energy is the natural candidate to yield such estimates. Hence we are led to consider the behavior of the energy with respect to $\varepsilon$. 
We therefore now index the energy (introduced in  in \eqref{pader}) as follows:
\begin{equation} \label{conserv-eps}
\mathcal{E}_{\varepsilon} (q,p) := \frac{1}{2} M_{\varepsilon}(q) p \cdot p
+ U_{\varepsilon}(q) ,
\index{AE2@$\mathcal{E}_{\varepsilon} (q,p)$:  total energy of the shrinking solid}
\end{equation}
where the potential energy $U_{\varepsilon}$ is given by
\begin{equation} \label{pot-eps}
U_{\varepsilon}(q) := - \frac{1}{2} \gamma^{2}  C_{\varepsilon} (q)  .
\end{equation}
Of course Proposition~\ref{energy} can be applied for each $\varepsilon $ in $(0,1)$ so that the energy associated with a solution 
$q_\eps  $ as in Theorem \ref{theo:3} is conserved along time until its maximal time of existence $T_{\eps}$.
We will establish in Subsection \ref{proof-deux}
the following result regarding the expansion of $C_{\varepsilon} (q)$  with respect to $\varepsilon$. 
The expansion is uniform, in the sense that the remainder is uniformly bounded, as long as the solid stays at a positive distance from the external boundary.
Let us recall that the Newtonian potential $G$ was introduced in \eqref{NewtonianPotential}, the Kirchhoff-Routh stream function $\psi^\Omega$ was defined in \eqref{def-KRS} and
the constant $C^{\Ext}$ in \eqref{model_stream-1}. 
We will also use the function defined for $q:= (\vartheta , h)$ in $\R \times \Omega$, 
\begin{equation} \label{PsiC}
\psi_c (q) :=  D_h \psi^{\Omega} (h) \cdot {\zeta}_\vartheta .
\end{equation}
\index{BGrecYP3@$\psi_c$: corrector stream function}
Above $D_h$ denotes the derivative with respect to $h$.
\begin{lem} \label{drift}
There exists a function $C_{r} :\mathfrak Q \rightarrow \R$ such that, for any  $\delta > 0$, there exists $\eps_{0}$ in $(0,1)$ such that $C_{r}$ is in $L^\infty (\mathfrak Q_{\delta,\eps_{0}} ; \R)$ and that for any $(\varepsilon ,q) $ in $ \mathfrak Q_{\delta,\eps_{0}}$, 
\begin{equation} \label{expansion_mu-intro}
C_{\varepsilon}  (q) = 
- G(\varepsilon) + C^{\Ext} 
+ 2  \psi^\Omega (h)
+ 2  \eps  \psi_c (q)
+ \eps^{2} C_r (\varepsilon ,q) .
\end{equation}
\end{lem}
%
%
This result establishes that the potential energy  $U_{\varepsilon}(q)$ diverges logarithmically as $\eps \rightarrow 0^{+}$.
However  since they do not depend on the solid position and velocity
the contributions of the two first terms of \eqref{expansion_mu-intro} can be discarded from the energy in  \eqref{conserv-eps} without altering its conservation property.

On the other hand an immediate consequence of Proposition~\ref{dev-added} below
is the following result regarding the kinetic energy part.
\begin{lem} \label{kin-eps}
There is a function $M_{r}:\mathfrak Q \rightarrow \R^{3 \times 3}$ depending on $\mathcal S_0$ and $\Omega$, such that,
for any $\delta > 0$, there exists $\eps_{0} $ in $(0,1)$ such that $M_{r}$ is in $ L^\infty (\mathfrak Q_{\delta,\eps_{0}} ; \R^{3 \times 3})$
and such that for all $(\varepsilon ,q) $ in $\mathfrak Q$,  for all $p $ in $\R^3$, 
\begin{equation*}
\frac{1}{2} M_\varepsilon (q)  p \cdot p  
= 
\frac{1}{2} ( (M_g + \eps^{2}\, M^{\Ext}_{a, \vartheta})  I_{\varepsilon} p ) )\cdot  (I_{\varepsilon} p ) 
+ \frac{1}{2} \eps^4  (M_{r}   (\eps ,q )    I_{\varepsilon} p)   \cdot  ( I_{\varepsilon} p)  .
\end{equation*}
\end{lem}
Recall that $M^{\Ext}_{a, \vartheta}$ was defined in \eqref{g+a_ext} and $I_{\varepsilon}$ in \eqref{Eq:Ieps}.
Combining Lemma~\ref{drift} and Lemma~\ref{kin-eps} we obtain the following.
\begin{cor} \label{energy-eps-2}
Let $q_\eps$ and $T_{\eps}$ be as in Theorem \ref{theo:3}.
Then the renormalized energy 
\begin{multline*} \label{conserv-eps-reno-2}
 \frac{1}{2} ( (M_g + \eps^{2}\, M^{\Ext}_{a, \vartheta}) {p}_\varepsilon  )\cdot  {p}_\varepsilon
 - \gamma^2 \psi^\Omega (h_{\varepsilon}) \\
+ \frac{1}{2} \eps^4   M_{r}   (\eps ,q_{\varepsilon} )   {p}_{\varepsilon}  \cdot  {p}_{\varepsilon} 
- \eps \gamma^{2}  \psi_c (q_{\varepsilon} ) 
- \frac{1}{2} \eps^{2} \gamma^{2} C_r (\varepsilon ,q_{\varepsilon} )   ,
\end{multline*}
 is constant in time  until $T_{\eps}$.
\end{cor}
The two most important terms in the renormalized energy  above are the first and second ones which are respectively of order $O(   | {p}_\varepsilon |_{\R^{3}}^{2} )$ and $O(1)$ as long as there is no collision. Hence we deduce the following counterpart of Corollary~\ref{bd-loin}.
\begin{cor} \label{bd-loin-(i)}
Let $(q_\eps, T_{\varepsilon})$  as in Theorem~\ref{theo:3}.
Let $\delta>0$. There exists $K>0$ (depending on $\mathcal S_0$, $\Omega$, $p_0$, $\gamma$, $m_{1}$, $\mathcal J_{1}$, $\delta$) and $\varepsilon_{0}>0$ such that for any $\eps$ in $(0,\eps_{0})$,
as long as $(\eps,q_\eps)$ belongs to $\mathfrak Q_{\delta,\eps_{0}}$,
one has $|{p}_\varepsilon|_{\R^{3}} \leq K$.
\end{cor}
\subsection{Passage to the limit}
\label{Subsec:PTTLi}
We deduce from Corollary \ref{bd-loin-(i)} two different results. 
The first result concerns the lifetime $T_{\eps}$ of the solution $(q_\varepsilon,{p}_\varepsilon)$, which can be only limited by a possible encounter between the solid and the boundary $\partial \Omega$.
\begin{lem} \label{nrj-liminf}
There exist $\varepsilon_{0}>0$, $\underline{T}>0$ and $\underline{\delta} >0$, such that for any $\varepsilon $ in $(0,\varepsilon_{0})$, 
\begin{equation} \label{Eq:MinimalTime}
T^{\varepsilon} \geq \underline{T} \ \text{ and  on } [0,\underline{T} ], \ 
(\eps,q_\eps )  \in  \mathfrak Q_{\underline{\delta}  ,\eps_{0}} .
\end{equation}
\end{lem}
\begin{proof}
Let us introduce
\begin{equation} \label{rayon}
R_{0} := \max \{ |x|, \ x \in \partial {\mathcal S}_{0} \},
\end{equation}
so that, whatever $t\geq0$, $\vartheta(t)$ in $\R$ and $\varepsilon $ in $(0,1)$, 
\begin{equation} \label{dsuneboule}
{\mathcal S}_{\varepsilon}(q_{\varepsilon} (t) ) \subset \overline{B}(h_{\varepsilon}(t),\varepsilon R_{0}).
\end{equation}
We introduce $\underline{\delta}:= \frac{1}{4} d(0, \partial \Omega)$
and $\varepsilon_{0} $ in $(0,1)$ (which may be reduced later) such that 
$\varepsilon_{0} R_{0} \leq \underline{\delta}$ and
\begin{equation} \label{secuInit}
\forall \varepsilon \in (0,\varepsilon_{0}] ,\quad
d\big( \overline{B} (0,\varepsilon R_{0}), \partial \Omega \big)
\geq \frac{3}{4} d(0, \partial \Omega).  
\end{equation}
We apply Corollary~\ref{bd-loin-(i)} with $\underline{\delta}$ to deduce that there exists $K>0$  such that, reducing $\varepsilon_{0}$ if necessary, 
\begin{equation} \label{EstVit}
|h'_{\varepsilon}| \leq K, \text{ for all } t \text{ for which } d\big({\mathcal S}_{\varepsilon}(q_{\varepsilon} (t) ), \partial \Omega\big) \geq \underline{\delta}.
\end{equation}
We introduce $\underline{T} := \min\left(1, \frac{d( 0, \partial \Omega)}{2K} \right)$,
and for $\varepsilon $ in $(0,\varepsilon_{0}]$,
\begin{equation*}
{\mathcal I}^{\varepsilon} =  \left\{ t \in [0,1] \ : \ \forall s \in [0,t], \ 
d\big(\overline{B}(h_{\varepsilon}(s), \varepsilon R_{0}), \partial \Omega\big) \geq \underline{\delta} \right\} .
\end{equation*}
The set ${\mathcal I}^{\varepsilon}$ is a closed interval containing $0$, according to  \eqref{secuInit}.
Consider $\tilde{T}_{\varepsilon} := \max \, {\mathcal I}^{\varepsilon}$, and let us show that $\tilde{T}_{\varepsilon} \geq \underline{T}$. 
Of course, if $\tilde{T}_{\varepsilon}=1$, then this is clear; let us suppose that $\tilde{T}_{\varepsilon} <1$.
This involves that  
$d \big( \overline{B}(h_{\varepsilon}(\tilde{T}_{\varepsilon}), \varepsilon R_{0}), \partial \Omega \big) = \underline{\delta} .$
Using $\varepsilon_{0} R_{0} \leq \underline{\delta}$ we deduce
$d(h_{\varepsilon}(\tilde{T}_{\varepsilon}), \partial \Omega) \leq 2 \underline{\delta} .$
With the triangle inequality and $\underline{\delta}= \frac{1}{4} d(0, \partial \Omega) $  we infer that 
$d \big( h_{\varepsilon}(\tilde{T}_{\varepsilon}), 0 \big)  \geq \frac{1}{2} d(0, \partial \Omega).$
Now the relation \eqref{dsuneboule} implies that for all $t  $ in $ [0,\tilde{T}_{\varepsilon}]$,
\begin{equation} \label{inclus}
d \big( {\mathcal S}_{\varepsilon}(q_{\varepsilon} (t) ), \partial \Omega \big) \geq
d \big( \overline{B}(h_{\varepsilon}(t), \varepsilon R_{0}), \partial \Omega \big)
\geq \underline{\delta}  ,
\end{equation}
so that \eqref{EstVit} is satisfied during $[0,\tilde{T}_{\varepsilon}]$.
We deduce that $K \tilde{T}_{\varepsilon} \geq d(0, \partial \Omega)/2$, so $\tilde{T}_{\varepsilon} \geq \underline{T}$. 
Therefore for any $t$ in $[0,\underline{T}]$, for any $\varepsilon $ in $[0,\varepsilon_{0}]$,  \eqref{inclus} holds true.
This concludes the proof of Lemma~\ref{nrj-liminf}.
\qed
\end{proof} 
The second result establishes the desired convergence on any time interval during which we have a minimal distance between ${\mathcal S}_{\varepsilon}(q)$ and $\partial \Omega$, uniform for small $\varepsilon$.
Let us recall that $(\overline{h},\overline{T})$ denotes the maximal solution to \eqref{ODE-mass}.
\begin{lem} \label{nrj-cvi}
Let $\varepsilon_{1}>0$, $\check{\delta}>0$ and $\check{T} > 0$ with  $\check{T} < \overline{T}$,
and suppose that for any $\varepsilon $ in $(0,\varepsilon_{1} )$, 
\begin{equation} \label{HypUniformite}
d({\mathcal S}_{\varepsilon}(q_{\varepsilon}(t)), \partial \Omega) \geq \check{\delta} \; \text{ on } [0,\check{T}] .
\end{equation}
Then
$(h_\varepsilon, \varepsilon \vartheta_\varepsilon) \longrightharpoonup (\overline{h}, 0)$ in $W^{2,\infty}([0,\check{T}];\mathbb R^3)$ weak-$\star$.
\end{lem}
The proof of Lemma~\ref{nrj-cvi} consists in passing to the weak limit, with the help of all a priori bounds, in each term of \eqref{fnorm1}. 
It is a straightforward extension of the proof of Theorem \ref{theo:3EXTE} and it is therefore omitted. \par
\ \par
%
%
%
We now finish the proof of Theorem~\ref{theo:3}. It only remains to extend the time interval on which the above convergences are valid to any closed subinterval of $[0,\overline{T})$. 
Hence let $T \in (0,\overline{T})$, and let us prove that for small $\varepsilon>0$ the time of existence $T^{\varepsilon}$ is larger than $T$ and establish the convergences on the time interval $[0,T]$. 
For such a $T$, we know that there exists $\overline{d}>0$ such that 
\begin{equation} \label{Eq:DBarre}
\forall t \in \left[0,\frac{T+\overline{T}}{2}\right], \ \ d(\overline{h}(t), \partial \Omega) \geq \overline{d}. 
\end{equation}
We let
$\overline{T}_{\varepsilon}:= \max \left\{ t>0 \: \ d\big(B(h_{\varepsilon}(t),\varepsilon R_{0}), \partial \Omega\big) \geq \overline{d}/2 \right\}.$
Let us recall that $R_{0}$ is defined in \eqref{rayon}.
Using Lemma~\ref{nrj-liminf} we deduce that, reducing $\overline{d}$ if necessary,
for some $\overline{\varepsilon}>0$,  $\inf_{ \varepsilon \in (0,\overline{\varepsilon}]} \overline{T}_{\varepsilon} >0$.
Therefore 
$\tilde{T}:=\liminf_{\varepsilon \rightarrow 0^+} \overline{T}_{\varepsilon} $ satisfies $ \tilde{T}>0 .$
Due to Corollary~\ref{bd-loin-(i)}, there exists $K>0$ and $\varepsilon_{0}$ such that for all $t $ in $[0,T+1]$ and $\varepsilon $ in $(0,\varepsilon_{0})$, 
\begin{equation} \label{enj2}
|h'_{\varepsilon}| + | \varepsilon \vartheta_{\varepsilon}' | \leq K \ \text{ as long as } d({\mathcal S}_{\varepsilon}(q_{\varepsilon}(t)), \partial \Omega) \geq \overline{d}/2.
\end{equation}
Now we claim that
\begin{equation} \label{TTilde}
\tilde{T}  \geq T + \frac{\overline{T}-T}{4}.
\end{equation}
Suppose that this is not the case, so that there is a sequence $\varepsilon_{n} \rightarrow 0^{+}$ such that $\overline{T}_{\varepsilon_{n}} \rightarrow \tilde{T} <  T + \frac{\overline{T}-T}{4}$.
Now for any $\eta $ in $(0,\tilde{T})$, on the interval $[0,\tilde{T}-\eta]$, the condition $ d(\overline{B}(h_{\varepsilon_{n}}(t),\varepsilon R_{0}), \partial \Omega) \geq \overline{d}/2$ is satisfied for $n$ large enough.
Moreover, for such $n$,  for all $t  $ in $ [0,\tilde{T}-\eta]$, \eqref{dsuneboule} implies that 
\begin{equation*}
d({\mathcal S}_{\varepsilon_{n}}(q_{\varepsilon_{n}}(t)), \partial \Omega) \geq
d \big( \overline{B}(h_{\varepsilon_{n}}(t), \varepsilon_{n} R_{0}), \partial \Omega \big) \geq   \overline{d}/2 .
\end{equation*}
Hence applying Lemma~\ref{nrj-cvi}, we deduce the uniform convergence of $(h_{\varepsilon_{n}} )$ to $\overline{h}$ on $[0,\tilde{T}-\eta]$.
In particular, as $n \rightarrow +\infty$,

$$d \big( h_{\varepsilon_{n}}(\tilde{T}-\eta) , \partial \Omega \big) 
\rightarrow d \big( \overline{h}(\tilde{T}-\eta), \partial \Omega \big) \geq \overline{d} ,$$
according to \eqref{Eq:DBarre}.
On the other hand by definition of $\overline{T}_{\varepsilon_{n}}$ 
$$d\big(B(h_{\varepsilon_{n}}(\overline{T}_{\varepsilon_{n}}),\varepsilon_{n} R_{0}), \partial \Omega\big) = \overline{d}/2 .$$
Using the triangle inequality and 
$\overline{T}_{\varepsilon_{n}} \rightarrow \tilde{T}$,  we get a contradiction with \eqref{enj2} for $\eta$ small enough.
Hence \eqref{TTilde} is valid, so that, reducing $\overline{\varepsilon}$ if necessary, 
 $\inf_{ \varepsilon \in (0,\overline{\varepsilon}]} \overline{T}_{\varepsilon} \geq T$.
Now, applying again \eqref{dsuneboule} and Lemma~\ref{nrj-cvi}, we reach the conclusion. This ends the proof of Theorem~\ref{theo:3}. 
%
%
%
%
%
%
%
%
%
%
\section{Convergence to the point vortex system in Case (ii): Proof of Theorem \ref{theo:1}}
\label{Sec:PreuveTheo1}
In this section we prove Theorem \ref{theo:1} which corresponds to Case (ii). We work again on Equations~\eqref{ODE_intro_eps} but here $m_\varepsilon$, and $\mathcal{J}_\varepsilon$ are given by \eqref{mass-inertie2}. Our analysis relies on some normal forms (see Propositions~\ref{Pro-fnorm} and \ref{Pro-fnorm-ball-inh} below) which proved in subsequent sections. 
We begin this section with the case where ${\mathcal S}_{0}$ is a homogeneous disk because in this situation it is simpler to deduce the convergence from the normal form. Then we treat the case when ${\mathcal S}_{0}$ is not a disk, and finish with the case of a non-homogeneous disk. 
\subsection{The case of a homogeneous disk}
\label{SubsecHomDisk}
When ${\mathcal S}_{0}$ is a homogeneous disk, the dynamics of the angle $\vartheta_{\varepsilon}$ is trivial; we expand the dynamics of the center of mass $h_{\varepsilon}$ in powers of $\varepsilon$.

Without loss of generality, we assume that ${\mathcal S}_{0}=\overline{B}(0,1)$. 
We first modify a bit Definition~\ref{def-weakly-nonlinear} as follows.
\begin{defn} \label{def-WNL2}
Let $\delta>0$ and $\varepsilon_0\in(0,1)$ be given. 
We say that a vector field $F_{\flat}$ in $L^{\infty}_\mathrm{loc}((0,\varepsilon_0) \times \Omega \times \R^2; \R^2)$ is weakly nonlinear if 
there exists $K>0$ depending on  $m$, ${\mathcal J}$, $\gamma$, $\Omega$ and $\delta$
such that for any $\varepsilon$ in $(0,\varepsilon_{0})$, any $h$ in $\Omega$ such that $d(B(h,\varepsilon),\partial \Omega) > \delta$ and any $\ell \in \R^{2}$, 
\begin{equation} \label{ineq-WNL2}
| F_{\flat} (\eps , h, \ell) |_{\R^{2}} \leq K ( 1 + | \ell |_{\R^{2}}  + \eps | \ell|_{\R^{2}}^{2} ) .
\end{equation}
\end{defn}
\noindent
We used the notation \eqref{MinimalDistance}.
In this simpler situation we obtain the following result.
\begin{prop} \label{Pro-fnorm-ball}
There exists $F_{\flat,r}$ in $L^{\infty}_\mathrm{loc}((0,1) \times\Omega \times \R^2; \R^2)$ such that, for any $\delta>0$, there exists $\varepsilon_{0}$ in $(0,1)$ for which $F_{\flat,r}$ is weakly nonlinear in the sense of Definition~\ref{def-WNL2} and such that Equation~\eqref{ODE_intro-disk} can be recast as
\begin{equation*}
\vartheta_{\varepsilon}'' =0,
\end{equation*}
and
\begin{multline} \label{fnorm2-ball}
\Big( \eps^{\alpha}\,  m_{1} + \eps^2 \pi \Big) \big( h_\varepsilon' - u^{\Omega}(h_{\varepsilon}) \big)'
= \gamma ( h_{\varepsilon}' - u^{\Omega}(h_{\varepsilon}) )^{\perp} \\
+ \varepsilon^{\min(2,\alpha)}  F_{\flat,r} (\eps,h_\varepsilon, h_\varepsilon' )  .
\end{multline}
\end{prop}
Since this proposition is a particular case of Proposition~\ref{Pro-fnorm} (or of Proposition~\ref{Pro-fnorm-ball-inh}), we make no specific proof. \par
Note that the coefficient $\pi$ comes from the fact that $M^{\Ext}_{\flat} =\pi \mbox{\rm Id}_{2}$ in this case (where $M^{\Ext}_{\flat}$ is defined in \eqref{def-Ma}).
As a consequence, Corollary~\ref{bd-loin-(i)} has the following counterpart.
\begin{cor} \label{bd-disk}
Let $h_\eps$ satisfy the assumptions of Theorem~\ref{THEO-intro-disk}.
Let $\delta>0$. There exists $K>0$ (depending on  $\Omega$, $\ell_0$, $\gamma$, $m_{1}$, $\delta$) and $\varepsilon_{0}>0$ such that for any $\eps$ in $(0,\eps_{0})$, for any $t>0$, as long as $d(B(h_{\varepsilon},\varepsilon),\partial \Omega) > \delta$
one has $\varepsilon^{\min(1,\frac{ \alpha }{2 })} \, |{h}'_\varepsilon|_{\R^{2}} \leq K$.
\end{cor}
\begin{proof}
The proof is almost the same as for Corollary~\ref{bd-loin-(i)}, but we have to take into account that the added mass matrix is degenerate and that dynamics of the rotation angle is trivial. \par
In the case of a homogeneous disk,the energy given in \eqref{conserv-eps} can be described by using the following function, where $q=(\vartheta,h)$ and $p=(\omega,\ell)$:
\begin{equation} \label{conserv-eps-disk}
\tilde{\mathcal{E}}_{\varepsilon} (h,\omega,\ell) = \frac{1}{2} \left(\varepsilon^{\alpha} m_{1} \mbox{\rm Id}_{2} + \varepsilon^{2} \tilde{M}_{\flat}(h) \right) \ell \cdot \ell
+ \varepsilon^{\alpha+2}{\mathcal J}_{1} \omega^{2}
+ \tilde{U}_{\varepsilon}(h) .
\end{equation}
Here we wrote $\tilde{U}_{\varepsilon}(h):=U_{\varepsilon}(q)$ for $q=(\vartheta,h)$ since actually it does not depend on the angle $\vartheta$. \par
Now using the same analysis as for Corollary~\ref{energy-eps-2} and taking into account that $\vartheta_{\varepsilon}'(t)=\omega_{0}$ for all times, we deduce that the following quantity is conserved over time:
\begin{multline*}
\frac{1}{2} \left(\varepsilon^{\alpha} m_{1} \mbox{\rm Id}_{2} + \varepsilon^{2} \tilde{M}_{\flat}(h) \right) h'_{\varepsilon} \cdot h'_{\varepsilon}
- \gamma^2 \psi^\Omega (h_{\varepsilon}) \\
+ \frac{1}{2} \eps^4 \tilde{M}_{\flat,r} (\eps, h_{\varepsilon}) h'_{\varepsilon} \cdot h'_{\varepsilon} 
+ \eps \tilde{C}_r (\varepsilon ,h_{\varepsilon} ) ,
\end{multline*}
where $\tilde{C}_r (\varepsilon, h_{\varepsilon}):= C_r (\varepsilon ,q_{\varepsilon})$ since it does not depend on the angle, and $\tilde{M}_{\flat,r}$ is a bounded function on sets for which $d(B(h_{\varepsilon},\varepsilon), \partial \Omega) > \delta$. The conclusion follows.
\qed
\end{proof}
To improve the estimates on ${h}'_\varepsilon$, we obtain modulated energy estimates. The following lemma relies on straightforward computations.
\begin{lem} \label{nrj11-disk}
Let $h_{\varepsilon}$ satisfy the assumptions of Theorem~\ref{THEO-intro-disk}.
Then, during their lifetime, regular solutions satisfy 
\begin{equation} \label{Leibencore2-disk}
\frac{d}{dt} \frac{1}{2} \left( \varepsilon^{2} \pi + \varepsilon^{\alpha} m_{1}\right) | h'_{\varepsilon} - u^{\Omega}(h_{\varepsilon}) |^{2}
= \varepsilon^{\min(2,\alpha)} (h'_{\varepsilon} - u^{\Omega}(h_{\varepsilon})) \cdot F_{\flat, r} (\eps , h_\varepsilon, h'_\varepsilon)  .
\end{equation}
\end{lem}
Moreover we have the following immediate estimate.
\begin{lem} \label{drifterisok-disk}
Let $\delta > 0$, there exists $\eps_{0}$ in $(0,1)$ and $K>0$ such that for any $\varepsilon \in (0,\varepsilon_{0})$, any $h \in \Omega$ such that $d(B(h,\varepsilon), \partial \Omega) > \delta$, we have $|u^\Omega(h)|_{\R^{2}} \leq K$.
\end{lem}
We conclude that Corollary~\ref{bd-disk} can be improved into the following. 
\begin{cor} \label{bd-disk2}
Let $h_\eps$ satisfy the assumptions of Theorem~\ref{THEO-intro-disk}.
Let $\delta>0$. There exists $K>0$ (depending on $\mathcal S_0$, $\Omega$, $\ell_0$, $\gamma$, $m_{1}$, $\delta$) and $\varepsilon_{0}>0$ such that for any $\eps$ in $(0,\eps_{0})$, for any $t>0$ such that $d(B(h_{\varepsilon} (t) ,\varepsilon),\partial \Omega) > \delta$
one has $|{h}'_\varepsilon (t)|_{\R^{2}} \leq K$.
\end{cor}
Thanks to Corollary~\ref{bd-disk2}, Lemma~\ref{nrj-liminf} remains true in the case under view, that it, the time of existence $T_{\varepsilon}$ of $h_{\varepsilon}$ is bounded from below by some $\underline{T}>0$. Now we have the following local convergence result of $h_{\varepsilon}$ toward $\overline{h}$, where we recall that $\overline{h}$ is the global solution to \eqref{argh}.
\begin{lem} \label{nrj-cv-disk}
Let $\varepsilon_{1}>0$,  ${\delta}>0$ and  ${T} >0$, and suppose that for any $\varepsilon $ in $(0,\varepsilon_{1} )$,  
\begin{equation} \label{HypUniformite2-disk}
d(B(h_{\varepsilon}(t),\varepsilon), \partial \Omega) > {\delta}  \; \text{ on  } [0,{T} ]  .
\end{equation}
Then $h_\varepsilon \longrightharpoonup \overline{h}$ in $W^{1,\infty}([0,\check{T}];\mathbb R^2)$ weak-$\star$.
\end{lem}

\begin{proof}
Given $\delta>0$, $T>0$ and $\varepsilon_{1}>0$ as above we apply Corollary~\ref{bd-disk2} on $[0,T]$ so that reducing $\varepsilon_{1}>0$ if necessary, 
\begin{equation} \label{EstBase-disk}
(h'_{\varepsilon})_{\varepsilon \in (0, \varepsilon_{1})}  \text{ is bounded uniformly }   \text{ on } [0,T].
\end{equation}
Our goal is to pass to the limit in each term of \eqref{fnorm2-ball}. 
For what concerns the left hand side it is obvious that
$\big( \eps^{\alpha}\,  m_{1} + \eps^2 \, \pi \big) \big( h_\varepsilon' - u^{\Omega}(h_{\varepsilon}) \big)'
\longrightarrow 0$  in $W^{-1,\infty}$. 
Next, the term $\varepsilon^{\min(2,\alpha)}  F_{\flat,r} (\eps,h_\varepsilon, h_\varepsilon' )$ converges to $0$ in $L^{\infty}$.
Hence we infer that $h_\varepsilon' - \gamma u^\Omega (h_\varepsilon)$ converges weakly to $0$ in $W^{-1,\infty}$. 
Due to the a priori estimate, this convergences occurs in $L^{\infty}$ weak-$\star$.
By \eqref{EstBase-disk}, there is a subsequence $(h_{\varepsilon_{n}})_n$ of $(h_{\varepsilon})$ satisfying $h_{\varepsilon_{n}} \rightharpoonup h_{*}$ in $W^{1,\infty}$ weak-$\star$.
In particular convergence of $h_{\varepsilon}$ toward $h_{*}$ is strong in $L^{\infty}$, and with the convergence of $h_\varepsilon' - \gamma u^\Omega (h_\varepsilon)$ we deduce that
$h_{*}'=\gamma u^{\Omega}(h_{*})$  and $ h_{*}(0) = 0$.
The uniqueness of the solution to this Cauchy problem gives $h_{*} = \overline{h}$ and that the whole sequence $(h_{\varepsilon})$ converges toward $\overline{h}$ as $\varepsilon \rightarrow 0^{+}$. 
This concludes the proof of Lemma~\ref{nrj-cv-disk}. 
\qed
\end{proof} 
\ \par
Finally we briefly conclude the proof of Theorem~\ref{theo:1} in the case of a homogeneous disk, which is the same as the one of Theorem~\ref{theo:3}, except that in the case (ii) under view, as mentioned below \eqref{argh}, the solution $\overline{h} $ is global in time, and in particular that there is no collision of the point vortex with the external boundary $\partial \Omega$. Hence here, compared to the end of the proof of Theorem~\ref{theo:3} at the end of Subsection~\ref{Subsec:PTTLi}, we can pick any $T>0$, and then we define $\overline{d}$ so that for all 
$t \in [0,T+1]$, $d(\overline{h}(t), \partial \Omega) \geq \overline{d}$ rather than by \eqref{Eq:DBarre} and prove $\tilde{T} \geq T + \frac{1}{2}$ rather than \eqref{TTilde}. Of course we rely on Corollary~\ref{bd-disk2} rather than Corollary~\ref{bd-loin-(i)} and on Lemma~\ref{nrj-cv-disk} rather than Lemma~\ref{nrj-cvi}. This ends the proof of Theorem~\ref{theo:1} in the case of a homogeneous disk. 
\subsection{Geodesic-gyroscopic normal form}
In Case (ii)
we will establish that \eqref{ODE_intro_eps} can be put into a normal form whose structure looks like \eqref{ODE_ext} up to a refined modulation of the velocity of the center of mass. 
Indeed, in the same way as we defined the Kirchhoff-Routh velocity $u^\Omega $ by  $u^\Omega =\nabla^\perp   \psi^\Omega $
we introduce the corrector velocity $u_c$ corresponding to the stream function $\psi_c$ defined in \eqref{PsiC}
by 
\begin{equation} \label{def-uc-perp}
u_c (q) :=\nabla^\perp_{h} \psi_c (q).
\index{AU3@$u_c$: corrector velocity}
\end{equation}
Observe that the function $ u_c $ depends on $\Omega$, $ \mathcal S_0$,  $\vartheta$ and on $h$, whereas $u^\Omega$ depends only on $\Omega$ and $h$.
We will make use in a crucial way of  the following second order modulation: 
\begin{equation}  \label{dmodu}
{\tilde{p}}_{\varepsilon} := \big(\varepsilon \vartheta_\varepsilon' , h_\varepsilon'  - \gamma [ u^\Omega (h_\varepsilon ) + \varepsilon   u_c (q_\varepsilon) ] \big)^t,
\end{equation}
which can be compared to the expression \eqref{def-hat} of $p_\varepsilon$.
We  note that  $ \gamma (u^\Omega (h) + \eps u_c (q)) $ is the beginning  of the expansion  of $ - \frac{1}{\gamma} \nabla^\perp_{h}  U_{\eps} (q)$ where $U_{\eps} (q)$ is the electric-type  potential energy defined in \eqref{pot-eps}; see \eqref{expansion_mu-intro}. This modulation is therefore driven by the leading terms of the electric-type potential. 
Observe also that, as long as the solid does not touch the boundary, the  drift term in the velocity of the center of mass
is bounded. Indeed the following refinement of Lemma~\ref{drifterisok-disk} is a direct consequence of Lemma~\ref{banane} and of the definitions of $u^\Omega$ and $u_{c}$.

\begin{lem} \label{drifterisok}
Let $\delta > 0$, there exists $\eps_{0} $ in $(0,1)$ and $K>0$ such that for any $(\varepsilon ,q) $ in $ \mathfrak Q_{\delta,\eps_{0}} $
with $q=(\vartheta,h)$, $| u^\Omega (h) + \varepsilon   u_c (q) |_{\R^{3}} \leq K$.
\end{lem}
Before stating our normal form, we introduce the following definition.
\begin{defn} \label{wg}
Let $\delta>0$ and $\varepsilon_0\in (0,1)$ be given. 
We say that a vector field $F$ in $C^{\infty}(\R \times \Omega ; \R^3)$ is weakly gyroscopic if  there exists $K>0$ depending on  $\mathcal S_0$, $\Omega$, $\gamma $ and $\delta$ such that for any smooth curve $q(t) = (\vartheta (t) , h(t))$ in $ \R\times  \Omega_{\delta}$,  for any $t\geq 0$ and any $\varepsilon \in (0,\varepsilon_{0})$
\begin{equation} \label{weakylgyro}
\Big| \int_0^t \, 
\tilde{p} \cdot F (q) \Big|
 \leq \varepsilon K  \left( 1 + t +  \int_0^t \, |  \tilde{p}|^{2}_{ \R^3} \right) ,
\end{equation}
with $\tilde{p}= (\eps \vartheta', h' - \gamma [u^\Omega (h) + \eps u_{c} (q)])^t$.
\end{defn}
The weakly gyroscopic vector fields in the sequel have the form $F=(F_{1},0,0)^{t}$.
The normal form that we use in Case (ii) is as follows.
\begin{prop} \label{Pro-fnorm}
%
There exist
\begin{itemize}
\item  $F_{r}: \mathfrak Q \times \R^3 \rightarrow \R^3$ depending on  $\mathcal S_0$, $\gamma$  and $\Omega$, such that for any $\delta > 0$, there exists $\eps_{0} $ in $(0,1)$ such that $F_{r} \in L^{\infty}_\mathrm{loc}(\mathfrak Q_{\delta,\eps_{0}} \times \R^3; \R^3)$ and is weakly nonlinear in the sense of Definition~\ref{def-weakly-nonlinear},
\item $\mathsf{E}_{{\it 1}}^b$ in $C^{\infty}(\R \times \Omega ; \R^3)$ depending on  $\mathcal S_0$  and $\Omega$, weakly gyroscopic in the sense of Definition~\ref{wg},
\end{itemize}
such that Equation \eqref{ODE_intro_eps} can be recast as
\begin{multline}
\label{fnorm2}
\Big( \eps^{\alpha}\,  M_g + \eps^2 \, M^{\Ext}_{a, \vartheta_\varepsilon} \Big)
 \tilde{p}_\varepsilon '
+ \varepsilon \langle \Gamma^{\Ext}_{\vartheta_\varepsilon} ,{\tilde{p}}_\varepsilon,{\tilde{p}}_\varepsilon\rangle 
 = F^{\Ext}_{\vartheta_\varepsilon} ( {\tilde{p}}_{\varepsilon} )
+ \varepsilon \gamma^{2} \mathsf{E}_{{\it 1}}^b (q_\varepsilon) 
\\  + \varepsilon^{\min(2,\alpha)}  F_{r} (\eps,q_\varepsilon, {\tilde{p}}_{\varepsilon} )  .
\end{multline}
\end{prop}
We recall that $M^{\Ext}_{a, \vartheta}$, $\Gamma^{\Ext}_{\vartheta}$ 
and the force term $F^{\Ext}_{\vartheta}$  were defined in the case without outer boundary in \eqref{g+a_ext}, \eqref{Christo-exter} and \eqref{force-ext} (see also \eqref{def-Bext}) respectively. Moreover the term $\mathsf{E}_{{\it 1}}^b$ is explicit; see \eqref{DefH1b}.  The proof of Proposition~\ref{Pro-fnorm} is given in Section~\ref{Sec:FormesNormales}. \par
\begin{itemize}
\item \textbf{Motivations}.  
We will use the normal form  \eqref{fnorm2} both  in order to get a uniform bound of the velocity and to pass to the limit in Case (ii). 
It would be actually possible to deal with the case where $\alpha $ is small with a less accurate normal form and still get an energy estimate. 

In particular in order to get  a uniform bound of the velocity in Case (ii) we will perform an estimate on an energy adapted to the normal form \eqref{fnorm2}.
Observe that should the right hand side vanish the  normal form  \eqref{fnorm2} would be the geodesic equation associated
with the metric ${M}_{ \vartheta }(\varepsilon)$ defined in  \eqref{TrueMatrix}.
On the other hand the right hand side is the sum of  terms with a quite remarkable structure: 
the leading term $F^{\Ext}_{\vartheta_\varepsilon} ({\tilde{p}}_{\varepsilon})$ is gyroscopic in the sense of Definition~\ref{def-gyro},
the electric-type term  $\mathsf{E}_{{\it 1}}^b (q_\varepsilon)$ is  weakly gyroscopic in the sense of Definition \ref{wg};
and the remainder $F_{r}$ is weakly nonlinear  in the sense
of Definition~\ref{def-weakly-nonlinear}.
\item  \textbf{Ideas of the proof  of Proposition~\ref{Pro-fnorm}}. 
As for Proposition~\ref{Pro-fnormA}
the proof of  Proposition~\ref{Pro-fnorm} relies on expansions of the inertia matrix, of the Christoffel symbols and of the force terms with respect to $\eps$.
A striking and crucial phenomenon is that some subprincipal contributions (that is, of order $\varepsilon$) of the force terms will be gathered with the leading part of the term involving the Christoffel symbols to become a part of the second term of the left hand side of \eqref{fnorm2}; see Lemma~\ref{lemma-grav}.
The leading part of the contribution coming from the Christoffel symbols  will be provided by the $\Gamma^{{\rm rot}}$-part of the decomposition  \eqref{NiouGamma}. \par
\end{itemize}

\begin{rem}
The normal forms above are inspired by the case without external boundary (see Equation~\eqref{ODE_ext}) and by 
the paper \cite{BG} where the authors consider the motion of a light charged particle in a slowly varying electromagnetic field.
The equation of motion for the particle is an ordinary differential equation involving a small parameter in front of the term with the highest time derivative.
To restore some uniformity with respect to the small parameter they use a modulation, subtracting from the particle velocity the $|B|^{-2}   E \times B$ drift, and a normal form, see \cite[Eq. (3.5)]{BG}, where 
the only remaining singular term appears through a Lorentz gyroscopic force. This allows us to tackle the convergence of the particle motion to the so-called guiding center motion despite the fast oscillations induced by the gyroscopic force. 

In the case where the solid ${\mathcal S}_{0}$ is a homogeneous disk (Subsection~\ref{SubsecHomDisk}), 
$u^{\Omega}(h_{\varepsilon})$ is the vector 
whose coordinates are the  last two coordinates of this drift.
However in the case where ${\mathcal S}_{0}$ is not a disk our drift term $(0,\gamma (u^\Omega (h) + \eps u_c (q) ))$ does not enter this framework.
Actually the use of the $|B|^{-2} E \times B$ drift could give a modulated energy estimate only in the case $\alpha \leq 1$, and in particular not in the case of a solid with a fixed homogeneous density ($\alpha=2$). Moreover it would not be adapted to the passage to the limit.
\end{rem}
\subsection{Modulated energy estimates}
\label{sec-modudu}
As mentioned above, 
in Case (i), Corollary \ref{bd-loin-(i)} provides a uniform bound of $h_\varepsilon'$ as long as the body stays at a positive distance from the external boundary.
In Case (ii), the same analysis can be carried on so that Corollary~\ref{bd-loin-(i)} and Corollary~\ref{bd-disk} have the following counterpart.
\begin{cor} \label{bd-loin-(i)-Bis}
Let $q_\eps$ satisfy the assumptions of Theorem~\ref{theo:1}.
Let $\delta>0$. There exists $K>0$ (depending on $\mathcal S_0$, $\Omega$, $p_0$, $\gamma$, $m_{1}$, $\mathcal J_{1}$, $\delta$) and $\varepsilon_{0}>0$ such that for any $\eps$ in $(0,\eps_{0})$, such that as long as $(\eps,q_\eps)$ belongs to $\mathfrak Q_{\delta,\eps_{0}}$,
one has $\varepsilon^{\min(1,\frac{ \alpha }{2 })} \, |{p}_\varepsilon|_{\R^{3}} \leq K$.
\end{cor}
Therefore this analysis does no longer provide a uniform bound of the solid velocity. 
An important part of the proof consists in finding an appropriate substitute  which allows us a better control on the body velocity.
This will be accomplished below by a modulated energy, which, roughly speaking, consists in applying the energy ${\mathcal{E}}_\vartheta$ (see \eqref{TrueNRJ}) to $\tilde{p}_{\varepsilon}$ (defined in \eqref{dmodu}) rather than to $p_{\varepsilon}$ (defined in \eqref{def-hat}). 

 The structure established in Proposition~\ref{Pro-fnorm}
will allow us to obtain an estimate of the modulated energy 
${\mathcal{E}}_\vartheta (\eps , \tilde{p}_{\varepsilon}) $.
Since Equation~\eqref{fnorm2} looks like Equation~\eqref{ODE_ext} of the case without external boundary 
for which the total energy is the kinetic energy alone (as defined in Proposition~\ref{Prop-conserv-no}), 
one may hope to have a good behaviour of the modulated  energy ${\mathcal{E}}_\vartheta (\eps , \tilde{p}_{\varepsilon}) $ 
as time proceeds. 
Indeed  we have the following result on the time derivative  of the modulated energy. 
\begin{lem} \label{nrj11}
Let  $(q_\eps,T_{\varepsilon})$ as in Theorem~\ref{theo:1}.
Then 
\begin{equation} \label{Leibencore2}
\frac{d}{dt}  {\mathcal{E}}_{\vartheta_{\varepsilon}} (\eps , \tilde{p}_{\varepsilon})
= \eps^{\max(1-\alpha ,-1)} \,   \gamma^{2}  {\tilde{p}}_{\varepsilon} \cdot  \mathsf{E}_{{\it 1}}^b (q_\varepsilon) 
+ {\tilde{p}}_{\varepsilon} \cdot F_{r} (\eps , q,\tilde{p}_\varepsilon)  .
\end{equation}
\end{lem}
\begin{proof}
Since the matrix ${M}_{\vartheta_\varepsilon}$ defined in \eqref{TrueMatrix} is symmetric, 
\begin{equation} \label{Leib}
\frac{d}{dt} \mathcal{E}_{\vartheta_{\eps}} (\eps , \tilde{p}_{\varepsilon}) 
=  {\tilde{p}}_{\varepsilon} \cdot  {{M}}_{ \vartheta_\varepsilon }  (\eps) 
   {\tilde{p}}_{\varepsilon}'
+ \frac{1}{2} {\tilde{p}}_{\varepsilon} \cdot 
\left( \frac{d}{dt} {{M}}_{ \vartheta_\varepsilon }  (\eps) \right) {\tilde{p}}_{\varepsilon} .
\end{equation}
Recalling \eqref{agreable} and substituting \eqref{fnorm2}   in  \eqref{Leib} we arrive at  
\begin{align} \nonumber
\eps^{\min(2,\alpha)} \,  \frac{d}{dt}  \mathcal{E}_{\vartheta_{\eps}} (\eps , \tilde{p}_{\varepsilon}) 
& = {\tilde{p}}_{\varepsilon} \cdot {F}^\Ext_{ \vartheta_\varepsilon} ( {\tilde{p}}_{\varepsilon} )\\
\nonumber
& + {\tilde{p}}_{\varepsilon} \cdot  \left(  \frac{1}{2}  \eps^{\min(2,\alpha)} \,  \left(\frac{d}{dt}  {M}_{ \vartheta_\varepsilon }(\varepsilon) \right) {\tilde{p}}_{\varepsilon} 
-  \varepsilon \langle \Gamma^\Ext_{ \vartheta_\varepsilon },{\tilde{p}}_\varepsilon,{\tilde{p}}_\varepsilon \rangle  \right) \\
\label{Leibencore}
& + \varepsilon \gamma^{2}  {\tilde{p}}_{\varepsilon} \cdot  \mathsf{E}_{{\it 1}}^b (q_\varepsilon) 
  + \eps^{\min(2,\alpha)} \,  {\tilde{p}}_{\varepsilon} \cdot  F_{r} (\eps,q_\varepsilon, {\tilde{p}}_{\varepsilon} )  .
\end{align}
Using that the force term is $(\vartheta,p) \mapsto {F}^{\Ext}_{\vartheta} ( p )$ is gyroscopic
in the sense of Definition~\ref{def-gyro}
we deduce that the first  term of the right hand side  of the equation \eqref{Leibencore} vanishes.
Moreover, going back to the definition of ${M}_\vartheta (\eps)$, 
\begin{equation} \label{EZ2}
\eps^{\min(2,\alpha)} \,  \frac{d}{dt}  {M}_{\vartheta_\varepsilon } (\eps)
=
\eps^2\,  \frac{d}{dt}  M^{\Ext}_{a, \vartheta_\varepsilon}
=  \eps   \frac{\partial M^{\Ext}_{a, \vartheta}}{\partial q}  (\vartheta_\varepsilon )  \cdot  {\tilde{p}}_{\varepsilon} ,
\end{equation}
by taking advantage of the fact that the matrix $M^{\Ext}_{a, \vartheta}$ depends only on  $\vartheta$ and not on the last two coordinates of  $q$. 
Using  \eqref{EZ1} we conclude that the second term of the right hand side of Equation~\eqref{Leibencore} vanishes.
Therefore the equation \eqref{Leibencore} reduces to \eqref{Leibencore2} and the proof of Lemma  \ref{nrj11} is complete.
\end{proof} \qed
Now Corollary~\ref{bd-loin-(i)} and Lemma~\ref{drifterisok} already give us that
$\varepsilon {\tilde{p}}_{\varepsilon}$  is bounded.
Then we use that $F_{r}$  is weakly nonlinear in the sense of Definition~\ref{def-weakly-nonlinear},
that $\mathsf{E}_{{\it 1}}^b$ is weakly gyroscopic in the sense of Definition~\ref{wg} (using Lemma~\ref{banane}), 
Lemma~\ref{coerc} and Gronwall's lemma to get the following result. 
\begin{cor} \label{nrj12}
Let $(q_\eps,T_{\varepsilon})$ as in Theorem~\ref{theo:3}.
Let $\delta>0$. There exists $K>0$ (depending on $\mathcal S_0$, $\Omega$, $p_0$, $\gamma$, $m_{1}$, $\mathcal J_{1}$, $\delta$) and $\varepsilon_{0}>0$ such that for any $\eps$ in $(0,\eps_{0})$, as long as $(\eps,q_\eps)$ belongs to $\mathfrak Q_{\delta,\eps_{0}}$,
one has $| {p}_\varepsilon |_{\R^{3}}  \leq K$.
\end{cor}
Corollary~\ref{nrj12} therefore provides the same estimates for Case (ii) as Corollary~\ref{bd-loin-(i)} for Case (i).
\begin{rem}
A modulated energy argument  was also used in \cite{MS} for a related issue but we would like to emphasize that the modulation occurs for a different part of the energy. 
More precisely, in the paper \cite{MS} an Euler-Vlasov system is introduced as a mean-field model for massless point particles moving in a perfect incompressible fluid. 
This system couples the incompressible Euler equation with a Vlasov equation which describes the dynamics of the massive particles, seen as a dispersed phase. 
This Vlasov equation  is therefore the counterpart of the ordinary differential equation  \eqref{ODE-mass} of Case (i). 
The last section of \cite{MS} deals with the limit where the individual mass of the particles converges to $0$. Then a modulated energy is used in order to obtain a hydrodynamic convergence, that is the convergence of the macroscopic mixture velocity associated by the Biot-Savart law with both fluid vorticity and particles circulations. 
This modulated energy is inspired by the paper \cite{brenier} of Brenier regarding the gyrokinetic limit for the Vlasov-Poisson system.
Referring to the decomposition of the energy in \eqref{pader} and to Eq. (78) in \cite{MS} we sketch the following opposition. In \cite{MS} the potential part of the energy or more exactly of its modulation is crucial. On the contrary we use here a modulation of the kinetic part of the energy in order to deduce the limit of the particle dynamics, the fluid dynamics being subjugated to the particle one. 
\end{rem}
\subsection{Passage to the limit}
Here we prove the convergence of the center of mass $h_{\varepsilon}$ to $\overline{h}$, where $\overline{h}$ is the global solution to \eqref{argh}. \par
First, thanks to  Corollary~\ref{nrj12}, Lemma~\ref{nrj-liminf}  remains true in Case $(ii)$. 
Moreover we have the following counterpart of Lemma~\ref{nrj-cvi} and Lemma~\ref{nrj-cv-disk}.
\begin{lem} \label{nrj-cv}
Let $\varepsilon_{1}>0$,  ${\delta}>0$ and  ${T} >0$, and suppose that for any $\varepsilon $ in $(0,\varepsilon_{1} )$,  
\begin{equation} \label{HypUniformite2}
(\eps,q_\eps )  \in  \mathfrak Q_{{\delta}  ,\eps_{1}}  \; \text{ on  } [0,{T} ]  .
\end{equation}
Then
 $h_\varepsilon \longrightharpoonup \overline{h}$ in $W^{1,\infty}([0,{T}];\mathbb R^2)$ weak-$\star$.
\end{lem}
As for Lemma~\ref{nrj-cv-disk}, the proof of Lemma~\ref{nrj-cv} consists in passing to the weak limit, with the help of all a priori bounds, in each term of \eqref{fnorm2}. 
\begin{proof}
We consider ${\delta}>0$, ${T}>0$ and $\varepsilon_{1}>0$ as above and apply Corollary~\ref{nrj12} on the interval $[0,{T}]$. Reducing $\varepsilon_{1}>0$ if necessary, 
\begin{equation} \label{EstBase}
(|h'_{\varepsilon}| + |\varepsilon \vartheta_{\varepsilon}' | )_{\varepsilon \in (0, \varepsilon_{1})}  \text{ is bounded uniformly }   \text{ on } [0,{T}].
\end{equation}
Our goal is to pass to the limit in each term of \eqref{fnorm2}. 
For what concerns the left hand side we first observe that, thanks to \eqref{EstBase}, $\eps M_{a,\Ext,\vartheta_\varepsilon}$ 
is bounded in $W^{1,\infty}$ whereas $\tilde{p}_\varepsilon'$ is bounded in $W^{-1,\infty}$.
Since $M_{g}$ is constant, it follows that $\big( \varepsilon^{\alpha} M_g + \varepsilon^{2}  M^\Ext_{a, \vartheta_\varepsilon} \big) \tilde{p}_\varepsilon'
\longrightarrow 0$  in $W^{-1,\infty}$.
Next, the term $\varepsilon \langle \Gamma^{\Ext}_{\vartheta_\varepsilon}, {\tilde{p}}_\varepsilon,{\tilde{p}}_\varepsilon\rangle$ converges to $0$ in $L^{\infty}$ since all factors in the brackets are bounded.
In the same way, the terms $\varepsilon \gamma^{2} \mathsf{E}_{{\it 1}}^b(q_\varepsilon)$ and $\varepsilon^{\min(2,\alpha)}  {F}_{r} (\eps,q_\varepsilon, {\tilde{p}}_{\varepsilon})$ converge strongly to $0$ in $L^{\infty}$. \par
Now let us consider the last two coordinates of the remaining term $F^{\Ext}_{\vartheta_\varepsilon} ( {\tilde{p}}_{\varepsilon} )$ in the equation \eqref{fnorm2}, that is, $\gamma (h_\varepsilon'  - \gamma  u^\Omega (h_\varepsilon ) - \varepsilon \gamma u_{c}(q_{\varepsilon}))^\perp - \varepsilon \vartheta_\varepsilon' {\zeta}_{\vartheta_\varepsilon }.$
The last term converges weakly to $0$ in $W^{-1,\infty}$ as seen in Case (i); see \eqref{2-g}.
Hence we infer that $h_\varepsilon' - \gamma u^\Omega (h_\varepsilon)$ converges weakly to $0$ in $W^{-1,\infty}$. 
Then one concludes exactly as for Lemma~\ref{nrj-cv-disk}. \qed
\end{proof} 
%
%
%
%
Once Lemma~\ref{nrj-cv} is established, the conclusion of the proof of Theorem~\ref{theo:1} (provided that ${\mathcal S}_{0}$ is not a disk) follows the exact same lines as in the case of a homogeneous disk. This concludes the proof of Theorem~\ref{theo:1} in this case.
\subsection{The case of a non-homogeneous disk}
\label{Subsec:TCONHD}
We now return to the case ${\mathcal S}_{0}$ is a disk, but this time we handle the case where it is non-homogeneous, or more precisely the case where the center of gravity $h_{\varepsilon}$ is not at the center of the disk $h_{c,\varepsilon}$, which can occur with a non-uniform mass distribution. We aim at proving Theorem~\ref{theo:1disk}.
This adds some extra-difficulties in the analysis which require a separate treatment in the case $\alpha >2$. In the case $\alpha \leq 2$, we can use the above subsection since the degeneracy of $M_{a,\vartheta}^{\Ext}$ does not prevent $M_{\vartheta}(\varepsilon)$ to be uniformly bounded from below, so that we can obtain Corollary~\ref{nrj12} in the same way. \par
From now on we suppose $\alpha>2$ and once again we assume without loss of generality ${\mathcal S}_{0}=\overline{B}(0,1)$. 
We will use yet another slight variant of  Definition~\ref{def-weakly-nonlinear} and  Definition~\ref{def-WNL2}.
\begin{defn} \label{def-WNL2-inh}
Let $\delta>0$ and $\varepsilon_0\in(0,1)$ be given. 
We say that a vector field $F_{\flat}$ in $L^{\infty}_\mathrm{loc}((0,\varepsilon_0) \times \Omega \times \R^3; \R^2)$ is weakly nonlinear if 
there exists $K>0$ depending on  $m$, ${\mathcal J}$, $\gamma$, $\Omega$ and $\delta$
such that for any $\varepsilon$ in $(0,\varepsilon_{0})$, any $x$ in $\Omega$ such that $d(B(x,\varepsilon),\partial \Omega) > \delta$ and any $p$ in $\R^{3}$, 
\begin{equation} \label{ineq-WNL3}
| F_{\flat} (\eps , x ,  p ) |_{\R^{2}} \leq K ( 1 + | p |_{\R^{3}}  + \eps | p |_{\R^{3}}^{2} ) .
\end{equation}
\end{defn}
We will use the modulated variable $\tilde{p}_{\flat,\varepsilon}$ defined by
\begin{equation*}
\tilde{p}_{\flat,\varepsilon} :=  h_{c,\varepsilon}' - \gamma  u^{\Omega}(h_{c,\varepsilon}),
\end{equation*}
and the following normal form.
\begin{prop} \label{Pro-fnorm-ball-inh}
There exists $F_{\flat,r}$ in $L^{\infty}_\mathrm{loc}((0,1) \times\Omega \times \R^3; \R^2)$ such that, for any $\delta>0$, there exists $\varepsilon_{0}$ in $(0,1)$ for which $F_{\flat,r}$ is weakly nonlinear in the sense of Definition~\ref{def-WNL2-inh}, and such that Equation~\eqref{ODE_intro-disk} can be recast as
\begin{align} \label{fnorm2-ball-inh}
\eps^{\alpha}\,  m_{1} h_{\varepsilon}'' + \eps^2  \pi ( \tilde{p}_{\flat,\varepsilon} )'
&= \gamma  \tilde{p}_{\flat,\varepsilon}^\perp
+ \eps^2 F_{\flat,r} (\eps,h_{c,\varepsilon}, \varepsilon \vartheta_\varepsilon' , \tilde{p}_{\flat,\varepsilon})  , \\
\label{FBI2}
h_{c,\varepsilon} - h_{\varepsilon} &=  \varepsilon \zeta_{\vartheta_\varepsilon} , \\
\mathcal J_1 \varepsilon \vartheta_\varepsilon '' &=  \zeta_{\vartheta_\varepsilon}^\perp \cdot  m_1 h_\varepsilon '' ,
\end{align}
\end{prop}
Again the coefficient $\pi$ comes from the fact that $M^{\Ext}_{\flat} =\pi \mbox{\rm Id}_{2}$ (where $M^{\Ext}_{\flat}$ is defined in \eqref{def-Ma}). The proof of Proposition~\ref{Pro-fnorm-ball-inh} is given in Section~\ref{Sec:FormesNormales}. \par
\ \par
Once Proposition~\ref{Pro-fnorm-ball-inh} is obtained, for $\varepsilon \in (0,\varepsilon_{0})$, we multiply \eqref{fnorm2-ball-inh} by $\tilde{p}_{\flat,\varepsilon}$. 
We set 
${\mathcal E}_{\flat}:= \varepsilon^{\alpha}\,  m_{1} |h_{\varepsilon}'|^{2} + \varepsilon^{\alpha+2} {\mathcal J}_{1} |\vartheta_{\varepsilon}'|^{2} + \eps^2  \pi |\tilde{p}_{\flat,\varepsilon}|^{2}$.
Noticing that
\begin{equation*}
m_{1} h''_{\varepsilon} \cdot h'_{c,\varepsilon} = m_{1} h_{\varepsilon}'' \cdot h_{\varepsilon}' + \varepsilon^{2} {\mathcal J}_{1} \vartheta_{\varepsilon}'' \vartheta_{\varepsilon}',
\end{equation*}
we arrive at
\begin{equation*}
\frac{d}{dt} {\mathcal E}_{\flat}(t)
+ \varepsilon^{\alpha} m_{1} h_{\varepsilon}'' \cdot u^{\Omega}(h_{c,\varepsilon})
\leq C \varepsilon^{2} |\tilde{p}_{\flat,\varepsilon}| (1 + |\tilde{p}_{\flat,\varepsilon}| + \varepsilon |\tilde{p}_{\flat,\varepsilon}|^{2} ).
\end{equation*}
The standard energy estimate gives us here that $\varepsilon |h'_{c,\varepsilon}|$ is bounded, and consequently so is $\varepsilon |\tilde{p}_{\flat,\varepsilon}|$.
Integrating over time and using an integration by parts we obtain (as long as the solid stays at positive distance from $\partial \Omega$):
\begin{equation*}
{\mathcal E}_{\flat} (t) - {\mathcal E}_{\flat} (0)
\leq C \varepsilon^{2} + C \int_{0}^{t} {\mathcal E}_{\flat} (s) \, ds
+ C \varepsilon^{\alpha} |h_{\varepsilon}'(t)| + C \varepsilon^{\alpha} |h_{\varepsilon}'(0)|.
\end{equation*}
Thanks to Young's inequality, 
\begin{equation*}
{\mathcal E}_{\flat} (t) \leq C \int_{0}^{t} {\mathcal E}_{\flat} (s) \, ds + C {\mathcal E}_{\flat}(0) + C \varepsilon^{2},
\end{equation*}
so with Gronwall's lemma we finally deduce that ${\mathcal E}_{\flat}(t) \leq C ({\mathcal E}_{\flat} (0) + \varepsilon^{2})$, as long as the solid stays at positive distance from $\partial \Omega$. Since $\alpha >2$, we deduce that $(h'_{c,\varepsilon})_{\varepsilon \in (0,\varepsilon_{0})}$, $(\varepsilon^{\frac{\alpha}{2}-1} h'_{\varepsilon})_{\varepsilon \in (0,\varepsilon_{0})}$ and $(\varepsilon^{\frac{\alpha}{2}} \vartheta'_{\varepsilon})_{\varepsilon \in (0,\varepsilon_{0})}$ are bounded in $L^{\infty}$. Notice in passing that using \eqref{FBI2} and an interpolation argument we deduce  that $(h_{\varepsilon})_{\varepsilon \in (0,\varepsilon_{0})}$ is bounded in $W^{\frac{2}{\alpha},\infty}$; see the discussion after the statement of Theorem~\ref{theo:1disk}.

Then it is straightforward to adapt the previous reasoning to pass to the weak limit in \eqref{fnorm2-ball-inh}. We deduce $\tilde{p}_{\flat,\varepsilon} \rightharpoonup 0$ in $L^{\infty}$ and the conclusion follows. This ends the proof of Theorem~\ref{theo:1disk}.
%
%
%
%
%
%
%
%
%
%
\section{Asymptotic development of the stream and potential functions}
\label{sec-dev-stream}
In this section, we establish asymptotic expansions for the circulation stream function (defined in \eqref{def_stream})
and the Kirchhoff potentials (defined in \eqref{Kir}) in the domain ${\mathcal F}^{\varepsilon}(q)$, as $\varepsilon$ tends to $0^{+}$.
The asymptotic analysis of the Laplace equation when the size of an inclusion goes to $0$ has been deeply studied, see for example \cite{Ilin} and 
\cite{MNP}. However to our knowledge the results of this section are not covered by the literature. \par
%
%
%
%
%
\subsection{A few reminders about single-layer potentials}
\label{AFewReminders}
To get the asymptotic expansions mentioned above, we will look for a representation of these  stream and  potential functions as a superposition of single-layer integrals supported by the two connected components $\partial  \mathcal S^{\varepsilon} (q)$ and $\partial \Omega$ of the boundary of the fluid domain $\mathcal F_\varepsilon  (q)$. In this subsection, we give a few reminders about single-layer potentials which we will use in the analysis. We refer for instance to \cite{Folland} and \cite{McLean}.

Below we consider single-layer potentials of the form:
\begin{equation} \label{def-SL}
SL [\mathtt{p}^{\mathcal C}]  := \int_{\mathcal C} \mathtt{p}^{\mathcal C} (y) G(\cdot -y ) \, {\rm d}s(y) , 
\end{equation}
where $\mathcal C$ is a smooth Jordan curve in the plane and $\mathtt{p}^{\mathcal C}$ belongs to the Sobolev space $ H^{-\frac{1}{2}} (\mathcal C)$. Recall that $G$ was defined in \eqref{NewtonianPotential}.
We  say that $\mathcal C$  is the support of the single-layer potential and that $\mathtt{p}^{\mathcal C}$ is a density on $\mathcal C$. \par
\ \par \noindent
{\it Harmonicity and trace}.
The formula \eqref{def-SL} defines a function in the  Sobolev space  $H^{1}_\mathrm{loc} (\R^{2})$, harmonic in $\mathbb R^2\setminus \mathcal C$. In particular, for any  $\mathtt{p}^{\mathcal C}$ in $ H^{-\frac{1}{2}} (\mathcal C)$,  the trace of $SL[\mathtt{p}^{\mathcal C}]$ on $\mathcal C$ is well-defined as a function of the  Sobolev space $H^{\frac12} (\mathcal C)$.

\ \par \noindent
{\it Jump of the derivative and density}.
The density $\mathtt{p}^{\mathcal C} $ is equal to the jump of the normal derivative of $SL [\mathtt{p}^{\mathcal C}]$ across $\mathcal C$.
To state this rigorously let us be specific on the orientation of the normal. 
According to Jordan's theorem, the set $\R^{2} \setminus \mathcal C$ has two connected components,
one bounded (the interior), say $\mathcal O_i$, and the other one unbounded (the exterior), say $\mathcal O_e$. Moreover the curve $\mathcal C$ is the boundary of each component. 
We consider the restrictions of $SL [\mathtt{p}^{\mathcal C}]$: 
\begin{equation*}
u_{i} := SL [\mathtt{p}^{\mathcal C}]_{|{\mathcal O}_{i}} \ \text{ and } \ 
u_{e} := SL [\mathtt{p}^{\mathcal C}]_{|{\mathcal O}_{e}}.
\end{equation*}
Denote $n_i$ (respectively $n_e$) the unit normal on $\mathcal C$ pointing outward of $\mathcal O_i$ (resp. of $\mathcal O_e$). Then
the function $u_i$ (respectively $u_e$) is harmonic in $\mathcal O_i$ (resp. $\mathcal O_e$) and the 
traces of the normal derivatives $\frac{\partial u_i}{\partial n_i}$ and $\frac{\partial u_e}{\partial n_e}$ 
on each side of $\mathcal C$ are well-defined in $H^{-\frac12} (\mathcal C)$ and satisfy
\begin{equation} \label{saut}
\mathtt{p}^{\mathcal C} = \frac{\partial u_i}{\partial  n_i} + \frac{\partial u_e}{\partial n_e} .
\end{equation} \par
In the sequel we will make use of single-layer potentials supported on the external boundary $\partial \Omega$, on the boundary $\partial \mathcal S_\varepsilon  (q)$ of the solid body and on the boundary  $\partial \mathcal S_0$ of the rescaled body as well.  We will not use the notations $n_i$ nor $n_e$ but rather the notation $n$ which always stands for the normal outward the fluid. Hence we will have to particularly take care of the signs when referring to the formula \eqref{saut}.

\ \par \noindent
{\it Kernel and rank}.
We will use the following facts (see  Th. 7.17, Th. 8.12 and Th. 8.16 in \cite{McLean}):
 \begin{gather}
\label{fact1}
\text{The operator } SL \text{ is Fredholm with index zero from }   L^2 (\mathcal C) \text{ to }  H^{1} (\mathcal C), \\
\label{fact3}
\text{If }  \mathtt{p}^{\mathcal C} \in H^{-\frac12} (\mathcal C) \text{ satisfies } \int_{\mathcal C}  \,\mathtt{p}^{\mathcal C} \, {\rm d}s = 0 \text{ and }SL [\mathtt{p}^{\mathcal C}] =0 \text{ then }  \mathtt{p}^{\mathcal C} =0, \\
\label{fact2}
\text{If } {\rm Cap} (\mathcal C) \neq 1, \text{ then for any }  \mathtt{p}^{\mathcal C} \in H^{-\frac12} (\mathcal C),
SL [\mathtt{p}^{\mathcal C}] =0 \text{ implies }  \mathtt{p}^{\mathcal C} =0.
\end{gather}
Above, with some slight abuses of notation, we omit to mention the trace operator on $\mathcal C$ and we write 
$\int_{\mathcal C} \, \mathtt{p}^{\mathcal C}  \, {\rm d}s $ for the duality bracket
$\langle 1,\mathtt{p}^{\mathcal C} \rangle_{ H^{-\frac12} (\mathcal C) ,  H^{\frac12} (\mathcal C)}$. \par
\ \par
These properties have the following consequences in our context. 
We  assume without loss of generality that the logarithmic capacity\footnote{also called external conformal radius or transfinite diameter in other contexts \cite{Pomm}} ${\rm Cap}({\partial \Omega})$ of $\partial  \Omega$ satisfies ${\rm Cap}({\partial  \Omega}) < 1$, using translation and dilatation of the coordinates system if necessary. 
Observe that the monotonicity property of the logarithmic capacity  entails that  ${\rm Cap}({\partial \mathcal S_{0}}) < 1$.
Using this latter property, we deduce the two following results.
\begin{prop} \label{coro-cap}
There exists a unique smooth function $\psi^{\Ext}_{{\it -1}}$  solution to \eqref{model_stream-1}. 
Moreover, it satisfies
\begin{equation} \label{ineed}
\psi^{\Ext}_{{\it -1}} =  SL [   \mathsf{p}_{{\it -1}}^{\Ext} ] , \text{ with } \mathsf{p}_{{\it -1}}^{\Ext} =   \frac{\partial \psi^{\Ext}_{{\it -1}} }{\partial n} .
\end{equation} 
\end{prop}
In potential theory $-\mathsf{p}_{{\it -1}}^{\Ext}$ is called the equilibrium density of $\partial \mathcal S_{0}$. The constant $C^{\Ext}$ in \eqref{model_stream-1} is given by $C^{\Ext} = \frac{1}{2\pi} \ln \left( {\rm Cap}(\partial\mathcal S_{0}) \right)$. Note that the index $-1$ is in italic type to emphasize the fact that it is related to an asymptotic development of $\psi$ in powers of $\varepsilon$ and is not a coordinate.
\begin{prop} \label{coro-pro}
Let $g$ be a smooth  function on $\partial  \mathcal S_0$ such that 
\begin{equation}
\label{plm}
\int_{\partial  \mathcal S_0} \, g   \mathsf{p}_{{\it -1}}^{\Ext} \, {\rm d}s = 0 .
\end{equation}
Then there exists a unique bounded smooth function $f$ such that 
\begin{equation} \label{gener}
 -\Delta f=0 \quad \text{in }  \quad  \R^{2} \setminus\mathcal S_{0} ,  \quad  \text{ and }  \quad  f =g\quad \text{on }\partial\mathcal S_{0} .
\end{equation}
Moreover, there exists a unique smooth density $p^{\partial \mathcal S_{0}} $ in $C^{\infty} (\partial \mathcal S_{0})$ such that 
$f = SL [ p^{\partial \mathcal S_{0}}]$ and
\begin{equation}
  \label{samsoir}
\int_{\partial \mathcal S_0} \, p^{\partial \mathcal S_{0}}  \, {\rm d}s = 0 .
\end{equation}
Finally, $f = O(|  x|_{\R^{2}}^{-1} )$ at infinity and  
\begin{equation}
  \label{samsoir2}
\int_{\partial \mathcal S_0} \,  \frac{\partial  f}{\partial n}  \, {\rm d}s  = 0 .
\end{equation}
\end{prop}
\begin{proof}[Proof of Proposition \ref{coro-cap} and \ref{coro-pro}.]
The uniqueness part of Proposition \ref{coro-cap} and of Proposition  \ref{coro-pro} and the decay at infinity in Proposition  \ref{coro-pro} 
can be established by considering holomorphy at infinity of appropriate functions; see for instance \cite[Prop. 2.74. and Prop. 3.2.]{Folland}.

The existence part of Proposition \ref{coro-cap} is given in \cite[Th. 8.15]{McLean}; it also follows from the properties of the single-layers potentials recalled above, in particular \eqref{fact1} and \eqref{fact2}. 

Regarding the  existence part of Proposition \ref{coro-pro} we proceed in two steps. 

First we prove that the operator which maps $(p^{\partial \mathcal S_{0}} , C) $ in $L^{2} (\partial \mathcal S_{0})  \times \R$
to $( SL [ p^{\partial \mathcal S_{0}}] -C , \int_{\partial  \mathcal S_0} \,p^{\partial \mathcal S_{0}}    \, {\rm d}s)$
in $ H^{1} (\partial \mathcal S_{0})  \times \R$ is invertible. 
To prove this, we observe that this operator is Fredholm with index zero  as a consequence of \eqref{fact1}.
Moreover if $(p^{\partial \mathcal S_{0}} , C)$ is in the kernel of this operator, then 
$SL [ p^{\partial \mathcal S_{0}} - \frac{C}{C^{\Ext} } \mathsf{p}_{{\it -1}}^{\Ext} ] = 0$,
so that according to \eqref{fact2}, 
$p^{\partial \mathcal S_{0}} = \frac{C}{C^{\Ext} } \mathsf{p}_{{\it -1}}^{\Ext} $. 
Now, as a consequence of \eqref{53d-1} and of the second identity of \eqref{ineed},  
\begin{equation} \label{plm+}
\int_{\partial  \mathcal S_0} \, \mathsf{p}_{{\it -1}}^{\Ext}   \, {\rm d}s = -1 . 
\end{equation}
Then using that $\int_{\partial  \mathcal S_0} \, p^{\partial \mathcal S_{0}} \, {\rm d}s= 0$ 
we deduce that $C=0$ and therefore $p^{\partial \mathcal S_{0}} = 0$ as well. 
 
Then $(g, 0)$ is in the image of this operator, that is there exists  $(p^{\partial \mathcal S_{0}} , C) $ in $L^{2} (\partial \mathcal S_{0})  \times \R$  such that 
\begin{equation} \label{plm2}
SL [ p^{\partial \mathcal S_{0}}] - C = g \text{ on } \partial  \mathcal S_0 ,
\end{equation}
and \eqref{samsoir}.
Observing that the trace of the operator $SL$ on $\partial \mathcal S_{0}$ is self-adjoint we infer that 
\begin{align} \label{plm3}
\int_{\partial \mathcal S_0} \, SL [ p^{\partial \mathcal S_{0}}]   \mathsf{p}_{{\it -1}}^{\Ext} \, {\rm d}s = \int_{\partial  \mathcal S_0} \, p^{\partial \mathcal S_{0}} SL [ \mathsf{p}_{{\it -1}}^{\Ext}]  \, {\rm d}s  =
C^{\Ext} \,  \int_{\partial  \mathcal S_0} \, p^{\partial \mathcal S_{0}}  \, {\rm d}s 
 = 0 .
\end{align}
Combining \eqref{plm}, \eqref{plm+}, \eqref{plm2} and \eqref{plm3} we infer that $C=0$. 
 
Finally the smoothness part of Proposition \ref{coro-cap} and of Proposition  \ref{coro-pro} follows from
\cite[Th. 7.16]{McLean} and  \eqref{samsoir2} follows from  \eqref{samsoir}, \eqref{saut} and  the vanishing by integration by parts of the interior contribution.
\qed 
\end{proof}
\ \\
{\it Regular integral operators}.
Since we consider single-layer potentials supported on two disjoint curves and their values on both curves, we will also be led to consider regular integral operators.
We recall below some straightforward results which are useful in the sequel. 
Given $\mathcal C$ a smooth Jordan curve in $\overline{\Omega}$, we introduce, for $\delta >0$,
\begin{equation*} 
{\mathcal C}^{\delta} := \{x \in  \overline{\Omega} \ : \  \text{dist} (x,  \mathcal C) < \delta \} ,
\end{equation*}
and define 
\begin{equation*}
F_\delta :  C^{1} \big( \overline{\Omega} \setminus {\mathcal C}^{\delta} ; H^{\frac{1}{2}} (\mathcal C) \big) \times H^{-\frac{1}{2}} (\mathcal C) \rightarrow C^{1} ( \overline{\Omega} \setminus {\mathcal C}^{\delta} ; \R ),
\end{equation*}
by setting, for any $(b, \mathtt{p}^{\mathcal C} ) $ in $ C^{1} \big( \overline{\Omega} \setminus {\mathcal C}^{\delta} ; H^{\frac{1}{2}} (\mathcal C) \big) \times H^{-\frac{1}{2}} (\mathcal C) $, for any $x $ in $\overline{\Omega} \setminus {\mathcal C}^{\delta}$, 
\begin{equation*}
F_\delta [b,\mathtt{p}^{\mathcal C} ] (x) := \int_{\mathcal C} b(x,y) \mathtt{p}^{\mathcal C} (y) \, {\rm d}s(y).
\end{equation*} %
This will be applied to $b$ defined in a larger set but singular for $x=y$; this motivates our framework for $b$. \par
Next, given another smooth Jordan curve $\tilde{\mathcal C}$  in $\overline{\Omega} \setminus {\mathcal C}^{\delta}$ and 
for $b $ in $C^{1} \big( \overline{\Omega} \setminus {\mathcal C}^{\delta} ; H^{\frac{1}{2}} (\mathcal C) \big)$, we define the operator
\begin{equation*}
F_{\delta,b} : L^{2} (\mathcal C) \rightarrow  H^{1} (\tilde{\mathcal C})               
\end{equation*}
by setting $F_{\delta,b} (\mathtt{p}^{\mathcal C})$ as the trace of $F_\delta [b,\mathtt{p}^{\mathcal C} ]$ on $\tilde{\mathcal C}$. 
We will  make use of the following lemma.
\begin{lem} \label{op-reg}
Let $\delta >0$. The two following properties hold.
\begin{enumerate}[(i)]
\item \label{un} The operator $F_\delta$ is bilinear continuous with a norm less than $1$, in other words: 
for any $(b, \mathtt{p}^{\mathcal C} )$ in  $C^{1} \big( \overline{\Omega} \setminus {\mathcal C}^{\delta} ; H^{\frac{1}{2}} (\mathcal C) \big) \times H^{-\frac{1}{2}} (\mathcal C)$, 
\begin{equation*}
\|  F_\delta [b,\mathtt{p}^{\mathcal C} ] \|_{C^{1} ( \overline{\Omega} \setminus {\mathcal C}^{\delta}  )} 
\leq \|  b \|_{C^{1} \big( \overline{\Omega} \setminus {\mathcal C}^{\delta} ; H^{\frac{1}{2}} (\mathcal C) \big)}  \, 
\|  \mathtt{p}^{\mathcal C}  \|_{H^{-\frac{1}{2}} (\mathcal C) } .
\end{equation*}
\item \label{deux} If  $\tilde{\mathcal C}$  is a smooth Jordan curve in $\overline{\Omega} \setminus {\mathcal C}^{\delta}$ and  $b $ in $C^{1} \big( \overline{\Omega} \setminus {\mathcal C}^{\delta} ; H^{\frac{1}{2}} (\mathcal C) \big)$, the operator  $F_{\delta,b}$ is compact from $L^{2} (\mathcal C)$ to $H^{1} (\tilde{\mathcal C})$.
\end{enumerate}
\end{lem}
The proof of Lemma \ref{op-reg} is elementary and left to the reader. \par
\subsection{Statements of the results}

\subsubsection{Circulation part}
\label{Cev-c}

Let $(\varepsilon, q) $ in $\mathfrak Q$.
We denote $\psi_\varepsilon (q,\cdot)$ in $\mathcal F_\varepsilon  (q)$ the function defined as the solution to the Dirichlet boundary value problem:
\begin{subequations} \label{model_stream}
\begin{alignat}{3}
\label{model_stream-a} -\Delta\psi_\varepsilon  (q,\cdot)&=0&&\text{in }\mathcal F_\varepsilon  (q) ,\\
\label{model_stream-b} \psi_\varepsilon  (q,\cdot)&=C_{\varepsilon} (q)&\quad&\text{on }\partial\mathcal S_\varepsilon  (q),\\
\label{model_stream-c} \psi_\varepsilon  (q,\cdot)&=0&&\text{on }\partial\Omega,
\end{alignat}
\index{BGrecYP4@$\psi_{\varepsilon}(q,\cdot)$: circulatory part of the stream function of the shrinking solid}
where the constant $C_{\varepsilon} (q)$ is such that:
\begin{equation} \label{53d}
\int_{\partial\mathcal S_\varepsilon  (q)} \frac{\partial\psi_\varepsilon}{\partial n} (q,\cdot) \, {\rm d}s= - 1.
\end{equation}
\end{subequations}
Here, $n$ stands for the unit normal vector to $\partial\mathcal S_\varepsilon  (q) \cup\partial\Omega$ directed toward the exterior of $\mathcal F_\varepsilon (q)$. 
The function $\psi_\varepsilon  $ is the counterpart, for the case where the size of the solid is of order $\eps$, of the function $\psi$ defined in \eqref{def_stream} in the case where  the size of the solid is of order $1$. For any $q $ in $\mathfrak Q$, the existence and uniqueness of a solution $\psi_\varepsilon (q,\cdot)$ of \eqref{model_stream} is classical. \par
\ \par
To state a result establishing an asymptotic expansion of  $C_{\varepsilon}  (q)$ and of $\frac{\partial\psi_\varepsilon}{\partial n} (q,\cdot) $ on $\partial\mathcal S_\varepsilon  (q)$  as $\varepsilon\to 0^{+}$ we introduce a few notations. \par
\ \par
\noindent
{\it Definition of $\psi^{\Ext}_{{\it 0}}  (q, \cdot) $ and of $P_{{\it 0}} (q, X) $.}
We denote, for any $q:= (\vartheta ,h) $ in $\R \times \Omega$, by $P_{{\it 0}} (q, X)$ the harmonic  polynomial 
\begin{equation} \label{polyh0}
P_{{\it 0}} (q,  X)  := u^{\Omega} (h)^{\perp} \cdot (   R(\vartheta) X - \zeta_{\vartheta})  .
\index{AP4@$P_{{\it 0}} (q, X) $: harmonic  polynomial extending  $\psi^{\Ext}_{{\it 0}}  (q, \cdot) $ in $\mathcal S_0$}
\end{equation}
Let us recall that $\zeta_\vartheta $ is defined in  \eqref{def-zeta_vartheta} in terms of $ \zeta$ defined in 
  \eqref{def-zeta}.

Recalling \eqref{plm+} and the second identity of \eqref{ineed}
we observe that $P_{{\it 0}} (q, X)$  satisfies 
\begin{equation}
\label{arti1}
\int_{\partial  \mathcal S_0} \,  P_{{\it 0}} (q, \cdot)  \mathsf{p}_{{\it -1}}^{\Ext} \, {\rm d}s = 0 .
\end{equation}
Therefore, according to Proposition \ref{coro-pro}
there exists a unique smooth function $\psi^{\Ext}_{{\it 0}}  (q, \cdot)$  satisfying
\begin{subequations} \label{model_stream0}
\begin{alignat}{3}
-\Delta\psi^{\Ext}_{{\it 0}} (q,\cdot)  & =0 & \quad & \text{in }\R^{2} \setminus \mathcal S_0 ,\\
\label{ohoh}
\psi^{\Ext}_{{\it 0}}  (q, \cdot) & = P_{{\it 0}} (q, \cdot ) & &\text{on }\partial\mathcal S_0 ,
\end{alignat}
and vanishing at infinity. 
Moreover
\begin{equation} \label{circ0nul}
\int_{\partial\mathcal S_0}  \frac{\partial\psi^{\Ext}_{{\it 0}}}{\partial n}  \, {\rm d}s  = 0  .
\index{BGrecYP5@$\psi^{\Ext}_{k}  (q, \cdot) $: $k$th-order profile, defined in $\R^{2} \setminus \mathcal S_0$}
\end{equation} 
\end{subequations}
\ \\
\noindent
{\it Definition of $\psi^{\Int}_{{\it 1}}  (q, \cdot) $.}
We also introduce the solution  $\psi^{\Int}_{{\it 1}}  (q, \cdot) $ of
\begin{subequations}
\label{model_stream1-noint}
\begin{alignat}{3}
-\Delta \psi^{\Int}_{{\it 1}} (q,\cdot)  &=0& &\text{in }\Omega ,\\
\label{pasdel}
\psi^{\Int}_{{\it 1}}  (q, \cdot) &=  - (\nabla G) (\cdot -h)  \cdot  \zeta_{\vartheta}  &\quad&\text{on }\partial\Omega .
\index{BGrecYP6@$\psi^{\Int}_{k} (q, \cdot) $: $k$th-order profile, defined in $\Omega$}
\end{alignat}
\end{subequations}
Above $G$ denotes the Newtonian potential  defined in \eqref{NewtonianPotential}.
The function $\psi^{\Int}_{{\it 1}}$ can be expressed thanks to the function $\psi^{\Int}_{{\it 0}}$ defined in \eqref{depsi0}
according to the following formula: 
\begin{equation} \label{29dec-eq}
\forall h,x \in \Omega, \quad \forall  \vartheta \in \R, \quad
D_{x} \psi^{\Int}_{{\it 0}} (h,x) \cdot \zeta_{\vartheta} = \psi^{\Int}_{{\it 1}} (\vartheta,x,h) .
\end{equation}
\begin{proof}[Proof of \eqref{29dec-eq}.]
We first recall  that  $\psi^{\Int}_{{\it 0}}$ is symmetric in its variables. 
Indeed by uniqueness of the Dirichlet problem  \eqref{depsi0} it follows that  for any $h $ in $\Omega$, the following decomposition holds in $ \Omega$:
\begin{equation} \label{deco-routh}
\psi^{\Int}_{{\it 0}} (h,\cdot) = G(\cdot-h ) + G^{\Omega}(h,\cdot ),
\end{equation}
where $G^{\Omega}$ denotes the Green function associated with the  domain $\Omega$ and the homogeneous Dirichlet condition, that is 
$$\Delta G^{\Omega}(h,\cdot )  =  \delta_h \text{ in } \Omega, \ \ 
 G^{\Omega}(h,\cdot ) =  0 \text{  on } \partial\Omega .$$	
Using the decomposition \eqref{deco-routh}, that the Newtonian potential $G$ is even
and the symmetry of $G^{\Omega}$ we deduce 
 \begin{equation} \label{symm-routh}
\forall h,x \in \Omega, \quad \psi^{\Int}_{{\it 0}} (h,x) = \psi^{\Int}_{{\it 0}} (x,h) .
\end{equation}

It follows that
$(D_{x} \psi^{\Int}_{{\it 0}}) (x,h) \cdot \zeta_{\vartheta} = (D_{h} \psi^{\Int}_{{\it 0}})  (h,x) \cdot \zeta_{\vartheta}$. 
Next we observe that  $D_{h} \psi^{\Int}_{{\it 0}} (h,\cdot)\cdot \zeta_{\vartheta}$ satisfies the same Dirichlet problem as
$\psi^{\Int}_{{\it 1}}  (\vartheta,h,\cdot)$, by derivation of  \eqref{depsi0}.
Formula \eqref{29dec-eq} follows then from the uniqueness of solutions to the Dirichlet problem, after switching $h$ and $x$. 
\qed 
\end{proof} 
\noindent
{\it Definition of $\psi^{\Ext}_{{\it 1}}  (q, \cdot) $ and of $P_{{\it 1}} (q, X) $.}
Let us denote, for any $q := (\vartheta ,h)$  in $\R \times \Omega$, by $P_{{\it 1}} (q, X)$ the  polynomial defined by
\begin{multline} \label{polyh1}
P_{{\it 1}} (q,X) := - \frac{1}{2}  \langle R(\vartheta)^t \,  D^2_x \psi^{\Int}_{{\it 0}} (h,h) R(\vartheta), 
T^2 ( \mathsf{p}_{{\it -1}}^{\Ext} ) + X^{\otimes 2} \rangle_{\R^{2 \times 2}} 
\\+  R(\vartheta)^t \,  D_x  \psi^{\Int}_{{\it 1}} (q,h)  \cdot \big(  \zeta - X \big) .
\index{AP5@$P_{{\it 1}} (q,  X) $:  harmonic polynomial extending  $\psi^{\Ext}_{{\it 1}}  (q, \cdot) $ in $\mathcal S_0$}
\end{multline}
Above $D^2_x \psi^{\Int}_{{\it 0}} (h,h)$ denotes the second derivative of $\psi^{\Int}_{{\it 0}} (h,\cdot)$ evaluated in $h$,
$D_x \psi^{\Int}_{{\it 1}} (q,h)$ stands for the derivative of $ \psi^{\Int}_{{\it 1}} (q,\cdot) $ evaluated in $h$
and $X^{\otimes 2}$ stands for the $2\times 2$ matrix $X \otimes X$.
The notation $T^2 ( \mathsf{p}_{{\it -1}}^{\Ext})$ stands for 
\begin{equation} \label{mordre2}
T^2 ( \mathsf{p}_{{\it -1}}^{\Ext} ) 
:= \int_{\partial \mathcal S_0}   \frac{\partial\psi^{\Ext}_{{\it -1}}}{\partial n} (X)   \,   X^{\otimes 2}   \, {\rm d}s(X)  .
\end{equation}
This notation is justified by \eqref{ineed}. 
Observe that $P_{{\it 1}} (q, X)$ is harmonic since every monomial of Taylor's expansions of harmonic functions are themselves harmonic.
By  \eqref{def-zeta}, \eqref{def-zeta_vartheta}  and the second identity of \eqref{ineed},  
\begin{equation}
\label{niouniou}
\int_{\partial  \mathcal S_0} \,  \mathsf{p}_{{\it -1}}^{\Ext} \,( \zeta - X) \, {\rm d}s = 0 .
\end{equation}
Thus, by \eqref{plm+} and \eqref{niouniou}, 
\begin{equation} \label{arti2}
\int_{\partial  \mathcal S_0} \,  P_{{\it 1}} (q, \cdot)  \mathsf{p}_{{\it -1}}^{\Ext} \, {\rm d}s = 0 .
\end{equation}
Therefore, according to Proposition \ref{coro-pro}
there exists a unique smooth function   $\psi^{\Ext}_{{\it 1}}  (q, \cdot) $ satisfying 
\begin{subequations} \label{model_stream1}
\begin{alignat}{3}
-\Delta\psi^{\Ext}_{{\it 1}} (q,\cdot)  &=0& &\text{in }\R^{2} \setminus \mathcal S_0 ,\\
\label{ahah-1}
\psi^{\Ext}_{{\it 1}}  (q, \cdot) &= P_{{\it 1}} (q,  \cdot)  &\quad&\text{on }\partial\mathcal S_0 ,
\end{alignat}
and vanishing at infinity. 
Moreover 
\begin{equation} \label{circ1nul}
\int_{\partial\mathcal S_0} \frac{\partial \psi^{\Ext}_{{\it 1}}}{\partial n} (q, \cdot)  \, {\rm d}s  = 0  .
\end{equation}
\end{subequations}
%
\ \par
Our main result regarding the potential $\psi_\varepsilon$,  in addition to Lemma~\ref{drift}, is the following.
\begin{prop} \label{dev-psi}
There exist $\mathtt{p}^{\partial \mathcal S_0}_{r}: \mathfrak Q \rightarrow L^2 (\partial \mathcal S_{0} ; \R )$, depending only on  $\mathcal S_0$ and $\Omega$, 
such that, for any $\delta >0$, there exists $\eps_{0}$ in $(0,1)$ for which $\mathtt{p}^{\partial \mathcal S_0}_{r} \in L^\infty \big(\mathfrak Q_{\delta,\eps_{0}} ; L^2 (\partial \mathcal S_{0} ; \R ) \big)$, and such that for any $(\varepsilon , q) $ in $\mathfrak Q_{\delta,\eps_{0}}$
and for any $X $ in $\partial \mathcal S_0$, 
\begin{multline} \label{exp-psi_n}
\frac{\partial\psi_\eps}{\partial n} (q,\eps R(\vartheta) X +h) 
= \frac{1}{\eps} \frac{\partial\psi^{\Ext}_{{\it -1}}}{\partial n} (X)
+ \left(   \frac{\partial \psi^{\Ext}_{{\it 0}} }{\partial n} (q,X)  -  R(\vartheta)^t \, u^\Omega (h) \cdot \tau \right) \\
+ \eps \left(   \frac{\partial\psi^{\Ext}_{{\it 1}}}{\partial n}   -   \frac{\partial P_{{\it 1}}}{\partial n}     \right) (q,X) 
+ \eps^{2} \mathtt{p}^{\partial \mathcal S_0}_{r}(\eps,q, X) .
\end{multline}
\end{prop}
We recall that the set $\mathfrak Q_{\delta,\eps_{0}}$ was defined in \eqref{deqQed}. \par
%
%
%
%
The proofs of Lemma~\ref{drift} and of Proposition~\ref{dev-psi} are gathered in Subsection~\ref{proof-deux}. \par
%
%
%
%
%
%
%
%
%
\subsubsection{Potential part}
For any $j=1,2,3$,  for  any $q$ in $\mathcal Q$, we consider the functions $K_{j,\eps}(q,\cdot)$ on $\partial\Omega  \cup \partial \mathcal S_\eps(q)$ given by:
\begin{equation} \label{Def-Kj-eps}
K_{j,\eps} (q,\cdot) := n \cdot  \xi_{j} (q,\cdot) \text{ on } \partial\Omega  \cup \partial \mathcal S_\eps (q) ,
\index{AK2@$K_{j,\eps}  (q,\cdot)$: normal trace of elementary rigid velocities on $\partial \mathcal S_\eps(q)$}
\end{equation}
where $n$ denotes the unit normal to $\partial \mathcal S_\eps(q) \cup \partial \Omega$, pointing outside ${\mathcal F}^\eps(q)$
and the functions $\xi_{j} (q,\cdot)$ are given by the formula \eqref{def-xi-j}. 
Then the Kirchhoff potentials $\varphi_{j,\eps} (q,\cdot)$, for $j=1,2,3$, are the unique (up to an additive constant) solutions in $\mathcal F^\eps (q)$ of the following Neumann problem:
\begin{subequations} \label{Kir-eps}
\begin{alignat}{3}
\Delta \varphi_{j,\eps} (q, \cdot) &= 0 & \quad & \text{ in } \mathcal F^\eps(q),\\
\frac{\partial \varphi_{j,\eps}}{\partial n} (q,\cdot)&=K_{j,\eps} (q,\cdot) & \quad & \text{ on }\partial\mathcal S_\eps(q), \\
\frac{\partial\varphi_{j,\eps}}{\partial n}(q,\cdot) & =0 & &\text{ on } \partial \Omega.
\index{BGrecTF5@$\varphi_{j,\eps}(q,\cdot)$ ($j=1,2,3$) :  Kirchhoff's potentials of the shrinking body}
\end{alignat}
\end{subequations}
The  functions $K_{j,\eps}(q,\cdot)$ (respectively $\varphi_{j,\eps} (q,\cdot)$) are the  counterpart, for the case where the size of the solid is of order $\eps$, of the  functions defined in \eqref{Def-Kj} (resp. in \eqref{Kir}) in the case where  the size of the solid is of order $1$. \par
We will use the vector notations:
\begin{equation} \label{Kir-vect-pasc}
\boldsymbol{{\varphi}}_\eps = ({\varphi}_{1, \eps} , \, {\varphi}_{2,\eps},\, {\varphi}_{3,\eps})^{t}
\ \text{ and } \ 
\boldsymbol{K}_\eps = (K_{1,\eps} , \, K_{2,\eps},\, K_{3,\eps})^{t} .
\index{BGrecTF6@$\boldsymbol{{\varphi}}_\eps (q,\cdot)$:  vector containing the three Kirchhoff potentials}
\end{equation}
Our result on the expansion of the Kirchhoff potentials $\varphi_{j,\eps}$ is the following. \par
\begin{prop} \label{dev-phi-pasc}
%
There exist
\begin{enumerate}[(i)]
\item  $\boldsymbol{\varphi}_{r} : \mathfrak Q \rightarrow L^2 (\partial \mathcal S_{0} ; \R^3)$
and $\boldsymbol{\check{c}}: \mathfrak Q \rightarrow \R^3$
such that for any $(\varepsilon , q) $ in $\mathfrak Q$, with $q=(\vartheta , h)$,
for any $X $ in $\partial \mathcal S_0$, 
 \begin{equation}
\boldsymbol{{\varphi}}_\eps  (q,\eps R(\vartheta) X +h)
= \eps I_\eps \mathcal R (\vartheta)   \Big(\boldsymbol{{\varphi}}^{\Ext}  (X) 
  + \boldsymbol{\check{c}} (\eps,q)  + \eps^{2} \boldsymbol{\varphi}_{r}   (\eps,q, X) \Big) ,
\end{equation}
\item  $\boldsymbol{\mathtt{p}}^{\partial \mathcal S_0}_{r}: \mathfrak Q \rightarrow L^2 (\partial \mathcal S_{0} ; \R^3  )$
such that for any  $(\varepsilon , q) $ in $\mathfrak Q$ with $q=(\vartheta , h)$, for any $X $ in $\partial \mathcal S_0$, 
\begin{equation} \label{exp-phi_n-pasc}
\mathcal R (\vartheta)^{t} \, \frac{\partial \boldsymbol{{\varphi}}_\eps}{\partial \tau} (q,\eps R(\vartheta) X +h) 
= I_\eps   \Big( \frac{\partial \boldsymbol{{\varphi}}^{{\Ext}}}{\partial \tau} (X)  
+ \eps^{2} \boldsymbol{\mathtt{p}}^{\partial \mathcal S_0}_{r} (\eps,q, X) \Big) ,
\end{equation}
\item $\boldsymbol{\mathtt{p}}^{\partial \Omega}_{r} : \mathfrak Q \rightarrow L^2 (\partial \Omega ; \R^3  )$
such that for any  $(\varepsilon , q) $ in $\mathfrak Q$, for any $x $ in $\partial \Omega$, 
\begin{equation} \label{exp-phi_n-pasc-ext}
\frac{\partial \boldsymbol{{\varphi}}_\eps}{\partial \tau} (q, x) 
=  I_\eps \,  \eps^{2} \,   \boldsymbol{\mathtt{p}}^{\partial \Omega}_{r} (\eps,q, x) ,
\end{equation}
\end{enumerate}
and such that for any $\delta>0$, there exists $\varepsilon_{0}$ in $(0,1)$ such that
$\boldsymbol{\varphi}_{r}$ belongs to $L^\infty \big(\mathfrak Q_{\delta,\eps_{0}} ; L^2 (\partial \mathcal S_{0} ; \R^3 ) \big)$,
$\boldsymbol{\check{c}}$ to $L^\infty \big(\mathfrak Q_{\delta,\eps_{0}} ; \R^3 \big)$,
$\boldsymbol{\mathtt{p}}^{\partial \mathcal S_0}_{r}$ to $L^\infty \big(\mathfrak Q_{\delta,\eps_{0}} ; L^2 (\partial \mathcal S_{0} ; \R^3  ) \big)$
and $\boldsymbol{\mathtt{p}}^{\partial \Omega}_{r}$ to $L^\infty \big(\mathfrak Q_{\delta,\eps_{0}} ; L^2 (\partial \Omega ; \R^3  ) \big)$.

Moreover the remainders  $\boldsymbol{\varphi}_{r}$, $\boldsymbol{\mathtt{p}}^{\partial \mathcal S_0}_{r}$ and $\boldsymbol{\mathtt{p}}^{\partial \Omega}_{r}  $
depend only on  $\mathcal S_0$ and $\Omega$.
\end{prop}
We recall that $\boldsymbol{\varphi}^{\Ext}$ was defined in \eqref{Kir-exte}.
The proof of Proposition \ref{dev-phi-pasc} is given in Subsection \ref{sec-proof-dev-phi}. \par
\subsection{Asymptotic expansion of the circulation part: Proof of Proposition~\ref{dev-psi} and of Lemma~\ref{drift}}
\label{proof-deux}
In this subsection we prove Proposition~\ref{dev-psi} and Lemma~\ref{drift}. 
Let, for $-\frac{1}{2} \leq s  \leq 1$, $F_{s}$ denote the following Hilbert space:
\begin{equation*}
F_{s} :=  H^{s} (\partial \mathcal S_{0} )  \times  H^{s} (\partial \Omega )  \times  \R ,
\end{equation*}
We will mainly make use of the indices $s=0$ and $1$ and also for technical reasons of $-\frac{1}{2}$ and $\frac{1}{2}$. 
We will proceed in four steps. 

\subsubsection{First Step. Reduction to integral equations}

We look for the solution  $\psi_\varepsilon (q,\cdot)$ of \eqref{model_stream} as a superposition of two single-layer integrals, one supported on the body's boundary and the other one supported on  $\partial \Omega$. This transforms \eqref{model_stream} in an integral system as follows.

We define, for any $(\eps,q) $ in $\mathfrak Q$ with $q=(\vartheta ,h) $ in $\R \times \Omega$, two operators $K^{\partial  \mathcal S_0} (\eps,q)$ and $K^{\partial  \Omega} (\eps,q)$ respectively from $L^{2}(\partial \Omega)$ to $H^{1} (\partial \mathcal S_{0}) $ and from $L^{2}(\partial \mathcal S_{0})$ to $ H^{1} (\partial \Omega)$, by the following formulas: given  densities $\mathtt{p}^{\partial \Omega}$ and $\mathtt{p}^{\partial  \mathcal S_{0} }$ respectively in $L^{2} (\partial \Omega )$ and $L^{2} (\partial \mathcal S_{0})$,
\begin{gather}
\label{K1}
K^{\partial  \mathcal S_0} (\eps,q)  [ \mathtt{p}^{\partial \Omega}] (\cdot) 
:= SL   [   {p}^{\partial \Omega} ] (\eps R(\vartheta) \cdot +h )  
\text{ on } \partial \mathcal S_{0}, \\
\label{K2}
K^{\partial  \Omega} (\eps,q)  [ \mathtt{p}^{\partial  \mathcal S_0}] (\cdot)
:=  \int_{\partial\mathcal S_0}  \mathtt{p}^{\partial \mathcal S_0} (Y) G\big(\cdot -(\eps R(\vartheta) Y +h) \big) \, {\rm d}s(Y) 
\text{ on } \partial \Omega.
\end{gather}
Thanks to Lemma~\ref{op-reg} \eqref{deux}, the operators $K^{\partial  \mathcal S_0} (\eps,q)$ and $K^{\partial  \Omega} (\eps,q)$ are compact
respectively from $L^{2}(\partial \Omega)$ to $H^{1}(\partial \mathcal S_{0})$ and from $L^{2}(\partial \mathcal S_{0})$ to $H^{1} (\partial \Omega)$. \par
We also introduce for $(\eps,q) $ in $\mathfrak Q$, the operator $\mathfrak{A} (\eps,q) : F_{0} \rightarrow F_{1} $ as follows: for any 
$\mathfrak{p} := (\mathtt{p}^{\partial  \mathcal S_0} , \mathtt{p}^{\partial  \Omega} , C)$ in $F_{0}$,
\begin{multline} \label{op-depart}
\mathfrak{A} (\eps,q)  [ \mathfrak{p} ] 
:=  \Big(   SL [ \mathtt{p}^{\partial  \mathcal S_0}] + K^{\partial \mathcal S_{0}} (\eps,q)   [ \mathtt{p}^{\partial \Omega}] - C , \\
SL [ \mathtt{p}^{\partial \Omega}] +K^{\partial \Omega} (\eps,q)  [ \mathtt{p}^{\partial  \mathcal S_0}]  ,
\int_{\partial  \mathcal S_{0}} \mathtt{p}^{\partial \mathcal S_{0}} \, {\rm d}s \Big) .
\end{multline}
To simplify the notations, we omitted to write the trace operators applied to the single-layers in $K^{\partial  \mathcal S_0} (\eps,q)$, $K^{\partial  \Omega} (\eps,q)$ and $\mathfrak{A} (\eps,q)$.
We also emphasize that the dependence of  $\mathfrak{A} (\eps,q)$ on $(\eps,q)$ occurs only through the compact operators   $K^{\partial  \mathcal S_0} (\eps,q)$ and $K^{\partial  \Omega} (\eps,q) $. \par
\ \par
Now the equation \eqref{model_stream} is transformed into an integral system thanks to the following lemma.
\begin{lem} \label{pot}
For any $(\eps,q)  $ in $\mathfrak Q$,  let
$\mathfrak{p}_{\varepsilon}  (q,\cdot) = \big(p_{\varepsilon}^{\partial \mathcal S_0}    (q,\cdot)  , {p}_{\varepsilon}^{\partial \Omega}  (q,\cdot)   , C_{\varepsilon} (q) \big) $ in $F_{0}$
such that 
\begin{equation} \label{rgroup}
\mathfrak{A} (\eps,q) \big[ \mathfrak{p}_{\varepsilon}  (q,\cdot)   + \big(0,0, G(\eps)\big) \big]  = (0,0,-1) .
 \end{equation}
Consider  the density $ p_{\varepsilon}^{\partial \mathcal S_\varepsilon  (q)} (q,\cdot)$ on $\partial\mathcal S_\varepsilon (q)$ defined through the relation:
\begin{equation} \label{chgv}
\text{for } X \in \partial\mathcal S_{0}, \quad
{p}_{\varepsilon}^{\partial \mathcal S_{0}} (q, X)  :=  \varepsilon p_{\varepsilon}^{\partial \mathcal S_\varepsilon  (q)} (q,\eps R(\vartheta) X +h) .
\end{equation}
Then the function in ${\mathcal F}^{\varepsilon} (q)$
\begin{equation} \label{twoSlayers}
\psi_\varepsilon (q,\cdot) 
:= SL [  p_{\varepsilon}^{\partial \mathcal S_\varepsilon  (q)} (q,\cdot) ] 
+ SL [  p_{\varepsilon}^{\partial \Omega} (q,\cdot  ) ],
\end{equation}
is the solution to \eqref{model_stream}.
Moreover the normal derivative $\frac{\partial\psi_\varepsilon}{\partial n} (q,\cdot) $ on $\partial \mathcal S_\eps (q)$ is given by:
\begin{equation} \label{exp-psi_density}
\text{for } X \in \partial\mathcal S_{0} , \quad 
\frac{\partial\psi_\eps}{\partial n} (q,\eps R(\vartheta) X +h) =  \frac{1}{\varepsilon}{p}_{\varepsilon}^{\partial \mathcal S_{0}}    (q,X) .
\end{equation}
\end{lem}
%
\begin{proof}
First observe that for any densities $p_{\varepsilon}^{\partial \mathcal S_\varepsilon (q)} (q,\cdot) $ in $H^{-\frac12} (\partial\mathcal S_\varepsilon (q))$
and $p_{\varepsilon}^{\partial \Omega} (q,\cdot )  $ in $H^{-\frac12} (\partial\Omega )$, 
the right hand side of \eqref{twoSlayers} is in $H^{1}_\mathrm{loc} (\R^{2})$ and harmonic in $\mathcal F_\varepsilon(q)$ and in $\R^{2} \setminus \mathcal F_\varepsilon(q)$.
In particular the equation  \eqref{model_stream-a} is satisfied when $\psi^{\varepsilon} (q,\cdot)$ is given by \eqref{twoSlayers} without further assumptions about 
$p_{\varepsilon}^{\partial \mathcal S_\varepsilon (q)} (q,\cdot)$ or $p_{\varepsilon}^{\partial \Omega} (q,\cdot)$. \par
Next we write \eqref{rgroup} explicitly in the form:
\begin{subequations} \label{splitte}
\begin{alignat}{3}
\label{in1}
-G(\eps) +  SL [ p_{\varepsilon}^{\partial \mathcal S_0}    (q,\cdot)] + K^{\partial \mathcal S_{0}}  (\eps,q)  [ p_{\varepsilon}^{\partial \Omega}    (q,\cdot)]
&= C_{\varepsilon} (q) &&   \text{  on }  \partial\mathcal S_{0}, \\
\label{in2}
SL [ p_{\varepsilon}^{\partial \Omega}    (q,\cdot)] +K^{\partial \Omega}  (\eps,q)  [ p_{\varepsilon}^{\partial \mathcal S_0}    (q,\cdot)]  
&= 0 & \quad & \text{ on } \partial \Omega , \\
\label{53dnew}
\int_{\partial  \mathcal S_{0}} p_\varepsilon^{\partial \mathcal S_{0}} (q,\cdot) \, {\rm d}s  &= -1 . & 
\end{alignat}
\end{subequations}
Thanks to a change of variable, the identity $G\big(\eps (x -y )\big)= G(\eps) + G (x -y )$,  \eqref{K1}, \eqref{K2} and \eqref{chgv}, 
 \eqref{splitte} can be recast as 
\begin{subequations} \label{splitte2}
\begin{alignat}{3}
\label{99}
SL [  p_{\varepsilon}^{\partial \mathcal S_\varepsilon (q)}    (q,\cdot) ] 
+  SL  [   {p}_{\varepsilon}^{\partial \Omega} (q,\cdot  ) ]
&= C_{\varepsilon} (q) & \quad &  \text{ on }  \partial\mathcal S_\varepsilon  (q) , \\
\label{99bis}
SL [ p_{\varepsilon}^{\partial \mathcal S_\varepsilon (q)}    (q,\cdot) ] 
+ SL [ {p}_{\varepsilon}^{\partial \Omega} (q,\cdot  ) ] &= 0  & \quad & \text{ on }  \partial \Omega , \\
\label{53dnew-ante}
\int_{\partial\mathcal S_\varepsilon  (q)} p_{\varepsilon}^{\partial \mathcal S_\varepsilon  (q)} (q,\cdot) \, {\rm d}s = -1 . & 
\end{alignat}
\end{subequations}
In particular we infer from \eqref{99} and \eqref{99bis} that, when  $\psi_{\varepsilon} (q,\cdot)$ is given by \eqref{twoSlayers} with  $\mathfrak{p}_{\varepsilon}  (q,\cdot) = \big(p_{\varepsilon}^{\partial \mathcal S_0}    (q,\cdot)  , {p}_{\varepsilon}^{\partial \Omega}  (q,\cdot)   , C_{\varepsilon} (q) \big)$ solution to \eqref{rgroup},  the boundary conditions \eqref{model_stream-b} and \eqref{model_stream-c} are satisfied. 
Moreover, by uniqueness of the solutions to the Poisson problem:
\begin{equation*}
\Delta \Psi = 0 \text{ in } \mathcal S_\varepsilon (q) , \quad  \Psi =  C_{\varepsilon} (q)  \text{ on } \partial \mathcal S_\varepsilon (q) ,
\end{equation*}
the right hand side of \eqref{twoSlayers} is equal to $C_{\varepsilon} (q)$ in $\mathcal  S_\varepsilon (q)$. \par
The single-layer potential $SL[ {p}_{\varepsilon}^{\partial \Omega} (q,\cdot) ]$
is smooth in a neighborhood of $\partial \mathcal S_\varepsilon  (q)$. Hence, according to \eqref{saut},
when $\psi^{\varepsilon} (q,\cdot)$ is given by  \eqref{twoSlayers},
the density ${p}_{\varepsilon}^{\partial \mathcal S_\varepsilon  (q)} (q,\cdot)$ is equal to the jump  across $\partial\mathcal S_\varepsilon  (q)$ of the normal derivatives of the function equal to 
$ \psi_\varepsilon (q,\cdot)$ in $\mathcal F_\varepsilon  (q)$ and to $ C_{\varepsilon} (q)$ in $\mathcal  S_\varepsilon (q)$, that is 
\begin{equation*}
{p}_{\varepsilon}^{\partial \mathcal S_\varepsilon  (q)}    (q,\cdot) = \frac{\partial\psi_\eps}{\partial n} (q,\cdot)  \text{ on }\partial\mathcal S_\varepsilon  (q) .
\end{equation*}
Hence we obtain \eqref{exp-psi_density} by using \eqref{chgv} and  the condition \eqref{53d} by using \eqref{53dnew-ante}. 
This concludes the proof of Lemma~\ref{pot}.
\qed 
\end{proof}
\subsubsection{Second Step. Construction of an approximate solution}
In this step we describe an approximation $\mathfrak{p}_{\rm app}$ up to order ${O}(\varepsilon^{3})$ of the solution $\mathfrak{p}_{\varepsilon}$ of \eqref{rgroup} and reformulate the equation \eqref{rgroup} in terms of the rest $\mathfrak{p}_{\varepsilon} - \mathfrak{p}_{\rm app}$. 
We first introduce the various terms involved in the approximation.

\ \par
\noindent
{\it Densities on $\partial \mathcal S_{0}$.} 
Recall that the functions $\psi^{\Ext}_{{\it -1}} $, $\psi^{\Ext}_{{\it 0}} (q, \cdot)$ and $\psi^{\Ext}_{{\it 1}} (q, \cdot)$, defined  respectively in \eqref{model_stream-1}, \eqref{model_stream0} and \eqref{model_stream1}, are harmonic in $\mathbb R^2\setminus\mathcal S_0$.

Let $\mathsf{p}^{\Ext}_{{\it -1}}$, $\mathsf{p}^{\Ext}_{{\it 0}} (q, \cdot)$ and $\mathsf{p}^{\Ext}_{{\it 1}}  (q, \cdot)$ be the densities on $\partial \mathcal S_{0}$ 
associated respectively with $\psi^{\Ext}_{{\it -1}} $, $\psi^{\Ext}_{{\it 0}} (q, \cdot)$ and $\psi^{\Ext}_{{\it 1}} (q, \cdot)$ as explained in Propositions~\ref{coro-cap} and \ref{coro-pro}. Thus, in $\mathbb R^2\setminus \mathcal S_0$:
\begin{equation} \label{thatis}
\psi^{\Ext}_{{\it -1}} = SL [ \mathsf{p}^{\Ext}_{{\it -1}}]\quad\text{ and }\quad\psi^{\Ext}_{{\it j}}  (q, \cdot)  = SL [  \mathsf{p}^{\Ext}_{{\it j}}  (q,\cdot) ]\quad (j={\it 0,1}).
\end{equation}
The single layer potentials, as being supported by $\partial\mathcal S_0$, are actually defined in $\mathbb R^2$ and harmonic in $\mathbb R^2\setminus \partial\mathcal S_0$.
The identities above can therefore be used to extend the functions $\psi^{\Ext}_{{\it -1}} $, $\psi^{\Ext}_{{\it 0}} (q, \cdot)$ and $\psi^{\Ext}_{{\it 1}} (q, \cdot)$ 
in $\mathcal S_0$.  More explicitly $\psi^{\Ext}_{{\it -1}}$  is extended by $C^{\Ext}$ in $ \mathcal S_0$, 
 $\psi^{\Ext}_{{\it 0}} (q, \cdot)$ is extended by $P_{{\it 0}} (q, \cdot)$ in $\mathcal S_0$ and
 $\psi^{\Ext}_{{\it 1}} (q, \cdot)$ is  extended by $P_{{\it 1}} (q, \cdot)$ in $\mathcal S_0$, 
 see the definitions \eqref{polyh0} and \eqref{polyh1} of the harmonic polynomials $P_{{\it 0}}$ and $P_{{\it 1}}$.
Moreover using \eqref{saut}  
\begin{align}
\label{rel-sauts}
\mathsf{p}^{\Ext}_{{\it 0}}
= \frac{\partial\psi^{\Ext}_{{\it 0}}}{\partial n}
 - \frac{\partial P_{{\it 0}}}{\partial n} 
 \text{ and }
\mathsf{p}^{\Ext}_{{\it 1}}  
= \frac{\partial\psi^{\Ext}_{{\it 1}}}{\partial n} 
 - \frac{\partial P_{{\it 1}}}{\partial n} .
\end{align}

\ \par
\noindent
{\it Densities on $\partial \Omega$.}  
We follow the same ideas to extend in $\mathbb R^2$ the functions $\psi^{\Int}_{{\it 0}} (h, \cdot)$, $\psi^{\Int}_{{\it 1}} (q, \cdot)$ and $\psi^{\Int}_{{\it 2}} (q, \cdot)$, so far defined and harmonic 
in $\Omega$ (see \eqref{depsi0}, \eqref{model_stream1-noint} and \eqref{model_stream-2-noint}). 
We consider their densities $\mathsf{p}^{\Int}_{{\it 0}} (q,\cdot)$, $\mathsf{p}^{\Int}_{{\it 1}} (q,\cdot)$ and $\mathsf{p}^{\Int}_{{\it 2}} (q,\cdot)$, supported in $\partial\Omega$ and such that, in $\Omega$:
\begin{equation} \label{thatisback}
\psi^{\Int}_{{\it 0}}  (h, \cdot) = SL [  \mathsf{p}^{\Int}_{{\it 0}}   (h,\cdot)  ] , \ \
\psi^{\Int}_{{\it j}}  (q, \cdot) = SL [  \mathsf{p}^{\Int}_{{\it j}}   (q,\cdot) ] \quad (j={\it1,2}). 
\end{equation}
The single layer potentials being defined in $\mathbb R^2$ and harmonic $\mathbb R^2\setminus\partial\Omega$, these identities are used to extend the functions 
$\psi^{\Int}_{{\it 0}} (h, \cdot)$, $\psi^{\Int}_{{\it 1}} (q, \cdot)$ and $\psi^{\Int}_{{\it 2}} (q, \cdot)$ in $\mathbb R^2$. 
More explicitly
 $\psi^{\Int}_{{\it 0}} (h, \cdot)$ is extended by $G(\cdot -h)$ in $\R^{2} \setminus  \Omega$, 
$\psi^{\Int}_{{\it 1}} (q, \cdot)$ is extended by $ - (\nabla G) (\cdot -h)  \cdot  \zeta_{\vartheta}$ in $\R^{2} \setminus  \Omega$, and
$\psi^{\Int}_{{\it 2}} (q, \cdot)$ is extended by $Q_{{\it 2}} (q, \cdot) $, defined in \eqref{defQ2},  in $\R^{2} \setminus \Omega$.

\ \par
\noindent
{\it Definition of $\psi^{\Int}_{{\it 2}}  (q, \cdot) $ and of $Q_{{\it 2}} (q, \cdot) $.}
To define  $\psi^{\Int}_{{\it 2}}  (q, \cdot) $,  we introduce the harmonic function in $\R^{2} \setminus  \Omega$:
\begin{multline} \label{defQ2}
Q_{{\it 2}} (q,x) :=  \frac{1}{2}  \langle R(\vartheta)^t \,  D^2 G  (x-h) R(\vartheta), T^2 ( \mathsf{p}_{{\it -1}}^{\Ext} )  \rangle_{\R^{2 \times 2}} 
  \\+  R(\vartheta)^t \,  \nabla G  (x-h)  \cdot  T^1 (\mathsf{p}^{\Ext}_{{\it 0}}  (q, \cdot)  )    ,
\end{multline}
where
\begin{equation} \label{momentP}
T^1 (\mathsf{p}^{\Ext}_{{\it 0}}  (q, \cdot)  )  := 
  \int_{\partial\mathcal S_0}  X \mathsf{p}^{\Ext}_{{\it 0}} (q,X) \, {\rm d}s(X) . 
\end{equation}
Then we consider  $\psi^{\Int}_{{\it 2}}  (q, \cdot) $ as the solution to 
\begin{subequations}
\label{model_stream-2-noint}
\begin{alignat}{3}
-\Delta\psi^{\Int}_{{\it 2}} (q,\cdot)  &=0& &\text{in }\Omega ,\\
\label{ahah}
\psi^{\Int}_{{\it 2}}  (q, \cdot) &= Q_{{\it 2}} (q,  \cdot)  &\quad&\text{on }\partial  \Omega ,
\end{alignat}
\end{subequations}
extended by $Q_{{\it 2}} (q,x)$ for $x$ in  $\R^{2} \setminus  \Omega$. These functions $\psi^{\Int}_{{\it 2}}  (q, \cdot) $ and $Q_{{\it 2}} (q, \cdot) $ do not appear in the claim of Proposition~\ref{dev-psi} and of Lemma~\ref{drift} but will be useful later.
With the choices above we aim at constructing a solution to \eqref{rgroup} with $p_{\varepsilon}^{\partial \mathcal S_0} (\varepsilon ,q,\cdot)$ and ${p}_{\varepsilon}^{\partial \Omega} (\varepsilon ,q,\cdot)  $ close respectively to
\begin{align} \label{p_app}
\mathtt{p}^{\partial \mathcal S_0}_{\text{app} }  (\varepsilon ,q,\cdot)  
&:=  \mathsf{p}_{{\it -1}}^{\Ext}
 + \varepsilon  \mathsf{p}^{\Ext}_{{\it 0}}   (q,\cdot)
 + \varepsilon^{2}\,  \mathsf{p}^{\Ext}_{{\it 1}}   (q,\cdot) , \\
\nonumber
\mathtt{p}^{\partial \Omega}_{\text{app} }  (\varepsilon ,q,\cdot)
&:= \mathsf{p}^{\Int}_{{\it 0}}  (h,\cdot)
 + \varepsilon \mathsf{p}^{\Int}_{{\it 1}}   (q,\cdot)
 + \varepsilon^{2}  \mathsf{p}^{\Int}_{{\it 2}} (q,\cdot) .
 \end{align}
The corresponding approximation $C_{\text{app}}  (\varepsilon,q)$ of   $C_{\varepsilon}(q)$ is chosen as:
\begin{equation} \label{C_app}
C_\text{app} (\varepsilon,q) := - G(\varepsilon) +  \mathsf{C}_{{\it 0}} (h) + \eps \mathsf{C}_{{\it 1}} (q) + \eps^{2}  \mathsf{C}_{{\it 2}} (q) ,
\end{equation}
with
\begin{align}
\nonumber
\mathsf{C}_{{\it 0}} (h) :=& C^{\Ext} +  \psi^{\Int}_{{\it 0}}  (h, h) , \\
\nonumber
\mathsf{C}_{{\it 1}} (q) :=&2 D_x \psi^{\Int}_{{\it 0}}  (h, h)  \cdot {\zeta}_\vartheta , \\   
\label{C2} 
\mathsf{C}_{{\it 2}} (q) := &\psi^{\Int}_{{\it 2}}  (q, h) 
   + D_{x}   \psi^{\Int}_{{\it 1}}  (q, h)  \cdot  \zeta_{\vartheta} \nonumber
   \\&+ \frac{1}{2}  \langle  R(\vartheta)^t \,  D^2_x \psi^{\Int}_{{\it 0}} (h,h) R(\vartheta), T^2 ( \mathsf{p}_{{\it -1}}^{\Ext} ) \rangle_{\R^{2 \times 2}}  .
\end{align}
Using that $\psi^{\Int}_{{\it 0}}$ is symmetric with respect to its two arguments (see \eqref{symm-routh}), and using \eqref{def-KRS}, we see that the first terms of the expansion above are the same as those claimed in Lemma~\ref{drift}, that is 
\begin{equation} \label{lien-intro}
\mathsf{C}_{{\it 0}} (h) := C^{\Ext} +  2  \psi^{\Omega} (h) 
\ \text{ and } \ 
\mathsf{C}_{{\it 1}} (q) := 2  \psi_{c} (q)   .
\end{equation}
We finally define
\begin{equation} \label{DefPapp}
\mathfrak{p}_\text{app}     (\varepsilon ,q,\cdot)  :=  \big(  \mathtt{p}^{\partial \mathcal S_{0}}_{\text{app}}    (\varepsilon ,q,\cdot)  , \mathtt{p}^{\partial \Omega}_{\text{app} }  (\varepsilon ,q,\cdot)  , C_\text{app} (\varepsilon,q) \big) .
\end{equation}
\ \par
Now the equation \eqref{rgroup} translates as follows.
Let us introduce $g^{\partial \mathcal S_0}(\varepsilon , q,\cdot)$ and $g^{\partial \Omega} (\varepsilon , q,\cdot)$ two functions respectively defined on $\partial \mathcal S_{0}$ and $\partial \Omega$, for
$q= (\vartheta,h)$, by 
\begin{subequations} \label{lesource}
\begin{align}
\label{def-gi}
- g^{\partial \mathcal S_0}  (\varepsilon , q,\cdot)
:=& \sum_{j=0}^{2} \int_{\partial \Omega} \mathsf{p}^{\Int}_{{\it j}} (q,y) \eta_{{\it 3-j}} (\varepsilon, q, \cdot, y)  \, {\rm d}s(y) , \\
\label{def-ge}
- g^{\partial \Omega} (\varepsilon , q,x)
:=& \int_{\partial\mathcal S_0}  \mathsf{p}_{{\it -1}}^{\Ext}  (y) \eta_{{\it 3}} (\varepsilon, (\vartheta,x),- y,h) \, {\rm d}s(y) \nonumber
 \\&+ \sum_{j=0}^{1} \int_{\partial\mathcal S_0} \mathsf{p}^{\Ext}_{{\it j}} (q,y) \eta_{{\it 2-j}}  (\varepsilon, (\vartheta,x),- y,h) \, {\rm d}s(y) ,
\end{align}
\end{subequations}
where, for $N \geq 1$,
\begin{equation} \label{rest-Taylor}
\eta_{{\it N}} (\varepsilon , q,\cdot,y)
 :=  \int_0^{1} \frac{ (1-\sigma)^{N-1} }{(N-1) ! }  D^{N} G (\sigma \eps R(\vartheta) \cdot +h -y)  \cdot (R(\vartheta) \cdot )^{\otimes N} {\rm d}\sigma  .
\end{equation}
Let 
\begin{equation} \label{def-frakg}
\mathfrak{g}  (\varepsilon , q,\cdot)
 := \big(  g^{\partial \mathcal S_0} (\varepsilon , q,\cdot) , g^{\partial \Omega} (\varepsilon , q,\cdot) , 0 \big).
\end{equation}
We can deduce from the definitions of the densities $\mathsf{p}^{\Int}_{{\it j}}$ for $j=0,1,2$, $ \mathsf{p}_{{\it -1}}^{\Ext}$ and $\mathsf{p}^{\Ext}_{{\it j}}$ for $j=0$ and $1$, and from Lemma~\ref{op-reg}, \eqref{deux} that $g^{\partial \mathcal S_0}(\varepsilon , q,\cdot)$ and $g^{\partial \Omega} (\varepsilon , q,\cdot)$ belong respectively to $H^{1} (\partial \mathcal S_{0} )$ and to $H^{1} (\partial \Omega)$. Actually we even have  
\begin{equation} \label{gbd}
\mathfrak{g} \in L^{\infty} (\mathfrak Q^{\delta} ; {F}_{1} ). 
\end{equation}
We can now state the result of this second step.
\begin{lem} \label{jedecoupe}
For any $(\eps,q)$ in $\mathfrak Q$, let $\mathfrak{p}_{r} (\varepsilon ,q,\cdot)$ in $F_{0}$  satisfy:
\begin{equation} \label{rgroup-reste}
\mathfrak{A} (\eps,q)  [ \mathfrak{p}_{r}  (\varepsilon ,q,\cdot)   ] =  \mathfrak{g}  (\varepsilon , q,\cdot)   .
\end{equation}
Then
\begin{align} \label{in3}
\mathfrak{p}_{\varepsilon}  (q,\cdot) 
:= \mathfrak{p}_\text{app}     (\varepsilon ,q,\cdot)  
+ \eps^{3}\, \mathfrak{p}_{r}  (\varepsilon ,q,\cdot),
\end{align}
is solution to \eqref{rgroup}.
\end{lem}
Let us stress in particular that the third  coordinate of the left hand side of  \eqref{rgroup-reste} does not contain the singular term $G(\eps)$ anymore (compare with \eqref{rgroup}) and that the third coordinate of the right hand side of \eqref{rgroup-reste}  is now $0$. 
\begin{proof}
Let $(\eps , q) $ in $ \mathfrak Q$ and $ \mathfrak{p}_{r} (\varepsilon ,q,\cdot) := \big(\mathtt{p}^{\partial \mathcal S_0}_{r} (\varepsilon ,q,\cdot), {p}^{\partial \Omega}_{r} (\varepsilon ,q,\cdot), C_{r} (\varepsilon ,q) \big) $ in $F_{0}$ satisfying \eqref{rgroup-reste} (where $\mathfrak{A}$ is defined in \eqref{op-depart}), that is 
\begin{gather} 
\label{Preum}
SL [ \mathtt{p}^{\partial \mathcal S_0}_{r}    (\varepsilon ,q,\cdot)] + K^{\partial \mathcal S_{0}}  (\eps,q)  [ \mathtt{p}^{\partial \Omega}_{r} (\varepsilon ,q,\cdot)] -C_{r} (\varepsilon ,q)
=   g^{\partial \mathcal S_0} (\varepsilon , q,\cdot) \text{ on } \partial {\mathcal S}_{0},  \\
\label{Vend-fin}
SL [ \mathtt{p}^{\partial \Omega}_{r}    (\varepsilon ,q,\cdot)] +K^{\partial \Omega} (\eps,q)   [ \mathtt{p}^{\partial \mathcal S_0}_{r}    (\varepsilon ,q,\cdot)] 
= g^{\partial \Omega} (\varepsilon , q,\cdot)  \text{ on } \partial \Omega, \\
\label{circu-reste}
\int_{\partial  \mathcal S_{0}} \mathtt{p}^{\partial \mathcal S_0}_{r}   (\varepsilon ,q,\cdot) \, {\rm d}s = 0 .
\end{gather}
Let $\mathfrak{p}_{\varepsilon}  (q,\cdot)   = \big(p_{\varepsilon}^{\partial \mathcal S_0} (q,\cdot), {p}_{\varepsilon}^{\partial \Omega} (q,\cdot), C_{\varepsilon} (q) \big)  $ in $  F_{0}$ given by \eqref{in3}. To prove \eqref{rgroup} we now verify the three parts of \eqref{splitte}. \par

\ \par
Let us start with \eqref{53dnew}.
Using the second equality in \eqref{ineed} and \eqref{rel-sauts}, the fact that $P_{{\it 0}}$ and $P_{{\it 1}}$ are harmonic and the conditions \eqref{53d-1}, \eqref{circ0nul} and \eqref{circ1nul}, 
we arrive at 
\begin{equation} \label{nvelan7}
\int_{\partial\mathcal S_0}  \mathsf{p}_{{\it -1}}^{\Ext} \, {\rm d}s = - 1
\ \text{ and } \ 
\int_{\partial\mathcal S_0}  \mathsf{p}^{\Ext}_{{\it 0}}  (q,\cdot) \, {\rm d}s 
=  \int_{\partial\mathcal S_0}  \mathsf{p}^{\Ext}_{{\it 1}}  (q,\cdot) \, {\rm d}s = 0 .
\end{equation}
We deduce from  \eqref{circu-reste} and   \eqref{nvelan7}   that  the condition \eqref{53dnew} is fulfilled. 

\ \par
Let us now verify \eqref{in1}.
Using \eqref{DefPapp}, \eqref{in3}, \eqref{ineed}, \eqref{model_stream-1-b}, \eqref{thatis},   we obtain,  on $ \partial\mathcal S_{0}$, 
\begin{align} \label{nvelan8}
SL [ p_{\varepsilon}^{\partial \mathcal S_0}  (q,\cdot) ] 
&= C^{\Ext} + \varepsilon  \psi^{\Ext}_{{\it 0}}  (q, \cdot) 
 + \varepsilon^{2} \psi^{\Ext}_{{\it 1}}  (q, \cdot) 
 + \varepsilon^{3} SL [ p^{\partial \mathcal S_0}_{\varepsilon ,r}  (\varepsilon,q,\cdot)] .
\end{align}
It follows from \eqref{K1}, \eqref{DefPapp}, \eqref{in3}, \eqref{thatisback}, Taylor's formula and \eqref{29dec-eq}  that, on $\partial {\mathcal S}_{0}$, 
\begin{multline} \label{nvelan9bis}
K^{\partial \mathcal S_{0}}   (\eps,q)  [ p_{\varepsilon}^{\partial \Omega}    (q,\cdot)] (X)
= \psi^{\Int}_{{\it 0}}  (h, h) 
+ \varepsilon  D_{x}   \psi^{\Int}_{{\it 0}}  (h, h)  \cdot  \big( R(\vartheta) X + \zeta_{\vartheta} \big) \\
+ \varepsilon^{2}  \Big( \psi^{\Int}_{{\it 2}}  (q, h) 
 + D_{x}   \psi^{\Int}_{{\it 1}}  (q, h)  \cdot R(\vartheta) X 
+ \frac{1}{2}  D^{2}_{x}   \psi^{\Int}_{{\it 0}}  (h, h)  \cdot  \big( R(\vartheta) X  , R(\vartheta) X \big) \Big) \\
+\varepsilon^{3} 
\Big( K^{\partial \mathcal S_{0}} (\eps,q) [ \mathtt{p}^{\partial \Omega}_{r}  (\varepsilon , q,\cdot)  ] (X) -g^{\partial \mathcal S_0} (\varepsilon , q,X) \Big) .
\end{multline}
We recall that the function $g^{\partial \mathcal S_0}  (\varepsilon , q,\cdot)$ is defined in \eqref{def-gi}. \par
Gathering \eqref{C_app}, \eqref{in3}, \eqref{nvelan8} and  \eqref{nvelan9bis} we obtain,  on $ \partial\mathcal S_{0} $,
\begin{multline*}
-G(\varepsilon) + SL [ p_{\varepsilon}^{\partial \mathcal S_0}  (q,\cdot) ] 
+ K^{\partial \mathcal S_{0}}   (\eps,q)  [ p_{\varepsilon}^{\partial \Omega}    (q,\cdot)] - C_{\varepsilon} (q) \\ 
= \varepsilon  \big(   \psi^{\Ext}_{{\it 0}}  (q, \cdot) - P_{{\it 0}} (q,  \cdot)   \big) 
\ + \ \varepsilon^{2} \big( \psi^{\Ext}_{{\it 1}}  (q, \cdot) - P_{{\it 1}} (q,  \cdot) \big) \\
 + \varepsilon^{3}   \Big(  
  SL [ p^{\partial \mathcal S_0}_{\varepsilon ,r}  (\varepsilon,q,\cdot)] 
  + K^{\partial \mathcal S_{0}}   (\eps,q)  [ \mathtt{p}^{\partial \Omega}_{r}  (\varepsilon , q,\cdot)  ] 
  - C_{r}(\varepsilon,q)
  - g^{\partial \mathcal S_0} (\varepsilon , q,\cdot) \Big),
\end{multline*}
where $P_{{\it 0}} (q,  \cdot) $ and  $P_{{\it 1}} (q,  \cdot) $  are  the harmonic polynomials defined respectively in \eqref{polyh0} and in \eqref{polyh1}.
Now taking into account the boundary conditions \eqref{ohoh} and \eqref{ahah}, and \eqref{Preum} we deduce  that \eqref{in1} holds true.

\ \par
Finally we move to the verification of \eqref{in2}.
First, using \eqref{DefPapp}, \eqref{in3} and \eqref{thatisback}, we obtain,  on $\partial \Omega$, 
\begin{multline} \label{Vend2}
SL [ p_{\varepsilon}^{\partial \Omega} (q, \cdot ) ] 
= \psi^{\Int}_{{\it 0}}  (h, \cdot) 
+ \eps   \psi^{\Int}_{{\it 1}}  (q, \cdot)  
+  \varepsilon^{2} \, \psi^{\Int}_{{\it 2}}  (q, \cdot)  
+ \varepsilon^{3} \,   SL [ \mathtt{p}^{\partial \Omega}_{r}   (\varepsilon,q,\cdot) ].
\end{multline}
By \eqref{def-zeta_vartheta}, \eqref{def-zeta}, the second equality in \eqref{ineed} and \eqref{mordre2}, 
\begin{gather} \label{rel1}
\int_{\partial\mathcal S_0}     \mathsf{p}_{{\it -1}}^{\Ext} (X)  R(\vartheta) X  \, {\rm d}s (X)  =  - \zeta_{\vartheta}  ,
\\  \label{rel2}
\int_{\partial\mathcal S_0}     \mathsf{p}_{{\it -1}}^{\Ext} (X) X^{\otimes 2} \, {\rm d}s (X)
= T^2 ( \mathsf{p}_{{\it -1}}^{\Ext} ) ,
\end{gather}
On the other hand, using  \eqref{K2}, \eqref{in3}, Taylor's formula, \eqref{def-ge}, \eqref{momentP}, \eqref{circu-reste}, \eqref{nvelan7},  \eqref{rel1},
\eqref{rel2}, we deduce , for $x $ in $\partial \Omega$, 
\begin{multline} \label{Vend1}
 K^{\partial  \Omega} (\eps,q)  [ p_{\varepsilon}^{\partial  \mathcal S_0}(q, \cdot )] (x)
= -G(x-h) + \varepsilon DG(x-h) \cdot \zeta_{\vartheta} \\
+ \varepsilon^2 \, \Big( -   R(\vartheta)^t \,  D G  (x-h)  \cdot  T^1 (\mathsf{p}^{\Ext}_{{\it 0}}  (q, \cdot)  ) \\
\quad \quad \quad \quad \quad \quad \quad \quad - \frac{1}{2}  \langle  R(\vartheta)^t \,  D^2_x G  (x-h) R(\vartheta), T^2 ( \mathsf{p}_{{\it -1}}^{\Ext} )  \rangle	_{\R^{2 \times 2}} \Big)  \\
+ \varepsilon^3 \,  \Big(  K^{\partial \Omega} (\eps,q) [ p^{\partial \mathcal S_0}_{r} (\varepsilon,q,\cdot)](x)
- g^{\partial \Omega} (\varepsilon , q,x)   \Big)  . \quad
\end{multline}
By \eqref{Vend2} and \eqref{Vend1}, the equation \eqref{in2} now reads, for $x $ in $\partial \Omega$, 
\begin{multline} \label{VendBis}
SL [ p_{\varepsilon}^{\partial \Omega} (q, \cdot ) ] (x)  + K^{\partial  \Omega} (\eps,q)  [ p_{\varepsilon}^{\partial  \mathcal S_0}(q, \cdot )] (x)
= \psi^{\Int}_{{\it 0}}  (h, x) -G(x-h) \\
+ \varepsilon   \Big(  \psi^{\Int}_{{\it 1}}  (q, x)   +DG(x-h) \cdot \zeta_{\vartheta} \Big)
+  \varepsilon^2 \, \Big(\psi^{\Int}_{{\it 2}}  (q, x)  - Q_{{\it 2}} (q,x)  \Big) \\
+  \varepsilon^3 \, \Big( SL [ \mathtt{p}^{\partial \Omega}_{r}   (\varepsilon,q,\cdot) ]
+ K^{\partial \Omega}   (\eps,q)  [ p^{\partial \mathcal S_0}_{\varepsilon,r}    (\varepsilon,q,\cdot)](x)
- g^{\partial \Omega} (\varepsilon , q,x)  \Big),
\end{multline}
where $Q_{{\it 2}} (q,x)$ denotes the harmonic polynomial defined in \eqref{defQ2}. \par
Taking now  the boundary conditions \eqref{depsi0}, \eqref{pasdel}, \eqref{ahah} and \eqref{Vend-fin} into account, we deduce from the equation \eqref{VendBis} that   \eqref{in2} holds true.
This concludes the proof of Lemma~\ref{jedecoupe}.
\qed 
\end{proof}
\subsubsection{Third Step. Existence and estimate of the remainders}
In this third step we prove, for $(\eps,q)$ in $\mathfrak Q_{\delta, \eps_{0}}$ with $\delta$ and $\eps_{0}$ positive and small enough, 
the existence of $ \mathfrak{p}_{r}  (\varepsilon ,q,\cdot)$ in $F_{0}$
satisfying \eqref{rgroup-reste} and provide an estimate in $F_{0}$, uniform over  $(\eps,q)  $ in $\mathfrak Q_{\delta, \eps_{0}}$. \par
We will make use of the fact that the the third argument of the right hand side of \eqref{rgroup-reste} vanishes.
Accordingly, we define
\begin{equation} \label{deftildeF1}
\tilde{F}_{1} :=  H^{1} (\partial \mathcal S_{0} )  \times  H^{1} (\partial \Omega )  \times  \{ 0 \} ,
\end{equation}
which is a closed subspace of $F_{1}$ and prove the following result.
\begin{lem} \label{etap3}
Let $\delta > 0$. There exists $\eps_{0}$ in $(0,1)$, such that for any $\mathfrak{g}$ in $L^{\infty} (\mathfrak Q_{\delta,\eps_{0}} ; \tilde{F}_{1} )$, there exists $\mathfrak{p}_{r} $ in $L^{\infty} (\mathfrak Q_{\delta,\eps_{0}} ; F_{0} )$
such that $\mathfrak{p}_{r} (\varepsilon ,q,\cdot)$ solves \eqref{rgroup-reste} for any $(\eps,q)$ in $ \mathfrak Q_{\delta, \eps_{0}}$.
\end{lem}
\begin{proof}[Proof of Lemma~\ref{etap3}.]
To prove Lemma \ref{etap3} let us start with stating a perturbative result. 
We will use the notation that given $X$ and $Y$ two Banach spaces, ${\mathcal L}(X;Y)$ is the space of bounded linear operators from $X$ to $Y$.
Now the framework is as follows. 
Let $\delta > 0$. Recall that $\mathfrak Q^{\delta}$ and $\Omega_{\delta}$ were defined in \eqref{MathfrakQdelta} and \eqref{Omega_delta}. We introduce the following families of operators. 
\begin{itemize}
\item First we consider a family of operators in ${\mathcal L}(L^{2}(\partial \Omega); H^{1} (\partial \mathcal S_{0}))$:
\begin{multline} \label{hyp-K}
\tilde{K}^{\partial  \mathcal S_0} \in \text{Lip} \left( \overline{\Omega_{\delta}}; {\mathcal L} \big( L^{2}(\partial \Omega); H^{1} (\partial \mathcal S_{0}) \big) \right)
\text{ such that for all } h \text{ in } \overline{\Omega_{\delta}}, \\
\ \tilde{K}^{\partial  \mathcal S_0} (h)   \text{  is compact from } L^{2} (\partial \Omega )   \text{ to } H^{1} (\partial \mathcal S_{0} ).
\end{multline}
\item Next we consider two families of operators: one in ${\mathcal L}(L^{2}(\partial \Omega); H^{1} (\partial \mathcal S_{0}))$ and the other one in ${\mathcal L}(L^{2}(\partial \mathcal S_{0}); H^{1}(\partial \Omega))$:
\begin{subequations} \label{hyp-T}
\begin{gather}
(T^{\partial  \mathcal S_0} (\eps,q) )_{(\eps,q) \in  \mathfrak Q^{\delta}}
\text{ bounded in } {\mathcal L}(L^{2}(\partial \Omega); H^{1} (\partial \mathcal S_{0})), \\
(T^{\partial  \Omega} (\eps,q) )_{ (\eps,q) \in  \mathfrak Q^{\delta}} 
\text{ bounded in } {\mathcal L}(L^{2}(\partial \mathcal S_{0}); H^{1}(\partial \Omega)).
\end{gather}
\end{subequations}
\end{itemize}
Given these operators we can construct the following one.
For $(\eps,q) $ in $ \mathfrak Q^{\delta}$, let $A(\eps , q): F_{0} \rightarrow F_{1} $ given by the following formula:
for any $\mathfrak p := (\mathtt{p}^{\partial \mathcal S_{0}} , \mathtt{p}^{\partial  \Omega} , C ) $ in $ F_{0}$,
\begin{equation} \label{cplex}
A(\eps , q)  [\mathfrak p  ]  := \big( A(\eps , q)  [\mathfrak p  ]_{i} \big)_{1 \leq i \leq 3} \in F_{1} ,
\end{equation}
with
\begin{subequations} \label{tttt}
\begin{align}
\label{cplex1}
A(\eps , q)  [\mathfrak p  ]_{1} 
&:= SL  [ \mathtt{p}^{\partial \mathcal S_{0}}] + \tilde{K}^{\partial \mathcal S_{0}} (h) [ \mathtt{p}^{\partial \Omega}] 
+ \eps T^{\partial  \mathcal S_0} (\eps,q)  [ \mathtt{p}^{\partial \Omega}]  - C , \\
\label{cplex2}
A(\eps , q)  [\mathfrak p  ]_{2} 
&:= SL  [ \mathtt{p}^{\partial \Omega}] + \eps T^{\partial \Omega} (\eps,q)  [ \mathtt{p}^{\partial \mathcal S_{0}}] , \\
\label{cplex3}
A(\eps , q)  [\mathfrak p  ]_{3}
&:= \int_{\partial  \mathcal S_{0}} \mathtt{p}^{\partial \mathcal S_{0}} \, {\rm d}s .
\end{align}
\end{subequations}
We will use the following  perturbative result.
\begin{lem} \label{abstract}
Let $\delta > 0$ and for $(\eps,q) $ in $ \mathfrak Q^{\delta}$, $A(\eps, q)$ given as above, with assumptions \eqref{hyp-K} and \eqref{hyp-T}.
Then there exists $\eps_{0} $ in $(0,1)$ such that for any $ (\eps,q)$ in $ \mathfrak Q_{\delta, \eps_{0}}$,
$A(\eps , q)  $ is an isomorphism from $F_{0}$ to $F_{1}$ and 
\begin{equation} \label{ti1}
\sup_{ (\eps,q) \in  \mathfrak Q_{\delta, \eps_{0}} } \| A(\eps , q)^{-1}  \|_{{\mathcal L}(F_{1};F_{0})} < \infty . 
\end{equation}
\end{lem}
\begin{proof}
It is straightforward to see that for any  $(\eps,q)  $ in $ \mathfrak Q$, $A (\eps ,q) $ is linear continuous. 
Let $(\eps,q)$ in $ \mathcal Q$, with $q=(\vartheta ,h) $ in $\R \times \Omega$.
Let us introduce, for any $ \mathfrak{p} := (\mathtt{p}^{\partial {\mathcal S}_{0}} , \mathtt{p}^{\partial  \Omega} , C)  $ in $F_{-\frac{1}{2}}$,
 \begin{align*}
L [ \mathfrak{p} ]
&:= (SL [ \mathtt{p}^{\partial \mathcal S_{0}}], SL [ \mathtt{p}^{\partial \Omega}], C) , \\
K(h) [ \mathfrak{p} ] 
&:= ( \tilde{K}^{\partial \mathcal S_{0}} (h) [ \mathtt{p}^{\partial \Omega}] - C, 0 , \int_{\partial  \mathcal S_{0}} \mathtt{p}^{\partial \mathcal S_{0}} \, {\rm d}s - C) , \\
T (\eps,q)   [ \mathfrak{p}]
&:= (T^{\partial  \mathcal S} (\eps,q) [ \mathtt{p}^{\partial \Omega}]  ,T^{\partial \Omega}  (\eps,q) [ \mathtt{p}^{\partial \mathcal S_{0}}] , 0) ,
\end{align*}
so that we can write $A$ in the following form: on $ F_{0}$,
\begin{equation} \label{decoco}
A (\eps ,q) = L +  K(h) + \eps T (\eps , q) .
\end{equation} \par
We first consider the operator $L + K(h)$. According to \eqref{fact1},  the operator $L$ is Fredholm with index zero and since for each $h $ in $\overline{\Omega_{\delta}}$,  $K(h)$ is compact, we deduce that $L + K(h)$ is Fredholm with index zero. It follows that to prove that $L + K(h)$ is an isomorphism, it is sufficient to prove that its kernel is trivial. \par
Consider $ \mathfrak{p} := (\mathtt{p}^{\partial \mathcal S_{0}} , \mathtt{p}^{\partial  \Omega} , C) $ in $F_{-\frac{1}{2}}$ such that $\big(L + K(h) \big) [ \mathfrak{p}]  =0$. Since the logarithmic capacity ${\rm Cap}({\partial \Omega})$ of $\partial  \Omega$ satisfies 
${\rm Cap}({\partial \Omega}) \neq 1$,  according to  \eqref{fact2}, the second equation $SL [ \mathtt{p}^{\partial \Omega}] =0$ implies $ \mathtt{p}^{\partial  \Omega} =0$.
Then, substituting into the first equation,   $SL [ \mathtt{p}^{\mathcal S_{0}}]   = C$, whereas the third equation reads $ \int_{\partial  \mathcal S_{0}} \mathtt{p}^{\partial \mathcal S_{0}} \, {\rm d}s = 0$. Thus according to \eqref{fact3}, we obtain $ \mathtt{p}^{\partial \mathcal S_{0}} = 0$ and thus $C=0$.
This proves that the kernel of  $L + K(h)$  is trivial, and consequently that for any $h $ in $\Omega_{\delta}$, $L + K(h)$ is an isomorphism. \par
Now using that the dependence of $K$ on $h$ is Lipschitz, we deduce that $L + K(h)$ has locally a bounded inverse.
By compactness of $\overline{\Omega_{\delta}}$, it follows that $L + K(h)$ has  a bounded inverse for $h$ running over $\overline{\Omega_{\delta}}$. \par
Since the operators $ (T_{\eps})_{\eps \in (0,1)} $ are bounded in the space of bounded operators from  $F_{0}$ to $F_{1}$ 
we can then easily deduce the result from \eqref{decoco}. 
This concludes the proof of Lemma~\ref{abstract}.
\qed 
\end{proof}

In our case, Lemma~\ref{abstract} is applied as follows. Recalling \eqref{K1}-\eqref{K2} we define, for any $(\eps,q) $ in $ \mathfrak Q$, with 
$q= (\vartheta,h)$, 
\begin{itemize}
\item for any density $\mathtt{p}^{\partial \Omega} $ in $ L^{2} (\partial \Omega )$,
\begin{equation} \label{Eq:DefKTilde}
\tilde{K}^{\partial  \mathcal S_0} (h)  [ \mathtt{p}^{\partial \Omega}]
= {K}^{\partial  \mathcal S_0} (0,0,h)  [ \mathtt{p}^{\partial \Omega}]
= SL  [ \mathtt{p}^{\partial \Omega}] (h)
\end{equation}
as a constant function on $\partial {\mathcal S}_{0}$, and
\begin{equation} \label{Eq:DefTS}
T^{\partial  \mathcal S_0}  (\eps,q)   [ \mathtt{p}^{\partial \Omega}]
:=  \int_{\partial\Omega}  \mathtt{p}^{\partial \Omega}    (y)  \eta_{{\it 1}}  (\varepsilon , q , \cdot ,  y)  \, {\rm d}s(y) 
\ \text{ on } \partial {\mathcal S}_{0}, 
\end{equation}
\item for any density $\mathtt{p}^{\partial \mathcal S_{0}}$ in $ L^{2} (\partial \mathcal S_{0} ) $,
\begin{equation} \label{Eq:DefTOmega}
T^{\partial  \Omega}  (\eps,q)  [ \mathtt{p}^{\partial  \mathcal S_0}]
:= \int_{\partial\mathcal S_0}  \mathtt{p}^{\partial  \mathcal S_0}    (Y)  \eta_{{\it 1}}  (\varepsilon , \vartheta , \cdot , -Y , h)  \, {\rm d}s(y)
\ \text{ on } \partial \Omega.
\end{equation}
\end{itemize}
The following lemma  entails that the hypotheses of Lemma~\ref{abstract} are satisfied.
\begin{lem} \label{abstract-check}
Let $\delta > 0$. With the definitions above, \eqref{hyp-K} and \eqref{hyp-T} hold true.
\end{lem}
\begin{proof}
We use Lemma~\ref{op-reg} with  $ \mathcal C = \partial \Omega$, $b=G$ and $ \mathtt{p}^{\mathcal C} = \mathtt{p}^{\partial \Omega}$ 
to obtain that $\tilde{K}^{\partial  \mathcal S_0}$ satisfies \eqref{hyp-K}.
Next we apply Lemma~\ref{op-reg}, \eqref{un}   for any $ (\eps,q)$ in $ \mathfrak Q_{\delta, \eps_{0}}$,
with $ \mathcal C = \partial \Omega$, $b(x,y)= \eta_{{\it 1}}  (\varepsilon , q , x ,  y) $ and $ \mathtt{p}^{\mathcal C} = \mathtt{p}^{\partial \Omega}$ 
and with $ \mathcal C = \partial \mathcal S_0$, $b(x,y)=  \eta_{{\it 1}}  (\varepsilon , \vartheta , x , -y , h) $ and $ \mathtt{p}^{\mathcal C} = \mathtt{p}_{\mathcal S_0}$ 
to get that $T^{\partial  \mathcal S_0}  (\eps,q)$ and $T^{\partial  \Omega}  (\eps,q)$ satisfy  \eqref{hyp-T}. 
This ends the proof of Lemma~\ref{abstract-check}
\qed 
\end{proof}
Then we consider the operator $A(\eps , q)$ associated with these operators $\tilde{K}^{\partial  \mathcal S_0} (h)$,  $T^{\partial  \mathcal S_0}  (\eps,q)$ and $T^{\partial  \Omega}  (\eps,q)$ as given by \eqref{cplex}-\eqref{tttt}. The next lemma shows that this operator $ A(\eps , q)$ provides the existence of a solution to \eqref{rgroup-reste} with uniform estimates.
\begin{lem} \label{abstract-check2}
Let $\delta > 0$. There exists $\eps_{0} $ in $(0,1)$ such that for any $ (\eps,q)$ in $ \mathfrak Q_{\delta, \eps_{0}}$,
\begin{equation} \label{solulu}
\mathfrak{p}_{r}  (\varepsilon ,q,\cdot)  := A(\eps , q)^{-1} \mathfrak{g}  (\varepsilon , q,\cdot) 
\end{equation}
belongs to $F_{0}$ and solves \eqref{rgroup-reste}. Moreover $ \mathfrak{p}_{r} $ is in $L^{\infty} (\mathfrak Q_{\delta,\eps_{0}} ; F_{0} )$.
\end{lem}
\begin{proof}
Let $\delta > 0$.
Let us first observe that for any  $ (\eps,q)$ in $ \mathfrak Q_\delta$, 
for any $\mathfrak p := (\mathtt{p}^{\partial \mathcal S_{0}} , \mathtt{p}^{\partial  \Omega} , C )
$ in $F_{0}$ satisfying the condition
\begin{equation} \label{condi}
\int_{\partial  \mathcal S_{0}} \mathtt{p}^{\partial \mathcal S_{0}} \, {\rm d}s = 0 , 
\end{equation}
the following equality holds:
\begin{equation*}
\mathfrak{A} (\eps,q) [\mathfrak p ]  = A(\eps , q)  [ \mathfrak p ]  .
\end{equation*}
Indeed \eqref{Eq:DefKTilde}
and first order Taylor expansions yield with \eqref{Eq:DefTS} and \eqref{Eq:DefTOmega} that 
\begin{align*}
K^{\partial  \mathcal S_0} (\eps,q)  [ \mathtt{p}^{\partial \Omega}]  -  \tilde{K}^{\partial  \mathcal S_0} (h)  [ \mathtt{p}^{\partial \Omega}] 
&=  \eps  T^{\partial  \mathcal S_0}  (\eps,q)   [ \mathtt{p}^{\partial \Omega}] , \\
K^{\partial  \Omega} (\eps,q)  [ \mathtt{p}^{\partial \mathcal S_{0}}]
&=  \eps T^{\partial  \Omega} (\eps,q) [ \mathtt{p}^{\partial \mathcal S_{0}}]  .
\end{align*}
We emphasize in particular that the last equality relies on the condition \eqref{condi}. \par
Now, consider $\eps_{0} $ in $(0,1)$ obtained by applying Lemma~\ref{abstract}. 
For any $ (\eps,q)$ in $ \mathfrak Q_{\delta, \eps_{0}}$, consider
$
\mathfrak{p}_{r}  (\varepsilon ,q,\cdot) = 
\big(\mathtt{p}^{\partial \mathcal S_0}_{r}    (\varepsilon ,q,\cdot)  , {p}^{\partial \Omega}_{r}  (\varepsilon ,q,\cdot)   , C_{r} (\varepsilon ,q) \big)
$
given by \eqref{solulu}. It belongs to $F_{0}$ and satisfies \eqref{circu-reste} and consequently
\begin{equation*}
\mathfrak{A} (\eps,q) [\mathfrak \mathtt{p}_{r}  (\varepsilon ,q,\cdot) ]
= A(\eps , q)  [ \mathfrak \mathtt{p}_{r}  (\varepsilon ,q,\cdot) ]  =  \mathfrak{g}  (\varepsilon , q,\cdot) .
\end{equation*}
Moreover we have the estimate
\begin{equation*}
\|  \mathfrak \mathtt{p}_{r}  (\varepsilon ,q,\cdot) \|_{F_{0}} \leq  \|  A(\eps , q)^{-1} \|_{{\mathcal L}(F_{1};F_{0})}
 \, \|  \mathfrak{g}  (\varepsilon , q,\cdot)  \|_{F_{1}} .
\end{equation*}
The estimates \eqref{gbd} and \eqref{ti1} entail that  $\mathfrak{p}_{r}$ is in $L^{\infty} (Q_{\delta,\eps_{0}} ;F_{0} )$, which concludes the proof of Lemma~\ref{abstract-check2}.
\qed 
\end{proof}
 Lemma~\ref{etap3} follows in a straightforward manner.
\qed
\end{proof} 

\subsubsection{Fourth Step. Conclusion}
\begin{proof}[End of proof of Lemma~\ref{drift}]
We apply Lemma~\ref{etap3} to \eqref{def-frakg}. Thanks to \eqref{gbd} the assumption is satisfied. 
Regarding $C_{\varepsilon} (q)$ this yields an expansion actually better than the one stated in Lemma~\ref{drift}, that is,
according to \eqref{C_app} and \eqref{lien-intro} and what precedes, 
there exists $C_r $ in $L^{\infty}(\mathfrak Q_{\delta,\eps_{0}}; \R)$ such that 
\begin{multline}
\label{expansion_mu-now}
C_{\varepsilon}  (q) = - G(\varepsilon) +  C^{\Ext} 
+ \psi^{\Int}_{{\it 0}}  (h, h) 
+ 2\eps D_x \psi^{\Int}_{{\it 0}}  (h, h)  \cdot {\zeta}_\vartheta
+ \eps^{2}\, \mathsf{C}_{{\it 2}} (q) \\
 + \eps^{3}\,  C_r (\varepsilon ,q) ,
\end{multline}
where $\mathsf{C}_{{\it 2}} (q) $ is given by  \eqref{C2}. To prove Lemma~\ref{drift} it is therefore sufficient to observe that $\mathsf{C}_{{\it 2}} (q)$ is bounded uniformly in $\R$ for $(\eps,q)$ in $ \mathfrak Q^{\delta} $ and to redefine $C_r (\varepsilon ,q) $ such that $\eps^{2}\, C_r (\varepsilon ,q) $ is equal to  the sum of the last two terms in \eqref{expansion_mu-now}.
\qed  
\end{proof}
\begin{proof}[End of proof of Proposition  \ref{dev-psi}]
Combining \eqref{exp-psi_density}, \eqref{p_app} and \eqref{in3}, we deduce  that on $\partial \mathcal S_0$
\begin{equation*}
\frac{\partial\psi_\eps}{\partial n} (q,\eps R(\vartheta) \cdot +h) 
= \mathsf{p}_{{\it -1}}^{\Ext}   (\cdot)
 + \varepsilon  \mathsf{p}^{\Ext}_{{\it 0}}   (q,\cdot)
 + \varepsilon^{2}\,  \mathsf{p}^{\Ext}_{{\it 1}}   (q,\cdot) 
 + \eps^{3}\,  \mathtt{p}^{\partial \mathcal S_0}_{r}  (\varepsilon ,q,\cdot) ,
\end{equation*}
with $\mathtt{p}^{\partial \mathcal S_0}_{r}  $ in $L^\infty \big(\mathfrak Q_{\delta,\eps_{0}}  ; L^2 (\partial \mathcal S_{0} ; \R ) \big)$.

Combining this with 
the second equality in 
\eqref{ineed}, \eqref{rel-sauts} and using that $\frac{\partial P_{{\it 0}}}{\partial n} (q, X) = -  R(\vartheta)^t \, u^\Omega (h) \cdot \tau $, for $X $ on 
$ \partial \mathcal S_0$,  as a direct consequence of the definition of $P_{{\it 0}}$ in \eqref{polyh0}, we 
conclude the proof of Proposition~\ref{dev-psi}.
\qed
\end{proof} 
\subsection{Asymptotic expansion of the potential part: Proof of Proposition \ref{dev-phi-pasc}}
\label{sec-proof-dev-phi}
The proof of Proposition~\ref{dev-phi-pasc} is very close to the one of Proposition~\ref{dev-psi}. We will only explain how to transform the (Neumann) problem defining the Kirchhoff potentials into a Dirichlet one, so that the proof of Proposition~\ref{dev-phi-pasc} follows from a tedious adaptation of the steps of the proof of Proposition~\ref{dev-psi} detailed in Subsection~\ref{proof-deux}. \par
We emphasize that the indices below correspond to coordinates in $\R^{3}$ and are in normal font type (while indices related to the order in an asymptotic development in powers of $\varepsilon$ are written in italic type). \par
We consider the functions $\overline{\varphi}_{j,\eps} (q,\cdot)$, for $j=1,2,3$, as the solution to the following Dirichlet boundary value problem in $\mathcal F_\varepsilon(q)$:
\begin{subequations} \label{model_conj-h}
\begin{alignat}{3}
\label{model_conj-h-a} 
- \Delta\overline{\varphi}_{j,\eps}  (q,\cdot) &=0&& \text{in }\mathcal F_\varepsilon  (q) ,\\
\label{model_conj-h-b}
\overline{\varphi}_{j,\eps}  (q,\cdot)&= \overline{K}_{j} (q,\cdot) + c_{j,\varepsilon} (q)
&\quad& \text{on }\partial \mathcal S_\varepsilon  (q),\\
\label{model_conj-h-c}
\overline{\varphi}_{j,\eps}  (q,\cdot) &=0&& \text{on }\partial\Omega,
\index{BGrecTF7@$\overline{\varphi}_{j,\eps} (q,\cdot)$: functions harmonically conjugated to the Kirchhoff potentials ${\varphi}_{j,\eps}(q,\cdot)$, up to a rotation}
\end{alignat}
where the 
functions $ \overline{K}_{j} (q,\cdot)$ are given by
\begin{gather*}
\overline{K}_{j} (q,\cdot) := 
\left\{ \begin{array}{ccc}
\frac{1}{2} |x-h|^2  &   \text{ if }   & j=1 , \\
-R(\vartheta)^t \, (x-h) \cdot e_2&   \text{ if }   &  j=2 ,\\
R(\vartheta)^t \, (x-h) \cdot e_1  &    \text{ if }  &    j=3 ,
\end{array} \right.
\end{gather*}
where $e_1$ and $e_2$ are the unit vectors of the canonical basis,
and the constants $c_{j,\varepsilon} (q)$ are such that:
\begin{equation} \label{circ-conj-h}
\int_{\partial\mathcal S_\varepsilon  (q)} \frac{\partial\overline{\varphi}_{j,\eps}}{\partial n} (q,\cdot) \, {\rm d}s= 0.
\end{equation}
\end{subequations}
Precisely, the constants $c_{j,\varepsilon} (q)$ are given by
\begin{equation} \label{fovoir}
c_{j,\varepsilon} (q) = C_{\varepsilon} (q) \int_{\partial\mathcal S_\varepsilon  (q)}  \frac{\partial\overline{\phi}_{j,\eps}}{\partial n} (q,\cdot) \, {\rm d}s ,
\end{equation}
where $\overline{\phi}_{j,\eps}$, $j=1,2,3$ are the solutions of 
\begin{alignat}{3} \label{fovoir2}
-\Delta\overline{\phi}_{j,\eps} (q,\cdot) &=0&& \text{in }\mathcal F_\varepsilon  (q) , \\
\overline{\phi}_{j,\eps} (q,\cdot) &= \overline{K}_{j} (q,\cdot)  &\quad& \text{on }\partial \mathcal S_\varepsilon  (q), \\
\overline{\phi}_{j,\eps} (q,\cdot) &=0&& \text{on }\partial\Omega .
\end{alignat}
We will use the vector notation:
\begin{equation} \label{Kir-vect}
\boldsymbol{\overline{\varphi}}_\eps :=
(\overline{\varphi}_{1,\eps} , \, \overline{\varphi}_{2,\eps},\, \overline{\varphi}_{3,\eps})^{t} .
\end{equation}
Up to a rotation, the functions $\overline{\varphi}_{j,\eps} (q,\cdot)$ are harmonically conjugated to the Kirchhoff potentials ${\varphi}_{j,\eps}(q,\cdot)$ (see \eqref{Kir-eps}), as shown in the following result.
\begin{lem} \label{IWantTo Sleep}
For any $(\eps,q) $ in $\mathfrak Q$, with $q=(\vartheta ,h)$, there  holds in $\mathcal F_\varepsilon  (q) $,
\begin{equation} \label{lien-har}
\nabla {\varphi}_{j,\eps} (q,\cdot) = \nabla^\perp \check{\varphi}_{j,\eps} (q,\cdot),
\end{equation}
where
\begin{equation} \label{def-check}
\Big(\check{\varphi}_{1,\eps} (q,\cdot), \, \check{\varphi}_{2,\varepsilon} (q,\cdot),\, \check{\varphi}_{3,\varepsilon} (q,\cdot)\Big)
:=  \mathcal R(\vartheta)   \Big(\overline{\varphi}_{1,\eps} (q,\cdot) , \, \overline{\varphi}_{2,\eps} (q,\cdot),\, \overline{\varphi}_{3,\eps} (q,\cdot)\Big) .
\end{equation}
\end{lem}
\begin{proof} 
We recall that for any $(\eps,q) $ in $\mathfrak Q$, the system 
\begin{subequations}
\label{Unik}
\begin{alignat}{3}
\dive u &=0& &\text{in } \mathcal F_\varepsilon  (q) ,\\
\curl u &=0& &\text{in } \mathcal F_\varepsilon  (q) ,\\
u \cdot n  &=0 & \quad&\text{on }\partial \Omega  ,\\
\label{Unik4} 
u \cdot n  &=K_{j} (q,\cdot) &  \quad&\text{on }\partial \mathcal S_\eps (q)  ,\\
\label{Unik5} 
\int_{\partial \mathcal S_\eps (q)} u \cdot \tau \, {\rm d}s &= 0, & &
\end{alignat}
\end{subequations}
has a unique solution $u$, say in $H^1 ({\mathcal F}_\varepsilon  (q) )$.
Then one observes that both $\nabla {\varphi}_{j,\eps} (q,\cdot) $ and $ \nabla^\perp \check{\varphi}_{j,\eps} (q,\cdot)$ solve \eqref{Unik}.
In particular let us emphasize that, on $\partial \mathcal S_\eps (q) $, 
$$
\Big( n \cdot \nabla^\perp \overline{\varphi}_{j,\eps} (q,\cdot) \Big)_{j=1,2,3}
= 
\Big(   \frac{\partial \overline{K}_{j}}{\partial \tau} (q,\cdot) \Big)_{j=1,2,3} 
=  \mathcal R(\vartheta)^{t} \boldsymbol{K} (q,\cdot) ,
$$
so that, for $j=1,2,3$, $\nabla^\perp \check{\varphi}_{j,\eps} (q,\cdot)$ satisfies \eqref{Unik4}, and the condition \eqref{circ-conj-h} ensures that \eqref{Unik5} is satisfied.
\qed
\end{proof} 
In the case without exterior boundary we consider in the same way $\overline{\varphi}^{\Ext}_{j}$ as the solution to 
\begin{subequations} \label{phi-0}
\begin{alignat}{3}
-\Delta\overline{\varphi}^{\Ext}_{j}  &=0& & \text{ in }\R^{2} \setminus \mathcal S_{0} ,\\
\label{ahah-phi}
\overline{\varphi}^{\Ext}_{j}  (\cdot) &= \overline{K}_{j} (0,  \cdot)  + c^{}_{j}&\quad&\text{on }\partial \mathcal S_{0}  , \\
\overline{\varphi}^{\Ext}_{j} (x) & \rightarrow 0 & & \text{ as } |x| \rightarrow +\infty,
\end{alignat}
where the constant $c_{\Ext, j}$ is such that 
\begin{equation} \label{circ-saoule}
\int_{\partial\mathcal S_0} \frac{\partial\overline{\varphi}^{\Ext}_{j}}{\partial n} \, {\rm d}s= 0.
\index{BGrecTF8@$\overline{\varphi}^{\Ext}_{j} $: functions harmonically conjugated to the Kirchhoff potentials ${\varphi}^{\Ext}_{j} $ when $\Omega = \R^2$}
\end{equation}
\end{subequations}
The existence and uniqueness of such a constant $ c_{\Ext ,j}$ is provided by a similar argument as for \eqref{fovoir}-\eqref{fovoir2}.
Proceeding as in the proof of Lemma~\ref{IWantTo Sleep} 
\begin{equation} \label{eqeq}
\nabla {\varphi}^{\Ext}_{j} = \nabla^\perp \overline{\varphi}^{\Ext}_{j} ,
\end{equation}
where the functions $\varphi^{\Ext}_{j}$, for $j=1,2,3$, are the Kirchhoff potentials in $\R^{2} \setminus \mathcal S_0 $ defined in \eqref{Kir-exte}.
As before we introduce the vector notation for the functions $\overline{\varphi}^{\Ext}_{j}$:
\begin{equation} \label{Kir-vect-ext}
\boldsymbol{\overline{\varphi}}^{\Ext} 
:=(\overline{\varphi}^{\Ext}_{1}, \overline{\varphi}^{\Ext}_{2},\, \overline{\varphi}^{\Ext}_{3}) .
\end{equation}
Then, following the strategy of Proposition \ref{dev-psi} we obtain  the following result.
\begin{prop}  \label{dev-phi}
There exist
\begin{enumerate}[(i)]
\item  $\boldsymbol{p}^{\partial \mathcal S_0}_{r} : \mathfrak Q \rightarrow L^2 (\partial \mathcal S_{0} ; \R^3 )$ such that for any $(\varepsilon , q) $ in $\mathfrak Q$, with $q=(\vartheta,h) $, for any $X $ in $\partial \mathcal S_0$, 
\begin{equation} \label{exp-phi_n}
\frac{\partial \boldsymbol{\overline{\varphi}}_\eps}{\partial n} (q,\eps R(\vartheta) X +h) 
= I_\eps  \Big( \frac{\partial \boldsymbol{\overline{\varphi}}^{\Ext}}{\partial n} (X)  
    + \eps^{2} \boldsymbol{p}^{\partial \mathcal S_0}_{r} (\eps,q, X) \Big) ,
\end{equation} 
\item $\boldsymbol{p}^{\partial \Omega}_{r} : \mathfrak Q \rightarrow L^2 (\partial \Omega ; \R^3 )$ such that for any 
$(\varepsilon, q) $ in $\mathfrak Q$, for any $x $ in $\partial \Omega$, 
\begin{equation} \label{exp-phi_n-ext}
\frac{\partial \boldsymbol{\overline{\varphi}}_\eps}{\partial n} (q, x) = I_\eps \,  \eps^{2} \,   \boldsymbol{p}^{\partial \Omega}_{r} (\eps,q, x) .
\end{equation}
and moreover such that for any $\delta >0$, there exists $\varepsilon_{0}$ in $(0,1)$ for which
$\boldsymbol{p}^{\partial \mathcal S_0}_{r} \in L^\infty \big(\mathfrak Q_{\delta,\eps_{0}} ; L^2 (\partial \mathcal S_{0} ; \R^3 ) \big)$
and $\boldsymbol{p}^{\partial \Omega}_{r} \in L^\infty \big(\mathfrak Q_{\delta,\eps_{0}} ; L^2 (\partial \Omega ; \R^3 ) \big)$.

\end{enumerate}
\end{prop}
Proposition~\ref{dev-phi-pasc} then follows from Lemma~\ref{IWantTo Sleep}, \eqref{eqeq} and  Proposition~\ref{dev-phi}. We omit the details.
\section{Proof of the normal forms}
\label{Sec:FormesNormales}
This section is devoted to the proof of the normal forms in Proposition~\ref{Pro-fnormA}, Proposition~\ref{Pro-fnorm} and Proposition~\ref{Pro-fnorm-ball-inh}, as well as 
the expansion of the added inertia in Proposition~\ref{dev-added} that was used to establish Lemma~\ref{kin-eps}.
The proof of the normal forms \eqref{fnorm1}, \eqref{fnorm2} and \eqref{fnorm2-ball-inh} consists first in expanding the functions $M_\varepsilon (q)$, $ \langle \Gamma_\varepsilon (q)  ,p,p\rangle $ and ${F}_\varepsilon (q,p)$ with respect to $\eps$ thanks to the expansions of the previous sections and to Lamb's lemma (Lemma~\ref{blasius}), 
and then in substituting these expansions into \eqref{ODE_intro_eps}. Next, further modifications are needed in order to reach the exact forms \eqref{fnorm1}, \eqref{fnorm2} and \eqref{fnorm2-ball-inh}.
\subsection{Asymptotic expansion of the added inertia and the Christoffel symbols}
\label{sec-dev-mat}
In this subsection, we use the asymptotic developments of Section~\ref{sec-dev-stream} to deduce expansions for the added inertia matrix and for the Christoffel symbols. \par
We begin by giving the expansions in terms of $\varepsilon$ of the inertia matrix  $M_{a,\eps} (q)$ which is the counterpart for the body of size $\eps$ of the added mass $M_a (q) $ defined in \eqref{def-MAq}.
Precisely, it is defined for $(\eps,q)$ in $\mathfrak Q$ by 
\begin{equation} \nonumber
M_{a,\eps} (q) := \int_{\partial \mathcal S_\eps (q)}\boldsymbol\varphi_\eps (q,\cdot)\otimes\frac{\partial\boldsymbol\varphi_\eps}{\partial n}(q,\cdot) \, {\rm d}s 
=\int_{\partial \mathcal S_\eps (q)} \boldsymbol\varphi_\eps (q,\cdot)\otimes \boldsymbol K_\eps (q,\cdot) \, {\rm d}s  .
\end{equation}
The function $ \boldsymbol{{\varphi}}_\eps $ mentioned above is  defined in \eqref{Kir-eps}, \eqref{Kir-vect-pasc}.
Let us also recall that the matrix $M^{\Ext}_{a, \vartheta} $ is defined in  \eqref{def-Maa} and \eqref{g+a_ext}, and $I_{\varepsilon}$ is defined in \eqref{Eq:Ieps}.
The expansion is as follows.
\begin{prop} \label{dev-added}
There exists a function $M_{r}: \mathfrak Q \rightarrow \R^{3 \times 3}$ depending on $\mathcal S_0$ and $\Omega$ such that for any $\delta>0$ there exists $\eps_{0} $ in $(0,1)$ such that
$M_{r} \in L^\infty (\mathfrak Q_{\delta,\eps_{0}} ; \R^{3 \times 3})$ and
 such that for all $(\varepsilon ,q ) $ in $\mathfrak Q$,  with $q=(\vartheta , h)$, 
\begin{equation} \label{dev-mat-ii-proof}
M_{a,\varepsilon} (q) = 
\varepsilon^2 I_\eps  
\Big( M^{\Ext}_{a, \vartheta} 
+ \eps^2 {M}_{r} (\eps ,q )
\Big) I_\eps  .
\end{equation}
\end{prop}
\begin{proof}
Using a change of variable, \eqref{Kir-gras} and \eqref{Kir-vect-pasc} we deduce  that for $(\eps,q)$ in $\mathfrak Q$,
\begin{equation}
\label{change-K}
 \boldsymbol{K}_\eps (q, \eps R(\vartheta) \cdot +h) =I_\eps  \mathcal  R (\vartheta)  \boldsymbol{K} (0, \cdot) \text{ on } \partial\mathcal S_0.
\end{equation}
It follows that
\begin{equation*}
M_{a,\eps} (q) 
=  \eps \int_{\partial\mathcal S_{0}}
 \boldsymbol\varphi_\eps (q, \eps R(\vartheta) \cdot +h )\otimes I_{\eps}  \mathcal R (\vartheta) \boldsymbol K (0,\cdot)
\, {\rm d}s  .
\end{equation*}
We now apply Proposition \ref{dev-phi-pasc}, (i) to get 
\begin{align*}
M_{a,\eps} (q) 
&=  \eps^{2} I_\eps   \Big(  \int_{\partial\mathcal S_{0}}
   \mathcal R (\vartheta)   \Big(\boldsymbol{\varphi}^{\Ext}   +\boldsymbol{\check{c}} (\eps,q)  + \eps^{2} \boldsymbol{\varphi}_{r}   (\eps,q, \cdot) \Big)
\otimes   \mathcal R (\vartheta) \boldsymbol K (0,\cdot) \, {\rm d}s \,   \Big) I_{\eps} 
\\ &=
 \eps^{2} I_\eps   \Big( M^\Ext_{a,\vartheta}  
 +  \eps^{2} \mathcal R (\vartheta) 
 \int_{\partial\mathcal S_{0}}  \boldsymbol{\varphi}_{r}   (\eps,q, \cdot)  
\otimes  \boldsymbol K (0,\cdot) \, {\rm d}s \, \mathcal R (\vartheta)^{t}  \Big)  I_{\eps} ,
\end{align*}
since
$$
\int_{\partial\mathcal S_{0}} \boldsymbol{\check{c}} (\eps,q)  \otimes  \boldsymbol K (0,\cdot) \, {\rm d}s
= \boldsymbol{\check{c}} (\eps,q)  \otimes   \int_{\partial\mathcal S_{0}} \boldsymbol K (0,\cdot) \, {\rm d}s
=0,
$$
and 
$$M^\Ext_{a,\vartheta}    
=   \mathcal R (\vartheta) \int_{\partial\mathcal S_{0}} \boldsymbol{\varphi}^{\Ext} \otimes   \boldsymbol K (0,\cdot) \, {\rm d}s \, 
 \mathcal R (\vartheta)^{t}
,$$
thanks to  \eqref{Kir-ext-boundary}, 
\eqref{def-Maa} and 
 \eqref{g+a_ext}.
Above $\eps_{0} $ belongs to $(0,1)$
and $\boldsymbol{\varphi}_{r}$ is in the space $L^\infty \big(\mathfrak Q_{\delta,\eps_{0}} ; L^2 (\partial \mathcal S_{0} ; \R^3 ) \big)$.
 Then we set 
 $${M}_{r} (\eps ,q ) := 
 \mathcal R (\vartheta) 
 \int_{\partial\mathcal S_{0}}  \boldsymbol{\varphi}_{r}   (\eps,q, \cdot)  
\otimes  \boldsymbol K (0,\cdot) \, {\rm d}s \, \mathcal R (\vartheta)^{t} , $$
and we observe that $M_{r}  $ is in  $L^\infty (\mathfrak Q_{\delta,\eps_{0}} ; \R)$ and depends only on $\mathcal S_0$ and $\Omega$.
This concludes the proof of Proposition \ref{dev-added} and consequently of Lemma~\ref{kin-eps}.
\qed
\end{proof} 
We now consider the Christoffel symbols $\Gamma_\eps^{\rm rot}$ given for  $(\eps,q)$ in $\mathfrak Q  $ and  $p = ({\omega},\ell)  \in\mathbb R^3$, by 
\begin{equation} \label{zozo-eps}
\langle \Gamma_\eps^{\rm rot} (q), p, p\rangle := 
- \begin{pmatrix}0\\
P^\eps_a \end{pmatrix}
\times p
-
\omega M_{a,\eps} (q)\begin{pmatrix}
0 \\
\ell^\perp
\end{pmatrix}       \in\mathbb R^3,
\end{equation}
where $P^\eps_a$ denotes the last two coordinates of $M_{a,\eps}(q)p$.
The formula  \eqref{zozo-eps} is the counterpart for a body of size $\eps$ of the  
Christoffel symbols given by \eqref{zozo} when $\eps=1$.

The next result proves that the leading term of $\Gamma_\eps^{\rm rot}$ is given, up to an appropriate scaling, by the Christoffel symbols  $\langle\Gamma^{\Ext}_{\vartheta} ,p,p\rangle$ of the solid as if it was immersed in a fluid filling the plane. 
We recall that $\langle\Gamma^{\Ext}_{\vartheta},p,p\rangle$ is defined in \eqref{Christo-exter}. 
Precisely, we have the following result.
\begin{prop} \label{Gamma-int-exp}
There exists $\Gamma^{\rm rot}_{r} : \mathfrak Q \rightarrow  \mathcal{BL} (\R^3 \times \R^3 ; \R^3 )$
depending on  $\mathcal S_0$, $\gamma$  and $\Omega$, such that for any $\delta >0$, there exists $\varepsilon_{0}$ in $(0,1)$ for which
$\Gamma^{\rm rot}_{r} \in L^\infty (\mathfrak Q_{\delta,\eps_{0}} ;  \mathcal{BL} (\R^3 \times \R^3 ; \R^3 ))$ and
such that for any $(\varepsilon ,q) $ in $ \mathfrak Q$, with $q=(\vartheta,h)$,  for any   ${p} = ({\omega},\ell) $ in $\R^3$,
\begin{equation} \label{exp-GammaS}
\langle \Gamma_\eps^{\rm rot} (q),p,p\rangle 
=  \eps I_\eps \big(\langle \Gamma^{\Ext}_{\vartheta} , I_{\varepsilon} p , I_{\varepsilon} p \rangle
 + \eps^{2} \langle \Gamma^{\rm rot}_{r} (\eps,q), I_{\varepsilon} p , I_{\varepsilon} p \rangle  \big) .
\end{equation}
\end{prop}
Proposition \ref{Gamma-int-exp} follows from Proposition \ref{dev-added} by straightforward computations. \par
\ \par
Finally we study the Christoffel symbols $\Gamma_{\eps}^{\Omega}$ given for  $(\eps,q)$ in $\mathfrak Q  $ and  $p=(p_1,p_2,p_3)\in\mathbb R^3$, by 
\begin{equation} \label{DeCadix}
\langle\Gamma_\varepsilon^{\partial \Omega} (q), p, p \rangle
:= \left( \sum_{1\leq k,l\leq 3} (\Gamma_\varepsilon^{\partial \Omega})^j_{k,l}(q) p_k p_l \right)_{1\leq j\leq 3} \in \mathbb R^3 ,
\end{equation}
where for every $j,k,l\in\{1,2,3\}$, we set
\begin{equation*}
(\Gamma_\varepsilon^{\partial \Omega})^j_{k,l}(q) :=
\frac{1}{2} [  \Lambda^{l}_{\eps ,kj}(q)  +  \Lambda^{k}_{\eps ,jl}(q) -\Lambda^{j}_{\eps ,kl}(q)  ] ,
\end{equation*}
with 
\begin{equation*}
\Lambda^{l}_{\eps , kj}(q) :=
\int_{\partial\Omega}
\left(
\frac{\partial\varphi_{j,\varepsilon}}{\partial\tau}  \frac{\partial\varphi_{k,\varepsilon}}{\partial\tau} K_l
\right) 
 (q,\cdot)\, {\rm d}s  .
\end{equation*}
The Christoffel symbols $\Gamma_{\eps}^{\Omega}$ are the counterpart $\Gamma^{\Omega}$ defined in \eqref{Def_Gamma}. They are expanded as follows.
\begin{prop} \label{Gamma-ext-exp}
%
There exists $\Gamma^{\partial \Omega}_{r}  : \mathfrak Q \rightarrow  \mathcal{BL} (\R^3 \times \R^3 ; \R^3 )$ depending on  $\mathcal S_0$, $\gamma$  and $\Omega$, 
such that for any $\delta >0$, there exists $\varepsilon_{0}$ in $(0,1)$ for which $\Gamma^{\partial \Omega}_{r} \in L^\infty (\mathfrak Q_{\delta,\eps_{0}} ;  \mathcal{BL} (\R^3 \times \R^3 ; \R^3 ))$ and such that for any $(\varepsilon ,q) $ in $ \mathfrak Q$, with $q:=(\vartheta,h)$,  for any   ${p} := ({\omega},\ell) $ in $\R^3$,
\begin{equation} \label{expGammaOmega}
\langle \Gamma_{\eps}^{{\partial \Omega}} (q),p,p\rangle 
=  \eps^{3} I_\eps  \langle \Gamma^{\partial \Omega}_{r} (\eps,q), I_{\varepsilon} p , I_{\varepsilon} p \rangle .
\end{equation}
\end{prop}
\begin{proof}
Proposition \ref{Gamma-ext-exp} follows from Proposition \ref{dev-phi-pasc}, (iii). Indeed, \eqref{DeCadix} can be rewritten as:
\begin{equation*}
\langle\Gamma_\varepsilon^{\partial \Omega} (q), p, p \rangle 
=\int_{\partial\Omega}
\left[ \frac{\partial\boldsymbol\varphi_\varepsilon}{\partial\tau}(\mathbf K_\varepsilon\cdot p) 
\left(\frac{\partial\boldsymbol\varphi_\varepsilon}{\partial\tau}\cdot p\right) 
-\frac{1}{2} \mathbf K_\varepsilon \left(\frac{\partial\boldsymbol\varphi_\varepsilon}{\partial\tau}\cdot p\right)^2
\right] (q, \cdot) \, {\rm d}s.
\end{equation*}
Observe that $\mathbf K_\varepsilon$ is actually independent of $\varepsilon$ on $\partial\Omega$.
According to \eqref{exp-phi_n-pasc-ext}, we obtain:
\begin{multline*}
\langle \Gamma_\varepsilon^{\partial \Omega} (q), p, p \rangle 
\\ = \varepsilon^3 I_\varepsilon \int_{\partial\Omega}
\left[ \boldsymbol{\mathtt{p}}^{\partial \Omega}_{r} (\hat{\mathbf K}_\varepsilon \cdot I_{\varepsilon} p) \left(\boldsymbol{\mathtt{p}}^{\partial \Omega}_{r} \cdot I_{\varepsilon} p\right)
- \frac{1}{2} \hat{\mathbf K}_\varepsilon \left( \boldsymbol{\mathtt{p}}^{\partial \Omega}_{r} \cdot I_{\varepsilon} p\right)^2 \right] (q,\cdot) \, {\rm d}s,
\end{multline*}
 where  $\hat{\mathbf K}_\varepsilon := \varepsilon I_\varepsilon^{-1} \mathbf K_\varepsilon$. This  gives the expected result.
\qed
\end{proof} 
%
%
%
%
%
%
%
\subsection{Asymptotic expansion of $E_\varepsilon $}
We now consider the expansion of $E_\varepsilon$ which is given, for $(\eps,q) $ in $\mathfrak Q$, by
\begin{equation*}
E_{\varepsilon} (q) := 
- \frac{1}{2}\int_{\partial\mathcal S_\eps (q)}\left|\frac{\partial\psi_\eps }{\partial n} (q,\cdot) \right|^2 \boldsymbol{K}_\eps (q,\cdot) \, {\rm d}s.
\end{equation*}
This formula is the counterpart of  \eqref{E-def} for a body of size $\eps$.
We recall that the function $\psi_\varepsilon (q,\cdot)$ is defined in \eqref{model_stream} and the vector field $\boldsymbol{K}_\eps (q,\cdot)$ in  \eqref{Def-Kj-eps}-\eqref{Kir-vect-pasc}.

The first two terms in the asymptotic expansion will be given respectively thanks to two vector fields
$\mathsf{E}_{{\it 0}} (q)$ and $\mathsf{E}_{{\it 1}} (q)$  which we now define. 
First we set
\begin{equation*} 
\mathsf{E}_{{\it 0}} (q) :=
-
\begin{pmatrix}
 u^\Omega (h) \cdot  \zeta_\vartheta \\
 u^\Omega (h)^{\perp} 
\end{pmatrix} ,    \text{ where  }  q=(\vartheta , h).
\end{equation*}
We recall that $u^\Omega$ and ${\zeta}_\vartheta$  were defined in \eqref{DefUOmega} and \eqref{def-zeta_vartheta}-\eqref{def-zeta} respectively. 
Next we define $\mathsf{E}_{{\it 1}} (q) $ as 
\begin{equation} \label{decompE1}
\mathsf{E}_{{\it 1}} (q) := \mathsf{E}_{{\it 1}}^a  (q) + \mathsf{E}_{{\it 1}}^b (q) + \mathsf{E}_{{\it 1}}^c (q) ,
\end{equation}
where the three addends are given by the definitions below. \par
\ \par
\noindent
{\it The inertial subprincipal term ${\it \mathsf{E}_{{\it 1}}^a}$}.
The definition of the term $\mathsf{E}_{{\it 1}}^a$ will use some functions of  the entries 
of the matrix $M_{a}^{\Ext} $ defined in \eqref{def-Maa}. 
Let us first recall that we decomposed $M_{a}^{\Ext}$ in \eqref{def-Ma}.
We also define the real traceless symmetric $2 \times 2$ matrix $ {M}^{\dagger}$ defined by 
\begin{equation} \label{def-Mdagger}
{M}^{\dagger} 
 = \big( M^{\dagger}_{i,j} \big)_{1\leqslant i,j \leqslant 2} 
 := \frac12 \Big(  M^\Ext_{\flat} (\perp) + \big(M^\Ext_\flat (\perp) \big)^t  \Big) 
 = \frac12 \Big(  M^\Ext_\flat (\perp) -   (\perp) M^\Ext_\flat  \Big)  ,
\end{equation}
where $(\perp)$  is defined as the $2 \times 2$ matrix 
\begin{equation} \label{mat-perp}
(\perp)  := 
\begin{pmatrix}
0 & -1 \\
1 & 0
\end{pmatrix}.
\end{equation}
The matrix $ {M}^{\dagger} \, $ depends only on  $\mathcal S_0$.
Its coefficients can be described as follows: 
\begin{subequations} \label{def-Mdagger2}
\begin{gather}
M^{\dagger}_{1,1} = - M^{\dagger}_{2,2} = \int_{\partial \mathcal S_0} \frac{\partial \varphi^{\Ext}_{3}}{\partial n} \varphi^{\Ext}_{2} \, {\rm d}s 
, \quad \text{ and  } \\
M^{\dagger}_{1,2} =  M^{\dagger}_{2,1} = \frac12 \int_{\partial \mathcal S_0} \left( \frac{\partial  \varphi^{\Ext}_{3} }{\partial n} \varphi^{\Ext}_{3}
- \frac{\partial \varphi^{\Ext}_{2} }{\partial n}  \varphi^{\Ext}_{2} \right) \, {\rm d}s ,
\end{gather}
\end{subequations}
where the functions $\varphi^{\Ext}_{j}$, for $j=1,2,3$, are the Kirchhoff potentials in $\R^{2} \setminus \mathcal S_0 $ defined in \eqref{Kir-exte}. 
Recalling \eqref{def-Ma}, we also consider
\begin{equation} \label{Mflattheta}
M_{\flat,\vartheta}^\Ext := R(\vartheta) M^\Ext_\flat  R(\vartheta)^t \, , \quad
\mu^{\Ext}_{\vartheta} := R(\vartheta) \mu^\Ext , \quad
{M}_{\vartheta}^{\dagger} := R(\vartheta)  {M}^{\dagger}  R(\vartheta)^t .
\end{equation}
Then we define:
\begin{equation} \label{def-E1a}
\mathsf{E}_{{\it 1}}^a  (q) :=
\begin{pmatrix}
u^\Omega (h)^{\perp}  M_\vartheta^{\dagger} \, u^\Omega (h)^{\perp}  \\ 0 \\ 0
\end{pmatrix}  ,    \text{ where  }  q=(\vartheta , h).
\end{equation}
\noindent
{\it The weakly gyroscopic subprincipal term ${\it \mathsf{E}_{{\it 1}}^b}$}.
Let us introduce the geometrical constant $2\times 2$ matrix
\begin{align} \nonumber
\sigma &:=   \int_{\partial \mathcal S_0} \,  \frac{\partial\psi^{\Ext}_{{\it -1}}}{\partial n} (X)  \, X \otimes  X^{\perp} \, {\rm d}s(X)
+ \zeta \otimes \zeta^{\perp}  \\
\label{def-sigma}
&= \int_{\partial \mathcal S_0} \,   \frac{\partial\psi^{\Ext}_{{\it -1}}}{\partial n} (X)  \,  ( X \otimes  X^{\perp} 
- \zeta \otimes \zeta^{\perp} ) \, {\rm d}s(X) ,
\end{align}
which only depends on $\mathcal S_0$. Next we introduce its symmetric part
\begin{equation} \label{anti-sigma}
\sigma^{s} := \frac{1}{2} (\sigma + \sigma^{t}),
\end{equation}
and the associated field force $\mathsf{E}_{{\it 1}}^b (q) $ defined,  for $q = (\vartheta, h)$ in $\R \times \Omega$, by
\begin{equation} \label{DefH1b}
\mathsf{E}_{{\it 1}}^b (q) 
:=
\begin{pmatrix}
 - \langle  D^2_x \psi^{\Int}_{{\it 0}} (h,h)  ,  R(-2\vartheta)  \sigma^s  \rangle_{\R^{2 \times 2}} \\
 0 \\ 0
\end{pmatrix} .
\end{equation}
Let us recall that the function $\psi^{\Int}_{{\it 0}}$ is defined in \eqref{depsi0}.
The main property of $\mathsf{E}_{{\it 1}}^b$ is the following.
\begin{lem} \label{cvwg}
The vector field $\mathsf{E}_{{\it 1}}^b$ in $C^{\infty}(\R\times  \Omega ; \R^3)$ defined by \eqref{DefH1b}
is weakly gyroscopic in the sense of Definition \ref{wg}.
\end{lem}
\begin{proof}
Multiply \eqref{DefH1b} by $\tilde{p}$ and integrate.
The conclusion follows from an integration by parts, crude bounds, Lemma~\ref{banane},
the smoothness of the function $\psi^{\Int}_{{\it 0}}$ and Lemma~\ref{drifterisok}.
\qed
\end{proof}
%
%
\noindent
{\it The drift subprincipal term ${\it \mathsf{E}_{{\it 1}}^c}$}.
Let us introduce the force field $\mathsf{E}_{{\it 1}}^c (q) $  defined, for $q = (\vartheta  , h)$ in $\R\times  \Omega$, by
\begin{equation} \label{DefH1c}
\mathsf{E}_{{\it 1}}^c (q) 
:=
-  \begin{pmatrix}
{\zeta}_\vartheta \cdot   u_c (q) 
\\   (u_c (q))^\perp 
\end{pmatrix} .	
\end{equation}
Above $u_c (q)$ denotes the corrector velocity defined in \eqref{def-uc-perp}. \par
\ \par
Now the goal of this subsection is to establish the following result.
\begin{prop} \label{E-exp}
There exists a function   $E_{r} : \mathfrak Q \rightarrow \R^3$  depending on  $\mathcal S_0$ and $\Omega$,
such that for any $\delta >0$, there exists $\varepsilon_{0}$ in $(0,1)$ for which $E_{r}$ belongs to $L^\infty (\mathfrak Q_{\delta,\eps_{0}}; \R^3)$
and such that for any $(\varepsilon ,q) $ in $ \mathfrak Q_{\delta,\eps_{0}}$, 
\begin{equation} \label{E-dev}
E_\varepsilon(q)= 
I_\eps  \Big(   \mathsf{E}_{{\it 0}} (q) + \varepsilon \mathsf{E}_{{\it 1}} (q) + \eps^2  E_{r}  (\eps,q)  \Big) .
\end{equation}
\end{prop}
\begin{proof}
We proceed in three steps: first we use a change of variable in order to recast $E_{\varepsilon} (q)$ 
as an integral on the fixed boundary $\partial\mathcal S_0$. 
Then we plug the expansion of $\psi^{\varepsilon}$ into this integral. 
Finally we use several times Lamb's lemma in order to compute the terms of the resulting expansion. \par
\ \par
First, thanks to a change of variable and \eqref{change-K}, 
\begin{equation*}
E_{\varepsilon} (q)= - \frac{\eps}{2} I_\eps \mathcal  R (\vartheta)  \int_{\partial\mathcal S_0}\left| \frac{\partial\psi_\eps}{\partial n} (q,\eps R(\vartheta) \cdot +h) \right|^2  \boldsymbol{K} (0, \cdot) \, {\rm d}s ,
\end{equation*}
where $\boldsymbol{K} (q, \cdot)$ is the vector field defined in  \eqref{Kir-gras}. 
Now let $\delta > 0$. 
Using Proposition~\ref{dev-psi} we deduce  that there exists $\eps_{0} $ in $(0,1)$ such that for any $(\varepsilon ,q) $ in $ \mathfrak Q_{\delta,\eps_{0}} $,
\begin{equation} \label{gather-mec1}
E_\varepsilon (q) =  
I_\eps  \mathcal  R (\vartheta)
\Big( \frac{1}{\eps}  \underline{\mathsf{E}}_{{\it -1}}
+ \underline{\mathsf{E}}_{{\it 0}} (q )
+ \varepsilon \underline{\mathsf{E}}_{{\it 1}} (q)
+ \eps^2 \underline{E}_{r}  (\eps,q)  \Big),
\end{equation}
with 
\begin{align} \nonumber
\underline{\mathsf{E}}_{{\it -1}}&:= 
- \frac{1}{2} \int_{\partial\mathcal S_0}
\left|\frac{\partial\psi^{\Ext}_{{\it -1}} }{\partial n}   \right|^2 
\boldsymbol{K} (0, \cdot) \, {\rm d}s , \\
\label{E0def}
\underline{\mathsf{E}}_{{\it 0}} (q )&:= - \int_{\partial\mathcal S_0}
\frac{\partial\psi^{\Ext}_{{\it -1}} }{\partial n}   
\left(  \frac{\partial\psi^{\Ext}_{{\it 0}}}{\partial n} (q,\cdot)  -  R(\vartheta)^t \, u^\Omega (h) \cdot \tau \right)
\boldsymbol{K} (0, \cdot) \, {\rm d}s , \\
\label{gather-mec1demi}
\underline{\mathsf{E}}_{{\it 1}} (q) &:= \underline{\mathsf{E}}_{{\it 1}}^a (q)  + \underline{\mathsf{E}}_{{\it 1}}^b (q) , 
\end{align}
where
\begin{align}
\label{jura}
\underline{\mathsf{E}}_{{\it 1}}^a (q) &:= - \frac{1}{2} \int_{\partial\mathcal S_0}
\left|  \frac{\partial\psi^{\Ext}_{{\it 0}}}{\partial n}  (q,\cdot)  -  R(\vartheta)^t \,  u^\Omega (h) \cdot \tau
   \right|^{2}  \boldsymbol{K} (0, \cdot) \, {\rm d}s , \\
\label{jurb}
\underline{\mathsf{E}}_{{\it 1}}^b (q) &:= -
\int_{\partial\mathcal S_0}  \frac{\partial\psi^{\Ext}_{{\it -1}}}{\partial n}  
\left( \frac{\partial\psi^{\Ext}_{{\it 1}}}{\partial n}  (q,\cdot) -  \frac{\partial P_{{\it 1}}  }{\partial n} (q,\cdot) \right)
\boldsymbol{K} (0, \cdot) \, {\rm d}s,
 \end{align}
and $\underline{E}_{r}  $ in $   L^\infty (\mathfrak Q_{\delta,\eps_{0}} ; \R^3)$
depending only on  $\mathcal S_0$ and $\Omega$. \par
\ \par
We now compute each term thanks to Lamb's lemma.
More precisely we establish the following equalities:
\begin{align}
\label{E-1}
\underline{\mathsf{E}}_{{\it -1}} &= 0 , \\
\label{E0}
\mathcal  R (\vartheta)   \underline{\mathsf{E}}_{{\it 0}} (q) &= {\mathsf{E}}_{\it 0} (q) , \\
\label{E1a}
\mathcal  R (\vartheta)   \underline{\mathsf{E}}_{{\it 1}}^a (q) &= {\mathsf{E}}_{\it 1}^a (q) , \\
\label{E1b}
\mathcal  R (\vartheta)   \underline{\mathsf{E}}_{{\it 1}}^b (q) &= \mathsf{E}_{{\it 1}}^b (q)  + \mathsf{E}_{{\it 1}}^c (q) .
\end{align}
The proof of Proposition~\ref{E-exp} is then concluded after observing that $\mathcal  R (\vartheta)  \underline{E}_{r} $ is also in $L^\infty (\mathfrak Q_{\delta,\eps_{0}} ; \R^3) $  and depends only on  $\mathcal S_0$ and $\Omega$. \par
To simplify the notations we omit to write the dependence on $q$ except if this dependence reduces on $\vartheta$ or $h$.
Similarly we omit to write that the function $\boldsymbol{K} $, its coordinates  $K_j$ and the vector fields $\xi_j$, which appear thanks to Lamb's lemma, are evaluated at $q=0$. \par
\ \par
\noindent
\textbf{Proof of \eqref{E-1}. Computation of $\underline{\mathsf{E}}_{{\it -1}}$}.
We use Lemma \ref{blasius} with  $u=v=  \nabla^\perp \psi^{\Ext}_{{\it -1}} $
and observe that $\nabla^\perp \psi^{\Ext}_{{\it -1}} $ is tangent to $\mathcal S_0$ to  obtain \eqref{E-1}. \par
%
%
%
\ \par
\noindent
\textbf{Proof of \eqref{E0}. Computation of $\underline{\mathsf{E}}_{{\it 0}}$}.
We observe that 
\begin{align} \label{obs}
\nabla^\perp \psi^{\Ext}_{{\it -1}} &= -  \frac{\partial\psi^{\Ext}_{{\it -1}}}{\partial n} \tau
\text{ on } \partial\mathcal S_0 ,
\\ \label{obs-aut}
\tau  \cdot  \nabla^\perp \psi^{\Ext}_{{\it 0}} &= -  \frac{\partial\psi^{\Ext}_{{\it 0}}}{\partial n} 
\text{ on } \partial\mathcal S_0 .
\end{align}
Hence for $j=1,2,3$, 
\begin{equation*}
- \int_{\partial\mathcal S_0}
\frac{\partial\psi^{\Ext}_{{\it -1}} }{\partial n}\cdot    \frac{\partial\psi^{\Ext}_{{\it 0}}}{\partial n} K_j \, {\rm d}s 
= - \int_{\partial\mathcal S_0}
\nabla \psi^{\Ext}_{{\it -1}} \cdot   \nabla \psi^{\Ext}_{{\it 0}} K_j \, {\rm d}s ,
\end{equation*}
and we use Lemma~\ref{blasius} with 
$(u,v) = (\nabla^\perp \psi^{\Ext}_{{\it -1}} , \nabla^\perp  \psi^{\Ext}_{{\it 0}} )$
to  obtain
\begin{equation*}
- \int_{\partial\mathcal S_0}
\frac{\partial\psi^{\Ext}_{{\it -1}} }{\partial n}\cdot    \frac{\partial\psi^{\Ext}_{{\it 0}}}{\partial n} K_j \, {\rm d}s 
= - \int_{\partial\mathcal S_0} (\xi_j \cdot \nabla^\perp \psi^{\Ext}_{{\it -1}}  )
(n \cdot \nabla^\perp \psi^{\Ext}_{{\it 0}}  ) \, {\rm d}s .
\end{equation*}
Then we use again \eqref{obs} and 
observe that applying the tangential derivative to  \eqref{ohoh}, taking \eqref{polyh0} into account, yields
\begin{equation} \label{surbord}
n \cdot \nabla^\perp \psi^{\Ext}_{{\it 0}} = \frac{\partial\psi^{\Ext}_{{\it 0}}}{\partial \tau}
=  R(\vartheta)^t \,  u^\Omega (h)^\perp \cdot \tau  \text{ on } \partial\mathcal S_0 .
\end{equation}
Thus
\begin{equation} \label{deez}
- \int_{\partial\mathcal S_0}
\frac{\partial\psi^{\Ext}_{{\it -1}} }{\partial n}\cdot    \frac{\partial\psi^{\Ext}_{{\it 0}}}{\partial n} K_j \, {\rm d}s 
= \int_{\partial\mathcal S_0} \frac{\partial \psi^{\Ext}_{{\it -1}}  }{\partial n}
(\xi_j \cdot \tau ) ( R(\vartheta)^t \,  u^\Omega (h)^\perp \cdot \tau ) \, {\rm d}s .
\end{equation}
Then using \eqref{53d-1}, \eqref{def-zeta} and \eqref{def-zeta_vartheta}, we arrive at 
\begin{equation*} \label{gather-mec3}
\mathcal  R (\vartheta) \underline{\mathsf{E}}_{{\it 0}}
= \mathcal  R (\vartheta) 
\left( \int_{\partial\mathcal S_0}
  \frac{\partial\psi^{\Ext}_{{\it -1}} }{\partial n}   \big( \xi_j \cdot  R(\vartheta)^t \,   u^\Omega (h)^\perp  \big) \, {\rm d}s \right)_j
= -
\begin{pmatrix}
u^\Omega (h) \cdot  \zeta_\vartheta \\
u^\Omega (h)^{\perp} 
\end{pmatrix} = \mathsf{E}_{{\it 0}} .
\end{equation*}
%
%
%
\ \par
\noindent
\textbf{Proof of  \eqref{E1a}. Computation of $\underline{\mathsf{E}}_{{\it 1}}^a$}.
We start with expanding the square in  \eqref{jura}, to get 
\begin{equation} \label{multisum}
\underline{\mathsf{E}}_{{\it 1}}^a 
= \underline{\mathsf{E}}_{{\it 1}}^{a,1}
+ \int_{\partial\mathcal S_0}  \frac{\partial\psi^{\Ext}_{{\it 0}}}{\partial n}  \big(R(\vartheta)^t \,  u^\Omega (h) \cdot \tau \big)  \boldsymbol{K} \, {\rm d}s
- \frac{1}{2} \int_{\partial\mathcal S_0} \left| R(\vartheta)^t \, u^\Omega (h) \cdot \tau  \right|^{2}  \boldsymbol{K} \, {\rm d}s ,
  \end{equation}
with
\begin{align*}
\underline{\mathsf{E}}_{{\it 1}}^{a,1}
=- \frac{1}{2} \int_{\partial\mathcal S_0}
 \left|  \frac{\partial\psi^{\Ext}_{{\it 0}}}{\partial n}  \right|^{2}   \boldsymbol{K} \, {\rm d}s
=
- \frac{1}{2} \int_{\partial\mathcal S_0}
\left|  \nabla \psi^{\Ext}_{{\it 0}}  \right|^{2}   \boldsymbol{K}  \, {\rm d}s
+ \frac{1}{2} \int_{\partial\mathcal S_0}
\left|  \frac{\partial\psi^{\Ext}_{{\it 0}}}{\partial \tau}  \right|^{2}   \boldsymbol{K} \, {\rm d}s .
\end{align*}
We apply Lemma~\ref{blasius} with $u=v=  \nabla^\perp \psi^{\Ext}_{{\it 0}} $ to get
\begin{equation*}
\frac{1}{2} \int_{\partial\mathcal S_0}
\left|  \nabla \psi^{\Ext}_{{\it 0}}   \right|^{2}   \boldsymbol{K} \, {\rm d}s
= \Big(  \int_{\partial\mathcal S_0}
\big(  \nabla^\perp \psi^{\Ext}_{{\it 0}} \cdot n \big)  
\big(  \nabla^\perp \psi^{\Ext}_{{\it 0}} \cdot  \xi_j  \big) \, {\rm d}s  \Big)_{j=1,2,3}
\end{equation*}
Let us denote by $\underline{\mathsf{E}}_{{\it 1},j}^{a,1}$, $j=1,2,3$,  the coordinates of the vector $\underline{\mathsf{E}}_{{\it 1}}^{a,1}$.
We use \eqref{surbord} to get 
\begin{multline*}
\underline{\mathsf{E}}_{{\it 1},j}^{a,1}=
 -  \int_{\partial\mathcal S_0} 
\big( \xi_j  \cdot \nabla^\perp \psi^{\Ext}_{{\it 0}}  \big)  \big( R(\vartheta)^t \, u^\Omega (h)^{\perp} \cdot \tau   \big) \, {\rm d}s 
\\  + \frac12  \int_{\partial\mathcal S_0} \big(  R(\vartheta)^t \, u^\Omega (h)^{\perp} \cdot \tau
\big)^{2}   K_j    \, {\rm d}s .
 \end{multline*}
Then we decompose $\xi_j  \cdot \nabla^\perp \psi^{\Ext}_{{\it 0}} $ in 
normal and tangential parts and use again \eqref{surbord} to obtain:
\begin{equation*}
\label{E11a}
\underline{\mathsf{E}}_{{\it 1},j}^{a,1}
=
 \int_{\partial\mathcal S_0} \frac{\partial\psi^{\Ext}_{{\it 0}}}{\partial n}
  \big( R(\vartheta)^t \, u^\Omega (h)^{\perp} \cdot \tau \big) (\xi_{j}\cdot \tau ) \, {\rm d}s 
-
\frac12  \int_{\partial\mathcal S_0} \big(  R(\vartheta)^t \, u^\Omega (h)^{\perp} \cdot \tau
\big)^{2}   K_j    \, {\rm d}s .
 \end{equation*}
Now we plug this expression of $\underline{\mathsf{E}}_{{\it 1}}^{a,1}$ into $\eqref{multisum}$ to get
\begin{align*}
\underline{\mathsf{E}}_{{\it 1},j}^a
&= -
\frac12  \int_{\partial\mathcal S_0} \big(  R(\vartheta)^t \, u^\Omega (h)^{\perp}
\big)^{2}   K_j    \, {\rm d}s 
+ \int_{\partial\mathcal S_0}  \frac{\partial\psi^{\Ext}_{{\it 0}}}{\partial n}  \big(R(\vartheta)^t \,  u^\Omega (h) \cdot \tau \big)  K_j \, {\rm d}s \\
& \quad
+ \int_{\partial\mathcal S_0} \frac{\partial\psi^{\Ext}_{{\it 0}}}{\partial n}
  \big( R(\vartheta)^t \, u^\Omega (h)^{\perp} \cdot \tau \big) (\xi_{j}\cdot \tau ) \, {\rm d}s  .
 \end{align*}
We observe that the first term in the right hand side vanishes and we combine the two other ones to get 
\begin{equation*}
\underline{\mathsf{E}}_{{\it 1},j}^a = 
\int_{\partial\mathcal S_0}  \frac{\partial\psi^{\Ext}_{{\it 0}}}{\partial n}
  \big( R(\vartheta)^t \, u^\Omega (h)^{\perp} \big) \cdot \xi_{j} \, {\rm d}s  .
\end{equation*}
Using \eqref{circ0nul} we infer
\begin{equation} \label{gather-mec4}
\underline{\mathsf{E}}_{{\it 1},j}^a = 0 \text{ for } j=2,3.
\end{equation}
Now for $j=1$, we start with observing that 
\begin{equation} \label{tot1}
\underline{\mathsf{E}}_{{\it 1},1}^a 
= \big( R(\vartheta)^t \, u^\Omega (h)^{\perp} \big) \cdot \int_{\partial\mathcal S_0} 
\frac{\partial\psi^{\Ext}_{{\it 0}}}{\partial n} x^{\perp} \, {\rm d}s  .
\end{equation}
To compute the right hand side we introduce the matrix
\begin{equation} \label{overlineM}
\overline{M} := \int_{\partial\mathcal S_0} 
\begin{pmatrix}
\overline{\varphi}^{\Ext}_{2} \\
\overline{\varphi}^{\Ext}_{3} 
\end{pmatrix}
\otimes
\begin{pmatrix}
\frac{\partial    \overline{\varphi}^{\Ext}_{3}  }{\partial n} \\
-\frac{\partial    \overline{\varphi}^{\Ext}_{2}  }{\partial n}
\end{pmatrix}
 \, {\rm d}s ,
\end{equation}
where the functions $\overline{\varphi}^{\Ext}_{j}$, for $j=1,2,3$, defined in \eqref{phi-0}, are harmonic conjugates to the functions $\varphi^{\Ext}_{j}$. 
\begin{lem} \label{finalLundi}
For any $q:= (\vartheta ,h) $ in $\R \times \Omega$, 
\begin{equation} \label{Mbarre}
\int_{\partial \mathcal S_0} \frac{\partial\psi^{\Ext}_{{\it 0}}}{\partial n} (q,x) x^{\perp} \, {\rm d}s(x)
= \overline{M}  R(\vartheta)^t \, u^\Omega (h)^{\perp} .
\end{equation}
\end{lem}
\begin{proof}
First it follows from \eqref{ahah-phi} that, on $\partial \mathcal S_{0}$, 
\begin{equation} \label{Mbarre0}
x^{\perp} = 
\begin{pmatrix}
\overline{\varphi}^{\Ext}_{2} - c^{\Ext}_{2}  \\
\overline{\varphi}^{\Ext}_{3}  - c^{\Ext}_{3}
\end{pmatrix}
.
\end{equation}
We now express the stream function $\psi^{\Ext}_{{\it 0}}  (q, \cdot) $ thanks to the functions $ \overline{\varphi}^{\Ext}_{3}$ and $\overline{\varphi}^{\Ext}_{2} $.
Let  $q:= (\vartheta ,h) $ in $\R \times \Omega$.
On $\partial  \mathcal S_0$, it follows from \eqref{polyh0}, \eqref{ohoh}, \eqref{phi-0} and Proposition~\ref{coro-cap} 
that there exists $c$ in $\R$ such that, on $\overline{\R^{2} \setminus \mathcal S_0 }$, 
\begin{equation*}
\psi^{\Ext}_{{\it 0}}  (q, \cdot) 
=
R(\vartheta)^t \, u^\Omega (h)^{\perp} \cdot 
\begin{pmatrix}
 \overline{\varphi}^{\Ext}_{3}   \\
-  \overline{\varphi}^{\Ext}_{2} 
\end{pmatrix}
+ c \; \psi^{\Ext}_{{\it -1}}   .
\end{equation*}
Then using \eqref{53d-1}, \eqref{circ0nul} and   \eqref{circ-saoule} we obtain $c=0$.
Thus for any $q:= (\vartheta ,h) $ in $\R \times \Omega$,  on $\overline{\R^{2} \setminus \mathcal S_0 }$, 
\begin{equation} \label{MbarreBis}
\psi^{\Ext}_{{\it 0}}  (q, \cdot) 
= R(\vartheta)^t \, u^\Omega (h)^{\perp} \cdot 
\begin{pmatrix}
 \overline{\varphi}^{\Ext}_{3}   \\
-  \overline{\varphi}^{\Ext}_{2} 
\end{pmatrix} .
\end{equation}
Substituting \eqref{Mbarre0} and \eqref{MbarreBis} into the left hand side of \eqref{Mbarre} 
and using again \eqref{circ0nul} establishes Lemma~\ref{finalLundi}.
\qed
\end{proof} 
Then, combining  \eqref{tot1} and \eqref{Mbarre}, we obtain:
\begin{equation*}
\underline{\mathsf{E}}_{{\it 1},1}^a=   \big(  R(\vartheta)^t \, u^\Omega (h)^{\perp} \big) \cdot \overline{M}  R(\vartheta)^t \, u^\Omega (h)^{\perp} .
\end{equation*}
Let us now connect the matrices $M^\dagger$ defined in \eqref{def-Mdagger} and $\overline{M}$ defined in \eqref{overlineM}.
Using integrations by parts and \eqref{eqeq}, we get, for any $i,j=2,3$, 
\begin{align*}
\int_{\partial\mathcal S_0}  
\frac{\partial{\overline{\varphi}^{\Ext}_{i}}}{\partial n} \overline{\varphi}^{\Ext}_{j} \, {\rm d}s 
&= \int_{\partial \mathcal S_0} 
\frac{\partial{{\varphi}^{\Ext}_{i}}}{\partial n} {\varphi}^{\Ext}_{j} {\rm d}x .
\end{align*}
Combining this with \eqref{def-Mdagger2} yields 
\begin{equation} \label{dagger-over}
M^\dagger = \frac12 ( \overline{M} +  \overline{M}^t ) .
\end{equation}
Recalling the definition of $M_\vartheta^{\dagger}$ in \eqref{Mflattheta}, we deduce that 
\begin{equation} \label{gather-mec5}
\underline{\mathsf{E}}_{{\it 1},1}^a =  u^\Omega (h)^{\perp}  M_\vartheta^{\dagger} \, u^\Omega (h)^{\perp} .
\end{equation}
Gathering \eqref{def-E1a}, \eqref{gather-mec4} and \eqref{gather-mec5} we obtain \eqref{E1a}. \par
%
%
\ \par
\noindent
\textbf{Proof of  \eqref{E1b}. Computation of $\underline{\mathsf{E}}_{{\it 1}}^b$}.
We start by splitting $\underline{\mathsf{E}}_{{\it 1}}^b$ into two parts as follows:
\begin{equation*}
\underline{\mathsf{E}}_{{\it 1}}^b  = 
- \int_{\partial\mathcal S_0}  \frac{\partial\psi^{\Ext}_{{\it -1}}}{\partial n} 
     \frac{\partial\psi^{\Ext}_{{\it 1}}}{\partial n}
     \boldsymbol{K} \, {\rm d}s
+ \int_{\partial\mathcal S_0}  \frac{\partial\psi^{\Ext}_{{\it -1}}}{\partial n}   \frac{\partial P_{{\it 1}}}{\partial n} 
   \boldsymbol{K} \, {\rm d}s.
\end{equation*}
Using \eqref{obs} and \eqref{obs-aut}, we see that the first term of the right hand side above is equal to 
\begin{equation*}
- \int_{\partial\mathcal S_0} \nabla^\perp \psi^{\Ext}_{{\it -1}} \cdot \nabla^\perp \psi^{\Ext}_{{\it 1}} \boldsymbol{K} \, {\rm d}s .
\end{equation*}
We denote $\underline{\mathsf{E}}_{{\it 1},j}^b$, $j=1,2,3$, the coordinates of $\underline{\mathsf{E}}_{{\it 1}}^{b}$.
We apply Lemma~\ref{blasius} with $u= \nabla^\perp \psi^{\Ext}_{{\it -1}}$ 
and $v= \nabla^\perp \psi^{\Ext}_{{\it 1}}$ for any $j=1,2,3$, to get
\begin{equation*}
\underline{\mathsf{E}}_{{\it 1},j}^b = 
-  \int_{\partial\mathcal S_0} \frac{\partial\psi^{\Ext}_{{\it 1}}}{\partial \tau}   \xi_j \cdot  \nabla^\perp \psi^{\Ext}_{{\it -1}} \, {\rm d}s 
+ \int_{\partial\mathcal S_0}  \frac{\partial\psi^{\Ext}_{{\it -1}}}{\partial n}   \frac{\partial P_{{\it 1}}}{\partial n} 
  K_j \, {\rm d}s .
\end{equation*}
We now use that, on $\partial\mathcal S_0$,
\begin{equation*}
\xi_j \cdot  \nabla^\perp \psi^{\Ext}_{{\it -1}}
= - \frac{\partial\psi^{\Ext}_{{\it -1}}}{\partial n} \xi_j \cdot \tau
\ \text{ and } \
\frac{\partial\psi^{\Ext}_{{\it 1}}}{\partial \tau} =  \frac{\partial P_{{\it 1}}}{\partial \tau},
\end{equation*}
the last identity being a consequence of \eqref{ahah-1}, to deduce that 
\begin{equation} \label{expp}
\underline{\mathsf{E}}_{{\it 1},j}^b = 
\int_{\partial\mathcal S_0}  \frac{\partial\psi^{\Ext}_{{\it -1}}}{\partial n}  \xi_j \cdot \nabla P_{{\it 1}} \, {\rm d}s .
\end{equation}
Thanks to the expression of  $P_{{\it 1}}$ in \eqref{polyh1}, 
\begin{equation} \label{alacon}
\underline{\mathsf{E}}_{{\it 1},j}^b  = 
- \langle   D^2_x \psi^{\Int}_{{\it 0}} (h,h) ,  R(\vartheta) A^{1}_j  R(\vartheta)^{t} \rangle_{\R^{2 \times 2}}
- D_x  \psi^{\Int}_{{\it 1}} (q,h)  \cdot  R(\vartheta) A^{2}_j    ,
\end{equation}
where
\begin{equation*}
A^{1}_j  :=  \int_{\partial\mathcal S_0}  \frac{\partial\psi^{\Ext}_{{\it -1}}}{\partial n}x \otimes  \xi_j  \, {\rm d}s 
\ \text{ and } \ 
A^{2}_j  :=  \int_{\partial\mathcal S_0}  \frac{\partial\psi^{\Ext}_{{\it -1}}}{\partial n} \xi_j  \, {\rm d}s .
\end{equation*}
\ \par
\noindent
{\it Case $j=1$.} Consider the first term in the right hand side of \eqref{alacon}.
Using \eqref{def-sigma} we see that $A_1^1 = \sigma -  \zeta \otimes \zeta^{\perp}$ and 
we observe that, since $D^2_x \psi^{\Int}_{{\it 0}} (h,h) $ is symmetric, 
\begin{equation*}
\langle   D^2_x \psi^{\Int}_{{\it 0}} (h,h) ,  R(\vartheta) \sigma R(\vartheta)^{t} \rangle_{\R^{2 \times 2}} 
= \langle   D^2_x \psi^{\Int}_{{\it 0}} (h,h) ,  R(\vartheta)  \sigma^s R(\vartheta)^t \rangle,
\end{equation*}
where $\sigma^s$ is the symmetric part of $\sigma$ defined in \eqref{anti-sigma}.
Then using that $ \sigma^s$ is a traceless symmetric $2 \times 2$ matrix, 
\begin{equation*}
\langle   D^2_x \psi^{\Int}_{{\it 0}} (h,h) ,  R(\vartheta) \sigma R(\vartheta)^{t} \rangle_{\R^{2 \times 2}} 
= \langle   D^2_x \psi^{\Int}_{{\it 0}} (h,h) ,  R(-2\vartheta)  \sigma^s \rangle = - \mathsf{E}_{{\it 1},1}^b (q) ,
\end{equation*}
where $\mathsf{E}_{{\it 1},1}^b (q) $ denotes the first coordinate of the vector field $\mathsf{E}_{{\it 1}}^b (q) $ defined in  \eqref{DefH1b}.
Therefore we obtain for $j=1$,  
\begin{equation*}
- \langle   D^2_x \psi^{\Int}_{{\it 0}} (h,h) ,  R(\vartheta) A^{1}_j  R(\vartheta)^{t} \rangle_{\R^{2 \times 2}} =
\mathsf{E}_{{\it 1},1}^b (q)
+ \langle   D^2_x \psi^{\Int}_{{\it 0}} (h,h) ,  \zeta_{\vartheta} \otimes \zeta_{\vartheta}^{\perp} \rangle_{\R^{2 \times 2}} .
\end{equation*}
Concerning the second term in the right hand side of \eqref{alacon}, we use $A_1^2 = - \zeta^{\perp}$ (see \eqref{def-zeta}) to get that for $j=1$,
\begin{equation*}
- D_x  \psi^{\Int}_{{\it 1}} (q,h)  \cdot  R(\vartheta) A^{2}_j =
D_x  \psi^{\Int}_{{\it 1}} (q,h)  \cdot  \zeta^{\perp}_{\vartheta} .
\end{equation*}
Thus 
\begin{equation*}
\underline{\mathsf{E}}_{{\it 1},1}^{b} =  \mathsf{E}_{{\it 1},1}^b (q)
+ \langle   D^2_x \psi^{\Int}_{{\it 0}} (h,h) ,  \zeta_{\vartheta} \otimes \zeta_{\vartheta}^{\perp} \rangle_{\R^{2 \times 2}}
+ D_x  \psi^{\Int}_{{\it 1}} (q,h)  \cdot  \zeta^{\perp}_{\vartheta} .
\end{equation*} 
The last two terms in the right hand side can be expressed in terms of the corrector velocity $u_{c} (q)$ defined in \eqref{def-uc-perp}, as follows from the following statement.
\begin{lem} \label{jedepe}
For any $q=(\vartheta , h) $ in $\Omega \times  \R$, 
\begin{equation} \label{def-uc}
 u_c (q) = 
\Big( 
 D^2_x \psi^{\Int}_{{\it 0}} (h,h) \cdot {\zeta}_\vartheta 
+ D_x  \psi^{\Int}_{{\it 1}} (q,h) 
\Big)^\perp .
\end{equation}
\end{lem}
\begin{proof}
From the definition of $\psi_c$ in \eqref{PsiC} and the one of $u_{c} (q)$ in \eqref{def-uc-perp} we deduce that for any $q=(\vartheta, h)$ in $\Omega \times \R$,
\begin{equation*}
u_c (q) = \Big(  D^2_x \psi^{\Int}_{{\it 0}} (h,h) \cdot {\zeta}_\vartheta
+ D^2_{xh} \psi^{\Int}_{{\it 0}} (h,h) \cdot {\zeta}_\vartheta \Big)^\perp  ,
\end{equation*}
which yields \eqref{def-uc} thanks to  \eqref{29dec-eq}.
\qed
\end{proof} 
Hence we finally obtain
\begin{equation} \label{nrv1} 
\underline{\mathsf{E}}_{{\it 1},1}^b  =  \mathsf{E}_{{\it 1},1}^b  (q) - \zeta_{\vartheta} \cdot  u_c (q) .
\end{equation}
%
%
%
{\it Case $j=2$ or $3$.}
In this case, $A^{1}_j  =  - \zeta \otimes \xi_j $ and    $A^{2}_j  =  -  \xi_j  $,
and therefore
\begin{align*}
\underline{\mathsf{E}}_{{\it 1},j}^b
&=  \langle   D^2_x \psi^{\Int}_{{\it 0}} (h,h) ,  R(\vartheta)  ( \zeta \otimes   \xi_j  ) R(\vartheta)^{t} 
	\rangle_{\R^{2 \times 2}}
+   D  \psi^{\Int}_{{\it 1}} (q,h) \cdot R(\vartheta)  \xi_j  \\
&=  \Big(   D^2_x \psi^{\Int}_{{\it 0}} (h,h)  \cdot   \zeta_{\vartheta}  +   D  \psi^{\Int}_{{\it 1}} (q,h)  \Big) \cdot  R(\vartheta)  \xi_j   \\
&= - R(\vartheta)^{t} \, u_{c} (q)^{\perp} \cdot   \xi_j  .
\end{align*}
Thus 
\begin{align} \label{nrv2} 
R(\vartheta) ( \underline{\mathsf{E}}_{{\it 1},1}^b  )_{j=2,3} & =   - u_{c} (q)^{\perp} .
\end{align}
Gathering \eqref{DefH1b}, \eqref{DefH1c}, \eqref{nrv1} and \eqref{nrv2} we obtain \eqref{E1b}. This ends the proof of Proposition~\ref{E-exp}
\qed
\end{proof} 
%
%
%
%
%
%
\subsection{Asymptotic expansion of $B_\varepsilon$}
We now tackle the expansion of $B_\varepsilon$ which is given, for $(\eps,q)$ in $\mathfrak Q $,  by
\begin{equation*} \label{B-def-eps}
B_{\eps}(q) :=  \int_{\partial\mathcal S_{\eps}(q)}
\frac{\partial\psi_{\eps}}{\partial n} (q,\cdot) \left( \boldsymbol K_{\eps}  (q,\cdot) \times 
\frac{\partial\boldsymbol\varphi_{\eps}}{\partial \tau} (q,\cdot)\right) \, {\rm d}s .
\end{equation*}
This formula is the counterpart of  \eqref{B-def} for a body of size $\eps$.
Let us recall that the Kirchhoff potentials $\boldsymbol\varphi_{\eps}$ are defined in \eqref{Kir-eps}-\eqref{Kir-vect-pasc}. \par
The expansion that we obtain for $B_{\eps}(q)$ is given in the following statement where $B^{\Ext}_{\vartheta}$ is defined in \eqref{def-Bext}, $M_{\vartheta}^{\dagger}$ in \eqref{Mflattheta}, $I_{\varepsilon}$ in \eqref{Eq:Ieps} and where:
\begin{equation} \label{DefB1}
\mathsf{B}_{{\it 1}} (q) :=
\begin{pmatrix}
0 \\ 
- 2 M_{\vartheta}^{\dagger}  u^\Omega (h)^{\perp}
\end{pmatrix}  \text{ for }  q=(\vartheta , h).
\end{equation}
\begin{prop}  \label{B-exp}
%
There exists $B_{r} : \mathfrak Q \rightarrow \R^3$  depending only on $\mathcal S_0$ and $\Omega$,  
 such that for any $\delta >0$, there exists $\varepsilon_{0}$ in $(0,1)$ for which $B_{r} \in L^\infty (\mathfrak Q_{\delta,\eps_{0}}; \R^3)$
and such that for any $(\varepsilon ,q) $ in $ \mathfrak Q_{\delta,\eps_{0}}$, where $q=(\vartheta , h)$,
\begin{equation} \label{B-dev}
B_\varepsilon  (q) = 
\eps I_\eps^{-1}  
\Big( B^{\Ext}_{\vartheta}
+ \varepsilon \mathsf{B}_{{\it 1}} (q)
+ \eps^2 B_{r} (\eps,q)  \Big).
\end{equation}
\end{prop}
\begin{proof}
We proceed as in the proof of Proposition \ref{E-exp}. 
Let us state the following formula which is useful several times in the sequel: 
\begin{equation} \label{vprod25}
\text{for any } (p_a ,p_b) \in  \R^3 \times \R^{3} , \quad 
\eps \, p_a \times {p}_b  = I_\eps  \Big(  (I_\eps  {p}_a) \times (I_\eps   p_{b}) \Big) .
\end{equation}
By a change of variable, using \eqref{vprod25} and \eqref{change-K}, we arrive at 
\begin{multline*} \label{B-def-eps-cv}
B_{\eps}(q) = \\ \eps I_\eps^{-1}   \mathcal  R (\vartheta)
\int_{\partial  \mathcal S_0}\frac{\partial\psi_{\eps}}{\partial n}   (q,\eps R(\vartheta) \cdot +h) 
\left( \boldsymbol{K} (0, \cdot) \times 
\mathcal  R (\vartheta)^t \, \frac{\partial\boldsymbol\varphi_{\eps}}{\partial \tau} (q,\eps R(\vartheta) \cdot +h)\right) \, {\rm d}s .
\end{multline*}
Now let $\delta > 0$. 
We use Proposition~\ref{dev-psi} and Proposition~\ref{dev-phi-pasc} to obtain that there exists $\eps_{0} $ in $(0,1)$ such that for any $(\varepsilon ,q) $ in $ \mathfrak Q_{\delta,\eps_{0}} $, 
\begin{equation*}
B_\varepsilon   (q) = 
\eps I_\eps^{-1}   \mathcal  R (\vartheta)
\Big( \underline{\mathsf{B}}_{{\it 0}} 
+ \varepsilon \underline{\mathsf{B}}_{{\it 1}} (q)
+ \eps^2 B_{r} (\eps,q)  \Big),
\end{equation*}
with
\begin{align*}
\underline{\mathsf{B}}_{{\it 0}}   &:=
  \int_{\partial  \mathcal S_0} 
 \frac{\partial\psi^{\Ext}_{{\it -1}}}{\partial n}   
 \left( \boldsymbol K (0, \cdot)  \times 
\frac{\partial\boldsymbol\varphi^{{\Ext}}}{\partial \tau} \right) 
 \, {\rm d}s 
 ,
\\
\underline{\mathsf{B}}_{{\it 1}} (q) &:=
 \int_{\partial  \mathcal S_0}
\Big(  \frac{\partial\psi^{\Ext}_{{\it 0}}}{\partial n}  
- R(\vartheta)^t \,  u^\Omega (h) \cdot \tau
 \Big) 
\left( \boldsymbol K (0, \cdot)  \times 
\frac{\partial\boldsymbol\varphi^{{\Ext}}}{\partial \tau}  \right)
\, {\rm d}s 
 ,
\end{align*}
and $B_{r} $ in $ L^\infty (\mathfrak Q_{\delta,\eps_{0}} ; \R^{3} )$   depending only on  $\mathcal S_0$ and $\Omega$. \par
%
%
We now compute each term thanks to Lamb's lemma. More precisely we will prove the following equalities:
\begin{align}
\label{B0}
\mathcal  R (\vartheta) \underline{\mathsf{B}}_{{\it 0}}  &=  B^{\Ext}_{\vartheta}, \\
\label{B1}
\mathcal  R (\vartheta) \underline{\mathsf{B}}_{{\it 1}}  &= {\mathsf{B}}_{{\it 1}} .
\end{align}
As in the proof of Proposition \ref{B-exp} we will omit to write the dependence on $q$, except if this dependence reduces to a dependence on $\vartheta$ or $h$, and it will be understood  that the   functions $\boldsymbol{K} $, its coordinates  $K_j$ and the vector fields $\xi_j$
are evaluated at $q=0$. \par
\ \par
\noindent
\textbf{Proof of  \eqref{B0}. Computation of $\underline{\mathsf{B}}^{0}$}.
For $j=1,2,3$, we denote by $\underline{\mathsf{B}}_{{\it 0},j}$, the coordinates of  $\underline{\mathsf{B}}_{{\it 0}}  $.
By  \eqref{obs}, 
\begin{align*}
\underline{\mathsf{B}}_{{\it 0},1}    &= 
  \int_{\partial  \mathcal S_0}
 \frac{\partial\psi^{\Ext}_{{\it -1}}}{\partial n}  
\frac{\partial\varphi^{\Ext}_{3}}{\partial \tau}  K_{2} 
\, {\rm d}s 
 -
 \int_{\partial  \mathcal S_0}
 \frac{\partial\psi^{\Ext}_{{\it -1}}}{\partial n}  
\frac{\partial\varphi^{\Ext}_{2}}{\partial \tau}  K_{3} 
\, {\rm d}s 
 \\ &= -
   \int_{\partial  \mathcal S_0}
 \nabla^{\perp}\psi^{\Ext}_{{\it -1}}
\cdot
\nabla\varphi^{\Ext}_{3} K_{2} 
\, {\rm d}s 
 +
 \int_{\partial  \mathcal S_0}
 \nabla^{\perp}\psi^{\Ext}_{{\it -1}}
\cdot
\nabla\varphi^{\Ext}_{2} K_{3} 
\, {\rm d}s .
\end{align*}
Then we use Lemma~\ref{blasius} twice, with 
$(u,v)= ( \nabla^\perp \psi^{\Ext}_{{\it -1}} , \nabla \varphi^{\Ext}_{2} )$
and with $(u,v)= ( \nabla^\perp \psi^{\Ext}_{{\it -1}} , \nabla \varphi^{\Ext}_{3} )$, 
 \eqref{53d-1} and \eqref{obs} to obtain 
\begin{equation*}
\underline{\mathsf{B}}_{{\it 0},1}
=  \int_{\partial  \mathcal S_0}  \frac{\partial\psi^{\Ext}_{{\it -1}}}{\partial n} 
\Big(    ( \tau \cdot \xi_{2} )  ( n \cdot \xi_{3} ) -    ( \tau \cdot \xi_{3} )  ( n \cdot \xi_{2} ) \Big) \, {\rm d}s 
= - 1 .
\end{equation*}
Proceeding in the same way and using \eqref{def-zeta}, we arrive at
\begin{align*}
\underline{\mathsf{B}}_{{\it 0},2}  
= \int_{\partial  \mathcal S_0}
 \frac{\partial\psi^{\Ext}_{{\it -1}}}{\partial n} \Big(    ( \tau \cdot \xi_{3} )  ( n \cdot \xi_{1} ) -    ( \tau \cdot \xi_{1} )  ( n \cdot \xi_{3} ) \Big)\, {\rm d}s 
=  \zeta^{\perp} \cdot  \xi_{2} ,
\end{align*}
and
$\underline{\mathsf{B}}_{{\it 0},3} =  \zeta^{\perp} \cdot  \xi_{3} $.
This gives \eqref{B0}.\par
\ \par
\noindent
\textbf{Proof of  \eqref{B1}. Computation of $\underline{\mathsf{B}}_{{\it 1}}$}.
Let us start with the first coordinate $\underline{\mathsf{B}}_{{\it 1},1}$ of $\underline{\mathsf{B}}_{{\it 1}}$, that is:
\begin{align*}
\underline{\mathsf{B}}_{{\it 1},1} &=
- \int_{\partial  \mathcal S_0}
\Big(  \frac{\partial\psi^{\Ext}_{{\it 0}}}{\partial n}   
- R(\vartheta)^t \,  u^\Omega (h) \cdot \tau
 \Big) 
\left( \frac{\partial\varphi^{\Ext}_{2}}{\partial \tau}  K_3  -
\frac{\partial\varphi^{\Ext}_{3}}{\partial \tau} K_2   \right)
\, {\rm d}s 
\\ &= \underline{\mathsf{B}}_{{\it 1},1}^a +  \underline{\mathsf{B}}_{{\it 1},1}^b +  \underline{\mathsf{B}}_{{\it 1},1}^c  ,
\end{align*}
with 
\begin{align*}
\underline{\mathsf{B}}_{{\it 1},1}^a   &:=
- \int_{\partial  \mathcal S_0}
 \frac{\partial\psi^{\Ext}_{{\it 0}}}{\partial n}   
\frac{\partial\varphi^{\Ext}_{2}}{\partial \tau}  K_3
\, {\rm d}s ,
\\ \underline{\mathsf{B}}_{{\it 1},1}^b &:= 
\int_{\partial  \mathcal S_0}
\frac{\partial\psi^{\Ext}_{{\it 0}}}{\partial n}   
\frac{\partial\varphi^{\Ext}_{3}}{\partial \tau} K_2 
\, {\rm d}s ,
\\  \underline{\mathsf{B}}_{{\it 1},1}^c  &:=
 \int_{\partial  \mathcal S_0}
 R(\vartheta)^t \,  u^\Omega (h) \cdot 
\left( \frac{\partial\varphi^{\Ext}_{2}}{\partial \tau}  K_3  -
\frac{\partial\varphi^{\Ext}_{3}}{\partial \tau} K_2   \right) \tau
\, {\rm d}s .
\end{align*}
We start with
\begin{align*}
\underline{\mathsf{B}}_{{\it 1},1}^a   &=
 \int_{\partial  \mathcal S_0}
\nabla^{\perp} \psi^{\Ext}_{{\it 0}}
\cdot
\nabla \varphi^{\Ext}_{2} K_3
\, {\rm d}s 
-
\int_{\partial  \mathcal S_0}
 \frac{\partial\psi^{\Ext}_{{\it 0}}}{\partial  \tau}   
 K_{2} K_3
\, {\rm d}s .
\end{align*}
We use Lemma~\ref{blasius} with $u= \nabla^\perp \psi^{\Ext}_{{\it 0}} $ and $v = \nabla\varphi^{\Ext}_{2}$
to  obtain 
\begin{align*}
\int_{\partial  \mathcal S_0}
\nabla^{\perp} \psi^{\Ext}_{{\it 0}}
\cdot  \nabla \varphi^{\Ext}_{2} K_3 \, {\rm d}s 
&=
\int_{\partial  \mathcal S_0}
\big( \nabla^{\perp} \psi^{\Ext}_{{\it 0}} \cdot \xi_{3} \big)
\big( \nabla \varphi^{\Ext}_{2} \cdot  n \big) \, {\rm d}s  \\ 
&\quad + 
\int_{\partial  \mathcal S_0}
\big( \nabla^{\perp} \psi^{\Ext}_{{\it 0}} \cdot n \big)
\big( \nabla \varphi^{\Ext}_{2} \cdot  \xi_{3} \big) \, {\rm d}s v\\ 
&=
 \int_{\partial  \mathcal S_0}
\left(  \frac{\partial\psi^{\Ext}_{{\it 0}}}{\partial  \tau}   K_{3}
- \frac{\partial\psi^{\Ext}_{{\it 0}}}{\partial  n}   \xi_{3} \cdot \tau 
 \right) K_{2} \, {\rm d}s \\
+ 
\int_{\partial  \mathcal S_0}
 \frac{\partial\psi^{\Ext}_{{\it 0}}}{\partial  \tau}   
 K_{2} K_3 \, {\rm d}s 
& \quad + 
\int_{\partial  \mathcal S_0}  \frac{\partial\psi^{\Ext}_{{\it 0}}}{\partial  \tau}   
\frac{\partial\varphi^{\Ext}_{2}}{\partial  \tau}    (  \xi_{3} \cdot \tau )
\, {\rm d}s .
\end{align*}
Therefore
\begin{align*}
\underline{\mathsf{B}}_{{\it 1},1}^a   &=
 \int_{\partial  \mathcal S_0}
\left(  \frac{\partial\psi^{\Ext}_{{\it 0}}}{\partial  \tau}   K_{3}
-
 \frac{\partial\psi^{\Ext}_{{\it 0}}}{\partial  n}   \xi_{3} \cdot \tau 
 \right) K_{2}
\, {\rm d}s 
+
\int_{\partial  \mathcal S_0}  \frac{\partial\psi^{\Ext}_{{\it 0}}}{\partial  \tau}   
\frac{\partial\varphi^{\Ext}_{2}}{\partial  \tau}    (  \xi_{3} \cdot \tau )
\, {\rm d}s .
\end{align*}
By switching the indexes  $2$ and $3$ we obtain 
\begin{align*}
\underline{\mathsf{B}}_{{\it 1},1}^b   &=
- \int_{\partial  \mathcal S_0}
\left(  \frac{\partial\psi^{\Ext}_{{\it 0}}}{\partial  \tau}   K_{2}
-
 \frac{\partial\psi^{\Ext}_{{\it 0}}}{\partial  n}   \xi_{2} \cdot \tau 
 \right) K_{3}
\, {\rm d}s 
-
\int_{\partial  \mathcal S_0}  \frac{\partial\psi^{\Ext}_{{\it 0}}}{\partial  \tau}   
\frac{\partial\varphi^{\Ext}_{3}}{\partial  \tau}    (  \xi_{2} \cdot \tau )
\, {\rm d}s .
\end{align*}
We sum these two terms, observe that $ -  (\xi_{3} \cdot \tau ) K_{2} +  ( \xi_{2} \cdot \tau ) K_{3} = K_{2}^{2} + K_{3}^{2}=1$ and use \eqref{circ0nul} to get
\begin{equation*}
\int_{\partial  \mathcal S_0} (K_{2}^{2} + K_{3}^{2} )  \frac{\partial\psi^{\Ext}_{{\it 0}}}{\partial  n}  \, {\rm d}s   
= \int_{\partial  \mathcal S_0}    \frac{\partial\psi^{\Ext}_{{\it 0}}}{\partial  n}  \, {\rm d}s    = 0 .
\end{equation*}
We deduce
\begin{align*}
\underline{\mathsf{B}}_{{\it 1},1}^a + \underline{\mathsf{B}}_{{\it 1},1}^b   =
 \int_{\partial  \mathcal S_0} \frac{\partial\psi^{\Ext}_{{\it 0}}}{\partial  \tau} 
\left( 
\frac{\partial\varphi^{\Ext}_{2}}{\partial  \tau}   (  \xi_{3} \cdot \tau )
-
\frac{\partial\varphi^{\Ext}_{3}}{\partial  \tau}   (  \xi_{2} \cdot \tau ) 
\right)   \, {\rm d}s .
\end{align*}
Now, using \eqref{surbord}, we obtain 
\begin{align}
\nonumber
\underline{\mathsf{B}}_{{\it 1},1}^a + \underline{\mathsf{B}}_{{\it 1},1}^b   &=
- \int_{\partial  \mathcal S_0} R(\vartheta)^t \, u^\Omega (h) \cdot 
 \Big( \frac{\partial\varphi^{\Ext}_{2}}{\partial  \tau} (  \xi_{3} \cdot \tau )
- \frac{\partial\varphi^{\Ext}_{3}}{\partial  \tau}   (  \xi_{2} \cdot \tau ) 
\Big) n  \, {\rm d}s \\
\label{marre1} &= R(\vartheta)^t \, u^\Omega (h) \cdot
\int_{\partial  \mathcal S_0} 
\Big( 
 \frac{\partial\varphi^{\Ext}_{2}}{\partial  \tau} (  \xi_{2} \cdot n)
+ \frac{\partial\varphi^{\Ext}_{3}}{\partial  \tau}   (  \xi_{3} \cdot n) 
\Big) n  \, {\rm d}s .
\end{align}
On the other hand we observe that 
\begin{equation} \label{marre2}
\underline{\mathsf{B}}_{{\it 1},1}^c  = 
R(\vartheta)^t \, u^\Omega (h) \cdot 
\int_{\partial  \mathcal S_0} 
\Big( 
 \frac{\partial\varphi^{\Ext}_{2}}{\partial  \tau} (  \xi_{2} \cdot \tau)
+
\frac{\partial\varphi^{\Ext}_{3}}{\partial  \tau}   (  \xi_{3} \cdot \tau) 
\Big) \tau  \, {\rm d}s .
\end{equation}
By \eqref{marre1} and \eqref{marre2},  
\begin{equation*}
\underline{\mathsf{B}}_{{\it 1},1} =  R(\vartheta)^t \, u^\Omega (h) \cdot
\int_{\partial  \mathcal S_0} 
\Big[ \frac{\partial\varphi^{\Ext}_{2}}{\partial  \tau}   \xi_{2}
 +   \frac{\partial\varphi^{\Ext}_{3}}{\partial  \tau}  \xi_{3}\Big]  \, {\rm d}s = 0 ,
\end{equation*}
thanks to an integration by parts.\par
\ \par
Let us now consider the second coordinate $\underline{\mathsf{B}}_{{\it 1},2}$ of $\underline{\mathsf{B}}_{{\it 1}}$, that is:
\begin{align*}
\underline{\mathsf{B}}_{{\it 1},2}
&= \int_{\partial  \mathcal S_0}
\Big(  \frac{\partial\psi^{\Ext}_{{\it 0}}}{\partial n}   
- R(\vartheta)^t \,  u^\Omega (h) \cdot \tau
 \Big) 
\left( \frac{\partial\varphi^{\Ext}_{1}}{\partial \tau}  K_3  -
\frac{\partial\varphi^{\Ext}_{3}}{\partial \tau} K_1   \right)
\, {\rm d}s \\
&= \underline{\mathsf{B}}_{{\it 1},2}^a +  \underline{\mathsf{B}}_{{\it 1},2}^b +  \underline{\mathsf{B}}_{{\it 1},2}^c  ,
\end{align*}
with 
\begin{align*}
\underline{\mathsf{B}}_{{\it 1},2}^a
&:= - \int_{\partial  \mathcal S_0}
 \frac{\partial\psi^{\Ext}_{{\it 0}}}{\partial n}   
\frac{\partial\varphi^{\Ext}_{3}}{\partial \tau}  K_1
\, {\rm d}s , \\
\underline{\mathsf{B}}_{{\it 1},2}^b
&:= \int_{\partial  \mathcal S_0}
\frac{\partial\psi^{\Ext}_{{\it 0}}}{\partial n}   
\frac{\partial\varphi^{\Ext}_{1}}{\partial \tau} K_3 
\, {\rm d}s , \\
\underline{\mathsf{B}}_{{\it 1},2}^c
&:= \int_{\partial  \mathcal S_0}
R(\vartheta)^t \,  u^\Omega (h) \cdot 
\left( \frac{\partial\varphi^{\Ext}_{3}}{\partial \tau}  K_1  -
\frac{\partial\varphi^{\Ext}_{1}}{\partial \tau} K_3   \right)  \tau
\, {\rm d}s .
\end{align*}
Proceeding as above with $\underline{\mathsf{B}}_{{\it 1},1}^a$ and $\underline{\mathsf{B}}_{{\it 1},1}^b $,  we arrive at 
\begin{align*}
\underline{\mathsf{B}}_{{\it 1},2}^a
&= \int_{\partial  \mathcal S_0}
\left(  \frac{\partial\psi^{\Ext}_{{\it 0}}}{\partial  \tau}   K_{1}
- \frac{\partial\psi^{\Ext}_{{\it 0}}}{\partial  n}   \xi_{1} \cdot \tau 
 \right) K_{3} \, {\rm d}s 
+ \int_{\partial  \mathcal S_0}  \frac{\partial\psi^{\Ext}_{{\it 0}}}{\partial  \tau}   
\frac{\partial\varphi^{\Ext}_{3}}{\partial  \tau}    (  \xi_{1} \cdot \tau ) \, {\rm d}s , \\ 
\underline{\mathsf{B}}_{{\it 1},2}^b
&= - \int_{\partial  \mathcal S_0}
\left(  \frac{\partial\psi^{\Ext}_{{\it 0}}}{\partial  \tau}   K_{3}
- \frac{\partial\psi^{\Ext}_{{\it 0}}}{\partial  n}   \xi_{3} \cdot \tau  \right) K_{1} \, {\rm d}s 
- \int_{\partial  \mathcal S_0}  \frac{\partial\psi^{\Ext}_{{\it 0}}}{\partial  \tau}   
\frac{\partial\varphi^{\Ext}_{1}}{\partial  \tau}    (  \xi_{3} \cdot \tau ) \, {\rm d}s .
\end{align*}
We sum these two terms, observing that
$( \xi_{1} \cdot \tau )K_{3}  - (  \xi_{3} \cdot \tau )K_1 = x^{\perp} \cdot  \xi_{2}$,
to get 
\begin{multline} \label{macr2}
\underline{\mathsf{B}}_{{\it 1},2}^a + \underline{\mathsf{B}}_{{\it 1},2}^b
=
- \int_{\partial  \mathcal S_0} (  x^{\perp} \cdot  \xi_{2}   ) \frac{\partial\psi^{\Ext}_{{\it 0}}}{\partial  n}  \, {\rm d}s  \\
+ \int_{\partial  \mathcal S_0} \frac{\partial\psi^{\Ext}_{{\it 0}}}{\partial  \tau} 
\left( 
\frac{\partial\varphi^{\Ext}_{3}}{\partial  \tau}   (  \xi_{1} \cdot \tau )
- \frac{\partial\varphi^{\Ext}_{1}}{\partial  \tau}   (  \xi_{3} \cdot \tau ) 
\right)   \, {\rm d}s .
\end{multline}
Using  \eqref{Mbarre} we obtain
\begin{equation} \label{macr}
 \int_{\partial  \mathcal S_0} (  x^{\perp} \cdot  \xi_{2}   ) \frac{\partial\psi^{\Ext}_{{\it 0}}}{\partial  n}  \, {\rm d}s
= \overline{M}  R(\vartheta)^t \, u^\Omega (h)^{\perp} \cdot   \xi_{2} ,
\end{equation}
with $ \overline{M} $ given by  \eqref{overlineM}. 
We also use \eqref{surbord} to modify the second term in the right hand side of \eqref{macr2}  and then get
\begin{align*}
\underline{\mathsf{B}}_{{\it 1},2}^a + \underline{\mathsf{B}}_{{\it 1},2}^b   &=
- \overline{M}  R(\vartheta)^t \, u^\Omega (h)^{\perp} \cdot   \xi_{2}
 \\ &\quad - \int_{\partial  \mathcal S_0}
 R(\vartheta)^t \, u^\Omega (h)  \cdot 
\left( 
\frac{\partial\varphi^{\Ext}_{3}}{\partial  \tau}   (  \xi_{1} \cdot \tau )
-
\frac{\partial\varphi^{\Ext}_{1}}{\partial  \tau}   (  \xi_{3} \cdot \tau ) 
\right) n  \, {\rm d}s .
\end{align*}
Adding $\underline{\mathsf{B}}_{{\it 1},2}^c$,  we arrive at 
\begin{equation} \label{pff1}
 \underline{\mathsf{B}}_{{\it 1},2} =
- \overline{M}  R(\vartheta)^t \, u^\Omega (h)^{\perp} \cdot   \xi_{2}
- R(\vartheta)^t \, u^\Omega (h)  \cdot \underline{\mathsf{B}}_{{\it 1},2}^d ,
\end{equation}
with
\begin{align*}
\underline{\mathsf{B}}_{{\it 1},2}^d 
&:= \int_{\partial  \mathcal S_0} \frac{\partial\varphi^{\Ext}_{3}}{\partial  \tau}   
\Big(  ( \xi_{1}^{\perp} \cdot  n) n + ( \xi_{1}^{\perp} \cdot \tau )  \tau   \Big)   \, {\rm d}s \\
& {\hskip 1cm} +  \int_{\partial  \mathcal S_0} \frac{\partial\varphi^{\Ext}_{1}}{\partial  \tau}  
\Big(  - ( \xi_{3}^{\perp} \cdot  n) n - ( \xi_{3}^{\perp} \cdot \tau )  \tau   \Big)  \, {\rm d}s \\
&= \int_{\partial  \mathcal S_0} \frac{\partial\varphi^{\Ext}_{3}}{\partial  \tau}   
 \xi_{1}^{\perp}      \, {\rm d}s
- \int_{\partial  \mathcal S_0}  \frac{\partial\varphi^{\Ext}_{1}}{\partial  \tau}     \xi_{3}^{\perp}   \, {\rm d}s .
\end{align*}
Using an integration by parts we see that the second term of the right hand side  above vanishes and the first term gives
\begin{equation*}
\int_{\partial \mathcal S_0} \frac{\overline\varphi^{\Ext}_{3}}{\partial \tau} (x) x^{\perp} \, {\rm d}s(x)
= - \int_{\partial  \mathcal S_0} \varphi^{\Ext}_{3} \; n \, {\rm d}s
= -  \overline{M}  \xi_2 (0,\cdot) .
\end{equation*}
Therefore
\begin{equation} \label{pff2} 
R(\vartheta)^t \, u^\Omega (h)  \cdot \underline{\mathsf{B}}_{{\it 1},2}^d =  \overline{M}^t \,   R(\vartheta)^t \, u^\Omega (h)^{\perp}  \cdot \xi_{2} .
\end{equation}
Gathering \eqref{pff1} and  \eqref{pff2} and using \eqref{dagger-over},  we arrive at 
\begin{equation} \label{foufou2}
\underline{\mathsf{B}}_{{\it 1},2} = - 2 M^{\dagger}   R(\vartheta)^t \, u^\Omega (h)^{\perp}  \cdot \xi_{2} .
\end{equation}
Proceeding in the same way for the third coordinate, using 
\begin{equation*}
(  \xi_{1} \cdot \tau )K_{2}  - (  \xi_{2} \cdot \tau )K_1 = - x^{\perp} \cdot  \xi_{3} \text{ and }
\int_{\partial \mathcal S_0} \frac{\partial\overline\varphi^{\Ext}_{2}}{\partial \tau} (x) x^{\perp} \, {\rm d}s(x) 
= \overline{M}  \xi_3  (0,\cdot) ,
\end{equation*} 
 we deduce that 
\begin{equation} \label{foufou3}
\underline{\mathsf{B}}_{{\it 1},3} = - 2 M^{\dagger}   R(\vartheta)^t \, u^\Omega (h)^{\perp}  \cdot \xi_{3} .
\end{equation}
Combining \eqref{foufou2} and \eqref{foufou3} and recalling the definition of $M_{\vartheta}^{\dagger} $ in \eqref{Mflattheta}, we deduce  \eqref{B1}.
This ends the proof of Proposition~\ref{B-exp}.
\qed
\end{proof} 
%
%
%
%
%
\subsection{End of the proof of the normal forms} 
\label{eqfnorm}

To prove  Proposition~\ref{Pro-fnormA} and Proposition~\ref{Pro-fnorm} we have to put Equation~\eqref{ODE_intro} under the normal forms \eqref{fnorm1} and \eqref{fnorm2}. 
We focus on the more delicate Case (ii). Case (i) can be proved with the same strategy with some simplifications,
since the proof of the normal form \eqref{fnorm2} corresponding to Case (ii) actually requires to perform additional manipulations. At the end of this section, we add a few words about Proposition~\ref{Pro-fnorm-ball-inh}. \par
By  \eqref{def-upsilon} the equation \eqref{ODE_intro} reads 
\begin{equation} \label{ODE_intro_epsB}
M_\varepsilon (q_\varepsilon)   q_\varepsilon'' 
=  \gamma^2 E_{\varepsilon} (q_\varepsilon) + \gamma q_\varepsilon' \times B_{\eps} (q_\varepsilon)  
-  \langle \Gamma_\varepsilon  (q_\varepsilon),q_\varepsilon',q_\varepsilon'\rangle .
\end{equation}
The proof now consists in substituting the previous expansions of $\Gamma_\varepsilon$, $E_{\varepsilon}$ and $B_{\eps}$ into the right hand side of \eqref{ODE_intro_epsB} and to rely on some crucial cancellations. 
Let $\delta > 0$. Using the decomposition \eqref{NiouGamma} of $\Gamma$, the definition \eqref{def-hat} of $p_{\varepsilon}$, the expansions \eqref{Gamma-int-exp} and \eqref{expGammaOmega} for the Christoffel symbols, \eqref{E-dev} for the electric field, \eqref{B-dev} for the magnetic field and the relation \eqref{vprod25} we obtain that,
for $\eps_{0}$ in $(0,1)$ small enough, as long as $(\varepsilon ,q_{\varepsilon})$ belongs to $\mathfrak Q_{\delta,\eps_{0}}$:
\begin{multline} \label{mesaoulent}
\gamma^2 E_{\varepsilon} (q_\varepsilon) + \gamma q'_\varepsilon \times B_{\eps} (q_\varepsilon)  
 - \langle\Gamma_\varepsilon  (q_\varepsilon), q'_\varepsilon, q'_\varepsilon\rangle 
 = I_\eps  \Big[ \Big( \gamma^2 \mathsf{E}_{{\it 0}}  (q_\varepsilon) +\gamma p_\varepsilon  \times B^{\Ext}_{\vartheta_\varepsilon}  \Big) \\
+ \eps \Big(  \gamma^2 \mathsf{E}_{{\it 1}}  (q_\varepsilon) +\gamma p_\varepsilon \times \mathsf{B}_{{\it 1}}  (q_\varepsilon)  
- \langle \Gamma^{\Ext}_{\vartheta_\varepsilon} , p_\varepsilon , p_\varepsilon \rangle \Big) 
 + \eps^2 \check{F}_{r} (\varepsilon, q_\varepsilon, p_\varepsilon ) \Big], 
\end{multline}
where 
\begin{multline*}
\check{F}_{r} (\varepsilon, q_\varepsilon, p_\varepsilon ) =  \gamma^2  E_{r}  (\eps,q_\varepsilon) + \gamma p_\varepsilon \times B_{r}  (\eps,q_\varepsilon) 
- \eps \langle \Gamma^{\rm rot}_{r} (\eps,q_\varepsilon), p_\varepsilon , p_\varepsilon \rangle  \\
- \eps \langle \Gamma^{\partial  \Omega}_{r} (\eps,q_\varepsilon), p_\varepsilon , p_\varepsilon \rangle .
\end{multline*}
Then recalling that ${F}^{\Ext}_{\vartheta_\varepsilon}$  is defined in \eqref{force-ext}, we observe that the zero order term in the right hand side of \eqref{mesaoulent} (in terms of powers of $\varepsilon$) can be recast as follows:
\begin{equation} \label{EB1}
\gamma^2 \mathsf{E}_{{\it 0}}  (q_\varepsilon) + \gamma p_{\varepsilon} \times B^{\Ext}_{\vartheta_\varepsilon}  (q_\varepsilon)
= {F}^{\Ext}_{\vartheta_\varepsilon} ( \varepsilon \vartheta_\varepsilon',  h_\varepsilon'  - \gamma  u^\Omega (h_\varepsilon ) )  .
\end{equation}
Now, in order to deal with the subprincipal term of the right hand side of  \eqref{mesaoulent}, let us state the following crucial lemma, where we consider only the part $\mathsf{E}_{{\it 1}}^a (q_\varepsilon)$ defined in \eqref{def-E1a} of the decomposition \eqref{decompE1} of the term $\mathsf{E}_{{\it 1}}(q_\varepsilon)$.
\begin{lem} \label{lemma-grav}
The following holds:
\begin{equation} \label{grav}
\gamma^2 \mathsf{E}_{{\it 1}}^a  (q_{\varepsilon}) + \gamma p_{\varepsilon}  \times {\mathsf{B}}_{{\it 1}}  (q_{\varepsilon})
- \langle \Gamma^{\Ext}_{\vartheta}, p_{\varepsilon} , p_{\varepsilon} \rangle  
=  - \langle \Gamma^{\Ext}_{\vartheta}, \hat{p}_{\varepsilon}, \hat{p}_{ \varepsilon} \rangle ,
\end{equation}
where 
\begin{equation} \label{smodu}
{\hat{p}}_{\varepsilon} := (\varepsilon \vartheta_\varepsilon',  h_\varepsilon'  - \gamma  u^\Omega (h_\varepsilon ) )^{t} .
\end{equation}
\end{lem}
\begin{rem}
As for \eqref{EB1}, this relation is algebraic, in the sense that it does not rely on $p_{\varepsilon} = I_{\varepsilon} q'_{\varepsilon}$ or on the fact that $q_{\varepsilon}$ satisfies \eqref{ODE_intro}.
\end{rem}
\begin{proof}[Proof of Lemma~\ref{lemma-grav}.]
We will recast the second and third terms of the left hand side in terms of the matrix $M^\dagger$ defined in \eqref{def-Mdagger}.
Let us start with  the Christoffel term. 
Using the definition of $M^{\Ext}_{a, \vartheta}$ in \eqref{g+a_ext}
and the decomposition of $M_{a}^{\Ext}$ in \eqref{def-Ma},  we arrive at 
\begin{equation} \label{Ma}
M^{\Ext}_{a, \vartheta} =
\begin{pmatrix}
m^\#   & \mu_{\vartheta}^t \\
\mu_{\vartheta} & M_{\flat,\vartheta}^\Ext   
\end{pmatrix} ,
\end{equation}
with $M_{\flat,\vartheta}^\Ext$ and $\mu_{\vartheta}$ as in \eqref{Mflattheta}. 
In particular we infer from \eqref{Christo-exter} that
\begin{equation*}
\langle \Gamma^{\Ext}_{\vartheta}, p_{\varepsilon}, p_{\varepsilon} \rangle 
=  \begin{pmatrix}
 - ( M^\Ext_{\flat,\vartheta_{\varepsilon}} h_{\varepsilon}')^\perp \cdot h_{\varepsilon}' \\
 (\eps \vartheta_{\varepsilon}')^2 \mu_{\vartheta_{\varepsilon}}^\perp  +  \eps \vartheta_{\varepsilon}'  \big( (M^\Ext_{\flat,\vartheta_{\varepsilon}} h'_{\varepsilon})^\perp  -  M^\Ext_{\flat,\vartheta_{\varepsilon}}  (h_{\varepsilon}')^\perp \big)
\end{pmatrix}  .
\end{equation*}
It remains to recast this expression thanks to the matrix ${M}^{\dagger}$ defined in \eqref{def-Mdagger}. This is done thanks to the following elementary identities: recalling \eqref{mat-perp}  we have 
for any $\vartheta $ in $\R$ and $X $ in $\R^2$, 
\begin{align}
\label{flatVSdagger}
 (M^\Ext_{\flat,\vartheta} X )^\perp \cdot X &= X^\perp \cdot {M}^{\dagger}_\vartheta  X^\perp , \\
\label{flatVSdagger2}
 (M^\Ext_{\flat,\vartheta} X)^\perp   -  M^\Ext_{\flat,\vartheta} X^\perp &= -2 {M}^{\dagger}_\vartheta  X , \\
\label{flatVSdagger3}
 (\perp) {M}^{\dagger}_\vartheta  (\perp) & =  {M}^{\dagger}_\vartheta .
\end{align}
Therefore 
\begin{equation} \label{detai2-pre}
\langle \Gamma^{\Ext}_{\vartheta}, p_{\varepsilon}, p_{\varepsilon} \rangle 
=  \begin{pmatrix}
  - (h_{\varepsilon}')^\perp  \cdot M_\vartheta^{\dagger} \, (h_{\varepsilon}')^\perp \\
  (\eps \vartheta_{\varepsilon}')^2 \mu_{\vartheta_{\varepsilon}}^\perp  - 2\eps \vartheta_{\varepsilon}'  M_\vartheta^{\dagger} \, h_{\varepsilon}'
\end{pmatrix}
.
\end{equation}
Now using \eqref{DefB1}, \eqref{flatVSdagger3} and the fact that for any $\vartheta $ in $\R$,  $ {M}^{\dagger}_\vartheta$ is symmetric, we obtain
\begin{equation}
\label{detai1}
 p_{\varepsilon} \times {\mathsf{B}}_{{\it 1}}  (q_{\varepsilon})
=  
\begin{pmatrix}
-  2 (h_{\varepsilon}')^\perp  \cdot M_{\vartheta_{\varepsilon}}^{\dagger} \,  u^\Omega (h_{\varepsilon})^\perp \\
- 2  (\eps \vartheta_{\varepsilon}')^2  \big( M_{\vartheta_{\varepsilon}}^{\dagger} \,  u^\Omega (h_{\varepsilon})^\perp \big)^\perp
\end{pmatrix}  
= 
\begin{pmatrix}
-  2 (h_{\varepsilon}')^\perp  \cdot M_{\vartheta_{\varepsilon}}^{\dagger} \, u^\Omega (h_{\varepsilon})^\perp \\
- 2  (\eps \vartheta_{\varepsilon}')^2 M_{\vartheta_{\varepsilon}}^{\dagger} \,  u^\Omega (h_{\varepsilon})
\end{pmatrix} .
\end{equation}
Now it suffices to combine \eqref{detai1},  \eqref{detai2-pre} and  \eqref{def-E1a} to deduce  \eqref{grav}. 
\qed
\end{proof} 
As a consequence, combining \eqref{mesaoulent}, \eqref{EB1} and \eqref{grav}, we arrive at  
\begin{multline} \label{vasy1}
\gamma^2 E_{\varepsilon} (q_{\varepsilon}) + \gamma q_{\varepsilon}' \times B_{\eps} (q_{\varepsilon})  
- \langle \Gamma_\varepsilon  (q_{\varepsilon}), q_{\varepsilon}', q_{\varepsilon}' \rangle \\
=  I_\eps \Big[ {F}^{\Ext}_{\vartheta} (\hat{p}_{\varepsilon})
+  \varepsilon \big\{ - \langle \Gamma^{\Ext}_{\vartheta}, \hat{p}_{\varepsilon}, \hat{p}_{\varepsilon} \rangle
+ \gamma^{2} \big( \mathsf{E}_{{\it 1}}^b (q_{\varepsilon})  + \mathsf{E}_{{\it 1}}^c (q_{\varepsilon})  \big) \big\}
+ \eps^2 F_{r} (\eps, q_{\varepsilon}, \hat{p}_{\varepsilon})  \Big] .
\end{multline}
Moreover $F_{r} $ belongs to $ L^{\infty}_\mathrm{loc}(\mathfrak Q_{\delta,\varepsilon_{0}} \times \R^3; \R^3)$,  depends on  $\mathcal S_0$, $\gamma$  and $\Omega$ and is weakly nonlinear in the sense of 
Definition \ref{def-weakly-nonlinear}. 
%
%
%
%
%
%
%
%
%
%
Next the part $\mathsf{E}_{{\it 1}}^c$ of the subprincipal term in \eqref{vasy1} can be absorbed by the principal term up to a modification of size $\varepsilon$ of the arguments (that is, thanks to the second oder modulation). 
More precisely,  by  \eqref{def-Bext} and \eqref{DefH1c}, 
\begin{equation} \label{vasy3}
{F}_{\Ext , \vartheta_{\varepsilon}} ( {\tilde{p}}_{\varepsilon} ) =
{F}_{\Ext , \vartheta_{\varepsilon}} (\hat{p}_{\varepsilon}) + \varepsilon \gamma^{2} \mathsf{E}_{{\it 1}}^c (q_{\varepsilon}) ,
\end{equation}
where $\tilde{p}_{\varepsilon}$ is given by \eqref{dmodu}.
Thus we deduce from \eqref{vasy1} and \eqref{vasy3} that 
\begin{multline} \label{AUX-tot}
\gamma^2 E_{\varepsilon} (q_{\varepsilon}) + \gamma q_{\varepsilon}' \times B_{\eps} (q_{\varepsilon})  
- \langle \Gamma_\varepsilon  (q_{\varepsilon}), q_{\varepsilon}', q_{\varepsilon}' \rangle \\
= I_\eps  \Big[ {F}^{\Ext}_{\vartheta_{\varepsilon}} (\tilde{p}_{\varepsilon})
- \varepsilon \langle \Gamma^{\Ext}_{\vartheta_{\varepsilon}}, \tilde{p}_{\varepsilon}, \tilde{p}_{\varepsilon} \rangle
+ \varepsilon \gamma^{2} \mathsf{E}_{{\it 1}}^b (q_{\varepsilon}) 
+ \eps^2 \hat{F}_{r} (\eps, q_{\varepsilon}, \tilde{p}_{\varepsilon})  \Big],
\end{multline}
where the term $\hat{F}_{r}$ is defined by
\begin{multline*}
\hat{F}_{r} (\eps, q_{\varepsilon}, \tilde{p}_{\varepsilon})
:= \check{F}_{r} (\eps, q_{\varepsilon}, \tilde{p}_{\varepsilon} + \varepsilon  \gamma p_{c}(q_{\varepsilon})) 
- 2  \gamma  \langle \Gamma^{\Ext}_{\vartheta_{\varepsilon}}, \tilde{p}_{\varepsilon}, p_{c}(q_{\varepsilon}) \rangle \\
- \varepsilon  \gamma^{2} \langle \Gamma^{\Ext}_{\vartheta_{\varepsilon}}, p_{c}(q_{\varepsilon}), p_{c}(q_{\varepsilon}) \rangle ,
\end{multline*}
where $p_{c}(q_{\varepsilon}):=(0, u_{c}(q_{\varepsilon}))$. One can easily check that $\hat{F}_{r}$ is still weakly nonlinear. \par
\ \par
Using Proposition~\ref{dev-added} and \eqref{AUX-tot}, and recalling the notation \eqref{TrueMatrix}, we recast 
the equation \eqref{ODE_intro_epsB} as follows:
\begin{multline} \label{passto2}
\eps^{\min(2,\alpha)} \, \Big( {M}_{\vartheta_\varepsilon} (\eps) + \eps^{4-\min (2,\alpha)} \, M_{r}  (\eps,q_\varepsilon)  \Big) {p}_\varepsilon' 
 = F^{\Ext}_{\vartheta_\varepsilon} ({\tilde{p}}_{\varepsilon} )
- \varepsilon  \langle \Gamma^{\Ext}_{\vartheta_\varepsilon}, \tilde{p}_\varepsilon, \tilde{p}_\varepsilon \rangle
\\ + \varepsilon \gamma^{2} \mathsf{E}_{{\it 1}}^{b} (q_{\varepsilon}) 
+ \eps^2 \hat{F}_{r} ( \eps  , q_\varepsilon,\tilde{p}_\varepsilon).
\end{multline}
We need to perform further modifications on this equation in order to achieve the normal forms \eqref{fnorm1}-\eqref{fnorm2} exactly, due to the fact that the mass matrix in \eqref{passto2}  
contains some extra lower-order terms, and that the time derivative is applied to $ {p}_\varepsilon $ rather than to $\tilde{p}_\varepsilon$.
To deal with the first discrepancy, reducing $\eps_{0} $ in $(0,1)$ if necessary, we simply multiply \eqref{passto2} by the matrix
\begin{equation*}
{M}_\vartheta (\eps) \Big({M}_{\vartheta} (\eps)+  \eps^{4-\min (2,\alpha)} \, M_{r}  (\eps,q) \Big)^{-1} .
\end{equation*}
On the other hand, for the second discrepancy,  we observe that  the modulated translation velocity $\tilde{\ell}_\varepsilon := h'-\gamma (u^\Omega (h_{\varepsilon}) + \eps u_{c} (q_{\varepsilon}))$  satisfies
\begin{multline*}
\tilde{\ell}_\varepsilon' =  h_\varepsilon'' -  \gamma \tilde{\ell}_{\varepsilon}  \cdot (\nabla u^\Omega ) (h_{\varepsilon}) 
- \gamma^2 (u^\Omega (h_{\varepsilon}) + \eps u_{c} (q_{\varepsilon}) )  \cdot \nabla u^\Omega (h_{\varepsilon}) \\
- \gamma D_{\vartheta} u_{c} (q_{\varepsilon}) \eps \vartheta_{\varepsilon}' 
- \eps \gamma D_{h} u_{c} (q_{\varepsilon}) \cdot  \big( \tilde{\ell}_{\varepsilon} +   \gamma (u^\Omega (h_{\varepsilon}) + \eps u_{c} (q_{\varepsilon}) ) \big)  .
\end{multline*}
Thus we obtain \eqref{fnorm2} with $F_{r}$ in $ L^{\infty}_\mathrm{loc}(\mathfrak Q_{\delta,\eps_{0}} \times \R^3; \R^3)$ weakly nonlinear in the sense
of Definition \ref{def-weakly-nonlinear}. This ends the proof of Proposition~\ref{Pro-fnorm}. \par
\ \par
%
%
%
Starting from \eqref{OID1}, the proof of Proposition~\ref{Pro-fnorm-ball-inh} is similar to the one of Proposition~\ref{Pro-fnorm}, with some simplifications, since in this case $\tilde{M}^{{\mathcal S}_{0}}_{\flat}=\pi \mbox{\rm Id}_{2}$ and consequently $M^{\dagger}=0$. It follows that ${\mathsf{E}}_{{\it 1}}^{a}=0$ and ${\mathsf{B}}_{{\it 1}}=0$. \par
Now we expand $\tilde{M}_{\flat,\varepsilon}$, $\Gamma_{\flat,\varepsilon}$, $E_{\flat,\varepsilon}$ and $\tilde{B}_{1,\varepsilon}$ (which depend merely on $h_{c}$ and $\varepsilon$) in terms of $\varepsilon$.
Note that the last two coordinates of ${\mathsf{E}}_{{\it 1}}^{b}$ (which are the only ones to be relevant here, recall \eqref{Eflat}) are zero and that ${\mathsf{E}}_{{\it 1}}^{c}$ gives the term $ - u_{c}(q)^{\perp}$. Noting that $u^{\Omega}(h) + \varepsilon u_{c}(q) = u^{\Omega}(h_{c}) + O(\varepsilon^{2})$ and recalling \eqref{RelB1} we infer
\begin{equation*}
E_{\flat,\varepsilon}= - u^{\Omega}(h_{c})^{\perp} + \varepsilon^{2} E_{\flat,r} \text{ and } \tilde{B}_{1,\varepsilon} = -1 +  \varepsilon^{2} B_{\flat,r},
\end{equation*}
with $E_{\flat,r}= E_{\flat,r}(\eps,h_{c,\varepsilon})$ weakly nonlinear in the sense of Definition~\ref{def-WNL2-inh} and $B_{\flat,r}= B_{\flat,r} (\eps,h_{c,\varepsilon})$ bounded as long as $h_{c,\varepsilon}$ is away from $\partial \Omega$. On the other side, one finds that $\tilde{M}_{\flat} = \varepsilon^{2} \pi \mbox{\rm Id}_{2} + O(\varepsilon^{4})$ and $\Gamma_{\flat,\varepsilon} = O(\varepsilon^{4})$. The conclusion follows easily and this ends the proof of Proposition~\ref{Pro-fnorm-ball-inh}.
%
%
%
%
%
%
%
%
%
\begin{acknowledgements}
The authors  thank the Agence Nationale de la Recherche,  Project IFSMACS, grant ANR-15-CE40-0010 for a partial financial support.
The first and third authors were also partially supported by the Agence Nationale de la Recherche, Project DYFICOLTI, grant ANR-13-BS01-0003-01, the second author by the Agence Nationale de la Recherche, Project OPTIFORM,  grant ANR-12-BS01-0007-04 and the third author by the Agence Nationale de la Recherche, Project BORDS, grant ANR-16-CE40-0027-01.
The third author was also partially supported by  the Emergences Project ``Instabilities in Hydrodynamics" funded by the Mairie de Paris and the Fondation Sciences Math\'ematiques de Paris.
\end{acknowledgements}

\end{document}